\documentclass[12pt,reqno]{amsart} 
\usepackage{amsthm}

\usepackage{amsmath,amssymb}
\usepackage{ascmac}
\usepackage{mathrsfs}

\usepackage[abbrev]{amsrefs}

\usepackage{graphicx,xcolor}

\usepackage{hyperref}

\hypersetup{
    colorlinks=true,     
    linkcolor=red,      
    citecolor=green,      
    urlcolor=blue,       
    bookmarks=true,
    bookmarksnumbered=true,
    bookmarksdepth=tocdepth
}

\allowdisplaybreaks[1]
\theoremstyle{definition}
\newtheorem{dfn}{Definition}
\newtheorem{thm}{Theorem}[section]
\newtheorem*{thm*}{Theorem}
\newtheorem{cor}[thm]{Corollary}
\newtheorem{prop}[thm]{Proposition}
\newtheorem{lem}[thm]{Lemma}
\newtheorem{rem}{Remark}[section]

\newtheorem*{nota}{Notation}

\newcommand{\eqsp}[1]{{\begin{equation}\begin{split}#1\end{split}\end{
equation}}}

\makeatletter
\newcommand{\subscripts}[3]{%
  \@mathmeasure\z@\displaystyle{#2}%
  \global\setbox\@ne\vbox to\ht\z@{}\dp\@ne\dp\z@
  \setbox\tw@\box\@ne
  \@mathmeasure4\displaystyle{\copy\tw@_{#1}}%
  \@mathmeasure6\displaystyle{{#2}_{#3}}%
  \dimen@-\wd6 \advance\dimen@\wd4 \advance\dimen@\wd\z@
  \hbox to\dimen@{}\mathop{\kern-\dimen@\box4\box6}%
}
\makeatother

\begin{document}

\begin{center}\LARGE \bf
  On the stabilizing effects of rotation on stationary Navier--Stokes equations in the half spaces 
  in scale-critical spaces 
\end{center}
   
\footnote[0]
{
{\it Mathematics Subject Classification}: 35Q30; 76D05; 35Q86 

{\it 
Keywords}: Stationary Navier--Stokes equations, 
The half spaces, 
The Coriolis force, 
Scale-critical spaces 

* Taiki Okazaki -- E-mail: okazaki.taiki.r5@dc.tohoku.ac.jp 

}
\vskip5mm

\begin{center}
  {\large Taiki Okazaki*} 
  \vskip2mm
  {\large Mathematical Institute, Tohoku University}\\
  {\large Sendai 980-8578 Japan}
 \end{center}

\vskip5mm

\begin{center}
\begin{minipage}{135mm}
\footnotesize
{{\sc Abstract.} }
In this paper, we study the three-dimensional incompressible stationary Navier--Stokes equations with the Coriolis force in the half space with the inhomogeneous Dirichlet boundary condition. 
We show the existence of a unique small solution in the scale-critical Besov spaces. 
In particular, for any boundary data with a vanishing normal component, 
and for any external forces, we establish the well-posedness of the inhomogeneous Dirichlet boundary value problem when the speed of rotation is sufficiently large. 
In the proof, to exploit the stabilizing effect of the Coriolis term, 
we employ two explicit formulas depending on the relation between the horizontal frequency and the magnitude of the rotation speed. 
\end{minipage}
\end{center}

\section{Introduction}
In this paper, we consider the three-dimensional incompressible stationary Navier--Stokes equations 
with the Coriolis force in the half space 
\begin{equation*}
  \mathbb{R}_{+}^{3} := \{x = (x', x_{3})|\ x' = (x_{1}, x_{2})\in \mathbb{R}^{2}, x_{3}>0\}
\end{equation*}
with the inhomogeneous Dirichlet boundary condition. 
\begin{equation}\label{SNSC}
  \begin{cases}
    -\nu\Delta {\bf u} + \Omega(e_{3}\times{\bf u}) + ({\bf u}\cdot \nabla){\bf u} + \nabla p = \text{div } {\bf F}, 
          &\quad  \   x\in \mathbb{R}_{+}^{3}, \\
    \text{div } {\bf u} = 0,
          &\quad  \   x\in \mathbb{R}_{+}^{3}, \\
    {\bf u}(x', 0) = {\bf a}(x'),
          &\quad  \ x'\in{\mathbb{R}}^2, 
  \end{cases}
\end{equation}
where ${\bf u}(x) = (u_{1}(x), u_{2}(x), u_{3}(x))$ denotes the unknown velocity field of a fluid 
and $p(x)$ denotes the unknown pressure of the fluid, which acts to enforce the incompressibility condition $\text{div }{\bf u} = 0$, 
while the external force is assumed to be given in divergence form for some given matrix-valued function ${\bf F}(x) = \{F_{m,n}\}_{1\leq m,n \leq 3}$, that is, 
\begin{equation*}
  \text{div }{\bf F} 
  = \left(\sum_{m = 1}^{3}\partial_{m}F_{m, 1}, \sum_{m = 1}^{3}\partial_{m}F_{m, 2}, \sum_{m = 1}^{3}\partial_{m}F_{m, 3}\right), 
\end{equation*}
and ${\bf a}(x') = (a_{1}(x'), a_{2}(x'), a_{3}(x'))$ denotes the given boundary data. 
The constant $\nu > 0$ is the kinematic viscosity, and 
the constant $\Omega \in \mathbb{R}$ is the speed of rotation around the vertical unit vector $e_{3} = (0, 0, 1)$. 
The aim of this paper is to study the effect of the Coriolis force on \eqref{SNSC} from the viewpoint of the existence of solutions in scale-critical spaces. 
In particular, for any boundary data with $a_{3} = 0$ 
and for any ${\bf F}$, we show the existence of a unique small solution of \eqref{SNSC} when $\Omega$ is sufficiently large. 

We begin by recalling several known results. 
We first consider the case $\Omega = 0$. 
There is a vast literature on the stationary Navier--Stokes equations in the whole space $\mathbb{R}^{d}$ $(d \geq 3)$, 
\begin{equation}\label{0621-1}
  \begin{cases}
    -\nu\Delta {\bf u} + ({\bf u}\cdot \nabla){\bf u} + \nabla p = \text{div } {\bf F}, 
          &\quad  \   x\in \mathbb{R}^{d}, \\
    \text{div } {\bf u} = 0,
          &\quad  \   x\in \mathbb{R}^{d}. 
  \end{cases}
\end{equation}
Here, we only discuss works that are directly relevant to the present study. 
For further details, see the references below and the references therein. 
We briefly discuss the scale-critical spaces of \eqref{0621-1}. 
Suppose that $({\bf u}, p)$ is a solution of \eqref{0621-1}.
For $\lambda > 0$, we set 
\begin{equation}\label{0621-2}
  {\bf u}_{\lambda}(x) := \lambda {\bf u}(\lambda x),\quad 
  p_{\lambda}(x) := \lambda^{2}p(\lambda x) 
  \quad 
  \text{and} 
  \quad 
  {\bf F}_{\lambda}(x) := \lambda^{2}{\bf F}(\lambda x). 
\end{equation}
Then for given ${\bf F}_{\lambda}$, $({\bf u}_{\lambda}, p_{\lambda})$ is also a solution of \eqref{0621-1}. 
That is, \eqref{0621-1} is invariant under this scaling. 
A function space is said to be a scale-critical space for \eqref{0621-1} if its norm is invariant under the scaling \eqref{0621-2}. 
For instance, in the framework of Lebesgue spaces, the scale-critical spaces of \eqref{0621-1} are ${\bf F} \in L^{\frac{d}{2}}(\mathbb{R}^{d})$ and ${\bf u}\in L^{d}(\mathbb{R}^{d})$. 
Also, in the Besov space setting, the scale-critical spaces of \eqref{0621-1} are ${\bf F}\in \dot{B}_{p,q}^{\frac{d}{p} - 2}(\mathbb{R}^{d})$ and ${\bf u}\in \dot{B}_{p,q}^{\frac{d}{p} - 1}(\mathbb{R}^{d})$. 
It is known that scale-critical spaces plays an important role in the study of solvability of partial differential equations. 
Indeed, for small ${\bf F}\in L^{\frac{d}{2}}(\mathbb{R}^{d})$, Chen~\cite{Ch_1993} proved the existence of a unique small solution ${\bf u} \in L^{d}(\mathbb{R}^{d})$, 
and Kaneko, Kozono and Shimizu~\cite{Ka_Ko_Sh_2019} showed that for small ${\bf F}\in \dot{B}_{p,q}^{\frac{d}{p} - 2}(\mathbb{R}^{d})$ with $1 \leq p < d$ and $1 \leq q \leq \infty$, 
there exists a unique small solution ${\bf u}\in \dot{B}_{p,q}^{\frac{d}{p} - 1}(\mathbb{R}^{d})$, 
whereas if $d < p \leq \infty$ and $1\leq q \leq \infty$ or $p = d$ and $2 < q \leq \infty$, then the ill-posedness of \eqref{0621-1} in Besov spaces in the sense that the solution map is discontinuous was established by Tsurumi~\cite{Tsu_2019, Tsu_2019_2}. 
Recently, Fujii~\cite{Fu_2026} constructed non-unique solutions of \eqref{0621-1} in $\dot{B}_{p,q}^{\frac{d}{p} - 1}(\mathbb{R}^{d})$ with $d < p \leq \infty$ and $1\leq q \leq \infty$ or $p = d$ and $2 < q \leq \infty$ in the case ${\bf F} \equiv 0$, 
and then obtained non-unique solutions to the non-stationary problem by considering perturbations around them. 
When the domain is the half space $\mathbb{R}_{+}^{d}$, that is, when $\Omega = 0$ in \eqref{SNSC} and the spatial dimension is generalized to $d \geq 3$, 
Fujii~\cite{Fu_2025} derived an integral equation formulation of the problem by viewing the equation as an evolution equation in the normal direction. 
Moreover, based on the analysis of this formulation, \cite{Fu_2025} proved the existence of a unique small solution which belongs to the following scale-critical space: 
\begin{equation}\label{0621-3}
  \widetilde{L}_{x_{d}}^{\infty}(0, \infty ; \dot{B}_{p,q}^{\frac{d - 1}{p} - 1}(\mathbb{R}^{d - 1})), 
\end{equation}
for small ${\bf a} \in \dot{B}_{p,q}^{\frac{d - 1}{p} - 1}(\mathbb{R}^{d - 1})$ and ${\bf F} \in \widetilde{L}_{x_{d}}^{q}(0, \infty ; \dot{B}_{p,q}^{\frac{d - 1}{p} + \frac{1}{q} - 2}(\mathbb{R}^{d - 1}))$. 
A detailed definition of \eqref{0621-3} will be given below (Definition~\ref{0623-4}). 
Note that the fact that \eqref{0621-3} is a scale-critical space of the incompressible stationary Navier--Stokes equations in $\mathbb{R}_{+}^{d}$ follows immediately from \eqref{0621-2}, 
expressed as 
\begin{equation*}
  {\bf u}_{\lambda}(x) 
  = \lambda{\bf u}(\lambda x', \lambda x_{d}), 
\end{equation*}
where $x' := (x_{1}, x_{2}, \cdots , x_{d-1}) \in \mathbb{R}^{d-1}$ and $x_{d} \in (0, \infty)$. 
Also, the scaling of ${\bf a}$ corresponding to \eqref{0621-2} is 
\begin{equation*}
  {\bf a}_{\lambda}(x') 
  := \lambda{\bf a}(\lambda x'). 
\end{equation*}
In this paper, we study \eqref{SNSC} within the same framework as that of \cite{Fu_2025}. 

We next review previous studies in the case $\Omega \neq 0$. 
We begin by discussing previous results on the non-stationary problem in the whole space $\mathbb{R}^{3}$, 
\begin{equation}\label{0621-4}
  \begin{cases}
    \partial_{t}{\bf u} -\nu\Delta {\bf u} + \Omega(e_{3}\times{\bf u}) + ({\bf u}\cdot \nabla){\bf u} + \nabla p = \text{div } {\bf F}, 
          &\quad  \   t > 0, x\in \mathbb{R}^{3}, \\
    \text{div } {\bf u} = 0,
          &\quad  \   t > 0, x\in \mathbb{R}^{3}, \\
    {\bf u}(0, x) = {\bf u}_{0}(x),
          &\quad  \ x\in{\mathbb{R}}^{3}, 
  \end{cases}
\end{equation}
where ${\bf u}_{0}$ is the given initial data. 
In equation \eqref{0621-4}, the term $e_{3}\times {\bf u}$ represents the effect of rotation in large-scale geophysical flows, 
where $\Omega$ denotes the angular velocity of rotation. 
As suggested by the Taylor--Proudman theorem, when rotational effects are dominant, the flow described by \eqref{0621-4} is expected to be quasi-two-dimensional. 
For a detailed physical discussion, we refer the reader to the books \cite{Gr_1980, Pe_1979}. 
From a mathematical point of view, Chemin, Desjardins, Gallagher and Grenier~\cite{Ch_De_Ga_Gr_2002} proved that 
for any given initial data ${\bf u}_{0} = {\bf v}_{0} + {\bf w}_{0} \in L^{2}(\mathbb{R}^{2})^{3} + H^{\frac{1}{2}}(\mathbb{R}^{3})^{3}$ with $\text{div } {\bf u}_{0} = 0$, there exists a constant $\Omega_{0}$ such that 
for any $\Omega \in \mathbb{R}$ with $|\Omega| \geq \Omega_{0}$, there exists a unique global solution of \eqref{0621-4}. 
Moreover, \cite{Ch_De_Ga_Gr_2002} showed that the above solution converges, in the limit $|\Omega| \to \infty$, to the solution of the two-dimensional Navier--Stokes equations with initial data ${\bf v}_{0}$. 
Although \eqref{0621-4} does not possess the same scaling invariance as in the case $\Omega = 0$, 
the scale-critical spaces for the case $\Omega = 0$ are often still referred to as scale-critical spaces for \eqref{0621-4}, 
and have been extensively studied. 
We refer the reader to \cite{Gi_I_Ma_Ma_2005, Gi_I_Ma_Sa_2008, Hi_Sh_2010, Ko_Yo_2011, I_Ta_2014} for well-posedness results that are independent of $\Omega$. 
These results can be viewed as generalizations of the corresponding results for the case $\Omega = 0$ (see, e.g., \cite{Ka_1984, Ko_Ta_2001}) to the setting with the parameter $\Omega$. 
We investigate the effect of the Coriolis term on the solutions to \eqref{0621-4}. 
It is known that the following Fourier multiplier operator appears in the solution formula for the linear problem corresponding to \eqref{0621-4}, 
\begin{equation}\label{0621-5}
  \mathcal{F}_{\mathbb{R}^{3}}^{-1}\left[e^{-\nu t|\xi|^{2}}e^{\pm i\Omega t\frac{\xi_{3}}{|\xi|}}\mathcal{F}_{\mathbb{R}^{3}}[f](\xi)\right]. 
\end{equation}
Here, $\mathcal{F}_{\mathbb{R}^{3}}$ and $\mathcal{F}_{\mathbb{R}^{3}}^{-1}$ denote the Fourier transform and the inverse Fourier transform on $\mathbb{R}^{3}$, respectively. 
For an explicit formula for solutions to the linear problem associated with \eqref{0621-4} and its derivation, see, e.g., \cite{Gi_I_Ma_Ma_2006, Hi_Sh_2010}. 
From \eqref{0621-5}, the Coriolis term is expected to induce anisotropic dispersion. 
Indeed, in the non-viscous setting, a dispersive estimate for the semigroup generated by the Coriolis term was obtained by Dutrifoy~\cite{Du_2005}, 
and subsequently improved by Iwabuchi and Takada~\cite{I_Ta_2015} and Koh, Lee and Takada~\cite{Ko_Le_Ta_2014}. 
Using the dispersive estimate, \cite{Ko_Le_Ta_2014} proved that if ${\bf u}_{0} \in \dot{H}^{s}(\mathbb{R}^{3})$ with $s>1/2$, i.e., in the scale-sub-critical case, 
then there exists a constant $\Omega_{0} > 0$ such that for any $\Omega \in \mathbb{R}$ with $|\Omega| > \Omega_{0}$, 
there exists a unique global solution of \eqref{0621-4}. 
More importantly, $\Omega_{0}$ is explicitly determined by the $\dot{H}^{s}$ norm of ${\bf u}_{0}$. 

The effect of the Coriolis term also appears in the stationary problem associated with \eqref{0621-4}. 
Konieczny and Yoneda~\cite{Ko_Yo_2011} showed that for any ${\bf F}$ belonging to a scale-critical Fourier--Besov space $\dot{FB}_{p,p}^{1 - \frac{3}{p}}(\mathbb{R}^{3})$ $(3 < p < \infty)$, 
there exists a constant $\Omega_{0} > 0$ such that for any $\Omega \in \mathbb{R}$ with $|\Omega| \geq \Omega_{0}$, 
there exists a unique solution ${\bf u}\in \dot{FB}_{p,p}^{2 - \frac{3}{p}}(\mathbb{R}^{3})$ of the stationary problem associated with \eqref{0621-4}. 
It should be noted that the well-posedness results for the stationary Navier--Stokes equations \eqref{0621-1} discussed above are obtained under a smallness assumption on ${\bf F}$. 

We now consider \eqref{0621-4} in domains other than the whole space. 
For the torus $\mathbb{T}^{3} = [0, 2\pi]^{3}$ with periodic boundary conditions, results similar to those in the $\mathbb{R}^{3}$ setting were obtained by Babin, Mahalov and Nicolaenko~\cite{Ba_Ma_Ni_1997, Ba_Ma_Ni_1999, Ba_Ma_Ni_2001}. 
Also, for results on a three-dimensional layer $\mathbb{R}^{2} \times \mathbb{T}$ with periodic boundary conditions, 
we refer the reader to Ohyama and Takada~\cite{Oh_Ta_2021}. 
In contrast to these results, the study of \eqref{0621-4} in domains with Dirichlet boundary conditions cannot be treated in the same manner. 
One reason for this is that boundary layer effects have to be taken into account. 
As is well known from the Prandtl boundary layer for $\Omega = 0$, 
boundary layer phenomena play an important role in singular limits of viscous flows subject to Dirichlet boundary conditions, including the limits $\nu \to 0$ and $|\Omega| \to +\infty$. 
Such boundary layer effects are typically significant near the boundary but become negligible away from it. 
In viscous rotating flow, this boundary layer is known as the Ekman boundary layer~\cite{Ek_1905}. 
This boundary layer has been extensively studied in the literature; see, for instance, \cite{Ch_De_Ga_Gr_2006, Ge_2005, Gr_1980, Gr_Ma_1997, Ma_2000, Pe_1979}. 
In particular, its mathematical justification has been investigated in \cite{Gr_Ma_1997, Ma_2000, Ch_De_Ga_Gr_2002_2}. 
In the half-space $\mathbb{R}_{+}^{3}$ with homogeneous Dirichlet boundary conditions, 
Giga, Inui, Mahalov, Matsui, and Saal~\cite{Gi_In_Ma_Ma_Sa_2007} obtained a local well-posedness result around the Ekman spiral, 
which is a time-independent exact solution. 
All of the above results are obtained for domains with flat boundaries. 
Recently, Chemin, Fanelli and Gallagher~\cite{Ch_Fa_Ga_2026} established analogous results for domains with general non-flat boundaries arising from bottom topographies. 
We also mention the work of Koba~\cite{Ko_2015} on global well-posedness and asymptotic stability for related systems with small initial data in general domains, including the half-space. 
The present work does not address a singular limit problem, 
but rather focuses on the stationary problem for sufficiently large rotation parameters. 
We refer to \cite{Da_Pr_2014, Da_Ge_2017} for well posedness results concerning the stationary problem in a half-space with a rough surface, 
which is related to but distinct from the problem considered here. 
In particular, Dalibard and Prange~\cite{Da_Pr_2014} derived an explicit formula for \eqref{SNSC} with $\Omega = 1$ and ${\bf F}\equiv 0$. 
This formula will play an important role in our analysis; see Subsection~\ref{0623-2} and Subsection~\ref{0623-3}. 

In this work, in the same framework as that of \cite{Fu_2025}, we derive and analyze an explicit formula for \eqref{SNSC}, 
and investigate the effect of the Coriolis term on \eqref{SNSC} from the viewpoint of well-posedness in scale-critical spaces. 
In what follows, we assume that 
\begin{equation*}
  \nu = 1 
  \quad \text{and}\quad a_{3} = 0. 
\end{equation*}
We first present a simplified statement of the main theorem of this paper, 
which is the central result we wish to emphasize. 
The following result illustrates the stabilizing effect of the Coriolis term on \eqref{SNSC}. 
\vskip2mm
\noindent 
{\bf Simplified statement of Theorem~\ref{0609-4}.}
For any boundary data ${\bf a}$ with $a_{3} = 0$ and any matrix-valued function ${\bf F}$ in appropriate scale-critical spaces for \eqref{SNSC}, 
there exists a sufficiently large constant $\Omega_{0} > 0$ such that for any $\Omega \in \mathbb{R}\setminus\{0\}$ with $|\Omega| \geq \Omega_{0}$, 
there exists a unique small solution ${\bf u}$ of \eqref{SNSC}. 
\vskip2mm
To give precise statements of our main results, we first introduce some notation. 
We begin with the functional setting used throughout this paper. 
Let $d \in \mathbb{N}$. 
Let $\mathcal{S}(\mathbb{R}^{d})$ be the Schwartz space on $\mathbb{R}^{d}$, 
and let $\mathcal{S}'(\mathbb{R}^{d})$ be the space of tempered distributions on $\mathbb{R}^{d}$. 
For $f \in \mathcal{S}(\mathbb{R}^{d})$, we define the Fourier transform and the inverse Fourier transform on $\mathbb{R}^{d}$ by 
\begin{equation*}
  \mathcal{F}_{\mathbb{R}^{d}}[f](\xi) 
  := \frac{1}{(2\pi)^{\frac{d}{2}}}\int_{\mathbb{R}^{d}}e^{-ix\cdot\xi}f(x)~{\rm d}x 
  \quad 
  \text{and} 
  \quad 
  \mathcal{F}_{\mathbb{R}^{d}}^{-1}[f](x) 
  := \frac{1}{(2\pi)^{\frac{d}{2}}}\int_{\mathbb{R}^{d}}e^{ix\cdot\xi}f(\xi)~{\rm d}\xi. 
\end{equation*}
In particular, we denote 
\begin{equation*}
  \xi' = (\xi_{1}, \xi_{2})\in \mathbb{R}^{2} 
  \quad 
  \text{and} 
  \quad 
  \widehat{f}(\xi') := \mathcal{F}_{\mathbb{R}^{2}}[f](\xi'). 
\end{equation*}
For $f,g\in \mathcal{S}(\mathbb{R}^{2})$, we also define the convolution on $\mathbb{R}^{2}$ by 
\begin{equation*}
  f*g (x') 
  := \int_{\mathbb{R}^{2}}f(x' - y')g(y')~{\rm d}y' 
  = 2\pi\mathcal{F}_{\mathbb{R}^{2}}^{-1}[\widehat{f}(\xi')\widehat{g}(\xi')](x'). 
\end{equation*}
With the above notation, let us recall the definition of homogeneous Besov spaces $\dot{B}_{p,q}^{s}(\mathbb{R}^{2})$. 
We refer to the book by Triebel~\cite{Tr_1983}. 
\begin{dfn}
  Let $\{\phi_{j}\}_{j \in \mathbb{Z}}$ be such that 
  \begin{equation*}
    \text{supp } \widehat{\phi}_{j}\subset\{\xi'\in\mathbb{R}^{2}\ |\ 2^{j-1}\leq |\xi'| \leq 2^{j+1}\} 
    \text{ for any }j\in\mathbb{Z}, 
  \end{equation*}
  \begin{equation*}
    \sum_{j\in\mathbb{Z}}\widehat{\phi_j}(\xi')=1 \text{ for any }\xi'\in\mathbb{R}^{2}\backslash\{0\}. 
  \end{equation*}
  For $s \in \mathbb{R}$ and $1\leq p, q \leq \infty$, we define the homogeneous Besov spaces as follows. 
  \begin{equation*}
    \dot{B}_{p,q}^{s} 
    = \dot{B}_{p,q}^{s}(\mathbb{R}^{2}):=\{f\in \mathcal{S}'(\mathbb{R}^{2})/ \mathcal{P}(\mathbb{R}^{2}) |\ {\|f\|}_{B^s_{p,q}}<\infty\}, 
  \end{equation*}
  where $\mathcal{P}(\mathbb{R}^{2})$ is the set of all polynomials of $2$ real variables and 
  \begin{equation}\label{0625-15}
    {\|f\|}_{\dot{B}_{p,q}^{s}} 
    := {\left\|\left\{2^{sj}{\|\phi_j*f\|}_{L^p}\right\}_{j\in\mathbb{Z}}\right\|}_{l^q}. 
  \end{equation}
\end{dfn}
\begin{rem}
  In the above definition, note that $f$ is chosen as a representative of the equivalence class. 
  However, if $s < 2/p$ or $s = 2/p$ and $q = 1$, then the following isomorphism follows (see, e.g., \cite{Ko_Ya_1994}). 
  \begin{equation*}
    \dot{B}_{p,q}^{s} 
    \simeq \Big\{f \in \mathcal{S}'(\mathbb{R}^{2}) |\ {\|f\|}_{B^s_{p,q}}<\infty, f = \sum_{j \in \mathbb{Z}}\phi_{j}*f \text{ in }\mathcal{S}'(\mathbb{R}^{2}) \Big\}. 
  \end{equation*}
  All the spaces used in this study satisfy the condition $s < 2/p$. 
\end{rem}

We interpret \eqref{SNSC} as an evolution equation in the normal direction. 
Accordingly, we use the Chemin--Lerner spaces $\widetilde{L}_{x_{3}}^{r}(0, \infty ; \dot{B}_{p,q}^{s}(\mathbb{R}^{2}))$. 
The Chemin--Lerner spaces were first introduced in \cite{Ch_Le_1995}. 
One of the advantages of this framework is that it provides maximal regularity estimates for evolution operators (see e.g., \cite{Da_2000}). 
Let $\mathcal{D}((0, \infty)\times \mathbb{R}^{2})$ be the set of all test functions on $(0, \infty)\times \mathbb{R}^{2}$, 
and let $\mathcal{D}'((0, \infty)\times \mathbb{R}^{2})$ be the topological dual of $\mathcal{D}((0, \infty)\times \mathbb{R}^{2})$. 
\begin{dfn}\label{0623-4}
  Let $s\in\mathbb{R}$ and $1\leq p,q,r\leq\infty$. 
  We define the Chemin-Lerner space as follows. 
  \begin{equation*}
    \widetilde{L}_{x_{3}}^{r}(0,\infty;\dot{B}_{p,q}^{s})=\widetilde{L}_{x_{3}}^{r}(0,\infty;\dot{B}_{p,q}^{s}(\mathbb{R}^{2}))
    :=\left\{f\in \mathcal{D}'((0,\infty)\times\mathbb{R}^{2})|\ {\|f\|}_{\widetilde{L}_{x_{3}}^{r}(0,\infty;\dot{B}_{p,q}^s)}<\infty\right\}, 
  \end{equation*}
  where 
  \begin{equation*}
    {\|f\|}_{\widetilde{L}_{x_{3}}^{r}(0,\infty;\dot{B}_{p,q}^{s})}:=
    {\left\|\left\{2^{sj}{\|\phi_j*f\|}_{L_{x_{3}}^{r}(0,\infty;L^{p})}\right\}_{j\in\mathbb{Z}}\right\|}_{l^{q}}. 
  \end{equation*}
\end{dfn}
As in \cite{Fu_2025}, we decompose the solution ${\bf u}$ of \eqref{SNSC} into a solution of a nonlinear equation defined on $\mathbb{R}^{3}$, 
which possesses symmetry with respect to $x_{3}$, 
and a solution of a linear equation that corrects the boundary conditions. 
In addition, we employ different equations depending on the relationship between the horizontal frequency and the size of the parameter $\Omega$. 
Assume that ${\bf u}$ is a smooth solution of \eqref{SNSC} 
for boundary data ${\bf a}$ with $a_{3} = 0$ and matrix-valued function ${\bf F}$. 
Let $\bar{\bf u} = (\bar{u}_{1}, \bar{u}_{2}, \bar{u}_{3})$ be an extension of ${\bf u}$ to $\mathbb{R}^{3}$ defined as 
\begin{equation}\label{0625-6}
  \bar{\bf u}_{h}(x', x_{3}) 
  := 
  \begin{cases}
    {\bf u}_{h}(x', x_{3}), 
    & \ x_{3} \geq 0,\\ 
    {\bf u}_{h}(x', -x_{3}), 
    & \ x_{3} < 0, 
  \end{cases}
  \quad 
  \text{and} 
  \quad 
  \bar{u}_{3}(x', x_{3}) 
  := 
  \begin{cases}
    u_{3}(x', x_{3}) 
    & \ x_{3} \geq 0,\\ 
    - u_{3}(x', -x_{3}) 
    & \ x_{3} < 0. 
  \end{cases}
\end{equation}
Here ${\bf u}_{h} := (u_{1}, u_{2})$. 
Namely, $\bar{\bf u}_{h}$ is an even extension of ${\bf u}_{h}$ and $\bar{u}_{3}$ is an odd extension of $u_{3}$. 
Note that since $a_{3} = 0$, $\bar{\bf u}$ satisfies the incompressibility condition $\text{div } \bar{\bf u} = 0$ on $\mathbb{R}^{3}$. 
Also, for the nonlinear term $({\bf u}\cdot \nabla){\bf u} = \text{div }({\bf u}\otimes{\bf u})$, 
$(\text{div }(\bar{\bf u}\otimes\bar{\bf u}))_{1}$ and $(\text{div }(\bar{\bf u}\otimes\bar{\bf u}))_{2}$ are even in $x_{3}$, 
and $(\text{div }(\bar{\bf u}\otimes\bar{\bf u}))_{3}$ is odd in $x_{3}$. 
\noindent 
Similarly, let $\bar{\bf F} = \{\bar{F}_{m,n}\}$ be an extension of ${\bf F}$ to $\mathbb{R}^{3}$ defined as 
for $1 \leq m,n \leq 2$, 
\begin{equation}\label{0625-7}
  \bar{F}_{m,n}(x', x_{3}) 
  := 
  \begin{cases}
    F_{m,n}(x', x_{3}), 
    & \ x_{3} \geq 0,\\ 
    F_{m,n}(x', -x_{3}), 
    & \ x_{3} < 0, 
  \end{cases}
  \quad 
  \bar{F}_{3,3}(x', x_{3}) 
  := 
  \begin{cases}
    F_{3,3}(x', x_{3}) 
    & \ x_{3} \geq 0,\\ 
    F_{3,3}(x', -x_{3}) 
    & \ x_{3} < 0, 
  \end{cases}
\end{equation}
and 
\begin{equation}\label{0625-8}
  \bar{F}_{m,3}(x', x_{3}) 
  := 
  \begin{cases}
    F_{m,3}(x', x_{3}), 
    & \ x_{3} \geq 0,\\ 
    - F_{m,3}(x', -x_{3}), 
    & \ x_{3} < 0, 
  \end{cases}
  \quad 
  \bar{F}_{3,n}(x', x_{3}) 
  := 
  \begin{cases}
    F_{3,n}(x', x_{3}) 
    & \ x_{3} \geq 0,\\ 
    - F_{3,n}(x', -x_{3}) 
    & \ x_{3} < 0. 
  \end{cases}
\end{equation}
Then $(\text{div }\bar{\bf F})_{1}$ and $(\text{div }\bar{\bf F})_{2}$ are even in $x_{3}$, 
and $(\text{div }\bar{\bf F})_{3}$ is odd in $x_{3}$. 

We decompose ${\bf u}$ in two different ways. 
First, we regard the Coriolis term as a perturbation. 
For $({\bf u}, {\bf F})$, let $({\bf w}^{(1)}, \pi^{(1)})$ satisfy 
\begin{equation*}
  \begin{cases}
  -\Delta{\bf w}^{(1)} + \nabla \pi^{(1)} = \text{div }(\bar{\bf F} - \bar{\bf u}\otimes\bar{\bf u})
    - \Omega\left(e_{3}\times\bar{\bf u}\right), 
  &\quad \  x\in\mathbb{R}^{3},\\
  \text{div } {\bf w}^{(1)} =0,
  &\quad \  x\in\mathbb{R}^{3}. 
  \end{cases}
\end{equation*}
We set 
\begin{equation*}
  {\bf v}^{(1)} 
  := {\bf u} - {\bf w}^{(1)}|_{\mathbb{R}_{+}^{3}} 
  \quad 
  \text{and} 
  \quad 
  q^{(1)} 
  := p - \pi^{(1)}|_{\mathbb{R}_{+}^{3}}, 
\end{equation*}
then $({\bf v}^{(1)}, q^{(1)})$ should solve 
\begin{equation*}
  \begin{cases}
    -\Delta {\bf v}^{(1)} + \nabla q^{(1)} = 0, 
          &\quad  \   x\in \mathbb{R}_{+}^{3}, \\
    \text{div } {\bf v}^{(1)} = 0,
          &\quad  \   x\in \mathbb{R}_{+}^{3}, \\
    {\bf v}^{(1)}(x', 0) =  {\bf a}(x') - {\bf w}^{(1)}(x', 0),
          &\quad  \ x'\in{\mathbb{R}}^{2}
  \end{cases}
\end{equation*}
Therefore, we can decompose ${\bf u} = {\bf w}^{(1)}|_{\mathbb{R}_{+}^{3}} + {\bf v}^{(1)}$. 

\begin{rem}
  The assumption $a_{3} = 0$ is used only to ensure that the extension $\bar{\bf u}$ employed in the present work satisfies the incompressibility condition on $\mathbb{R}^{3}$. 
  This property is used in estimating the perturbation term $\Omega\left(e_{3}\times\bar{\bf u}\right)$ (see Subsection~\ref{0612-2} and Subsection~\ref{1029-1}). 
  We do not address the case $a_{3} \neq 0$ in the present paper. 
\end{rem}

In another decomposition of ${\bf u}$, 
we regard the Coriolis term as the leading term. 
That is, let $({\bf w}^{(2)}, \pi^{(2)})$ satisfy 
\begin{equation}\label{0624-1}
  \begin{cases}
  -\Delta{\bf w}^{(2)} + \Omega(e_{3}\times{\bf w}^{(2)}) + \nabla \pi^{(2)} = \text{div }(\bar{\bf F} - \bar{\bf u}\otimes\bar{\bf u}), 
  &\quad \  x\in\mathbb{R}^{3},\\
  \text{div } {\bf w}^{(2)} =0,
  &\quad \  x\in\mathbb{R}^{3}. 
  \end{cases}
\end{equation}
Then ${\bf v}^{(2)} := {\bf u} - {\bf w}^{(2)}|_{\mathbb{R}_{+}^{3}}$ and $q^{(2)} := p - \pi^{(2)}|_{\mathbb{R}_{+}^{3}}$ should solve 
\begin{equation}\label{0624-2}
  \begin{cases}
    -\Delta {\bf v}^{(2)} + \Omega(e_{3}\times{\bf v}^{(2)}) + \nabla q^{(2)} = 0, 
          &\quad  \   x\in \mathbb{R}_{+}^{3}, \\
    \text{div } {\bf v}^{(2)} = 0,
          &\quad  \   x\in \mathbb{R}_{+}^{3}, \\
    {\bf v}^{(2)}(x', 0) =  {\bf a}(x') - {\bf w}^{(2)}(x', 0),
          &\quad  \ x'\in{\mathbb{R}}^2. 
  \end{cases}
\end{equation}
Thus, we obtain ${\bf u} = {\bf w}^{(2)}|_{\mathbb{R}_{+}^{3}} + {\bf v}^{(2)}$. 

We present the reason for using the two decompositions $({\bf w}^{(1)}, {\bf v}^{(1)})$ and $({\bf w}^{(2)}, {\bf v}^{(2)})$. 
In order to study the effect of the Coriolis term in \eqref{SNSC}, the decomposition $({\bf w}^{(2)}, {\bf v}^{(2)})$ is suitable. 
However, as shown in Subsection~\ref{0623-3}, 
for the decomposition $({\bf w}^{(2)}, {\bf v}^{(2)})$, different estimates are derived depending on the relationship between the horizontal frequency $|\xi'|$ and the magnitude of $|\Omega|^{\frac{1}{2}}$. 
In fact, the leading-order terms of the eigenvalues of the linear operators appearing in the explicit formulas for $({\bf w}^{(2)}, {\bf v}^{(2)})$ differ depending on the relationship between $|\xi'|$ and $|\Omega|^{\frac{1}{2}}$. 
This correspondence can be viewed as being related to the balance between the viscous term and the Coriolis term in \eqref{0624-1} and \eqref{0624-2}. 
Since our aim is to study the effect of sufficiently rapid rotation, we focus on the decomposition $({\bf w}^{(2)}, {\bf v}^{(2)})$ in the frequency region where $|\Omega|^{\frac{1}{2}}$ is too large compared with $|\xi'|$. 
On the other hand, when $|\xi'|$ is sufficiently larger than $|\Omega|^{\frac{1}{2}}$, 
it is more convenient to use decomposition $({\bf w}^{(1)}, {\bf v}^{(1)})$, in which the Coriolis term is treated as a perturbation, 
since the required estimates can be derived more directly. 
In this case, in the explicit formula for ${\bf w}^{(1)}$, the parameter $\Omega$ appearing in the perturbation term can be replaced by a second-order derivative (see Subsection~\ref{0624-3}). 
In Section~\ref{0624-4}, we provide explicit formulas for $({\bf w}^{(1)}, {\bf v}^{(1)})$ and $({\bf w}^{(2)}, {\bf v}^{(2)})$, 
together with their estimates, within the present framework. 

We introduce the following frequency cutoff operators. 
\begin{dfn}\label{0625-14}
  Let $\Omega \in \mathbb{R}$, $C\geq 1$ and $j\in \mathbb{Z}$. 
  For $f \in \mathcal{S}'(\mathbb{R}^{2})/ \mathcal{P}(\mathbb{R}^{2})$, we define 
  \begin{equation*}
    P_{2^{j} > C|\Omega|^{\frac{1}{2}}}f 
    := \sum_{2^{j} > C|\Omega|^{\frac{1}{2}}}\phi_{j}*f, 
    \quad 
    P_{C^{-1}|\Omega|^{\frac{1}{2}} \leq 2^{j} \leq C|\Omega|^{\frac{1}{2}}}f 
    := \sum_{C^{-1}|\Omega|^{\frac{1}{2}} \leq 2^{j} \leq C|\Omega|^{\frac{1}{2}}}\phi_{j}*f 
  \end{equation*}
  and 
  \begin{equation*}
    P_{2^{j} < C^{-1}|\Omega|^{\frac{1}{2}}}f 
    := \sum_{2^{j} < C^{-1}|\Omega|^{\frac{1}{2}}}\phi_{j}*f. 
  \end{equation*}
  Note that we have 
  \begin{equation*}
    \left(P_{2^{j} > C|\Omega|^{\frac{1}{2}}} + P_{C^{-1}|\Omega|^{\frac{1}{2}} \leq 2^{j} \leq C|\Omega|^{\frac{1}{2}}} + P_{2^{j} < C^{-1}|\Omega|^{\frac{1}{2}}}\right)f 
    = f. 
  \end{equation*}
\end{dfn}

We now state our main results. 
The first theorem establishes a well-posedness result that is independent of $\Omega$ under the assumption that $a_{3} = 0$. 
\begin{thm}\label{0609-3}
  Let $\Omega \in \mathbb{R}$, 
  and let $1\leq p,r_{0}, r < \infty$
  and $1\leq q < \infty$ with 
  \begin{equation}\label{0625-12}
    p < 2r' 
    \quad \text{and} \quad 
    \max\{2, r_{0}\} \leq r, 
  \end{equation}
  where $r':= r/(r - 1 )$ is the H\"{\rm o}lder conjugate of $r$. 
  Then there exists a sufficiently large constant $C_{0} \geq 1$ independent of $\Omega$ such that 
  there exist $\varepsilon_{0}, \delta_{0}> 0$ independent of $\Omega$ 
  such that for any boundary data ${\bf a} \in \dot{B}_{p,q}^{\frac{2}{p} - 1}(\mathbb{R}^{2})$ with $a_{3} = 0$
  and any matrix-valued function ${\bf F} \in \widetilde{L}_{x_{3}}^{r_{0}}(0,\infty; \dot{B}_{p,q}^{\frac{2}{p} + \frac{1}{r_{0}} - 2}(\mathbb{R}^{2}))$ with
  \begin{equation}\label{0609-5}
    {\|{\bf a}\|}_{\dot{B}_{p,q}^{\frac{2}{p} - 1}},\ 
    {\|{\bf F}\|}_{\widetilde{L}_{x_{3}}^{r_{0}}(0,\infty; \dot{B}_{p,q}^{\frac{2}{p} + \frac{1}{r_{0}} - 2})} 
    \leq \delta_{0}, 
  \end{equation}
  there exists a solution ${\bf u} \in C_{x_{3}}([0,\infty) ; \dot{B}_{p,q}^{\frac{2}{p} - 1}(\mathbb{R}^{2}))\cap \widetilde{L}_{x_{3}}^{r}(0,\infty;\dot{B}_{p,q}^{\frac{2}{p} + \frac{1}{r} - 1}(\mathbb{R}^{2}))$ of \eqref{SNSC} 
  satisfying 
  \begin{equation}\label{0609-1}
    {\bf u} = P_{2^{j}> C_{0}|\Omega|^{\frac{1}{2}}}({\bf w}^{(1)}|_{\mathbb{R}_{+}^{3}} + {\bf v}^{(1)}) 
    + \left(P_{C_{0}^{-1}|\Omega|^{\frac{1}{2}} \leq 2^{j} \leq C_{0}|\Omega|^{\frac{1}{2}}} + P_{2^{j} < C_{0}^{-1}|\Omega|^{\frac{1}{2}}}\right)({\bf w}^{(2)}|_{\mathbb{R}_{+}^{3}} + {\bf v}^{(2)})
  \end{equation}
  in $C_{x_{3}}([0,\infty) ; \dot{B}_{p,q}^{\frac{2}{p} - 1}(\mathbb{R}^{2}))\cap \widetilde{L}_{x_{3}}^{r}(0,\infty;\dot{B}_{p,q}^{\frac{2}{p} + \frac{1}{r} - 1}(\mathbb{R}^{2}))$, 
  and ${\|{\bf u}\|}_{\widetilde{L}_{x_{3}}^{r}(0,\infty;\dot{B}_{p,q}^{\frac{2}{p} + \frac{1}{r} - 1})} \leq \varepsilon_{0}$. 
  Moreover, the solution is unique in the following function space, 
  \begin{equation}\label{0609-2}
    \left\{{\bf u} \in \widetilde{L}_{x_{3}}^{r}(0,\infty;\dot{B}_{p,q}^{\frac{2}{p} + \frac{1}{r} - 1}(\mathbb{R}^{2})) |\ {\|{\bf u}\|}_{\widetilde{L}_{x_{3}}^{r}(0,\infty;\dot{B}_{p,q}^{\frac{2}{p} + \frac{1}{r} - 1})} \leq \varepsilon_{0}\right\}. 
  \end{equation}
\end{thm}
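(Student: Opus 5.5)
We will obtain the solution as a fixed point of the map defined by the right-hand side of \eqref{0609-1}. Write $X:=\widetilde{L}_{x_{3}}^{r}(0,\infty;\dot{B}_{p,q}^{\frac{2}{p}+\frac{1}{r}-1}(\mathbb{R}^{2}))$. For ${\bf u}\in X$ we form the extensions $\bar{\bf u},\bar{\bf F}$ by \eqref{0625-6}--\eqref{0625-8}. We then define ${\bf w}^{(1)},{\bf v}^{(1)}$ (Coriolis as a source) and ${\bf w}^{(2)},{\bf v}^{(2)}$ (Coriolis in the linear operator) through the explicit formulas of Section~\ref{0624-4}. Finally we set $\Phi[{\bf u}]$ equal to the right-hand side of \eqref{0609-1}. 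Note that ${\bf v}^{(1)}$ and ${\bf v}^{(2)}$ depend on ${\bf u}$ through the traces ${\bf w}^{(1)}(x',0)$ and ${\bf w}^{(2)}(x',0)$. Since $\Phi$ acts dyadic block by dyadic block, a fixed point solves \eqref{SNSC} frequency-localized on each of the three regions, and hence solves \eqref{SNSC} itself.

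We split $\Phi[{\bf u}]=\Phi_{\rm lin}+\Phi_{\rm nl}[{\bf u}]$ and the Coriolis contribution, and estimate each in $X$ uniformly in $\Omega$ using the estimates of Section~\ref{0624-4}.

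\emph{High frequencies $2^{j}>C_{0}|\Omega|^{1/2}$.} Here we use the Stokes resolvent in the normal-direction formulation of \cite{Fu_2025}. This gives maximal-regularity bounds in the Chemin--Lerner spaces for $\text{div}(\bar{\bf F}-\bar{\bf u}\otimes\bar{\bf u})$, together with a trace bound for ${\bf v}^{(1)}$ in terms of ${\|{\bf a}\|}_{\dot B^{2/p-1}_{p,q}}$ and the trace of ${\bf w}^{(1)}$ in $\dot B^{2/p-1}_{p,q}$. The term $\Omega(e_{3}\times\bar{\bf u})$ is where $\text{div}\,\bar{\bf u}=0$, i.e.\ $a_{3}=0$, enters. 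On these blocks $|\Omega|\le C_{0}^{-2}2^{2j}$, so the Coriolis term acts like a second-order operator with small constant: the Stokes solution operator applied to it is bounded on $X$ with norm $\lesssim C_{0}^{-2}$ (cf.\ Subsection~\ref{0624-3}). Choosing $C_{0}$ large makes this map a contraction with constant $\le 1/4$, independent of $\Omega$.

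\emph{Low and intermediate frequencies.} Here we use the Dalibard--Prange type formula for \eqref{0624-2} and the whole-space rotating Stokes formula for \eqref{0624-1} (Subsections~\ref{0623-2} and~\ref{0623-3}). We need block-wise bounds of the form $2^{(\frac{2}{p}+\frac1r-1)j}\|\phi_{j}*{\bf w}^{(2)}\|_{L^{r}_{x_3}L^{p}}\lesssim$ (the corresponding norm of the forcing block), uniformly in $\Omega$. This relies on the real parts of the normal-direction eigenvalues being bounded below by $c\,2^{j}$: in the low region the leading behaviour is $\sim|\Omega|^{1/2}$ (the Ekman-type roots) together with $\sim|\xi'|$, and the intermediate region is a finite number of dyadic blocks, about $\log C_{0}$ of them, where the eigenvalues are comparable to $2^{j}$. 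The ensuing constants depend on $C_{0}$ but not on $\Omega$, by scaling $\xi'=|\Omega|^{1/2}\eta$.

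\emph{Nonlinear term.} We need the product estimate
\[
\|\bar{\bf u}\otimes\bar{\bf u}\|_{\widetilde L^{r/2}_{x_3}\dot B^{\frac2p+\frac2r-2}_{p,q}}\lesssim\|{\bf u}\|_X^{2}.
\]
It follows from paraproduct decomposition using $p<2r'$ and $r\ge2$, which guarantee a positive regularity sum for the remainder term. The forcing is handled similarly with exponent $r_{0}\le r$ in place of $r/2$.

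Combining these, $\|\Phi[{\bf u}]\|_X\le K\delta_{0}+K\|{\bf u}\|_X^{2}+\tfrac14\|{\bf u}\|_X$, and a matching difference estimate holds. Taking $\varepsilon_{0}$ small and then $\delta_{0}$ small gives a contraction on the ball \eqref{0609-2}, which yields existence and uniqueness. Continuity in $x_{3}$ with values in $\dot B^{2/p-1}_{p,q}$ follows from the $L^{\infty}_{x_3}$ version of the same linear estimates and $q<\infty$.

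The main obstacle is the uniform-in-$\Omega$ estimates for the rotating formulas in the low and intermediate regions. The eigenvalues there are roots of a quartic-type symbol whose asymptotics change with $|\xi'|/|\Omega|^{1/2}$. We must show the kernels decay like $e^{-c2^{j}x_{3}}$ with non-degenerate coefficients and no loss near coalescing roots, handling the intermediate band by compactness after rescaling.
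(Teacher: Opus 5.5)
Your architecture matches the paper's: the same map $\Phi_{C_0}$ and three-region split, and the high-frequency Coriolis term costing $\widetilde C C_0^{-2}$ (Propositions~\ref{0611-11}, \ref{0611-13}, Lemma~\ref{0619-3}). You also use the bilinear estimate of Lemma~\ref{0619-9} and fix $C_0$ before $\varepsilon_0,\delta_0$, as the paper does.

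The gap is in the low region $2^j<C_0^{-1}|\Omega|^{1/2}$. There you say the estimates rest on all normal-direction roots having real part $\ge c\,2^j$, with the Ekman roots $\sim|\Omega|^{1/2}$ ``together with $\sim|\xi'|$''. That is false. The real root of $(|\xi'|^2-\lambda^2)^3=|\Omega|^2\lambda^2$ is $\lambda_{|\Omega|,1}=\frac{|\xi'|^3}{|\Omega|}\bigl(1+\mathcal{O}(|\xi'|^4/|\Omega|^2)\bigr)$ (Lemma~\ref{0613-7}). So the associated kernels decay only like $e^{-cx_3 2^{3j}/|\Omega|}$ (Lemma~\ref{0613-8}).

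With the $\mathcal{O}(1)$ amplitude one has in the intermediate band, Young's inequality in $x_3$ would give $(|\Omega|/2^{3j})^{1+\frac1r-\frac1{r_0}}$ where $2^{-(1+\frac1r-\frac1{r_0})j}$ is needed. That is a loss of $(|\Omega|/2^{2j})^{1+\frac1r-\frac1{r_0}}$, which is unbounded on the low region. Rescaling $\xi'=|\Omega|^{1/2}\eta$ does not give compactness here either: the low region becomes $|\eta|<C_0^{-1}$, which contains the degenerate point $\eta=0$.

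What is missing is that this slow mode always enters with a small amplitude, which must be read off from the expansions in $|\xi'|/|\Omega|^{1/2}$. In ${\bf w}^{(2)}$ it carries the factor $2^{2j}/|\Omega|$ (from $B_1\approx|\Omega|^{-2}$, Corollary~\ref{0916-1}, via Lemma~\ref{0613-10}). In ${\bf v}^{(2)}$ the coefficient $C_1$ has size $2^j|\Omega|^{-1/2}$ relative to the boundary block (Lemma~\ref{0618-2}). After integrating in $x_3$, the net factors relative to the required dyadic weights are $(2^{2j}/|\Omega|)^{\frac1{r_0}-\frac1r}$ for the forcing and $(2^{2j}/|\Omega|)^{\frac12-\frac1r}$ for the boundary data. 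These are bounded precisely because $r_0\le r$ and $r\ge 2$; this is where Lemma~\ref{0617-1}, \eqref{0617-3}, is used.

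So those hypotheses also enter the low-frequency linear estimates, not only Young's inequality and the product estimate. Without this amplitude analysis, the uniform-in-$\Omega$ bound does not follow from the mechanism you describe.

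A smaller point, which the paper also passes over: $P_{2^j>C_0|\Omega|^{1/2}}$ and the other cutoffs are sums of overlapping $\phi_j$. They are therefore not projections, and $\Phi$ does not decouple block by block. To see that a fixed point solves \eqref{SNSC}, set $D:=({\bf w}^{(1)}+{\bf v}^{(1)})-({\bf w}^{(2)}+{\bf v}^{(2)})$. Then $D$ solves the half-space Stokes problem with force $-\Omega e_3\times P_{2^j>C_0|\Omega|^{1/2}}D$ and zero boundary data. The same high-frequency bound gives $\|D\|\lesssim C_0^{-2}\|D\|$, so $D=0$ for $C_0$ large.
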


Here, $C_{x_{3}}([0,\infty) ; \dot{B}_{p,q}^{\frac{2}{p} - 1}(\mathbb{R}^{2}))$ denotes the space of $\dot{B}_{p,q}^{\frac{2}{p} - 1}(\mathbb{R}^{2})$-valued functions that are continuous with respect to $x_{3}$. 

\begin{rem}
  The size requirement on the constant $C_{0} \geq 1$ appearing in the assumptions of Theorem~\ref{0609-3} is needed to justify the asymptotic expansions of the eigenvalues of the linear operator appearing in the explicit formulas for $(\phi_{j}*{\bf w}^{(2)}, \phi_{j}*{\bf v}^{(2)})$, 
  for $j\in \mathbb{Z}$ satisfying $2^{j} < C_{0}^{-1}|\Omega|^{\frac{1}{2}}$ (see Lemma~\ref{0613-7} \eqref{0613-4}). 
  The same applies throughout the remainder of the paper. 
\end{rem}

By restricting the exponents in the Chemin--Lerner spaces, we obtain the following result by taking $|\Omega|$ sufficiently large depending on any horizontal components of the boundary data and ${\bf F}$. 
\begin{thm}\label{0609-4}
  Let $1\leq p,q,r_{0}, r < \infty$ with 
  \begin{equation*}
    p < 2r' 
    \quad \text{and} \quad 
    \max\{2, r_{0}\} < r. 
  \end{equation*}
  Then there exists a sufficiently large constant $C_{0} \geq 1$ such that 
  there exists a constant $\varepsilon_{0} > 0$ such that 
  for any boundary data ${\bf a} \in \dot{B}_{p,q}^{\frac{2}{p} - 1}(\mathbb{R}^{2})$ with $a_{3} \equiv 0$ and matrix-valued function ${\bf F} \in \widetilde{L}_{x_{3}}^{r_{0}}(0,\infty; \dot{B}_{p,q}^{\frac{2}{p} + \frac{1}{r_{0}} - 2}(\mathbb{R}^{2}))$, 
  there exists a sufficiently large constant $\Omega_{0} > 0$ such that 
  for any $\Omega \in \mathbb{R}\setminus\{0\}$ with $|\Omega| \geq \Omega_{0}$, 
  there exists a solution ${\bf u} \in C_{x_{3}}([0,\infty) ; \dot{B}_{p,q}^{\frac{2}{p} - 1}(\mathbb{R}^{2}))\cap \widetilde{L}_{x_{3}}^{r}(0,\infty;\dot{B}_{p,q}^{\frac{2}{p} + \frac{1}{r} - 1}(\mathbb{R}^{2}))$ of \eqref{SNSC} 
  satisfying \eqref{0609-1}
  and ${\|{\bf u}\|}_{\widetilde{L}_{x_{3}}^{r}(0,\infty;\dot{B}_{p,q}^{\frac{2}{p} + \frac{1}{r} - 1})} \leq \varepsilon_{0}$. 
  Moreover, the solution is unique in the function space \eqref{0609-2}. 
\end{thm}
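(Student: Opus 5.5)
We will prove Theorem~\ref{0609-4} as in Theorem~\ref{0609-3}: a fixed point argument for the map ${\bf u}\mapsto \Phi[{\bf u}]$ defined by the right-hand side of \eqref{0609-1}, in the ball \eqref{0609-2}. The difference is that smallness of the data is replaced by largeness of $|\Omega|$. The key observation is that the data enter $\Phi$ only through linear terms: the boundary contribution of ${\bf a}$ in ${\bf v}^{(1)},{\bf v}^{(2)}$, and the forcing contribution of $\bar{\bf F}$ in ${\bf w}^{(1)},{\bf w}^{(2)}$. Call these $\Phi_{\rm lin}[{\bf a},{\bf F}]$. The bilinear part $\Phi_{\rm bil}[{\bf u},{\bf u}]$ (from $\bar{\bf u}\otimes\bar{\bf u}$) and the Coriolis perturbation in the high-frequency region $2^j>C_0|\Omega|^{1/2}$ are handled exactly as in Theorem~\ref{0609-3}, with constants independent of $\Omega$. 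So it suffices to show
\begin{equation*}
\lim_{|\Omega|\to\infty}{\|\Phi_{\rm lin}[{\bf a},{\bf F}]\|}_{\widetilde{L}_{x_{3}}^{r}(0,\infty;\dot{B}_{p,q}^{\frac{2}{p}+\frac{1}{r}-1})}=0
\end{equation*}
for each fixed ${\bf a}$ and ${\bf F}$. With this, choose $\Omega_0$ so that this norm is at most $\varepsilon_0/2$. The contraction estimates of Theorem~\ref{0609-3} then give existence and uniqueness in \eqref{0609-2}. Continuity in $x_3$ with values in $\dot{B}_{p,q}^{\frac{2}{p}-1}$ follows as before.

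To prove the limit, split by frequency. On low frequencies $2^j<C_0^{-1}|\Omega|^{1/2}$, the explicit formula for $({\bf w}^{(2)},{\bf v}^{(2)})$ from the Dalibard--Prange type representation involves eigenvalues of size about $|\Omega|^{1/2}$ (Ekman layer) or about $|\Omega|^{-1}2^{3j}$ and similar (quasi-geostrophic modes). I expect per-block estimates of the following form. For the boundary part,
\begin{equation*}
2^{(\frac{2}{p}+\frac{1}{r}-1)j}\|\phi_j*{\bf v}^{(2)}\|_{L^r_{x_3}L^p}\lesssim \bigl(2^j|\Omega|^{-1/2}\bigr)^{\theta}\,2^{(\frac{2}{p}-1)j}\|\phi_j*{\bf a}\|_{L^p}
\end{equation*}
for some $\theta>0$, since $L^r$ integration of $e^{-c|\Omega|^{1/2}x_3}$ gives $|\Omega|^{-1/(2r)}$ in place of $2^{-j/r}$. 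For the forcing part, a maximal-regularity-type bound should gain a factor $(2^j|\Omega|^{-1/2})^{\theta'}$ with $\theta'>0$, which requires $r_0<r$. This is where the strict inequality $\max\{2,r_0\}<r$ is used: it turns the Hölder/Young exponent in $x_3$ into a strictly positive power.

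On the middle and high frequencies the linear terms are bounded uniformly in $\Omega$ by the data restricted to those blocks, but with no gain. Here we use $q<\infty$. The quantity $\sum_{2^j\ge C_0^{-1}|\Omega|^{1/2}}$ of the block norms of ${\bf a}$ and ${\bf F}$ to the power $q$ is a tail of a convergent series, so it tends to $0$ as $|\Omega|\to\infty$. On the low frequencies, the gain factors $(2^j|\Omega|^{-1/2})^{\theta}$ tend to zero blockwise and are bounded by $1$. Dominated convergence in $\ell^q$ then gives that the low-frequency contribution tends to $0$. Combining the two regimes proves the limit.

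The main obstacle is the low-frequency forcing estimate for ${\bf w}^{(2)}$ together with its boundary correction ${\bf v}^{(2)}$ (the trace ${\bf w}^{(2)}(x',0)$), with a genuine positive power of $2^j|\Omega|^{-1/2}$. The quasi-geostrophic eigenvalue branch decays slowly in $x_3$, so the gain there must come from the structure of the formula: the corresponding amplitudes carry factors like $2^{2j}/|\Omega|$. To handle it, I will first write out the eigenvalue asymptotics of Lemma~\ref{0613-7} and check, term by term, that each kernel's $L^{r_0'}\to L^r$-type convolution bound in $x_3$ produces such a factor. If some term gives no gain, the fallback is a density argument: approximate ${\bf F}$ and ${\bf a}$ by data with compact frequency support away from $0$, use the uniform bounds from Theorem~\ref{0609-3} for the remainder, and show that the linear response of the smooth approximants vanishes as $|\Omega|\to\infty$.
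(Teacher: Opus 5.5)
Your proposal is correct and is essentially the paper's argument: the paper likewise reuses the data-independent contraction estimate \eqref{0610-2}, makes the middle/high-frequency data terms small by the $q<\infty$ tail property (Lemma~\ref{0619-16}), and makes the low-frequency data terms small through the positive powers of $2^{j}|\Omega|^{-1/2}$ contained in \eqref{0610-3} and \eqref{0610-4}, with Lemma~\ref{0619-15} doing the job of your blockwise dominated convergence in $\ell^{q}$. One correction to where the exponent hypotheses enter: $r_{0}<r$ yields the gain $(2^{j}|\Omega|^{-1/2})^{\frac{2}{r_{0}}-\frac{2}{r}}$ for the forcing, whereas $r>2$ is not needed for the Ekman-layer modes you cite (they gain $(2^{j}|\Omega|^{-1/2})^{\frac{1}{r}}$ unconditionally) but for the slowly decaying quasi-geostrophic boundary mode, whose Ekman-pumping amplitude $2^{j}|\Omega|^{-1/2}$ produces the gain $(2^{j}|\Omega|^{-1/2})^{1-\frac{2}{r}}$.
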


We present several remarks concerning the our main theorems. 
\begin{rem}\label{0624-5}
  \begin{enumerate}
    \item Theorem~\ref{0609-3} generalizes the result of \cite{Fu_2025} in the case $a_{3} = 0$ with respect to $\Omega$. 
    \item Within the framework of our analysis, the Coriolis term can be viewed as altering the horizontal diffusion effect induced by the Laplacian. 
    For more details on this point, see Subsection~\ref{0623-3}. 
    We also remark that, for the nonstationary problem in the whole space, 
    the diffusion due to the Laplacian and the dispersion induced by the Coriolis term can be separated at the level of the evolution operator \eqref{0621-5}. 
    \item The solution ${\bf u}$ is continuous with respect to $x_{3}$ up to the boundary 
    and attains the prescribed Dirichlet boundary data ${\bf a}$ in the strong topology of $\dot{B}_{p,q}^{\frac{2}{p} - 1}(\mathbb{R}^{2})$. 
    \item For the interpolation index $q = \infty$, we can construct a solution ${\bf u}$ of \eqref{SNSC} belonging to $\widetilde{L}_{x_{3}}^{\infty}(0,\infty ; \dot{B}_{p,\infty}^{\frac{2}{p} - 1}(\mathbb{R}^{2}))\cap \widetilde{L}_{x_{3}}^{r}(0,\infty;\dot{B}_{p,\infty}^{\frac{2}{p} + \frac{1}{r} - 1}(\mathbb{R}^{2}))$ 
    under the same assumptions and by the same argument as in Theorem~\ref{0609-3}. 
  \end{enumerate}
\end{rem}

This paper is organized as follows. 
In Section~\ref{0624-4}, we derive explicit formulas for ${\bf w}^{(1)}$, ${\bf v}^{(1)}$, ${\bf w}^{(2)}$ and ${\bf v}^{(2)}$, 
and establish their estimates. 
In addition, we establish auxiliary lemmas on estimates for frequency cutoff functions and bilinear estimates in Chemin--Lerner spaces required for the proof of the main theorem. 
Finally, in Section~\ref{0625-5}, based on the estimates established in the previous section, 
we prove the main theorems via Banach's fixed point theorem. 

\section{Preliminaries}\label{0624-4}
As discussed in the previous section, we study \eqref{SNSC} using two different decompositions of the solution, $({\bf w}^{(1)}, {\bf v}^{(1)})$ and $({\bf w}^{(2)}, {\bf v}^{(2)})$. 
This is because the estimates for the decomposition $({\bf w}^{(2)}, {\bf v}^{(2)})$ depend on the relationship between the horizontal frequency $|\xi'|$ and the magnitude of $|\Omega|^{\frac{1}{2}}$ (see Subsection~\ref{0623-3}). 
In particular, when $|\xi'|$ is sufficiently larger than $|\Omega|^{\frac{1}{2}}$, 
it is more convenient to treat the Coriolis term as a perturbation. 
This motivates us to use the decomposition $({\bf w}^{(1)}, {\bf v}^{(1)})$ in this regime. 
\subsection{Explicit formulas for ${\bf w}^{(1)}$ and ${\bf v}^{(1)}$}\label{0612-2}
In this subsection, we consider 
\begin{equation}\label{0610-5}
  \begin{cases}
  -\Delta{\bf w}^{(1)} + \nabla \pi^{(1)} = \text{div }(\bar{\bf F} - \bar{\bf u}\otimes\bar{\bf u})
    - \Omega\left(e_{3}\times\bar{\bf u}\right), 
  &\quad \  x\in\mathbb{R}^{3},\\
  \text{div } {\bf w}^{(1)} =0,
  &\quad \  x\in\mathbb{R}^{3}, 
  \end{cases}
\end{equation}
and 
\begin{equation}\label{0611-3}
  \begin{cases}
    -\Delta {\bf v}^{(1)} + \nabla q^{(1)} = 0, 
          &\quad  \   x\in \mathbb{R}_{+}^{3}, \\
    \text{div } {\bf v}^{(1)} = 0,
          &\quad  \   x\in \mathbb{R}_{+}^{3}, \\
    {\bf v}^{(1)}(x', 0) =  {\bf a}(x') - {\bf w}^{(1)}(x', 0),
          &\quad  \ x'\in{\mathbb{R}}^2, 
  \end{cases}
\end{equation}
where ${\bf u}$ and ${\bf F}$ are given functions on $\mathbb{R}_{+}^{3}$, 
and $\bar{\bf u}$ and $\bar{\bf F}$ are the extensions of ${\bf u}$ and ${\bf F}$ to $\mathbb{R}^{3}$ 
defined by \eqref{0625-6}, \eqref{0625-7} and \eqref{0625-8}. 
Also, ${\bf a}$ is given boundary data with $a_{3} = 0$. 

First, we derive an explicit formula for ${\bf w}^{(1)}$. 
In the case $\Omega = 0$, an explicit formula for ${\bf w}^{(1)}$ adapted to the framework employed in the present estimates 
was given by Fujii~\cite{Fu_2025}. 
The derivation for $\Omega \neq 0$ follows the same procedure. 
For the sake of a unified notation, 
we briefly present the derivation here. 
To simplify the notation, let 
\begin{equation*}
  {\bf G} := {\bf F} - {\bf u}\otimes{\bf u}. 
\end{equation*}
Then $\bar{\bf G} = \bar{\bf F} - \bar{\bf u}\otimes\bar{\bf u}$. 
Let 
\begin{equation}\label{0610-8}
  \mathbb{P}_{\mathbb{R}^{3}} := Id + \nabla(-\Delta)^{-1}\text{div }
\end{equation}
denote the Helmholtz projection on $\mathbb{R}^{3}$. 
Applying $\mathbb{P}_{\mathbb{R}^{3}}$ to both sides of \eqref{0610-5}, we obtain 
\begin{equation*}
  -\Delta{\bf w}^{(1)} = \mathbb{P}_{\mathbb{R}^{3}}(\text{div }\bar{\bf G}) - \Omega\mathbb{P}_{\mathbb{R}^{3}}\left(e_{3}\times\bar{\bf u}\right). 
\end{equation*}
Moreover, since $\text{div }\bar{\bf u} = 0$,  it follows that 
\begin{equation*}
  \mathbb{P}_{\mathbb{R}^{3}}\left(e_{3}\times\bar{\bf u}\right) 
  = - \partial_{3}(-\Delta)^{-1}(\nabla\times\bar{\bf u}). 
\end{equation*}
Thus, we have 
\begin{equation}\label{0610-6}
  -\Delta{\bf w}^{(1)} = \mathbb{P}_{\mathbb{R}^{3}}(\text{div }\bar{\bf G}) + \Omega\partial_{3}(-\Delta)^{-1}(\nabla\times\bar{\bf u}). 
\end{equation}
Taking the Fourier transform on $\mathbb{R}^{3}$ of both sides of \eqref{0610-6}, we get 
\begin{equation*}
  |\xi|^{2}\mathcal{F}_{\mathbb{R}^{3}}[{\bf w}^{(1)}] 
  = \mathcal{F}_{\mathbb{R}^{3}}\left[\mathbb{P}_{\mathbb{R}^{3}}(\text{div }\bar{\bf G})\right] 
  + \Omega \frac{i\xi_{3}}{|\xi|^{2}}(i\xi\times (\mathcal{F}_{\mathbb{R}^{3}}\left[\bar{\bf u}\right])). 
\end{equation*}
Hence it follows that 
\begin{equation}\label{0610-7}
  \mathcal{F}_{\mathbb{R}^{3}}[{\bf w}^{(1)}] 
  = \frac{1}{|\xi|^{2}}\mathcal{F}_{\mathbb{R}^{3}}\left[\mathbb{P}_{\mathbb{R}^{3}}(\text{div }\bar{\bf G})\right] 
  + \Omega \frac{i\xi_{3}}{|\xi|^{4}}(i\xi\times(\mathcal{F}_{\mathbb{R}^{3}}\left[\bar{\bf u}\right])). 
\end{equation}
Let $\xi = (\xi', \xi_{3})\in \mathbb{R}^{3}$, where $\xi'\in \mathbb{R}^{2}$, and let 
\begin{align*}
  \mathbb{P}_{\mathbb{R}^{3}}(\text{div }\bar{\bf G}) 
  &= ((\mathbb{P}_{\mathbb{R}^{3}}(\text{div }\bar{\bf G}))_{1}, (\mathbb{P}_{\mathbb{R}^{3}}(\text{div }\bar{\bf G}))_{2}, (\mathbb{P}_{\mathbb{R}^{3}}(\text{div }\bar{\bf G}))_{3})\\ 
  &=: ((\mathbb{P}_{\mathbb{R}^{3}}(\text{div }\bar{\bf G}))_{h}, (\mathbb{P}_{\mathbb{R}^{3}}(\text{div }\bar{\bf G}))_{3}). 
\end{align*}
We now take the inverse Fourier transform of \eqref{0610-7} separately in the tangential and normal directions. 
By the definition of $\mathbb{P}_{\mathbb{R}^{3}}$ \eqref{0610-8}, we can write 
\begin{align}\label{0612-3}
  \frac{1}{|\xi|^{2}}\left(\mathcal{F}_{\mathbb{R}^{3}}\left[\mathbb{P}_{\mathbb{R}^{3}}(\text{div }\bar{\bf G})\right]\right)_{h}
  =& \frac{1}{|\xi|^{2}}\sum_{m = 1}^{3}i\xi_{m}\mathcal{F}_{\mathbb{R}^{3}}\left[(\bar{G}_{m, 1}, \bar{G}_{m, 2})\right]\notag\\ 
  &+ \frac{i\xi'}{|\xi|^{4}}\sum_{n = 1}^{3}\sum_{m = 1}^{3}i\xi_{n}i\xi_{m}\mathcal{F}_{\mathbb{R}^{3}}\left[\bar{G}_{m, n}\right]\notag\\ 
  =& \frac{1}{|\xi'|^{2} + \xi_{3}^{2}}\sum_{m = 1}^{2}i\xi_{m}\mathcal{F}_{\mathbb{R}^{3}}\left[(\bar{G}_{m, 1}, \bar{G}_{m, 2})\right] \notag\\ 
  &+ \frac{i\xi_{3}}{|\xi'|^{2} + \xi_{3}^{2}}\mathcal{F}_{\mathbb{R}^{3}}\left[(\bar{G}_{3, 1}, \bar{G}_{3, 2})\right] \notag\\ 
  &+ \frac{i\xi'}{(|\xi'|^{2} + \xi_{3}^{2})^{2}}\sum_{m, n = 1}^{2}i\xi_{m}i\xi_{n}\mathcal{F}_{\mathbb{R}^{3}}\left[\bar{G}_{m,n}\right]\notag\\ 
  &+ \frac{i\xi'i\xi_{3}}{(|\xi'|^{2} + \xi_{3}^{2})^{2}}\Big(\sum_{m = 1}^{2}i\xi_{m}\mathcal{F}_{\mathbb{R}^{3}}\left[\bar{G}_{m,3}\right] + \sum_{n = 1}^{2}i\xi_{n}\mathcal{F}_{\mathbb{R}^{3}}\left[\bar{G}_{3,n}\right]\Big)\notag\\ 
  &+ \frac{i\xi'(i\xi_{3})^{2}}{(|\xi'|^{2} + \xi_{3}^{2})^{2}}\mathcal{F}_{\mathbb{R}^{3}}\left[\bar{G}_{3,3}\right], \notag\\ 
  \frac{1}{|\xi|^{2}}\left(\mathcal{F}_{\mathbb{R}^{3}}\left[\mathbb{P}_{\mathbb{R}^{3}}(\text{div }\bar{\bf G})\right]\right)_{3} 
  =& \frac{1}{|\xi|^{2}}\sum_{m = 1}^{3}i\xi_{m}\mathcal{F}_{\mathbb{R}^{3}}\left[\bar{G}_{m, 3}\right] 
  + \frac{i\xi_{3}}{|\xi|^{4}}\sum_{n = 1}^{3}\sum_{m = 1}^{3}i\xi_{n}i\xi_{m}\mathcal{F}_{\mathbb{R}^{3}}\left[\bar{G}_{m, n}\right]\notag\\ 
  =& \frac{|\xi'|^{2}}{(|\xi'|^{2} + \xi_{3}^{2})^{2}}\sum_{m = 1}^{2}i\xi_{m}\mathcal{F}_{\mathbb{R}^{3}}\left[\bar{G}_{m, 3}\right] 
  + \frac{i\xi_{3}|\xi'|^{2}}{(|\xi'|^{2} + \xi_{3}^{2})^{2}}\mathcal{F}_{\mathbb{R}^{3}}\left[\bar{G}_{3,3}\right]\notag\\ 
  &+ \frac{i\xi_{3}}{(|\xi'|^{2} + \xi_{3}^{2})^{2}}\sum_{m,n = 1}^{2}i\xi_{m}i\xi_{n}\mathcal{F}_{\mathbb{R}^{3}}\left[\bar{G}_{m,n}\right]\notag\\ 
  &+  \frac{(i\xi_{3})^{2}}{(|\xi'|^{2} + \xi_{3}^{2})^{2}}\sum_{n = 1}^{2}i\xi_{n}\mathcal{F}_{\mathbb{R}^{3}}\left[\bar{G}_{3,n}\right]. 
\end{align}
Similarly, it follows that 
\begin{align*}
  \frac{i\xi_{3}}{|\xi|^{4}}\left(\mathcal{F}_{\mathbb{R}^{3}}\left[\nabla\times\bar{\bf u}\right]\right)_{h}
  &= \frac{(i\xi_{3})^{2}}{(|\xi'|^{2} + \xi_{3}^{2})^{2}}\mathcal{F}_{\mathbb{R}^{3}}[\bar{\bf u}_{h}^{\perp}] 
  - \frac{i\xi_{3}}{(|\xi'|^{2} + \xi_{3}^{2})^{2}}(i\xi')^{\perp}\mathcal{F}_{\mathbb{R}^{3}}[\bar{u}_{3}],\\ 
  \frac{i\xi_{3}}{|\xi|^{4}}\left(\mathcal{F}_{\mathbb{R}^{3}}\left[\nabla\times\bar{\bf u}\right]\right)_{3} 
  &= \frac{i\xi_{3}}{(|\xi'|^{2} + \xi_{3}^{2})^{2}}(i\xi')^{\perp}\cdot(\mathcal{F}_{\mathbb{R}^{3}}[\bar{\bf u}_{h}]), 
\end{align*}
where ${\bf u}_{h} := (u_{1}, u_{2})$, ${\bf u}_{h}^{\perp} := (- u_{2}, u_{1})$ and $(\xi')^{\perp} := (-\xi_{2}, \xi_{1})$. 
We observe that the following holds. 
\begin{align}\label{0611-1}
  \mathcal{F}_{\mathbb{R}_{\xi_{3}}}^{-1}\left[\frac{1}{|\xi'|^{2} + \xi_{3}^{2}}\right](\xi', x_{3}) 
  &= \left(\frac{\pi}{2}\right)^{\frac{1}{2}}\frac{1}{|\xi'|}e^{-|\xi'||x_{3}|} 
  =: K_{1}^{(1)}(\xi', x_{3}),\notag\\ 
  \mathcal{F}_{\mathbb{R}_{\xi_{3}}}^{-1}\left[\frac{i\xi_{3}}{|\xi'|^{2} + \xi_{3}^{2}}\right](\xi', x_{3}) 
  &= \partial_{3}\mathcal{F}_{\mathbb{R}_{\xi_{3}}}^{-1}\left[\frac{1}{|\xi'|^{2} + \xi_{3}^{2}}\right](\xi', x_{3})\notag\\ 
  &= - \left(\frac{\pi}{2}\right)^{\frac{1}{2}}\text{sgn}(x_{3})e^{-|\xi'||x_{3}|}
  =: K_{2}^{(1)}(\xi', x_{3}),\notag\\ 
  \mathcal{F}_{\mathbb{R}_{\xi_{3}}}^{-1}\left[\frac{1}{(|\xi'|^{2} + \xi_{3}^{2})^{2}}\right](\xi', x_{3}) 
  &= \left(\frac{\pi}{2}\right)^{\frac{1}{2}}\frac{1}{2|\xi'|^{3}}(1 + |\xi'||x_{3}|)e^{-|\xi'||x_{3}|}
  =: K_{3}^{(1)}(\xi', x_{3}),\notag\\ 
  \mathcal{F}_{\mathbb{R}_{\xi_{3}}}^{-1}\left[\frac{i\xi_{3}}{(|\xi'|^{2} + \xi_{3}^{2})^{2}}\right](\xi', x_{3}) 
  &= - \left(\frac{\pi}{2}\right)^{\frac{1}{2}}\frac{x_{3}}{2|\xi'|}e^{-|\xi'||x_{3}|}
  =: K_{4}^{(1)}(\xi', x_{3}),\notag\\ 
  \mathcal{F}_{\mathbb{R}_{\xi_{3}}}^{-1}\left[\frac{(i\xi_{3})^{2}}{(|\xi'|^{2} + \xi_{3}^{2})^{2}}\right](\xi', x_{3}) 
  &= \left(\frac{\pi}{2}\right)^{\frac{1}{2}}\frac{1}{2|\xi'|}(|\xi'||x_{3}| - 1)e^{-|\xi'||x_{3}|}
  =: K_{5}^{(1)}(\xi', x_{3}). 
\end{align}
Therefore, we obtain 
\begin{align*}
  \widehat{\bf w}_{h}^{(1)}(\xi', x_{3}) 
  =& \sum_{m = 1}^{2}i\xi_{m}\int_{-\infty}^{+\infty}K_{1}^{(1)}(\xi', x_{3} - y_{3})(\widehat{\bar{G}}_{m, 1}, \widehat{\bar{G}}_{m, 2})(\xi', y_{3})~{\rm d}y_{3}\\ 
  &+ \int_{-\infty}^{+\infty}K_{2}^{(1)}(\xi', x_{3} - y_{3})(\widehat{\bar{G}}_{3, 1}, \widehat{\bar{G}}_{3, 2})(\xi', y_{3})~{\rm d}y_{3}\\ 
  &+ i\xi'\sum_{m,n = 1}^{2}i\xi_{m}i\xi_{n}\int_{-\infty}^{+\infty}K_{3}^{(1)}(\xi', x_{3} - y_{3})\widehat{\bar{G}}_{m,n}(\xi', y_{3})~{\rm d}y_{3}\\ 
  &+ i\xi'\sum_{m = 1}^{2}i\xi_{m}\int_{-\infty}^{+\infty}K_{4}^{(1)}(\xi', x_{3} - y_{3})(\widehat{\bar{G}}_{m, 3} + \widehat{\bar{G}}_{3,m})(\xi', y_{3})~{\rm d}y_{3}\\ 
  &+ i\xi'\int_{-\infty}^{+\infty}K_{5}^{(1)}(\xi', x_{3} - y_{3})\widehat{\bar{G}}_{3,3}(\xi', y_{3})~{\rm d}y_{3}\\ 
  &+ \Omega\int_{-\infty}^{+\infty}K_{5}^{(1)}(\xi', x_{3} - y_{3})\widehat{\bar{\bf u}}_{h}^{\perp}(\xi', y_{3})~{\rm d}y_{3}\\ 
  &- \Omega (i\xi')^{\perp}\int_{-\infty}^{+\infty}K_{4}^{(1)}(\xi', x_{3} - y_{3})\widehat{\bar{u}}_{3}(\xi', y_{3})~{\rm d}y_{3}. 
\end{align*}
Here, we recall that $\widehat{f} = \mathcal{F}_{\mathbb{R}^{2}}[f]$. 
Thanks to 
\begin{equation*}
  \widehat{\bar{\bf u}} = \bar{\widehat{\bf u}} 
  \quad 
  \text{and}
  \quad 
  \widehat{\bar{\bf F}} = \bar{\widehat{\bf F}}, 
\end{equation*}
and the definitions of $\bar{\bf u}$ and $\bar{\bf F}$ (\eqref{0625-6}, \eqref{0625-7} and \eqref{0625-8}), 
we can write 
\begin{align*}
  &\widehat{\bf w}_{h}^{(1)}(\xi', x_{3})\\ 
  =& \sum_{m = 1}^{2}i\xi_{m}\int_{0}^{+\infty}\left(K_{1}^{(1)}(\xi', x_{3} - y_{3}) + K_{1}^{(1)}(\xi', x_{3} + y_{3})\right)(\widehat{G}_{m, 1}, \widehat{G}_{m, 2})(\xi', y_{3})~{\rm d}y_{3}\\ 
  &+ \int_{0}^{+\infty}\left(K_{2}^{(1)}(\xi', x_{3} - y_{3}) - K_{2}^{(1)}(\xi', x_{3} + y_{3})\right)(\widehat{G}_{3, 1}, \widehat{G}_{3, 2})(\xi', y_{3})~{\rm d}y_{3}\\ 
  &+ i\xi'\sum_{m,n = 1}^{2}i\xi_{m}i\xi_{n}\int_{0}^{+\infty}\left(K_{3}^{(1)}(\xi', x_{3} - y_{3}) + K_{3}^{(1)}(\xi', x_{3} + y_{3})\right)\widehat{G}_{m,n}(\xi', y_{3})~{\rm d}y_{3}\\ 
  &+ i\xi'\sum_{m = 1}^{2}i\xi_{m}\int_{0}^{+\infty}\left(K_{4}^{(1)}(\xi', x_{3} - y_{3}) - K_{4}^{(1)}(\xi', x_{3} + y_{3})\right)(\widehat{G}_{m,3} + \widehat{G}_{3,m})(\xi', y_{3})~{\rm d}y_{3}\\ 
  &+ i\xi'\int_{0}^{+\infty}\left(K_{5}^{(1)}(\xi', x_{3} - y_{3}) + K_{5}^{(1)}(\xi', x_{3} + y_{3})\right)\widehat{G}_{3,3}(\xi', y_{3})~{\rm d}y_{3}\\ 
  &+ \Omega\int_{0}^{+\infty}\left(K_{5}^{(1)}(\xi', x_{3} - y_{3}) + K_{5}^{(1)}(\xi', x_{3} + y_{3})\right)\widehat{\bf u}_{h}^{\perp}(\xi', y_{3})~{\rm d}y_{3}\\ 
  &- \Omega (i\xi')^{\perp}\int_{0}^{+\infty}\left(K_{4}^{(1)}(\xi', x_{3} - y_{3}) - K_{4}^{(1)}(\xi', x_{3} + y_{3})\right)\widehat{u}_{3}(\xi', y_{3})~{\rm d}y_{3}. 
\end{align*}
For a function $f = f(x',x_{3}):\mathbb{R}_{+}^{3}\to \mathbb{R}$ that is locally integrable with respect to $x_{3}$, we define 
\begin{align}\label{0611-2}
  &T_{K_{l}^{(1)}}^{(\pm)}[f](x', x_{3})\notag\\ 
  :=& \frac{1}{\sqrt{2\pi}}\int_{0}^{+\infty}\mathcal{F}_{\mathbb{R}^{2}}^{-1}\left[\left(K_{l}^{(1)}(\xi', x_{3} - y_{3}) \pm K_{l}^{(1)}(\xi', x_{3} + y_{3})\right)\widehat{f}(\xi', y_{3})\right](x')~{\rm d}y_{3}, 
\end{align}
for $l = 1, 2, 3, 4, 5$. 
Therefore, we obtain 
\begin{align*}
  {\bf w}_{h}^{(1)}(x', x_{3}) 
  =& \sum_{m = 1}^{2}\partial_{m}T_{K_{1}^{(1)}}^{(+)}\left[(G_{m, 1}, G_{m, 2})\right](x', x_{3}) 
  + T_{K_{2}^{(1)}}^{(-)}\left[(G_{3, 1}, G_{3, 2})\right](x', x_{3})\\ 
  &+ \nabla'\Big(\sum_{m,n = 1}^{2}\partial_{m}\partial_{n}T_{K_{3}^{(1)}}^{(+)}\left[G_{m,n}\right]\Big)(x', x_{3})\\ 
  &+ \nabla'\Big(\sum_{m = 1}^{2}\partial_{m}T_{K_{4}^{(1)}}^{(-)}\left[G_{m, 3} + G_{3, m}\right]\Big)(x', x_{3}) 
  + \nabla'\big(T_{K_{5}^{(1)}}^{(+)}\left[G_{3,3}\right]\big)(x', x_{3})\\ 
  &+ \Omega T_{K_{5}^{(1)}}^{(+)}\left[{\bf u}_{h}^{\perp}\right](x', x_{3})
  - \Omega (\nabla')^{\perp}\big(T_{K_{4}^{(1)}}^{(-)}\left[u_{3}\right]\big)(x', x_{3}), 
\end{align*}
where $\nabla' := (\partial_{1}, \partial_{2})$ and $(\nabla')^{\perp} := (-\partial_{2}, \partial_{1})$. 
By a similar argument, it follows that 
\begin{align*}
  w_{3}^{(1)}(x', x_{3}) 
  =& -\Delta_{\mathbb{R}^{2}}\Big(\sum_{m = 1}^{2}\partial_{m}T_{K_{3}^{(1)}}^{(-)}\left[G_{m,3}\right]\Big)(x', x_{3}) 
  -\Delta_{\mathbb{R}^{2}}\big(T_{K_{4}^{(1)}}^{(+)}\left[G_{3,3}\right]\big)(x', x_{3})\\ 
  &+ \sum_{m,n = 1}^{2}\partial_{m}\partial_{n}T_{K_{4}^{(1)}}^{(+)}\left[G_{m,n}\right](x', x_{3}) 
  + \sum_{m = 1}^{2}\partial_{m}T_{K_{5}^{(1)}}^{(-)}\left[G_{3,m}\right](x', x_{3})\\ 
  &+ \Omega(\nabla')^{\perp}\cdot\big(T_{K_{4}^{(1)}}^{(+)}\left[{\bf u}_{h}\right]\big)(x', x_{3}), 
\end{align*}
where $\Delta_{\mathbb{R}^{2}} := \partial_{1}^{2} + \partial_{2}^{2}$. 
By expanding the notation ${\bf G} = {\bf F} - {\bf u}\otimes{\bf u}$, we obtain the following proposition. 
\begin{prop}
  Let $\Omega \in \mathbb{R}$. 
  Then the solution ${\bf w}^{(1)} = (w_{1}^{(1)}, w_{2}^{(1)}, w_{3}^{(1)}) := ({\bf w}_{h}^{(1)}, w_{3}^{(1)})$ of \eqref{0610-5} is given by 
\begin{align}\label{0611-4}
  {\bf w}_{h}^{(1)}(x', x_{3}) 
  =& \sum_{m = 1}^{2}\partial_{m}T_{K_{1}^{(1)}}^{(+)}\left[(F_{m, 1}, F_{m, 2}) - u_{m}{\bf u}_{h}\right](x', x_{3}) 
  + T_{K_{2}^{(1)}}^{(-)}\left[(F_{3, 1}, F_{3, 2}) - u_{3}{\bf u}_{h}\right](x', x_{3})\notag\\ 
  &+ \nabla'\Big(\sum_{m,n = 1}^{2}\partial_{m}\partial_{n}T_{K_{3}^{(1)}}^{(+)}\left[F_{m,n} - u_{m}u_{n}\right]\Big)(x', x_{3})\notag\\ 
  &+ \nabla'\Big(\sum_{m = 1}^{2}\partial_{m}T_{K_{4}^{(1)}}^{(-)}\left[F_{m,3} + F_{3, m} - 2u_{m}u_{3}\right]\Big)(x', x_{3})\notag\\ 
  &+ \nabla'\big(T_{K_{5}^{(1)}}^{(+)}\left[F_{3,3} - u_{3}u_{3}\right]\big)(x', x_{3})\notag\\ 
  &+ \Omega T_{K_{5}^{(1)}}^{(+)}\left[{\bf u}_{h}^{\perp}\right](x', x_{3})
  - \Omega (\nabla')^{\perp}\big(T_{K_{4}^{(1)}}^{(-)}\left[u_{3}\right]\big)(x', x_{3}),\notag\\ 
  w_{3}^{(1)}(x', x_{3}) 
  =& -\Delta_{\mathbb{R}^{2}}\Big(\sum_{m = 1}^{2}\partial_{m}T_{K_{3}^{(1)}}^{(-)}\left[F_{m,3} - u_{m}u_{3}\right]\Big)(x', x_{3})\notag\\ 
  &-\Delta_{\mathbb{R}^{2}}\Big(T_{K_{4}^{(1)}}^{(+)}\left[F_{3,3} - u_{3}u_{3}\right]\Big)(x', x_{3})\notag\\ 
  &+ \sum_{m,n = 1}^{2}\partial_{m}\partial_{n}T_{K_{4}^{(1)}}^{(+)}\left[F_{m,n} - u_{m}u_{n}\right](x', x_{3})\notag\\ 
  &+ \sum_{m = 1}^{2}\partial_{m}T_{K_{5}^{(1)}}^{(-)}\left[F_{3,m} - u_{3}u_{m}\right](x', x_{3})\notag\\ 
  &+ \Omega(\nabla')^{\perp}\cdot\big(T_{K_{4}^{(1)}}^{(+)}\left[{\bf u}_{h}\right]\big)(x', x_{3}), 
\end{align}
where ${\bf u}_{h} := (u_{1}, u_{2})$, and $K_{l}^{(1)}$ and $T_{K_{l}^{(1)}}^{(\pm)}$ ($l = 1, 2, 3, 4, 5$) are given by \eqref{0611-1} and \eqref{0611-2}, respectively. 
\end{prop}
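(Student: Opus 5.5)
The derivation preceding the proposition already contains the essential computation. The proof will assemble it into a verification that the stated formula solves \eqref{0610-5}, proceeding at the level of the partial Fourier transform in $x'$. Throughout we take $\bar{\bf u}$ and $\bar{\bf F}$ (hence $\bar{\bf G}$) as the reflected extensions \eqref{0625-6}--\eqref{0625-8}.

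First, apply the Helmholtz projection $\mathbb{P}_{\mathbb{R}^{3}}$ to \eqref{0610-5}. The only nontrivial input is the identity $\mathbb{P}_{\mathbb{R}^{3}}(e_{3}\times\bar{\bf u}) = -\partial_{3}(-\Delta)^{-1}(\nabla\times\bar{\bf u})$. To check it on the Fourier side, use $\xi\cdot\mathcal{F}_{\mathbb{R}^{3}}[\bar{\bf u}] = 0$, which holds because $a_{3}=0$ makes $\text{div }\bar{\bf u}=0$ in $\mathbb{R}^{3}$, with no jump of $\bar u_{3}$ across $x_{3}=0$. Then expand $\xi\times(\xi\times(e_{3}\times\hat{\bf u}))$ and compare with $\xi_{3}\,\xi\times\hat{\bf u}$. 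This yields \eqref{0610-7}.

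Second, split each symbol into its $\xi'$ and $\xi_{3}$ parts as in \eqref{0612-3}. Invert in $\xi_{3}$ using the one-dimensional transforms \eqref{0611-1}. These follow from $\mathcal{F}^{-1}_{\xi_{3}}[(|\xi'|^{2}+\xi_{3}^{2})^{-1}] = \sqrt{\pi/2}\,|\xi'|^{-1}e^{-|\xi'||x_{3}|}$ combined with two operations:
\begin{itemize}
\item differentiation in $x_{3}$, which gives $K_{2}^{(1)}$ and $K_{5}^{(1)}$;
\item differentiation in the parameter $|\xi'|^{2}$, i.e.\ $-\partial_{|\xi'|^{2}}$, which gives $K_{3}^{(1)}$ and then $K_{4}^{(1)}=\partial_{3}K_{3}^{(1)}$ and $K_{5}^{(1)}=\partial_{3}^{2}K_{3}^{(1)}$.
\end{itemize}
Each product becomes a convolution in $x_{3}$ over $\mathbb{R}$. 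The constant $1/\sqrt{2\pi}$ in \eqref{0611-2} comes from the normalization of the one-dimensional convolution theorem.

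Third, fold the integral over $(-\infty,0)$ onto $(0,\infty)$ via $y_{3}\mapsto -y_{3}$. The parity of each entry decides the sign. The components $G_{m,n}$ ($m,n\le2$), $G_{3,3}$ and ${\bf u}_{h}$ are even, so they produce $T^{(+)}$. The components $G_{m,3}$, $G_{3,m}$ and $u_{3}$ are odd, so they produce $T^{(-)}$. Finally, insert ${\bf G}={\bf F}-{\bf u}\otimes{\bf u}$. Since ${\bf u}\otimes{\bf u}$ is symmetric, the two terms $u_{m}u_{3}$ and $u_{3}u_{m}$ combine into $2u_{m}u_{3}$, which gives \eqref{0611-4}. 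The vertical component $w_{3}^{(1)}$ is handled identically. There $|\xi'|^{2}\,i\xi_{m}$ turns into $-\Delta_{\mathbb{R}^{2}}\partial_{m}$.

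The main obstacle is bookkeeping rather than analysis: tracking signs, factors of $i$, and the parities under the folding, especially in the Coriolis terms, where $(i\xi')^{\perp}$ and the odd $u_{3}$ interact. A secondary point is justification, since $|\xi'|^{-1}$ and $|\xi'|^{-3}$ are singular at $\xi'=0$. I would first verify the formula for $\bf G$, $\bf u$ smooth with $\widehat{\cdot}$ supported in an annulus $\{c\le|\xi'|\le C\}$, where all integrals converge absolutely thanks to $e^{-|\xi'||x_{3}|}$. The general case would then be understood via the Littlewood--Paley pieces $\phi_{j}*\cdot$, which is how the formula is used later.
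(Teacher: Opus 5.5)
Your proposal is correct and follows the paper's own derivation essentially step for step: apply $\mathbb{P}_{\mathbb{R}^{3}}$, use $\text{div }\bar{\bf u}=0$ (from $a_{3}=0$) to rewrite the Coriolis term as $\Omega\partial_{3}(-\Delta)^{-1}(\nabla\times\bar{\bf u})$, split the symbols as in \eqref{0612-3}, invert in $\xi_{3}$ via \eqref{0611-1}, and fold by the parity of the extensions into $T^{(\pm)}_{K_{l}^{(1)}}$. Your additional details are consistent with the paper and somewhat more careful than it, which simply states the kernels and the formula without addressing the singularity at $\xi'=0$; these details are obtaining $K_{3}^{(1)}$ by differentiating in $|\xi'|^{2}$, tracing the factor $1/\sqrt{2\pi}$ to the one-dimensional convolution theorem, and first justifying the formula for data with horizontal frequencies in an annulus.
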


\begin{rem}
  From the definition of $K_{l}^{(1)}$ \eqref{0611-1} and $T_{K_{l}^{(1)}}^{(\pm)}$ \eqref{0611-2} ($l = 1,2,3,4,5$), 
  we readily check that ${\bf w}_{h}^{(1)}(x', x_{3})$ is even in $x_{3}$ and $w_{3}^{(1)}(x', x_{3})$ is odd in $x_{3}$. 
  Namely, ${\bf w}^{(1)}$ preserves the symmetries of ${\bf u}$ and ${\bf F}$, and in particular $w_{3}^{(1)}(x', 0) = 0$ holds. 
\end{rem}

Next, we consider an explicit formula for the solution ${\bf v}^{(1)}$ of \eqref{0611-3}. 
Since $\Omega$ does not appear in \eqref{0611-3}, the explicit formula for ${\bf v}^{(1)}$ is the same as that in \cite{Fu_2025}. 
Therefore, we omit the derivation and state only the resulting formula. 
Note that 
\begin{equation*}
  a_{3}(\cdot) - w_{3}^{(1)}(\cdot, 0) = 0 
\end{equation*}
holds. 

\begin{prop}[\cite{Fu_2025}]
  The solution ${\bf v}^{(1)} = (v_{1}^{(1)}, v_{2}^{(1)}, v_{3}^{(1)}) =: ({\bf v}_{h}^{(1)}, v_{3}^{(1)})$ of \eqref{0611-3} is given by 
  \begin{align}\label{0611-5}
    {\bf v}_{h}^{(1)}(x', x_{3}) 
    =& \left(Id + x_{3}\nabla'(-\Delta_{\mathbb{R}^{2}})^{-\frac{1}{2}}\nabla'\cdot\right)\mathcal{F}_{\mathbb{R}^{2}}^{-1}\left[e^{-x_{3}|\xi'|}\left(\widehat{\bf a}_{h}(\xi') - (\widehat{\bf w}^{(1)})_{h}(\xi', 0)\right)\right](x'), \notag\\ 
    v_{3}^{(1)}(x', x_{3}) 
    =& - x_{3}\nabla'\cdot\left(\mathcal{F}_{\mathbb{R}^{2}}^{-1}\left[e^{-x_{3}|\xi'|}\left(\widehat{\bf a}_{h}(\xi') - (\widehat{\bf w}^{(1)})_{h}(\xi', 0)\right)\right]\right)(x'), 
  \end{align}
  where $(-\Delta_{\mathbb{R}^{2}})^{\frac{1}{2}}f := \mathcal{F}_{\mathbb{R}^{2}}^{-1}[|\xi'|\widehat{f}(\xi')]$. 
\end{prop}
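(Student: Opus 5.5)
The plan is to verify the formula directly on the Fourier side in the horizontal variables, treating $x_{3}$ as an evolution variable as in the derivation of \eqref{0611-4}. Write ${\bf b}(x') := {\bf a}_{h}(x') - ({\bf w}^{(1)})_{h}(x', 0)$. By the preceding remark $w_{3}^{(1)}(x',0) = 0$, and $a_{3} = 0$ by assumption, so the boundary datum in \eqref{0611-3} is $({\bf b}, 0)$. Since $\nabla'(-\Delta_{\mathbb{R}^{2}})^{-\frac{1}{2}}\nabla'\cdot$ has symbol $-\xi'(\xi')^{T}/|\xi'|$, formula \eqref{0611-5} reads
\begin{equation*}
  \widehat{\bf v}_{h}^{(1)}(\xi', x_{3}) = e^{-x_{3}|\xi'|}\Big(\widehat{\bf b}(\xi') - x_{3}\frac{\xi'(\xi'\cdot\widehat{\bf b}(\xi'))}{|\xi'|}\Big),
  \qquad
  \widehat{v}_{3}^{(1)}(\xi', x_{3}) = -x_{3}\, i\xi'\cdot\widehat{\bf b}(\xi')\, e^{-x_{3}|\xi'|}.
\end{equation*}
This is the Fourier transform in $x'$ of the system \eqref{0611-3}.

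\textbf{Existence.} The check proceeds in three steps.
\begin{enumerate}
  \item Setting $x_{3} = 0$ gives $\widehat{\bf v}^{(1)}(\xi', 0) = (\widehat{\bf b}, 0)$, so the boundary condition holds.
  \item The divergence $i\xi'\cdot\widehat{\bf v}_{h}^{(1)} + \partial_{3}\widehat{v}_{3}^{(1)}$ vanishes: the terms $\pm i\xi'\cdot\widehat{\bf b}\,e^{-x_{3}|\xi'|}$ cancel, and so do the terms $\mp x_{3}|\xi'|\, i\xi'\cdot\widehat{\bf b}\,e^{-x_{3}|\xi'|}$.
  \item For the momentum equation we use $(\partial_{3}^{2} - |\xi'|^{2})e^{-x_{3}|\xi'|} = 0$ and $(\partial_{3}^{2} - |\xi'|^{2})(x_{3}e^{-x_{3}|\xi'|}) = -2|\xi'|e^{-x_{3}|\xi'|}$. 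These give $\widehat{-\Delta{\bf v}_{h}^{(1)}} = -2\xi'(\xi'\cdot\widehat{\bf b})e^{-x_{3}|\xi'|}$ and $\widehat{-\Delta v_{3}^{(1)}} = -2i|\xi'|(\xi'\cdot\widehat{\bf b})e^{-x_{3}|\xi'|}$. Both are cancelled by $\nabla q^{(1)}$ with
\begin{equation*}
  \widehat{q}^{(1)}(\xi', x_{3}) := -2i\,(\xi'\cdot\widehat{\bf b}(\xi'))\,e^{-x_{3}|\xi'|},
\end{equation*}
which is harmonic, as a Stokes pressure must be.
\end{enumerate}
Hence $({\bf v}^{(1)}, q^{(1)})$ solves \eqref{0611-3}. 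Every entry of the formula is $e^{-x_{3}|\xi'|}$ times a polynomial of degree at most one in $x_{3}|\xi'|$, so it decays as $x_{3} \to \infty$ for $\xi' \neq 0$. The Chemin--Lerner norms are then finite by the usual bound $\|\phi_{j}*\mathcal{F}^{-1}[(x_{3}|\xi'|)^{k}e^{-x_{3}|\xi'|}\widehat{f}]\|_{L^{p}} \lesssim e^{-c2^{j}x_{3}}\|\phi_{j}*f\|_{L^{p}}$, which follows from the Bernstein-type kernel estimate for $e^{-x_{3}|\xi'|}$ localized to the annulus.

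\textbf{Identification with the solution.} To show that this is the solution meant by \eqref{0611-3}, we argue uniqueness among solutions decaying in $x_{3}$. For fixed $\xi' \neq 0$, eliminating the pressure, whose transform satisfies $(\partial_{3}^{2} - |\xi'|^{2})\widehat{q} = 0$, reduces the system to the fourth-order ODE $(\partial_{3}^{2} - |\xi'|^{2})^{2}\widehat{v}_{3} = 0$. Its decaying solutions are spanned by $e^{-x_{3}|\xi'|}$ and $x_{3}e^{-x_{3}|\xi'|}$. The two conditions $\widehat{v}_{3}(0) = 0$ and $\partial_{3}\widehat{v}_{3}(0) = -i\xi'\cdot\widehat{\bf b}$ (the second from the divergence equation at $x_{3} = 0$) fix $\widehat{v}_{3}$. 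The horizontal components then satisfy $(\partial_{3}^{2} - |\xi'|^{2})\widehat{\bf v}_{h} = i\xi'\widehat{q}$, and are determined by decay together with $\widehat{\bf v}_{h}(0) = \widehat{\bf b}$.

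The only genuinely delicate point is this uniqueness statement, namely in which class (decay in $x_{3}$ frequency by frequency, modulo polynomials in $x'$) the representation holds. Since the proposition is quoted from \cite{Fu_2025}, I would simply work in the same class used there, where it is already established.
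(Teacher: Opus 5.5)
Your proof is correct and essentially the paper's route. The paper gives no argument of its own (it defers to \cite{Fu_2025}). Your identification step is exactly the procedure the paper carries out for ${\bf v}^{(2)}$: partial Fourier transform in $x'$, elimination of the pressure to reach $(\partial_{3}^{2}-|\xi'|^{2})^{2}\widehat{v}_{3}^{(1)}=0$, discarding growing modes, and fitting $\widehat{v}_{3}^{(1)}(0)=0$ and $\partial_{3}\widehat{v}_{3}^{(1)}(0)=-i\xi'\cdot\widehat{\bf b}$. Specialized to $\Omega=0$, the equation for $\widehat{v}_{3}$ closes at fourth order with the double characteristic root $|\xi'|$, which is where the factor $x_{3}$ comes from. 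Your direct verification (boundary values, divergence, and the harmonic pressure $\widehat{q}^{(1)}=-2i(\xi'\cdot\widehat{\bf b})e^{-x_{3}|\xi'|}$) checks out. It also supplies the sufficiency direction that such a derivation leaves implicit, with uniqueness in the same decaying class the paper adopts for ${\bf v}^{(2)}$.
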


\subsection{Estimates for ${\bf w}^{(1)}$ and ${\bf v}^{(1)}$}\label{1029-1}
In this subsection, we estimate ${\bf w}^{(1)}$ and ${\bf v}^{(1)}$ in $\widetilde{L}_{x_{3}}^{r}(0,\infty; \dot{B}_{p,q}^{s}(\mathbb{R}^{2}))$ 
using the explicit formulas \eqref{0611-4} and \eqref{0611-5} derived in the previous subsection. 

We first state some basic lemmas concerning frequency-localized functions. 
\begin{lem}[e.g., \cite{Ba_Ch_Da_2011}]\label{0611-6}
  Let $1\leq p \leq \infty$. 
  Then there exists a constant $C>0$ such that for any $j\in \mathbb{Z}$ and $f$ with $\phi_j*f \in L^p$, we have 
  \begin{equation*}
    {\|\nabla(\phi_j*f)\|}_{L^p}\leq C 2^{j}{\|\phi_{j}*f\|}_{L^p} 
    \text{ and } 
    {\|\Lambda^{-1}(\phi_j*f)\|}\leq C 2^{-j}{\|\phi_{j}*f\|}_{L^p}. 
  \end{equation*}
\end{lem}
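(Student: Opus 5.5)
The plan is to reduce both inequalities to uniform $L^{1}$ bounds on rescaled convolution kernels. Fix a bump $\widehat{\psi}\in C_c^\infty(\mathbb{R}^{2}\setminus\{0\})$ with $\widehat{\psi}\equiv 1$ on the annulus $\{1/2\le|\xi'|\le 2\}$ and support in $\{1/4\le|\xi'|\le 4\}$. Put $\widehat{\psi}_j(\xi'):=\widehat{\psi}(2^{-j}\xi')$, so that $\psi_j(x')=2^{2j}\psi(2^jx')$. Since $\operatorname{supp}\widehat{\phi}_j\subset\{2^{j-1}\le|\xi'|\le 2^{j+1}\}$, we have $\widehat{\phi}_j=\widehat{\psi}_j\widehat{\phi}_j$. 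With the convolution normalization of the paper this gives
\begin{equation*}
  \phi_j*f=\frac{1}{2\pi}\,\psi_j*(\phi_j*f),
\end{equation*}
where the constant depends only on the Fourier convention.

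For the derivative bound, apply $\partial_k$ to this identity. This yields $\partial_k(\phi_j*f)=\frac{1}{2\pi}(\partial_k\psi_j)*(\phi_j*f)$. By Young's inequality,
\begin{equation*}
  \|\partial_k(\phi_j*f)\|_{L^p}\le \frac{1}{2\pi}\|\partial_k\psi_j\|_{L^1}\|\phi_j*f\|_{L^p}.
\end{equation*}
Scaling gives $\partial_k\psi_j(x')=2^{3j}(\partial_k\psi)(2^jx')$, hence $\|\partial_k\psi_j\|_{L^1}=2^j\|\partial_k\psi\|_{L^1}$. This norm is finite because $\psi\in\mathcal{S}(\mathbb{R}^2)$. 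Summing over $k=1,2$ gives the first inequality with $C$ independent of $j$ and $p$.

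For $\Lambda^{-1}=(-\Delta_{\mathbb{R}^2})^{-1/2}$, define $\chi$ by $\widehat{\chi}(\xi'):=|\xi'|^{-1}\widehat{\psi}(\xi')$. This multiplier is smooth and compactly supported away from the origin, so $\chi\in\mathcal{S}(\mathbb{R}^2)$. Writing $\chi_j$ for the corresponding rescaled kernel, we get $\Lambda^{-1}(\phi_j*f)=\frac{1}{2\pi}\chi_j*(\phi_j*f)$ together with $\widehat{\chi}_j(\xi')=|\xi'|^{-1}\widehat{\psi}_j(\xi')=2^{-j}\widehat{\chi}(2^{-j}\xi')$. It follows that $\|\chi_j\|_{L^1}=2^{-j}\|\chi\|_{L^1}$, and Young's inequality gives the second bound.

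The only point needing care is the justification of these identities for general $f$ with $\phi_j*f\in L^p$, in particular when $p=\infty$. Here $\phi_j*f$ is a tempered distribution with compact Fourier support away from $0$, so the multiplier identities hold in $\mathcal{S}'$. Convolving a Schwartz function with an $L^p$ function is then legitimate for every $1\le p\le\infty$. No step is a genuine obstacle; the argument is a Bernstein-type inequality and follows [Ba\_Ch\_Da\_2011].
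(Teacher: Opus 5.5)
Your proof is correct and is the standard Bernstein-type argument: a fixed bump $\widehat{\psi}$ equal to $1$ on the annulus, rescaled to frequency $2^{j}$, followed by Young's inequality with $\|\partial_k\psi_j\|_{L^1}=2^{j}\|\partial_k\psi\|_{L^1}$ and $\|\chi_j\|_{L^1}=2^{-j}\|\chi\|_{L^1}$; the paper writes out no proof and simply cites \cite{Ba_Ch_Da_2011}, where this is the argument. Your argument also uses only the support condition on $\widehat{\phi}_{j}$, not that the $\phi_j$ are dilates of a single function, which fits the paper's definition of the partition.
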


Since $e^{-x_{3}|\xi'|}$ is the Fourier multiplier associated with the Poisson semigroup, the following lemma holds. 
\begin{lem}[\cite{I_2015}]\label{0611-10}
  Let $x_{3} > 0$ and $1\leq p\leq \infty$. 
  Then there exist constants $C,c>0$ such that for any $j\in\mathbb{Z}$ and $f$ with $\phi_j*f\in L^{p}(\mathbb{R}^{2})$, we have 
  \begin{equation*}
    {\left\|\mathcal{F}_{\mathbb{R}^{2}}\left[e^{-x_{3}|\xi'|}\widehat{\phi}_{j}(\xi')\widehat{f}(\xi')\right]\right\|}_{L^{p}}
    \leq Ce^{-cx_{3}2^{j}}{\|\phi_j*f\|}_{L^{p}}. 
  \end{equation*}
\end{lem}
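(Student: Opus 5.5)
The plan is to reduce the estimate to a pointwise-in-$x'$ kernel bound for the Poisson kernel restricted to a dyadic annulus. The factor $e^{-x_{3}|\xi'|}$ is the Fourier multiplier of the Poisson semigroup, and the statement is meant in the sense $\mathcal{F}^{-1}_{\mathbb{R}^{2}}$. Choose $\widetilde{\phi}$ with $\widehat{\widetilde{\phi}}\equiv 1$ on $\{1/2\leq|\xi'|\leq 2\}$ and supported in $\{1/4\leq|\xi'|\leq 4\}$, and set $\widetilde{\phi}_{j}:=2^{2j}\widetilde{\phi}(2^{j}\cdot)$. Then
\[
\mathcal{F}^{-1}_{\mathbb{R}^{2}}\bigl[e^{-x_{3}|\xi'|}\widehat{\phi}_{j}\widehat{f}\bigr]=g_{j,x_{3}}*(\phi_{j}*f),
\qquad
g_{j,x_{3}}:=(2\pi)^{-1}\mathcal{F}^{-1}_{\mathbb{R}^{2}}\bigl[e^{-x_{3}|\xi'|}\widehat{\widetilde{\phi}}_{j}\bigr].
\]
By Young's inequality, it then suffices to prove $\|g_{j,x_{3}}\|_{L^{1}}\leq Ce^{-cx_{3}2^{j}}$, uniformly in $j\in\mathbb{Z}$ and $x_{3}>0$. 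This single bound covers all $1\leq p\leq\infty$ at once.

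Next I use scaling. The change of variables $\xi'=2^{j}\eta$ gives $\|g_{j,x_{3}}\|_{L^{1}}=\|g_{0,t}\|_{L^{1}}$ with $t=x_{3}2^{j}$. So the task is to show
\[
\bigl\|\mathcal{F}^{-1}\bigl[e^{-t|\eta|}\widehat{\widetilde{\phi}}(\eta)\bigr]\bigr\|_{L^{1}}\leq Ce^{-ct}
\qquad\text{for } t>0 .
\]
I would use the standard bound $\|\mathcal{F}^{-1}m\|_{L^{1}}\leq C\sum_{|\alpha|\leq 3}\|\partial^{\alpha}m\|_{L^{2}}$, which holds because $(1+|x|^{2})^{3/2}$-weighted $L^{2}$ controls $L^{1}$ in $\mathbb{R}^{2}$. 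The multiplier $m_{t}(\eta)=e^{-t|\eta|}\widehat{\widetilde{\phi}}(\eta)$ is supported in the annulus $1/4\leq|\eta|\leq 4$, where $|\eta|$ is smooth. By Leibniz's rule, each derivative $\partial^{\alpha}m_{t}$ is bounded by $C(1+t)^{|\alpha|}e^{-t/4}$. Hence $\|g_{0,t}\|_{L^{1}}\leq C(1+t)^{3}e^{-t/4}\leq C'e^{-t/8}$, which is the claim with $c=1/8$.

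The only point needing care is this last step: the polynomial growth $(1+t)^{3}$ coming from differentiating the exponential must be absorbed by giving up part of the decay rate. The same splitting of $\phi_{j}$ through $\widetilde{\phi}_{j}$ also removes any dependence on $f$ beyond $\phi_{j}*f\in L^{p}$.
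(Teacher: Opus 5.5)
Your argument is correct and is the standard one. The paper only cites \cite{I_2015} for this lemma, but its proof of Lemma~\ref{0613-8} follows your scheme: an auxiliary cutoff $\widetilde{\phi}_{j}$ with $\widehat{\widetilde{\phi}}_{j}\widehat{\phi}_{j}=\widehat{\phi}_{j}$, Young's inequality, rescaling to $t=x_{3}2^{j}$, and derivative bounds of the form $C(1+t)^{N}e^{-ct}$ absorbed by lowering $c$. The differences are cosmetic: you bound the kernel's $L^{1}$ norm by a weighted $L^{2}$ estimate with three derivatives, where the paper uses the weight $(1+|x'|^{2})^{2}$ and $(1-\Delta_{\xi'})^{2}$, and you choose $\widetilde{\phi}$ independently of the partition, so you do not need the paper's assumption $\widehat{\phi}_{j}=\widehat{\phi}_{0}(2^{-j}\cdot)$.
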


The following result follows immediately from these lemmas and the explicit formula for the multiplier $K_{l}^{(1)}$ $(l = 1,2,3,4,5)$ given by \eqref{0611-1}. 
\begin{lem}\label{0611-7}
  Let $x_{3} > 0$ and $1\leq p \leq \infty$. 
  Then there exist constants $C,c>0$ such that for any $j\in\mathbb{Z}$ and $f$ with $\phi_j*f\in L^{p}(\mathbb{R}^{2})$, we have 
  \begin{align*}
    {\left\|\mathcal{F}_{\mathbb{R}^{2}}^{-1}\left[\widehat{\phi}_{j}(\xi')K_{1}^{(1)}(\xi', x_{3})\widehat{f}(\xi')\right]\right\|}_{L^{p}} 
    \leq& C 2^{-j}e^{-cx_{3}2^{j}}{\|\phi_j*f\|}_{L^{p}},\\ 
    {\left\|\mathcal{F}_{\mathbb{R}^{2}}^{-1}\left[\widehat{\phi}_{j}(\xi')K_{2}^{(1)}(\xi', x_{3})\widehat{f}(\xi')\right]\right\|}_{L^{p}} 
    \leq& C e^{-cx_{3}2^{j}}{\|\phi_j*f\|}_{L^{p}},\\ 
    {\left\|\mathcal{F}_{\mathbb{R}^{2}}^{-1}\left[\widehat{\phi}_{j}(\xi')K_{3}^{(1)}(\xi', x_{3})\widehat{f}(\xi')\right]\right\|}_{L^{p}} 
    \leq& C 2^{-3j}(1 + x_{3}2^{j})e^{-cx_{3}2^{j}}{\|\phi_j*f\|}_{L^{p}},\\ 
    {\left\|\mathcal{F}_{\mathbb{R}^{2}}^{-1}\left[\widehat{\phi}_{j}(\xi')K_{4}^{(1)}(\xi', x_{3})\widehat{f}(\xi')\right]\right\|}_{L^{p}} 
    \leq& C x_{3}2^{-j}e^{-cx_{3}2^{j}}{\|\phi_j*f\|}_{L^{p}},\\ 
    {\left\|\mathcal{F}_{\mathbb{R}^{2}}^{-1}\left[\widehat{\phi}_{j}(\xi')K_{5}^{(1)}(\xi', x_{3})\widehat{f}(\xi')\right]\right\|}_{L^{p}} 
    \leq& C 2^{-j}(1 + x_{3}2^{j})e^{-cx_{3}2^{j}}{\|\phi_j*f\|}_{L^{p}}. 
  \end{align*}
\end{lem}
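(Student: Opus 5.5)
The plan is to reduce each estimate in Lemma~\ref{0611-7} to Lemma~\ref{0611-10} and Lemma~\ref{0611-6}, using the explicit forms of $K_{l}^{(1)}$ in \eqref{0611-1}. Since $x_{3}>0$, we have $|x_{3}| = x_{3}$ and $\text{sgn}(x_{3}) = 1$. Each multiplier therefore factors as $(\pi/2)^{1/2}$ times a combination of the three objects
\begin{equation*}
  |\xi'|^{-k}, \qquad x_{3}^{m}, \qquad e^{-x_{3}|\xi'|},
\end{equation*}
with $k\in\{0,1,2,3\}$ and $m\in\{0,1\}$. Concretely, $K_{3}^{(1)} = c(|\xi'|^{-3} + x_{3}|\xi'|^{-2})e^{-x_{3}|\xi'|}$ and $K_{5}^{(1)} = c(x_{3} - |\xi'|^{-1})e^{-x_{3}|\xi'|}$. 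The factors $x_{3}^{m}$ are constants in $x'$ and simply pull out of the $L^{p}(\mathbb{R}^{2})$ norm.

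Next, I insert a fattened cutoff $\widetilde{\phi}_{j} = \phi_{j-1}+\phi_{j}+\phi_{j+1}$. Its Fourier transform equals $1$ on $\text{supp}\,\widehat{\phi}_{j}$, so $\phi_{j}*f = \widetilde{\phi}_{j}*\phi_{j}*f$. For a term of the form $|\xi'|^{-k}e^{-x_{3}|\xi'|}\widehat{\phi}_{j}\widehat{f}$, I write it as $e^{-x_{3}|\xi'|}\widehat{\widetilde{\phi}}_{j}\cdot\mathcal{F}_{\mathbb{R}^{2}}[\Lambda^{-k}(\phi_{j}*f)]$. Lemma~\ref{0611-10} applied to the function $\Lambda^{-k}(\phi_{j}*f)$ (localized by $\widetilde{\phi}_{j}$, which only changes constants) gives the factor $Ce^{-cx_{3}2^{j}}$. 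Then the $\Lambda^{-1}$ estimate of Lemma~\ref{0611-6}, iterated $k$ times on the localized function, gives $C2^{-kj}{\|\phi_{j}*f\|}_{L^{p}}$. Iterating is legitimate because $\Lambda^{-1}$ preserves the frequency annulus.

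Combining terms then gives each bound:
\begin{itemize}
\item $K_{1}^{(1)}$: $2^{-j}e^{-cx_{3}2^{j}}$.
\item $K_{2}^{(1)}$: $e^{-cx_{3}2^{j}}$.
\item $K_{3}^{(1)}$: $2^{-3j}e^{-cx_{3}2^{j}} + x_{3}2^{-2j}e^{-cx_{3}2^{j}} = 2^{-3j}(1+x_{3}2^{j})e^{-cx_{3}2^{j}}$.
\item $K_{4}^{(1)}$: $x_{3}2^{-j}e^{-cx_{3}2^{j}}$.
\item $K_{5}^{(1)}$: $(x_{3} + 2^{-j})e^{-cx_{3}2^{j}} = 2^{-j}(1+x_{3}2^{j})e^{-cx_{3}2^{j}}$.
\end{itemize}
These are exactly the stated inequalities, with $C,c$ independent of $j$ and $x_{3}$.

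The only delicate point is the uniformity of the constants. Lemma~\ref{0611-10} and Lemma~\ref{0611-6} are stated for $\phi_{j}*f$, and I apply them to $\Lambda^{-k}(\phi_{j}*f)$ localized by $\widetilde{\phi}_{j}$. I will justify this by scaling: the multipliers $\widehat{\widetilde{\phi}}_{j}(\xi')|\xi'|^{-k}$ are dilates of a single Schwartz function supported away from the origin. Hence their kernels have $L^{1}$ norm $C2^{-kj}$, and Young's inequality gives $j$-independent constants. The same scaling underlies Lemma~\ref{0611-10}, so its constants $C,c$ can also be taken uniform. I will also note that the Fourier transform in the display of Lemma~\ref{0611-10} should be read as $\mathcal{F}_{\mathbb{R}^{2}}^{-1}$.
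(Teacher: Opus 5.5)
Your proposal is correct and is precisely the argument the paper intends when it says the lemma ``follows immediately'' from Lemma~\ref{0611-6}, Lemma~\ref{0611-10} and \eqref{0611-1}: for $x_{3}>0$ each $K_{l}^{(1)}$ splits into terms $x_{3}^{m}|\xi'|^{-k}e^{-x_{3}|\xi'|}$, the Poisson factor gives $e^{-cx_{3}2^{j}}$, the Bernstein bound gives $2^{-kj}$, and your bookkeeping for $K_{3}^{(1)}$ and $K_{5}^{(1)}$ is right. The fattened cutoff $\widetilde{\phi}_{j}$ is not actually needed: writing $\widehat{\phi}_{j}|\xi'|^{-k}e^{-x_{3}|\xi'|}\widehat{f} = \widehat{\phi}_{j}e^{-x_{3}|\xi'|}\mathcal{F}_{\mathbb{R}^{2}}[\Lambda^{-k}f]$ lets you apply Lemma~\ref{0611-10} directly, and then Lemma~\ref{0611-6} to $\phi_{j}*\Lambda^{-k}f = \Lambda^{-k}(\phi_{j}*f)$.
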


By the following proposition, we obtain an estimate for ${\bf w}^{(1)}$. 
\begin{prop}\label{0611-11}
  Let $\Omega \in \mathbb{R}$, $s\in \mathbb{R}$ and $1\leq p,q,r,r_{0},r_{1} \leq \infty$ with $2 \leq r_{1}$ and $\max\{r_{0}, r_{1}\}\leq r$. 
  Let ${\bf u}$ and ${\bf F}$ be smooth functions on $\mathbb{R}_{+}^{3}$. 
  Then there exists a constant $C>0$ such that for ${\bf w}^{(1)}$ given by \eqref{0611-4}, we have 
  \begin{align}\label{0611-8}
    {\|{\bf w}^{(1)}\|}_{\widetilde{L}_{x_{3}}^{r}(0, \infty ; \dot{B}_{p,q}^{s + \frac{1}{r}})} 
    \leq& C {\|{\bf F}\|}_{\widetilde{L}_{x_{3}}^{r_{0}}(0, \infty ; \dot{B}_{p,q}^{s + \frac{1}{r_{0}} - 1})} 
    + C {\|{\bf u}\otimes{\bf u}\|}_{\widetilde{L}_{x_{3}}^{\frac{r_{1}}{2}}(0, \infty ; \dot{B}_{p,q}^{s + \frac{2}{r_{1}} - 1})}\notag\\ 
    &+ C |\Omega|{\|{\bf u}\|}_{\widetilde{L}_{x_{3}}^{r_{1}}(0, \infty ; \dot{B}_{p,q}^{s + \frac{1}{r_{1}} - 2})}. 
  \end{align}
\end{prop}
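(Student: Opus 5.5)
We estimate each term of the explicit formula \eqref{0611-4} block by block. Apply $\phi_j*$ to each term, use that the horizontal derivatives $\partial_m$, $\nabla'$, $\Delta_{\mathbb{R}^2}$ cost factors $2^{j}$, $2^{j}$, $2^{2j}$ (Lemma~\ref{0611-6}), and bound the $y_3$-integral kernels via Lemma~\ref{0611-7}. In every term the derivative count combined with the kernel size yields a pointwise bound
\begin{equation*}
  {\|\phi_j*(\text{term})(\cdot,x_3)\|}_{L^p}\leq C\int_0^\infty 2^{\alpha j}\,e^{-c2^{j}|x_3\mp y_3|}\,{\|\phi_j*g(\cdot,y_3)\|}_{L^p}\,{\rm d}y_3 .
\end{equation*}
Here $\alpha=0$ for $g\in\{F,\,{\bf u}\otimes{\bf u}\}$ and $\alpha=-1$ for $g={\bf u}$ with the factor $\Omega$. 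Polynomial factors such as $(1+|x_3\mp y_3|2^j)$ are absorbed by shrinking $c$.

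For example, $\partial_m T^{(+)}_{K_1^{(1)}}$ gives $2^j\cdot 2^{-j}=1$, and $\nabla'\partial_m\partial_n T^{(+)}_{K_3^{(1)}}$ gives $2^{3j}2^{-3j}(1+\cdot)$. For $K_4^{(1)}$ one uses $|x_3|2^{-j}e^{-c|x_3|2^j}\lesssim 2^{-2j}e^{-c'|x_3|2^j}$, so $\nabla'\partial_m T_{K_4^{(1)}}$ and $\Delta_{\mathbb{R}^2}T_{K_4^{(1)}}$ are also of order $1$. The $\Omega$-terms $\Omega T_{K_5^{(1)}}^{(+)}[{\bf u}_h^\perp]$, $\Omega(\nabla')^\perp T_{K_4^{(1)}}[u_3]$ and $\Omega(\nabla')^{\perp}\cdot T_{K_4^{(1)}}[{\bf u}_h]$ are each of order $|\Omega|2^{-j}$. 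The kernels evaluated at $x_3+y_3$ are dominated by those at $x_3-y_3$ since $x_3,y_3>0$.

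Next, apply Young's inequality in $x_3$. Since $\|e^{-c2^j|\cdot|}\|_{L^{\sigma}(\mathbb{R})}\simeq 2^{-j/\sigma}$ with $1+\frac1r=\frac1\sigma+\frac1{r_0}$, we get
\begin{equation*}
  {\|\phi_j*w\|}_{L^r_{x_3}L^p}\lesssim 2^{\alpha j}\,2^{-j(1-\frac1{r_0}+\frac1r)}\,{\|\phi_j*g\|}_{L^{r_0}_{x_3}L^p},
\end{equation*}
which is admissible because $r\geq r_0$. Multiplying by $2^{(s+\frac1r)j}$ gives $2^{(s+\frac1{r_0}-1)j}\|\phi_j*F\|_{L^{r_0}L^p}$ for the force term. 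The same computation with $r_0$ replaced by $r_1/2$ (needs $r\geq r_1/2$, true since $r\geq r_1$) gives the ${\bf u}\otimes{\bf u}$ term with regularity $s+\frac2{r_1}-1$. With exponent $r_1$ and $\alpha=-1$ it gives the $\Omega$ term with regularity $s+\frac1{r_1}-2$. Taking $\ell^q$ norms in $j$ yields \eqref{0611-8}.

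The only real obstacle is the bookkeeping of the many terms, especially checking that every kernel carrying $K_4^{(1)}$ or $K_3^{(1)}$ has exactly the right homogeneity, i.e.\ the total power of $2^j$ is $0$ (or $-1$ for $\Omega$-terms). This relies on the bound $x_3 2^j e^{-cx_32^j}\lesssim e^{-c'x_32^j}$. The hypothesis $2\le r_1$ only ensures $r_1/2\ge1$, so that the Lebesgue exponent for ${\bf u}\otimes{\bf u}$ makes sense.
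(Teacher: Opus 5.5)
Your proposal is correct and follows essentially the same route as the paper's proof. Both localize with $\phi_j*$, count horizontal derivatives via Lemma~\ref{0611-6}, and bound the kernels via Lemma~\ref{0611-7}, dominating the reflected kernels at $x_3+y_3$ by $e^{-c|x_3-y_3|2^j}$ and absorbing the polynomial factors. They then apply Young's inequality in $x_3$ (using $\max\{r_0,r_1\}\le r$), multiply by $2^{(s+\frac1r)j}$, and take the $\ell^q$ norm.
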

\begin{proof}
  Using Lemma~\ref{0611-6}, for each $j\in \mathbb{Z}$, we get 
  \begin{align*}
    {\|\phi_{j}*{\bf w}^{(1)}\|}_{L^{p}} 
    \leq& C 2^{j}{\left\|\phi_{j}*T_{K_{1}^{(1)}}^{(+)}\left[{\bf F} - {\bf u}\otimes{\bf u}\right]\right\|}_{L^{p}} 
    + C{\left\|\phi_{j}*T_{K_{2}^{(1)}}^{(-)}\left[{\bf F} - {\bf u}\otimes{\bf u}\right]\right\|}_{L^{p}}\\ 
    &+ C2^{3j}{\left\|\phi_{j}*T_{K_{3}^{(1)}}^{(\pm)}\left[{\bf F} - {\bf u}\otimes{\bf u}\right]\right\|}_{L^{p}} 
    + C2^{2j}{\left\|\phi_{j}*T_{K_{4}^{(1)}}^{(\pm)}\left[{\bf F} - {\bf u}\otimes{\bf u}\right]\right\|}_{L^{p}}\\ 
    &+ C2^{j}{\left\|\phi_{j}*T_{K_{5}^{(1)}}^{(\pm)}\left[{\bf F} - {\bf u}\otimes{\bf u}\right]\right\|}_{L^{p}} 
    + C|\Omega|2^{j}{\left\|\phi_{j}*T_{K_{4}^{(1)}}^{(\pm)}\left[{\bf u}\right]\right\|}_{L^{p}}\\ 
    &+ C|\Omega|{\left\|\phi_{j}*T_{K_{5}^{(1)}}^{(+)}\left[{\bf u}\right]\right\|}_{L^{p}}. 
  \end{align*}
  By the definition of $T_{K_{l}^{(1)}}^{(\pm)}$ $(l = 1,2,3,4,5)$ \eqref{0611-2}, it follows that 
  \begin{align}\label{0616-2}
    {\left\|\phi_{j}*T_{K_{l}^{(1)}}^{(\pm)}[f]\right\|}_{L^{p}}
    \leq& \int_{0}^{+\infty}{\left\|\mathcal{F}_{\mathbb{R}^{2}}^{-1}\left[\widehat{\phi}_{j}(\xi')K_{l}^{(1)}(\xi', x_{3} - y_{3})\widehat{f}(\xi', y_{3})\right]\right\|}_{L^{p}}~{\rm d}y_{3}\notag\\ 
    &+ \int_{0}^{+\infty}{\left\|\mathcal{F}_{\mathbb{R}^{2}}^{-1}\left[\widehat{\phi}_{j}(\xi')K_{l}^{(1)}(\xi', x_{3} + y_{3})\widehat{f}(\xi', y_{3})\right]\right\|}_{L^{p}}~{\rm d}y_{3}. 
  \end{align}
  Thus, from Lemma~\eqref{0611-7}, we have 
  \begin{align*}
    {\left\|\phi_{j}*T_{K_{1}^{(1)}}^{(\pm)}[f]\right\|}_{L^{p}}
    \leq& C2^{-j}\int_{0}^{+\infty}(e^{-c|x_{3} - y_{3}|2^{j}} + e^{-c(x_{3} + y_{3})2^{j}}){\|\phi_{j}*f(\cdot, y_{3})\|}_{L^{p}}~{\rm d}y_{3}\\ 
    \leq& C2^{-j}\int_{0}^{+\infty}e^{-c|x_{3} - y_{3}|2^{j}}{\|\phi_{j}*f(\cdot, y_{3})\|}_{L^{p}}~{\rm d}y_{3}, \\ 
    {\left\|\phi_{j}*T_{K_{2}^{(1)}}^{(\pm)}[f]\right\|}_{L^{p}}
    \leq& C\int_{0}^{+\infty}e^{-c|x_{3} - y_{3}|2^{j}}{\|\phi_{j}*f(\cdot, y_{3})\|}_{L^{p}}~{\rm d}y_{3}, \\ 
    {\left\|\phi_{j}*T_{K_{3}^{(1)}}^{(\pm)}[f]\right\|}_{L^{p}}
    \leq& C2^{-3j}\int_{0}^{+\infty}(1 + |x_{3} - y_{3}|2^{j})e^{-c|x_{3} - y_{3}|2^{j}}{\|\phi_{j}*f(\cdot, y_{3})\|}_{L^{p}}~{\rm d}y_{3}\\ 
    &+ C2^{-3j}\int_{0}^{+\infty}(1 + (x_{3} + y_{3})2^{j})e^{-c(x_{3} + y_{3})2^{j}}{\|\phi_{j}*f(\cdot, y_{3})\|}_{L^{p}}~{\rm d}y_{3}\\ 
    \leq& C2^{-3j}\int_{0}^{+\infty}e^{-c|x_{3} - y_{3}|2^{j}}{\|\phi_{j}*f(\cdot, y_{3})\|}_{L^{p}}~{\rm d}y_{3}, \\ 
    {\left\|\phi_{j}*T_{K_{4}^{(1)}}^{(\pm)}[f]\right\|}_{L^{p}}
    \leq& C2^{-2j}\int_{0}^{+\infty}e^{-c|x_{3} - y_{3}|2^{j}}{\|\phi_{j}*f(\cdot, y_{3})\|}_{L^{p}}~{\rm d}y_{3}, \\ 
    {\left\|\phi_{j}*T_{K_{5}^{(1)}}^{(\pm)}[f]\right\|}_{L^{p}}
    \leq& C2^{-j}\int_{0}^{+\infty}e^{-c|x_{3} - y_{3}|2^{j}}{\|\phi_{j}*f(\cdot, y_{3})\|}_{L^{p}}~{\rm d}y_{3}. 
  \end{align*}
  Here, we use 
  \begin{equation*}
    x_{3}2^{j}e^{-x_{3}2^{j}} 
    \leq Ce^{-cx_{3}2^{j}}. 
  \end{equation*}
  Therefore, we obtain 
  \begin{align*}
    {\|\phi_{j}*{\bf w}^{(1)}\|}_{L^{p}} 
    \leq& C \int_{0}^{+\infty}e^{-c|x_{3} - y_{3}|2^{j}}\left({\left\|\phi_{j}*{\bf F}(\cdot, y_{3})\right\|}_{L^{p}} + {\left\|\phi_{j}*({\bf u}\otimes{\bf u})(\cdot, y_{3})\right\|}_{L^{p}}\right)~{\rm d}y_{3}\\ 
    &+ C|\Omega|2^{-j} \int_{0}^{+\infty}e^{-c|x_{3} - y_{3}|2^{j}}{\left\|\phi_{j}*{\bf u}(\cdot, y_{3})\right\|}_{L^{p}}~{\rm d}y_{3}. 
  \end{align*}
  By Young's inequality (note that $\max\{r_{0}, r_{1}\} \leq r$), we have 
  \begin{align}\label{0611-9}
    {\|\phi_{j}*{\bf w}^{(1)}\|}_{L^{r}(0, \infty; L^{p})} 
    \leq& C 2^{\left(-1 - \frac{1}{r} + \frac{1}{r_{0}}\right)j}{\left\|\phi_{j}*{\bf F}\right\|}_{L^{r_{0}}(0, \infty; L^{p})}\notag\\ 
    &+ C 2^{\left(-1 - \frac{1}{r} + \frac{2}{r_{1}}\right)j}{\left\|\phi_{j}*({\bf u}\otimes{\bf u})\right\|}_{L^{\frac{r_{1}}{2}}(0, \infty; L^{p})}\notag\\ 
    &+ C|\Omega|2^{\left(-2 - \frac{1}{r} + \frac{1}{r_{1}}\right)j}{\left\|\phi_{j}*{\bf u}\right\|}_{L^{r_{1}}(0, \infty; L^{p})}. 
  \end{align}
  Multiplying $2^{(s + \frac{1}{r})j}$ to \eqref{0611-9} and taking the $l^{q}(\mathbb{Z})$ norm with respect to $j$, 
  we get \eqref{0611-8}. 
\end{proof}

By the same argument, we can also derive an estimate for ${\bf v}^{(1)}$. 
\begin{prop}\label{0611-13}
  Let $\Omega \in \mathbb{R}$, $s\in \mathbb{R}$ and $1\leq p,q,r,r_{0},r_{1} \leq \infty$ with $2\leq r_{1}$. 
  Let ${\bf u}$ and ${\bf F}$ be smooth functions on $\mathbb{R}_{+}^{3}$ and ${\bf a}$ be a smooth function on $\mathbb{R}^{2}$ with $a_{3} = 0$. 
  Then there exists a constant $C>0$ such that for ${\bf v}^{(1)}$ given by \eqref{0611-5}, we have 
  \begin{align*}
    {\|{\bf v}^{(1)}\|}_{\widetilde{L}_{x_{3}}^{r}(0, \infty ; \dot{B}_{p,q}^{s + \frac{1}{r}})} 
    \leq& C {\|{\bf a}\|}_{\dot{B}_{p,q}^{s}}
    + C {\|{\bf F}\|}_{\widetilde{L}_{x_{3}}^{r_{0}}(0, \infty ; \dot{B}_{p,q}^{s + \frac{1}{r_{0}} - 1})} 
    + C {\|{\bf u}\otimes{\bf u}\|}_{\widetilde{L}_{x_{3}}^{\frac{r_{1}}{2}}(0, \infty ; \dot{B}_{p,q}^{s + \frac{2}{r_{1}} - 1})}\notag\\ 
    &+ C |\Omega|{\|{\bf u}\|}_{\widetilde{L}_{x_{3}}^{r_{1}}(0, \infty ; \dot{B}_{p,q}^{s + \frac{1}{r_{1}} - 2})}. 
  \end{align*}
\end{prop}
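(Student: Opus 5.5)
The estimate splits into a boundary-value part and a trace part. We localize the explicit formula \eqref{0611-5} in frequency, applying $\phi_{j}*$ to both components. Write $\mathbf{b} := \mathbf{a}_{h} - \mathbf{w}_{h}^{(1)}(\cdot, 0)$. The multipliers $\nabla'(-\Delta_{\mathbb{R}^{2}})^{-\frac{1}{2}}\nabla'\cdot$ and $\nabla'\cdot$ act on the $j$-th block with norms bounded by $C$ and $C2^{j}$ respectively, by Lemma~\ref{0611-6}. Hence Lemma~\ref{0611-10} gives
\begin{equation*}
  {\|\phi_{j}*{\bf v}^{(1)}(\cdot, x_{3})\|}_{L^{p}}
  \leq C(1 + x_{3}2^{j})e^{-cx_{3}2^{j}}{\|\phi_{j}*\mathbf{b}\|}_{L^{p}}
  \leq Ce^{-c'x_{3}2^{j}}{\|\phi_{j}*\mathbf{b}\|}_{L^{p}},
\end{equation*}
using $x_{3}2^{j}e^{-cx_{3}2^{j}}\leq Ce^{-c'x_{3}2^{j}}$ as in the proof of Proposition~\ref{0611-11}. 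Taking the $L^{r}(0,\infty)$ norm in $x_{3}$ produces the factor ${\|e^{-c'x_{3}2^{j}}\|}_{L^{r}_{x_{3}}} = C2^{-\frac{j}{r}}$. Multiplying by $2^{(s + \frac{1}{r})j}$ and summing in $l^{q}$ then yields
\begin{equation*}
  {\|{\bf v}^{(1)}\|}_{\widetilde{L}_{x_{3}}^{r}(0,\infty;\dot{B}_{p,q}^{s + \frac{1}{r}})}
  \leq C{\|{\bf a}\|}_{\dot{B}_{p,q}^{s}} + C{\left\|{\bf w}^{(1)}(\cdot, 0)\right\|}_{\dot{B}_{p,q}^{s}}.
\end{equation*}
This holds for every $1\leq r\leq\infty$, which is why no relation between $r$ and $r_{0}, r_{1}$ is assumed here.

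It remains to bound the trace ${\|{\bf w}^{(1)}(\cdot,0)\|}_{\dot{B}^{s}_{p,q}}$, which is the main step. We reuse the pointwise-in-$x_{3}$ bound from the proof of Proposition~\ref{0611-11}:
\begin{align*}
  {\|\phi_{j}*{\bf w}^{(1)}(\cdot, x_{3})\|}_{L^{p}}
  \leq& C\int_{0}^{\infty}e^{-c|x_{3} - y_{3}|2^{j}}\left({\|\phi_{j}*{\bf F}(\cdot, y_{3})\|}_{L^{p}} + {\|\phi_{j}*({\bf u}\otimes{\bf u})(\cdot, y_{3})\|}_{L^{p}}\right){\rm d}y_{3}\\
  &+ C|\Omega|2^{-j}\int_{0}^{\infty}e^{-c|x_{3} - y_{3}|2^{j}}{\|\phi_{j}*{\bf u}(\cdot, y_{3})\|}_{L^{p}}{\rm d}y_{3}.
\end{align*}
We evaluate this at $x_{3} = 0$, which is legitimate since the kernel bounds of Lemma~\ref{0611-7} extend continuously to $x_{3} = 0$ for smooth data. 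Instead of Young's inequality we now use Hölder's inequality in $y_{3}$, with ${\|e^{-cy_{3}2^{j}}\|}_{L^{\sigma'}} = C2^{-j/\sigma'}$ for exponent $\sigma\in\{r_{0}, r_{1}/2, r_{1}\}$. This gives the factors $2^{(-1+\frac{1}{r_{0}})j}$ for ${\bf F}$, $2^{(-1+\frac{2}{r_{1}})j}$ for ${\bf u}\otimes{\bf u}$ (here $r_{1}\geq2$ is needed so that $r_{1}/2\geq 1$), and $|\Omega|2^{(-2+\frac{1}{r_{1}})j}$ for ${\bf u}$. Multiplying by $2^{sj}$ and taking the $l^{q}$ norm gives exactly the three remaining terms on the right-hand side of the claim. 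Equivalently, this is Proposition~\ref{0611-11} with $r = \infty$ and $\widetilde{L}^{\infty}$ controlling the trace, though Proposition~\ref{0611-11} requires $\max\{r_{0},r_{1}\}\le r$, which holds trivially for $r=\infty$.

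The only delicate point is justifying the trace evaluation. This is harmless for smooth ${\bf u}, {\bf F}$ as assumed, and the Hölder estimate is uniform in $x_{3}\ge 0$, so the bound in fact controls $\sup_{x_{3}}$ and continuity up to the boundary.
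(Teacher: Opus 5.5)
Your proposal is correct and follows the paper's proof essentially verbatim. Lemma~\ref{0611-10} applied to \eqref{0611-5} gives the factor $2^{-\frac{j}{r}}$ in front of ${\|\phi_{j}*{\bf a}\|}_{L^{p}} + {\|\phi_{j}*{\bf w}^{(1)}(\cdot,0)\|}_{L^{p}}$, and the trace is then handled by the kernel bounds of Lemma~\ref{0611-7} at $x_{3}=0$ together with H\"older's inequality in $y_{3}$, exactly as you do. There is one slip: $\nabla'(-\Delta_{\mathbb{R}^{2}})^{-\frac{1}{2}}\nabla'\cdot$ is of order one, so it costs $C2^{j}$ on the $j$-th block rather than $C$; this is what turns the prefactor $x_{3}$ into $x_{3}2^{j}$, so your displayed bound $C(1+x_{3}2^{j})e^{-cx_{3}2^{j}}$ is nonetheless right.
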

\begin{proof}
  Thanks to Lemma~\eqref{0611-10}, for each $j\in \mathbb{Z}$, we have 
  \begin{equation}\label{1023-1}
    {\|\phi_{j}*{\bf v}^{(1)}\|}_{L^{r}(0, \infty; L^{p})} 
    \leq C 2^{-\frac{1}{r}j}{\|\phi_j*{\bf a}\|}_{L^{p}} 
    + 2^{-\frac{1}{r}j}{\|\phi_{j}*{\bf w}^{(1)}(\cdot, 0)\|}_{L^{p}}. 
  \end{equation}
  We estimate ${\bf w}^{(1)}(\cdot, 0)$. 
  As in the proof of Proposition~\ref{0611-11}, by Lemma~\ref{0611-7}, it follows that 
  \begin{align*}
    {\|\phi_{j}*{\bf w}^{(1)}(\cdot, 0)\|}_{L^{p}} 
    \leq& C \int_{0}^{+\infty}e^{-cy_{3}2^{j}}\left({\left\|\phi_{j}*{\bf F}(\cdot, y_{3})\right\|}_{L^{p}} + {\left\|\phi_{j}*({\bf u}\otimes{\bf u})(\cdot, y_{3})\right\|}_{L^{p}}\right)~{\rm d}y_{3}\\ 
    &+ C|\Omega|2^{-j} \int_{0}^{+\infty}e^{-cy_{3}2^{j}}{\left\|\phi_{j}*{\bf u}(\cdot, y_{3})\right\|}_{L^{p}}~{\rm d}y_{3}. 
  \end{align*}
  Using H\"older's inequality, we have 
  \begin{align}\label{0611-12}
    {\|\phi_{j}*{\bf w}^{(1)}(\cdot, 0)\|}_{L^{p}} 
    \leq& C 2^{-\frac{1}{r_{0}'}j}{\|\phi_{j}*{\bf F}\|}_{L^{r_{0}}(0, \infty; L^{p})} 
    + C 2^{-\frac{1}{(r_{1}/2)'}j}{\|\phi_{j}*({\bf u}\otimes{\bf u})\|}_{L^{\frac{r_{1}}{2}}(0, \infty; L^{p})}\notag\\ 
    &+ C|\Omega|2^{-\left(1 + \frac{1}{r_{1}'}\right)j}{\|\phi_{j}*{\bf u}\|}_{L^{r_{1}}(0, \infty; L^{p})}\notag\\ 
    =& C 2^{\left(\frac{1}{r_{0}} - 1\right)j}{\|\phi_{j}*{\bf F}\|}_{L^{r_{0}}(0, \infty; L^{p})} 
    + C 2^{\left(\frac{2}{r_{1}} - 1\right)j}{\|\phi_{j}*({\bf u}\otimes{\bf u})\|}_{L^{\frac{r_{1}}{2}}(0, \infty; L^{p})}\notag\\ 
    &+ C|\Omega|2^{\left(\frac{1}{r_{1}} - 2\right)j}{\|\phi_{j}*{\bf u}\|}_{L^{r_{1}}(0, \infty; L^{p})}. 
  \end{align}
  Substituting \eqref{0611-12} into the right-hand side of \eqref{1023-1} and 
  multiplying \eqref{0611-12} by $2^{(s + \frac{1}{r})j}$ and taking the $l^{q}(\mathbb{Z})$ norm with respect to $j$, 
  we obtain the desired estimate. 
  This completes completes the proof of Proposition~\ref{0611-13}. 
\end{proof}

\subsection{Explicit formulas and estimates for ${\bf w}^{(2)}$ and ${\bf v}^{(2)}$}\label{0623-2}
In this subsection, we consider 
\begin{equation}\label{0611-14}
  \begin{cases}
  -\Delta{\bf w}^{(2)} + \Omega(e_{3}\times{\bf w}^{(2)}) + \nabla \pi^{(2)} = \text{div }(\bar{\bf F} - \bar{\bf u}\otimes\bar{\bf u}), 
  &\quad \  x\in\mathbb{R}^{3},\\
  \text{div } {\bf w}^{(2)} =0,
  &\quad \  x\in\mathbb{R}^{3}, 
  \end{cases}
\end{equation}
and 
\begin{equation}\label{0611-15}
  \begin{cases}
    -\Delta {\bf v}^{(2)} + \Omega(e_{3}\times{\bf v}^{(2)}) + \nabla q^{(2)} = 0, 
          &\quad  \   x\in \mathbb{R}_{+}^{3}, \\
    \text{div } {\bf v}^{(2)} = 0,
          &\quad  \   x\in \mathbb{R}_{+}^{3}, \\
    {\bf v}^{(2)}(x', 0) =  {\bf a}(x') - {\bf w}^{(2)}(x', 0),
          &\quad  \ x'\in{\mathbb{R}}^2, 
  \end{cases}
\end{equation}
where $\Omega \in \mathbb{R}\setminus\{0\}$, ${\bf u}$ and ${\bf F}$ are given functions on $\mathbb{R}_{+}^{3}$, 
and $\bar{\bf u}$ and $\bar{\bf F}$ are the extensions of ${\bf u}$ and ${\bf F}$ to $\mathbb{R}^{3}$ 
defined by \eqref{0625-6}, \eqref{0625-7} and \eqref{0625-8}, 
and ${\bf a}$ is a given boundary data with $a_{3} = 0$. 
We derive explicit formulas for ${\bf w}^{(2)}$ and ${\bf v}^{(2)}$ by the same procedure as for ${\bf w}^{(1)}$ and ${\bf v}^{(1)}$. 
Since \eqref{0611-14} and \eqref{0611-15} contain the Coriolis term in their principal parts, 
their explicit formulas are more complicated than those for ${\bf w}^{(1)}$ and ${\bf v}^{(1)}$. 

We first consider \eqref{0611-14}. 
We again use the notation ${\bf G} = {\bf F} - {\bf u}\otimes{\bf u}$. 
By applying $\mathbb{P}_{\mathbb{R}^{3}}$ to both sides of \eqref{0611-14} and taking the Fourier transform on $\mathbb{R}^{3}$, 
we obtain 
\begin{equation}\label{0611-16}
  |\xi|^{2}\mathcal{F}_{\mathbb{R}^{3}}[{\bf w}^{(2)}] - \Omega \frac{i\xi_{3}}{|\xi|^{2}}i\xi\times(\mathcal{F}_{\mathbb{R}^{3}}[{\bf w}^{(2)}]) 
  = \mathcal{F}_{\mathbb{R}^{3}}\left[\mathbb{P}_{\mathbb{R}^{3}}(\text{div }\bar{\bf G})\right]. 
\end{equation}
It is known that ${\bf w}^{(2)}$ satisfying \eqref{0611-16} can be represented as follows (see~\cite{Ko_Yo_2011}). 
\begin{align*}
  \mathcal{F}_{\mathbb{R}^{3}}[{\bf w}^{(2)}](\xi) 
  &= \int_{0}^{+\infty}\cos\left(\Omega\frac{\xi_{3}}{|\xi|}t\right)e^{-t|\xi|^{2}}\mathcal{F}_{\mathbb{R}^{3}}\left[\mathbb{P}_{\mathbb{R}^{3}}(\text{div }\bar{\bf G})\right](\xi)~{\rm d}t\\ 
  +& \int_{0}^{+\infty}\frac{i}{|\xi|}\sin\left(\Omega\frac{\xi_{3}}{|\xi|}t\right)e^{-t|\xi|^{2}}\mathcal{F}_{\mathbb{R}^{3}}\left[\nabla\times\left(\mathbb{P}_{\mathbb{R}^{3}}(\text{div }\bar{\bf G})\right)\right](\xi)~{\rm d}t. 
\end{align*}
Note that by the properties of the Helmholtz projection, we have 
\begin{equation*}
  \nabla\times\left(\mathbb{P}_{\mathbb{R}^{3}}(\text{div }\bar{\bf G})\right) 
  = \nabla\times(\text{div }\bar{\bf G}). 
\end{equation*}
Also, we get 
\begin{align*}
  \int_{0}^{+\infty}\cos\left(\Omega\frac{\xi_{3}}{|\xi|}t\right)e^{-t|\xi|^{2}}~{\rm d}t 
  &= \frac{|\xi|^{4}}{|\xi|^{6} + |\Omega|^{2}\xi_{3}^{2}}, \\ 
  \int_{0}^{+\infty}\frac{i}{|\xi|}\sin\left(\Omega\frac{\xi_{3}}{|\xi|}t\right)e^{-t|\xi|^{2}}~{\rm d}t 
  &= \frac{\Omega i\xi_{3}}{|\xi|^{6} + |\Omega|^{2}\xi_{3}^{2}}. 
\end{align*}
Thus, we obtain 
\begin{equation}\label{0612-1}
  \mathcal{F}_{\mathbb{R}^{3}}[{\bf w}^{(2)}] 
  = \frac{|\xi|^{4}}{|\xi|^{6} + |\Omega|^{2}\xi_{3}^{2}}\mathcal{F}_{\mathbb{R}^{3}}\left[\mathbb{P}_{\mathbb{R}^{3}}(\text{div }\bar{\bf G})\right] 
  + \frac{\Omega i\xi_{3}}{|\xi|^{6} + |\Omega|^{2}\xi_{3}^{2}}i\xi\times (\mathcal{F}_{\mathbb{R}^{3}}\left[\text{div }\bar{\bf G}\right]). 
\end{equation}
As in Subsection~\ref{0612-2}, we take the inverse Fourier transform of \eqref{0612-1} with respect to $\xi_{3}$. 
The representation of $\mathcal{F}_{\mathbb{R}^{3}}\left[\mathbb{P}_{\mathbb{R}^{3}}(\text{div }\bar{\bf G})\right]$ has already been given in \eqref{0612-3}, 
and we can write 
\begin{align*}
  \left(i\xi\times\big(\mathcal{F}_{\mathbb{R}^{3}}\left[\text{div }\bar{\bf G}\right]\big)\right)_{h} 
  =& i\xi_{3}\left(\mathcal{F}_{\mathbb{R}^{3}}[\text{div }\bar{\bf G}]\right)_{h}^{\perp}
- (i\xi')^{\perp}\left(\mathcal{F}_{\mathbb{R}^{3}}[\text{div }\bar{\bf G}]\right)_{3}\\ 
  =& i\xi_{3}\sum_{m = 1}^{2}i\xi_{m}\mathcal{F}_{\mathbb{R}^{3}}[(\bar{G}_{m, 1}, \bar{G}_{m, 2})^{\perp}] 
  + (i\xi_{3})^{2}\mathcal{F}_{\mathbb{R}^{3}}[(\bar{G}_{3, 1}, \bar{G}_{3, 2})^{\perp}]\\ 
  &- (i\xi')^{\perp}\sum_{m = 1}^{2}i\xi_{m}\mathcal{F}_{\mathbb{R}^{3}}[\bar{G}_{m, 3}] 
  - (i\xi')^{\perp}i\xi_{3}\mathcal{F}_{\mathbb{R}^{3}}[\bar{G}_{3, 3}],\\ 
  \left(i\xi\times\big(\mathcal{F}_{\mathbb{R}^{3}}\left[\text{div }\bar{\bf G}\right]\big)\right)_{3} 
  =& (i\xi')^{\perp}\cdot\left(\mathcal{F}_{\mathbb{R}^{3}}[\text{div }\bar{\bf G}]\right)_{h}\\ 
  =& (i\xi')^{\perp}\cdot\Big(\sum_{m = 1}^{2}i\xi_{m}\mathcal{F}_{\mathbb{R}^{3}}[(\bar{G}_{m, 1}, \bar{G}_{m, 2})] 
  + i\xi_{3}\mathcal{F}_{\mathbb{R}^{3}}[(\bar{G}_{3, 1}, \bar{G}_{3, 2})]\Big). 
\end{align*}
Hence, it follows that 
\begin{align*}
  \mathcal{F}_{\mathbb{R}^{3}}[{\bf w}_{h}^{(2)}] 
  =& \frac{(|\xi'|^{2} + \xi_{3}^{2})^{2}}{(|\xi'|^{2} + \xi_{3}^{2})^{3} + |\Omega|^{2}\xi_{3}^{2}}\left(\sum_{m = 1}^{2}i\xi_{m}\mathcal{F}_{\mathbb{R}^{3}}\left[(\bar{G}_{m, 1}, \bar{G}_{m, 2})\right]\right. \\ 
  &+ i\xi_{3}\mathcal{F}_{\mathbb{R}^{3}}\left[(\bar{G}_{3, 1}, \bar{G}_{3, 2})\right]
  + \frac{i\xi'}{|\xi'|^{2} + \xi_{3}^{2}}\sum_{m, n = 1}^{2}i\xi_{m}i\xi_{n}\mathcal{F}_{\mathbb{R}^{3}}\left[\bar{G}_{m,n}\right]\\ 
  &+ \frac{i\xi'i\xi_{3}}{|\xi'|^{2} + \xi_{3}^{2}}\sum_{m = 1}^{2}i\xi_{m}\mathcal{F}_{\mathbb{R}^{3}}\left[\bar{G}_{m, 3} + \bar{G}_{3,m}\right] 
  + \left.\frac{i\xi'(i\xi_{3})^{2}}{|\xi'|^{2} + \xi_{3}^{2}}\mathcal{F}_{\mathbb{R}^{3}}\left[\bar{G}_{3,3}\right]\right) \\ 
  &+ \frac{\Omega i\xi_{3}}{(|\xi'|^{2} + \xi_{3}^{2})^{3} + |\Omega|^{2}\xi_{3}^{2}}\left(i\xi_{3}\sum_{m = 1}^{2}i\xi_{m}\mathcal{F}_{\mathbb{R}^{3}}[(\bar{G}_{m, 1}, \bar{G}_{m, 2})^{\perp}]\right.\\  
  &+ (i\xi_{3})^{2}\mathcal{F}_{\mathbb{R}^{3}}[(\bar{G}_{3, 1}, \bar{G}_{3, 2})^{\perp}]
  \left.- (i\xi')^{\perp}\sum_{m = 1}^{2}i\xi_{m}\mathcal{F}_{\mathbb{R}^{3}}[\bar{G}_{m, 3}] 
  - (i\xi')^{\perp}i\xi_{3}\mathcal{F}_{\mathbb{R}^{3}}[\bar{G}_{3, 3}]\right)\\ 
  =& \frac{1}{(|\xi'|^{2} + \xi_{3}^{2})^{3} + |\Omega|^{2}\xi_{3}^{2}}\left(\sum_{m = 1}^{2}(|\xi'|^{2} + \xi_{3}^{2})^{2}i\xi_{m}\mathcal{F}_{\mathbb{R}^{3}}\left[(\bar{G}_{m, 1}, \bar{G}_{m, 2})\right]\right. \\ 
  &+ i\xi_{3}(|\xi'|^{2} + \xi_{3}^{2})^{2}\mathcal{F}_{\mathbb{R}^{3}}\left[(\bar{G}_{3, 1}, \bar{G}_{3, 2})\right]
  + (|\xi'|^{2} + \xi_{3}^{2})i\xi'\sum_{m, n = 1}^{2}i\xi_{m}i\xi_{n}\mathcal{F}_{\mathbb{R}^{3}}\left[\bar{G}_{m,n}\right]\\ 
  &+ i\xi_{3}(|\xi'|^{2} + \xi_{3}^{2})i\xi'\sum_{m = 1}^{2}i\xi_{m}\mathcal{F}_{\mathbb{R}^{3}}\left[\bar{G}_{m, 3} + \bar{G}_{3,m}\right]\\ 
  &+ (i\xi_{3})^{2}(|\xi'|^{2} + \xi_{3}^{2})i\xi'\mathcal{F}_{\mathbb{R}^{3}}\left[\bar{G}_{3,3}\right] \\ 
  &+ \Omega(i\xi_{3})^{2}\sum_{m = 1}^{2}i\xi_{m}\mathcal{F}_{\mathbb{R}^{3}}[(\bar{G}_{m, 1}, \bar{G}_{m, 2})^{\perp}] 
  + \Omega(i\xi_{3})^{3}\mathcal{F}_{\mathbb{R}^{3}}[(\bar{G}_{3, 1}, \bar{G}_{3, 2})^{\perp}]\\ 
  &- \left.\Omega i\xi_{3}(i\xi')^{\perp}\sum_{m = 1}^{2}i\xi_{m}\mathcal{F}_{\mathbb{R}^{3}}[\bar{G}_{m, 3}] 
  - \Omega(i\xi_{3})^{2}(i\xi')^{\perp}\mathcal{F}_{\mathbb{R}^{3}}[\bar{G}_{3, 3}]\right), \\ 
  \mathcal{F}_{\mathbb{R}^{3}}[w_{3}^{(2)}] 
  =& \frac{1}{(|\xi'|^{2} + \xi_{3}^{2})^{3} + |\Omega|^{2}\xi_{3}^{2}}\left((|\xi'|^{2} + \xi_{3}^{2})|\xi'|^{2}\sum_{m = 1}^{2}i\xi_{m}\mathcal{F}_{\mathbb{R}^{3}}\left[\bar{G}_{m, 3}\right]\right.\\ 
  &+ i\xi_{3}(|\xi'|^{2} + \xi_{3}^{2})|\xi'|^{2}\mathcal{F}_{\mathbb{R}^{3}}\left[\bar{G}_{3,3}\right] 
  + i\xi_{3}(|\xi'|^{2} + \xi_{3}^{2})\sum_{m,n = 1}i\xi_{m}i\xi_{n}\mathcal{F}_{\mathbb{R}^{3}}\left[\bar{G}_{m,n}\right]\\ 
  &+ (i\xi_{3})^{2}(|\xi'|^{2} + \xi_{3}^{2})\sum_{m = 1}^{2}i\xi_{m}\mathcal{F}_{\mathbb{R}^{3}}\left[\bar{G}_{3,m}\right]\\ 
  &+ \Omega i\xi_{3}(i\xi')^{\perp}\cdot\Big(\sum_{m = 1}^{2}i\xi_{m}\mathcal{F}_{\mathbb{R}^{3}}[(\bar{G}_{m, 1}, \bar{G}_{m, 2})]\Big)\\ 
  &+ \Omega (i\xi_{3})^{2}(i\xi')^{\perp}\cdot(\mathcal{F}_{\mathbb{R}^{3}}[(\bar{G}_{3, 1}, \bar{G}_{3, 2})])\Bigg). 
\end{align*}
We consider 
\begin{equation*}
  \mathcal{F}_{\mathbb{R}_{\xi_{3}}}^{-1}\left[\frac{1}{(|\xi'|^{2} + \xi_{3}^{2})^{3} + |\Omega|^{2}\xi_{3}^{2}}\right](\xi', \xi_{3}). 
\end{equation*}
The following holds. 

\begin{lem}\label{0804-2}
  Let $\xi'\in \mathbb{R}^{2}\backslash\{0\}$ and $\Omega \in \mathbb{R}\backslash\{0\}$. 
  Then we have 
  \begin{equation*}
    \mathcal{F}_{\mathbb{R}_{\xi_{3}}}^{-1}\left[\frac{1}{(|\xi'|^{2} + \xi_{3}^{2})^{3} + |\Omega|^{2}\xi_{3}^{2}}\right](\xi', x_{3}) 
    = \left(\frac{\pi}{2}\right)^{\frac{1}{2}}\sum_{k = 1}^{3}\frac{B_{k}}{\lambda_{|\Omega|, k}}e^{-\lambda_{|\Omega|, k}|x_{3}|}, 
  \end{equation*}
  where $\lambda_{|\Omega|, k} = \lambda_{|\Omega|, k}(|\xi'|)$ $(k = 1,2,3)$ is a root of the following algebraic equation for a fixed $|\xi'|$ and $|\Omega|$: 
  \begin{equation}\label{0804-1}
    (|\xi'|^{2} - \lambda^{2})^{3} - |\Omega|^{2}\lambda^{2} = 0, 
  \end{equation}
  and 
  \begin{equation*}
    B_{k} := \left(\prod_{\alpha = 1, \alpha \ne k}^{3}(\lambda_{|\Omega|, \alpha}^{2}-\lambda_{|\Omega|, k}^2)\right)^{-1}. 
  \end{equation*}
\end{lem}

\begin{proof}
  We can write 
  \begin{align*}
    (|\xi'|^{2} + \xi_{3}^{2})^{3} + |\Omega|^{2}\xi_{3}^{2} 
    &= (|\xi'|^{2} - (i\xi_{3})^{2})^{3} - |\Omega|^{2}(i\xi_{3})^{2}\\ 
    &= (\lambda_{|\Omega|, 1}^{2} - (i\xi_{3})^{2})(\lambda_{|\Omega|, 2}^{2} - (i\xi_{3})^{2})(\lambda_{|\Omega|, 3}^{2} - (i\xi_{3})^{2}). 
  \end{align*}
  Note that $\lambda_{|\Omega|, k}$ cannot be purely imaginary, 
  since substituting $\lambda = i\eta$ with $\eta \in \mathbb{R}$ into \eqref{0804-1} yields 
  \begin{equation*}
    (|\xi'|^{2} + \eta^{2})^{3} + |\Omega|^{2}\eta^{2} = 0, 
  \end{equation*}
  which is impossible for $|\xi'| \neq 0$. 
  Also, without loss of generality, we can assume that $\Re(\lambda_{|\Omega|, k}) > 0$ $(k = 1,2,3)$. 
  A detailed expression for $\lambda_{|\Omega|, k}$ will be given later; see \eqref{0625-9}. 
  Therefore, by partial fraction decomposition, it follows that 
  \begin{align*}
    \frac{1}{(|\xi'|^{2} + \xi_{3}^{2})^{3} + |\Omega|^{2}\xi_{3}^{2}} 
    =& \frac{1}{(\lambda_{|\Omega|, 1}^{2} - (i\xi_{3})^{2})(\lambda_{|\Omega|, 2}^{2} - (i\xi_{3})^{2})(\lambda_{|\Omega|, 3}^{2} - (i\xi_{3})^{2})}\\ 
    =& \sum_{k = 1}^{3}\frac{B_{k}}{\lambda_{|\Omega|, k}^{2} - (i\xi_{3})^{2}}. 
  \end{align*}
  Now, we consider 
  \begin{align*}
    \mathcal{F}_{\mathbb{R}_{\xi_{3}}}^{-1}\left[\frac{1}{\lambda_{|\Omega|, k}^{2} - (i\xi_{3})^{2}}\right](\xi', x_{3}) 
    =& \frac{1}{(2\pi)^{\frac{1}{2}}}\int_{-\infty}^{+\infty}\frac{e^{i\xi_{3}x_{3}}}{\lambda_{|\Omega|, k}^{2} - (i\xi_{3})^{2}}~{\rm d}\xi_{3}\\ 
    =& \frac{1}{(2\pi)^{\frac{1}{2}}}\int_{-\infty}^{+\infty}\frac{e^{i\xi_{3}x_{3}}}{(\xi_{3} - i\lambda_{|\Omega|, k})(\xi_{3} + i\lambda_{|\Omega|, k})}~{\rm d}\xi_{3}. 
  \end{align*}
  Since the above integrand has simple poles at $\xi_{3} = \pm i\lambda_{|\Omega|, k}$ and $\Re(\lambda_{|\Omega|, k}) > 0$ $(k = 1, 2, 3)$, 
  using the residue theorem, if $x_{3} > 0$, then we obtain 
  \begin{equation*}
    \int_{-\infty}^{+\infty}\frac{e^{i\xi_{3}x_{3}}}{(\xi_{3} - i\lambda_{|\Omega|, k})(\xi_{3} + i\lambda_{|\Omega|, k})}~{\rm d}\xi_{3} 
    = 2\pi i\lim_{\xi_{3} \to +i\lambda_{|\Omega|, k}}\left(\frac{e^{i\xi_{3}x_{3}}}{(\xi_{3} + i\lambda_{|\Omega|, k})}\right) 
    = \frac{\pi}{\lambda_{|\Omega|, k}}e^{-\lambda_{|\Omega|, k}x_{3}}. 
  \end{equation*}
  Similarly, if $x_{3} < 0$, it follows that 
  \begin{equation*}
    \int_{-\infty}^{+\infty}\frac{e^{i\xi_{3}x_{3}}}{(\xi_{3} - i\lambda_{|\Omega|, k})(\xi_{3} + i\lambda_{|\Omega|, k})}~{\rm d}\xi_{3} 
    = -2\pi i\lim_{\xi_{3} \to -i\lambda_{|\Omega|, k}}\left(\frac{e^{i\xi_{3}x_{3}}}{(\xi_{3} - i\lambda_{|\Omega|, k})}\right) 
    = \frac{\pi}{\lambda_{|\Omega|, k}}e^{\lambda_{|\Omega|, k}x_{3}}. 
  \end{equation*}
  Therefore, we have 
  \begin{equation*}
    \mathcal{F}_{\mathbb{R}_{\xi_{3}}}^{-1}\left[\frac{1}{(|\xi'|^{2} + \xi_{3}^{2})^{3} + |\Omega|^{2}\xi_{3}^{2}}\right](\xi', x_{3}) 
    = \left(\frac{\pi}{2}\right)^{\frac{1}{2}}\sum_{k = 1}^{3}\frac{B_{k}}{\lambda_{|\Omega|, k}}e^{-\lambda_{|\Omega|, k}|x_{3}|}. 
  \end{equation*}
\end{proof}

\begin{nota}
  Applying Lemma~\ref{0804-2}, we obtain the following. 
  \begin{align}\label{0612-4}
    \mathcal{F}_{\mathbb{R}_{\xi_{3}}}^{-1}\left[\frac{i\xi_{3}}{(|\xi'|^{2} + \xi_{3}^{2})^{3} + |\Omega|^{2}\xi_{3}^{2}}\right](\xi', x_{3}) 
    =& \partial_{3}\mathcal{F}_{\mathbb{R}_{\xi_{3}}}^{-1}\left[\frac{1}{(|\xi'|^{2} + \xi_{3}^{2})^{3} + |\Omega|^{2}\xi_{3}^{2}}\right](\xi', x_{3})\notag\\ 
    =& -\left(\frac{\pi}{2}\right)^{\frac{1}{2}}\sum_{k = 1}^{3}\text{sgn}(x_{3})B_{k}e^{-\lambda_{|\Omega|, k}|x_{3}|} 
    =: K_{1}^{(2)}(\xi', x_{3}), \notag\\ 
    \mathcal{F}_{\mathbb{R}_{\xi_{3}}}^{-1}\left[\frac{(i\xi_{3})^{2}}{(|\xi'|^{2} + \xi_{3}^{2})^{3} + |\Omega|^{2}\xi_{3}^{2}}\right](\xi', x_{3}) 
    =& \partial_{3}K_{1}^{(2)}(\xi', x_{3})\notag\\ 
    =& \left(\frac{\pi}{2}\right)^{\frac{1}{2}}\sum_{k = 1}^{3}(- \partial_{x_{3}}\text{sgn}(x_{3}) + \lambda_{|\Omega|, k})B_{k}e^{-\lambda_{|\Omega|, k}|x_{3}|}\notag\\ 
    =& \left(\frac{\pi}{2}\right)^{\frac{1}{2}}\sum_{k = 1}^{3}(- 2\delta_{0}(x_{3}) + \lambda_{|\Omega|, k})B_{k}e^{-\lambda_{|\Omega|, k}|x_{3}|}\notag\\
    =& \left(\frac{\pi}{2}\right)^{\frac{1}{2}}\sum_{k = 1}^{3}\lambda_{|\Omega|, k}B_{k}e^{-\lambda_{|\Omega|, k}|x_{3}|} 
    =: K_{2}^{(2)}(\xi', x_{3}). 
  \end{align}
  Here, we note that the equality $\partial_{x_{3}}\text{sgn}(x_{3}) = 2\delta_{0}(x_{3})$ appearing in the above is justified in the sense of distributions, 
  and we have used 
  \begin{equation*}
    \sum_{k = 1}^{3}B_{k} = 0. 
  \end{equation*}
  Also, we get 
  \begin{align}\label{0612-5}
    \mathcal{F}_{\mathbb{R}_{\xi_{3}}}^{-1}\left[\frac{|\xi'|^{2} + \xi_{3}^{2}}{(|\xi'|^{2} + \xi_{3}^{2})^{3} + |\Omega|^{2}\xi_{3}^{2}}\right](\xi', x_{3}) 
    &= \left(\frac{\pi}{2}\right)^{\frac{1}{2}}\sum_{k=1}^{3}\frac{(|\xi'|^{2} - \lambda_{|\Omega|, k}^{2})}{\lambda_{|\Omega|, k}}B_{k}e^{-\lambda_{|\Omega|, k}|x_{3}|}\notag\\ 
    &=: K_{3}^{(2)}(\xi', x_{3}),\notag\\ 
    \mathcal{F}_{\mathbb{R}_{\xi_{3}}}^{-1}\left[\frac{(i\xi_{3})^{3}}{(|\xi'|^{2} + \xi_{3}^{2})^{3} + |\Omega|^{2}\xi_{3}^{2}}\right](\xi', x_{3}) 
    &= - \left(\frac{\pi}{2}\right)^{\frac{1}{2}}\sum_{k=1}^{3}\text{sgn}(x_3)\lambda_{|\Omega|, k}^{2}B_{k}e^{-\lambda_{|\Omega|, k}|x_{3}|}\notag\\ 
    &=: K_{4}^{(2)}(\xi', x_{3}),\notag\\ 
    \mathcal{F}_{\mathbb{R}_{\xi_{3}}}^{-1}\left[\frac{i\xi_{3}(|\xi'|^{2} + \xi_{3}^{2})}{(|\xi'|^{2} + \xi_{3}^{2})^{3} + |\Omega|^{2}\xi_{3}^{2}}\right](\xi', x_{3}) 
    &= - \left(\frac{\pi}{2}\right)^{\frac{1}{2}}\sum_{k=1}^{3}\text{sgn}(x_3)(|\xi'|^{2} - \lambda_{|\Omega|, k}^{2})B_{k}e^{-\lambda_{|\Omega|, k}|x_{3}|}\notag\\ 
    &=: K_{5}^{(2)}(\xi', x_{3}),\notag\\ 
    \mathcal{F}_{\mathbb{R}_{\xi_{3}}}^{-1}\left[\frac{(i\xi_{3})^{2}(|\xi'|^{2} + \xi_{3}^{2})}{(|\xi'|^{2} + \xi_{3}^{2})^{3} + |\Omega|^{2}\xi_{3}^{2}}\right](\xi', x_{3}) 
    &= \left(\frac{\pi}{2}\right)^{\frac{1}{2}}\sum_{k=1}^{3}\lambda_{|\Omega|, k}(|\xi'|^{2} - \lambda_{|\Omega|, k}^{2})B_{k}e^{-\lambda_{|\Omega|, k}|x_{3}|}\notag\\ 
    &=: K_{6}^{(2)}(\xi', x_{3}),\notag\\ 
    \mathcal{F}_{\mathbb{R}_{\xi_{3}}}^{-1}\left[\frac{(|\xi'|^{2} + \xi_{3}^{2})^{2}}{(|\xi'|^{2} + \xi_{3}^{2})^{3} + |\Omega|^{2}\xi_{3}^{2}}\right](\xi', x_{3}) 
    &= \left(\frac{\pi}{2}\right)^{\frac{1}{2}}\sum_{k=1}^{3}\frac{(|\xi'|^{2} - \lambda_{|\Omega|, k}^{2})^{2}}{\lambda_{|\Omega|, k}}B_{k}e^{-\lambda_{|\Omega|, k}|x_{3}|}\notag\\ 
    &=: K_{7}^{(2)}(\xi', x_{3}),\notag\\ 
    \mathcal{F}_{\mathbb{R}_{\xi_{3}}}^{-1}\left[\frac{i\xi_{3}(|\xi'|^{2} + \xi_{3}^{2})^{2}}{(|\xi'|^{2} + \xi_{3}^{2})^{3} + |\Omega|^{2}\xi_{3}^{2}}\right](\xi', x_{3}) 
    &= - \left(\frac{\pi}{2}\right)^{\frac{1}{2}}\sum_{k=1}^{3}\text{sgn}(x_3)(|\xi'|^{2} - \lambda_{|\Omega|, k}^{2})^{2}B_{k}e^{-\lambda_{|\Omega|, k}|x_{3}|}\notag\\ 
    &=: K_{8}^{(2)}(\xi', x_{3}). 
  \end{align}
\end{nota}

Therefore, we can write 
\begin{align*}
  \mathcal{F}_{\mathbb{R}^{3}}[{\bf w}_{h}^{(2)}] 
  =& \sum_{m = 1}^{2}i\xi_{m}\mathcal{F}_{\mathbb{R}_{x_{3}}}\big[K_{7}^{(2)}(\xi', x_{3})\big]\mathcal{F}_{\mathbb{R}^{3}}\left[(\bar{G}_{m, 1}, \bar{G}_{m, 2})\right]\\ 
  &+ \mathcal{F}_{\mathbb{R}_{x_{3}}}\big[K_{8}^{(2)}(\xi', x_{3})\big]\mathcal{F}_{\mathbb{R}^{3}}\left[(\bar{G}_{3, 1}, \bar{G}_{3, 2})\right] \\ 
  &+ i\xi'\sum_{m, n = 1}^{2}i\xi_{m}i\xi_{n}\mathcal{F}_{\mathbb{R}_{x_{3}}}\big[K_{3}^{(2)}(\xi', x_{3})\big]\mathcal{F}_{\mathbb{R}^{3}}\left[\bar{G}_{m,n}\right]\\ 
  &+ i\xi'\sum_{m = 1}^{2}i\xi_{m}\mathcal{F}_{\mathbb{R}_{x_{3}}}\big[K_{5}^{(2)}(\xi', x_{3})\big]\mathcal{F}_{\mathbb{R}^{3}}\left[\bar{G}_{m, 3} + \bar{G}_{3,m}\right]\\ 
  &+ i\xi'\mathcal{F}_{\mathbb{R}_{x_{3}}}\big[K_{6}^{(2)}(\xi', x_{3})\big]\mathcal{F}_{\mathbb{R}^{3}}\left[\bar{G}_{3,3}\right] \\ 
  &+ \Omega\sum_{m = 1}^{2}i\xi_{m}\mathcal{F}_{\mathbb{R}_{x_{3}}}\big[K_{2}^{(2)}(\xi', x_{3})\big]\mathcal{F}_{\mathbb{R}^{3}}[(\bar{G}_{m, 1}, \bar{G}_{m, 2})^{\perp}]\\ 
  &+ \Omega\mathcal{F}_{\mathbb{R}_{x_{3}}}\big[K_{4}^{(2)}(\xi', x_{3})\big]\mathcal{F}_{\mathbb{R}^{3}}[(\bar{G}_{3, 1}, \bar{G}_{3, 2})^{\perp}]\\ 
  &- \Omega (i\xi')^{\perp}\sum_{m = 1}^{2}i\xi_{m}\mathcal{F}_{\mathbb{R}_{x_{3}}}\big[K_{1}^{(2)}(\xi', x_{3})\big]\mathcal{F}_{\mathbb{R}^{3}}[\bar{G}_{m, 3}]\\ 
  &- \Omega (i\xi')^{\perp}\mathcal{F}_{\mathbb{R}_{x_{3}}}\big[K_{2}^{(2)}(\xi', x_{3})\big]\mathcal{F}_{\mathbb{R}^{3}}[\bar{G}_{3, 3}], \\ 
  \mathcal{F}_{\mathbb{R}^{3}}[w_{3}^{(2)}] 
  =& |\xi'|^{2}\sum_{m = 1}^{2}i\xi_{m}\mathcal{F}_{\mathbb{R}_{x_{3}}}\big[K_{3}^{(2)}(\xi', x_{3})\big]\mathcal{F}_{\mathbb{R}^{3}}\left[\bar{G}_{m, 3}\right] 
  + |\xi'|^{2}\mathcal{F}_{\mathbb{R}_{x_{3}}}\big[K_{5}^{(2)}(\xi', x_{3})\big]\mathcal{F}_{\mathbb{R}^{3}}\left[\bar{G}_{3,3}\right]\\ 
  &+ \sum_{m,n = 1}^{2}i\xi_{m}i\xi_{n}\mathcal{F}_{\mathbb{R}_{x_{3}}}\big[K_{5}^{(2)}(\xi', x_{3})\big]\mathcal{F}_{\mathbb{R}^{3}}\left[\bar{G}_{m,n}\right]\\ 
  &+ \sum_{m = 1}^{2}i\xi_{m}\mathcal{F}_{\mathbb{R}_{x_{3}}}\big[K_{6}^{(2)}(\xi', x_{3})\big]\mathcal{F}_{\mathbb{R}^{3}}\left[\bar{G}_{3,m}\right]\\ 
  &+ \Omega (i\xi')^{\perp}\cdot\Big(\sum_{m = 1}^{2}i\xi_{m}\mathcal{F}_{\mathbb{R}_{x_{3}}}\big[K_{1}^{(2)}(\xi', x_{3})\big]\mathcal{F}_{\mathbb{R}^{3}}\left[(\bar{G}_{m, 1}, \bar{G}_{m, 2})\right]\Big)\\ 
  &+ \Omega (i\xi')^{\perp}\cdot\big(\mathcal{F}_{\mathbb{R}_{x_{3}}}\big[K_{2}^{(2)}(\xi', x_{3})\big]\mathcal{F}_{\mathbb{R}^{3}}\left[(\bar{G}_{3, 1}, \bar{G}_{3, 2})\right]\big). 
\end{align*}
The remainder of the derivation is the same as that for ${\bf w}^{(1)}$. 
Similarly to $T_{K_{l}^{(1)}}^{(\pm)}$, we define 
\begin{align}\label{0612-6}
  &T_{K_{l}^{(2)}}^{(\pm)}[f](x', x_{3})\notag\\ 
  :=& \frac{1}{\sqrt{2\pi}}\int_{0}^{+\infty}\mathcal{F}_{\mathbb{R}^{2}}^{-1}\left[\left(K_{l}^{(2)}(\xi', x_{3} - y_{3}) \pm K_{l}^{(2)}(\xi', x_{3} + y_{3})\right)\widehat{f}(\xi', y_{3})\right](x')~{\rm d}y_{3}, 
\end{align}
for $l = 1, 2, \cdots 8$. 
Recall that if $\bar{G}_{m,n}$ is the even extension of $G_{m, n}$ with respect to $x_{3}$, then we have 
\begin{equation*}
  \mathcal{F}_{\mathbb{R}^{3}}^{-1}\left[\mathcal{F}_{\mathbb{R}_{x_{3}}}\big[K_{l}^{(2)}(\xi', x_{3})\big]\mathcal{F}_{\mathbb{R}^{3}}\left[\bar{G}_{m, n}\right]\right](x', x_{3})
  = T_{K_{l}^{(2)}}^{(+)}[G_{m, n}](x', x_{3}), 
\end{equation*}
also, if $\bar{G}_{m,n}$ is the odd extension of $G_{m, n}$ with respect to $x_{3}$, then we have 
\begin{equation*}
  \mathcal{F}_{\mathbb{R}^{3}}^{-1}\left[\mathcal{F}_{\mathbb{R}_{x_{3}}}\big[K_{l}^{(2)}(\xi', x_{3})\big]\mathcal{F}_{\mathbb{R}^{3}}\left[\bar{G}_{m, n}\right]\right](x', x_{3})
  = T_{K_{l}^{(2)}}^{(-)}[G_{m, n}](x', x_{3}). 
\end{equation*}
By recalling the definition of $\bar{\bf G} = \bar{\bf F} - \bar{\bf u}\otimes\bar{\bf u}$ (\eqref{0625-6}, \eqref{0625-7} and \eqref{0625-8}), 
we obtain the following proposition. 
\begin{prop}\label{0626-4}
  Let $\Omega \in \mathbb{R}\setminus\{0\}$. 
  Then the solution ${\bf w}^{(2)} = (w_{1}^{(2)}, w_{2}^{(2)}, w_{3}^{(2)}) := ({\bf w}_{h}^{(2)}, w_{3}^{(2)})$ of \eqref{0611-14} is given by 
\begin{align}\label{0612-18}
  {\bf w}_{h}^{(2)}(x', x_{3}) 
  =& \sum_{m = 1}^{2}\partial_{m}T_{K_{7}^{(2)}}^{(+)}\left[(F_{m, 1}, F_{m, 2}) - u_{m}{\bf u}_{h}\right](x', x_{3}) \notag\\ 
  &+ T_{K_{8}^{(2)}}^{(-)}\left[(F_{3, 1}, F_{3, 2}) - {\bf u}_{h}u_{3}\right](x', x_{3})\notag\\ 
  &+ \nabla'\Big(\sum_{m,n = 1}^{2}\partial_{m}\partial_{n}T_{K_{3}^{(2)}}^{(+)}\left[F_{m,n} - u_{m}u_{n}\right]\Big)(x', x_{3})\notag\\ 
  &+ \nabla'\Big(\sum_{m = 1}^{2}\partial_{m}T_{K_{5}^{(2)}}^{(-)}\left[F_{m,3} + F_{3,m} - 2u_{m}u_{3}\right]\Big)(x', x_{3})\notag\\ 
  &+ \nabla'\big(T_{K_{6}^{(2)}}^{(+)}\left[F_{3,3} - u_{3}u_{3}\right]\big)(x', x_{3})\notag\\ 
  &+ \Omega \sum_{m = 1}^{2}\partial_{m}T_{K_{2}^{(2)}}^{(+)}\left[(F_{m, 1}, F_{m, 2})^{\perp} - u_{m}{\bf u}_{h}^{\perp}\right](x', x_{3})\notag\\ 
  &+ \Omega T_{K_{4}^{(2)}}^{(-)}\left[(F_{3, 1}, F_{3, 2})^{\perp} - u_{3}{\bf u}_{h}^{\perp}\right](x', x_{3}),\notag\\ 
  &- \Omega (\nabla')^{\perp}\Big(\sum_{m = 1}^{2}\partial_{m}T_{K_{1}^{(2)}}^{(-)}\left[F_{m,3} - u_{m}u_{3}\right]\Big)(x', x_{3})\notag\\ 
  &- \Omega (\nabla')^{\perp}\big(T_{K_{2}^{(2)}}^{(+)}\left[F_{3,3} - u_{3}u_{3}\right]\big)(x', x_{3})\notag\\ 
  w_{3}^{(2)}(x', x_{3})
  =& -\Delta_{\mathbb{R}^{2}}\Big(\sum_{m = 1}^{2}\partial_{m}T_{K_{3}^{(2)}}^{(-)}\left[F_{m,3} - u_{m}u_{3}\right]\Big)(x', x_{3})\notag\\ 
  &-\Delta_{\mathbb{R}^{2}}\big(T_{K_{5}^{(2)}}^{(+)}\left[F_{3,3} - u_{3}u_{3}\right]\big)(x', x_{3})\notag\\ 
  &+ \sum_{m,n = 1}^{2}\partial_{m}\partial_{n}T_{K_{5}^{(2)}}^{(+)}\left[F_{m,n} - u_{m}u_{n}\right](x', x_{3})\notag\\ 
  &+ \sum_{m = 1}^{2}\partial_{m}T_{K_{6}^{(2)}}^{(-)}\left[F_{3,m} - u_{3}u_{m}\right](x', x_{3})\notag\\ 
  &+ \Omega (\nabla')^{\perp}\cdot\Big(\sum_{m = 1}^{2}\partial_{m} T_{K_{1}^{(2)}}^{(+)}\left[(F_{m, 1}, F_{m, 2}) - {\bf u}_{h}u_{m}\right]\Big)(x', x_{3})\notag\\ 
  &+ \Omega(\nabla')^{\perp}\cdot \big(T_{K_{2}^{(2)}}^{(-)}\left[(F_{3, 1}, F_{3, 2}) - {\bf u}_{h}u_{3}\right]\big)(x', x_{3}). 
\end{align}
where $K_{l}^{(2)}$ and $T_{K_{l}^{(2)}}^{(\pm)}$ ($l = 1, 2, \cdots , 8$) are given by \eqref{0612-4}, \eqref{0612-5} and \eqref{0612-6}. 
\end{prop}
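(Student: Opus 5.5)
The plan is to obtain the formula \eqref{0612-18} by inverting the Fourier-side identity for $\mathcal{F}_{\mathbb{R}^{3}}[{\bf w}^{(2)}]$ derived just above, one term at a time. The argument parallels the derivation of \eqref{0611-4}.

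First, in the expressions for $\mathcal{F}_{\mathbb{R}^{3}}[{\bf w}_{h}^{(2)}]$ and $\mathcal{F}_{\mathbb{R}^{3}}[w_{3}^{(2)}]$, every coefficient has the form (polynomial in $i\xi'$) $\times$ $M(\xi',\xi_3)$. Here $M$ is one of the eight rational symbols
$\frac{(i\xi_3)^a(|\xi'|^2+\xi_3^2)^b}{(|\xi'|^2+\xi_3^2)^3+|\Omega|^2\xi_3^2}$.
Their inverse transforms in $\xi_3$ are, by Lemma~\ref{0804-2} and the Notation \eqref{0612-4}, \eqref{0612-5}, exactly $K_{l}^{(2)}$, $l=1,\dots,8$. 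For $b\ge1$ I would justify the formulas by substituting $|\xi'|^2+\xi_3^2 = |\xi'|^2-(i\xi_3)^2$. This amounts to applying $(|\xi'|^2-\partial_{3}^2)$ to the basic kernel. On each exponential $e^{-\lambda_k|x_3|}$ that operator produces the factor $(|\xi'|^2-\lambda_k^2)$, plus delta contributions at $x_3=0$.

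The main technical point is to show that these delta terms cancel. For this I would use the partial-fraction identities
$\sum_k B_k = 0$ and $\sum_k \lambda_k^2 B_k = 0$, together with $\sum_k\lambda_k^4B_k=1$.
These follow from comparing the decay of $\sum_k B_k/(\lambda_k^2+\xi_3^2)$ with that of $1/\prod_k(\lambda_k^2+\xi_3^2)$ as $\xi_3\to\infty$. Alternatively, one can note that each symbol with total degree $\le 5$ in $\xi_3$ relative to degree $6$ in the denominator is integrable, so no singular part can appear. This is the step where care is needed: $K_4^{(2)}$ and $K_8^{(2)}$ carry a $\mathrm{sgn}(x_3)$ jump rather than a delta, which is consistent with their symbols decaying like $|\xi_3|^{-3}$ and $|\xi_3|^{-1}$.

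Next, I would invert in $\xi_3$ using the convolution theorem, which gives $\frac{1}{\sqrt{2\pi}}\int_{\mathbb{R}} K_l^{(2)}(\xi',x_3-y_3)\widehat{\bar G}(\xi',y_3)\,dy_3$. I would then fold the integral onto $(0,\infty)$ via the parity of $\bar G_{m,n}$:
\begin{itemize}
\item $G_{m,n}$ with $m,n\le2$ and $G_{3,3}$ are even, since $\bar u_3\bar u_3$ is even;
\item the mixed entries $G_{m,3}$, $G_{3,n}$ are odd;
\item $\bar{\bf F}$ was defined with the same parities in \eqref{0625-7}, \eqref{0625-8}.
\end{itemize}
This yields $T^{(+)}_{K_l^{(2)}}$ for even data and $T^{(-)}_{K_l^{(2)}}$ for odd data, as recorded before the proposition.

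Finally, I would replace $i\xi_m$ by $\partial_m$, $i\xi'$ by $\nabla'$, $(i\xi')^\perp$ by $(\nabla')^\perp$, and $|\xi'|^2$ by $-\Delta_{\mathbb{R}^2}$. I would also use the componentwise identity $\widehat{(\bar G_{m,1},\bar G_{m,2})^\perp}=(\widehat{\bar G}_{m,1},\widehat{\bar G}_{m,2})^\perp$. Expanding ${\bf G}={\bf F}-{\bf u}\otimes{\bf u}$ then gives \eqref{0612-18} term by term. As a consistency check, ${\bf w}_h^{(2)}$ comes out even and $w_3^{(2)}$ odd, which is needed later for $w_3^{(2)}(x',0)=0$.
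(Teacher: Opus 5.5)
Your proposal is correct and follows essentially the same route as the paper: you start from the componentwise Fourier formula obtained from \eqref{0612-1}, identify the $\xi_{3}$-inverse transforms of the eight rational symbols with $K_{l}^{(2)}$ via Lemma~\ref{0804-2} and \eqref{0612-4}, \eqref{0612-5}, convolve in $x_{3}$, and fold onto $(0,\infty)$ using the parities of $\bar{\bf G}$ to produce $T_{K_{l}^{(2)}}^{(\pm)}$, and your delta-cancellation check via $\sum_{k}B_{k}=\sum_{k}\lambda_{|\Omega|,k}^{2}B_{k}=0$ justifies the kernel formulas that the paper only records. Two side remarks are slightly off but do not affect the argument: $K_{4}^{(2)}$ has no actual jump at $x_{3}=0$, since its one-sided limits are $\mp(\pi/2)^{1/2}\sum_{k}\lambda_{|\Omega|,k}^{2}B_{k}=0$ (consistent with its $|\xi_{3}|^{-3}$ decay), and the degree-five symbol behind $K_{8}^{(2)}$ decays only like $|\xi_{3}|^{-1}$, so it is square-integrable rather than integrable, which still rules out any singular part.
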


\begin{rem}
  Similarly to ${\bf w}^{(1)}$, it follows from the definition of $K_{l}^{(2)}$ (\eqref{0612-4}, \eqref{0612-5}) and $T_{K_{l}^{2}}^{(\pm)}$ \eqref{0612-6} $(l = 1, 2, \cdots , 8)$ that 
  ${\bf w}^{(2)}$ preserves the symmetry of ${\bf u}$ and ${\bf F}$ with respect to $x_{3}$. 
  Thus, we have 
  \begin{equation*}
    w_{3}^{(2)}(\cdot, 0) 
    = 0. 
  \end{equation*}
\end{rem}

We now consider the explicit formula for the solution ${\bf v}^{(2)}$ of \eqref{0611-15}. 
In the case $\Omega = 1$, the explicit formula for ${\bf v}^{(2)}$ was derived by Dalibard and Prange~\cite{Da_Pr_2014}, 
and we follow the same derivation in the present paper. 
However, since the dependence of the explicit formula on the parameter $\Omega$ plays a crucial role in our study, 
we do not omit the derivation and provide it for completeness. 
We note that 
\begin{equation*}
  a_{3}(\cdot) - w_{3}^{(2)}(\cdot, 0) 
  = 0 
\end{equation*}
holds. 

To begin with, we note that the pressure $q^{(2)}$ is given by 
\begin{equation}\label{0612-8}
  -\Delta q^{(2)} 
  = -\Omega (\nabla')^{\perp}\cdot{\bf v}_{h}^{(2)}. 
\end{equation}
Taking the Fourier transform on $\mathbb{R}^{2}$, 
we find that $\widehat{\bf v}^{(2)} = (\widehat{\bf v}_{h}^{(2)}, \widehat{v}_{3}^{(2)})$ satisfies 
\begin{equation}\label{0612-7}
  \begin{cases}
    (|\xi'|^{2} - \partial_{3}^{2})\widehat{\bf v}_{h}^{(2)} + \Omega(\widehat{\bf v}_{h}^{(2)})^{\perp} + i\xi'\widehat{q}^{(2)} = 0, \\ 
    (|\xi'|^{2} - \partial_{3}^{2})\widehat{v}_{3}^{(2)} + \partial_{3}\widehat{q}^{(2)} = 0. 
  \end{cases}
\end{equation}
Thanks to $(i\xi')^{\perp}\cdot i\xi' = 0$, we have 
\begin{equation*}
  (|\xi'|^{2} - \partial_{3}^{2})((i\xi')^{\perp}\cdot\widehat{\bf v}_{h}^{(2)}) + \Omega (i\xi'\cdot\widehat{\bf v}_{h}^{(2)}) = 0. 
\end{equation*}
Using the incompressibility condition, we get 
\begin{equation}\label{0612-10}
  (|\xi'|^{2} - \partial_{3}^{2})((i\xi')^{\perp}\cdot\widehat{\bf v}_{h}^{(2)}) - \Omega\partial_{3}\widehat{v}_{3}^{(2)} = 0. 
\end{equation}
Applying $(|\xi'|^{2} - \partial_{3}^{2})$ $(= \mathcal{F}_{\mathbb{R}^{2}}[(-\Delta)])$ to the second equation in \eqref{0612-7}, thanks to \eqref{0612-8}, we obtain 
\begin{equation}\label{0612-9}
  (|\xi'|^{2} - \partial_{3}^{2})^{2}\widehat{v}_{3}^{(2)} - \Omega\partial_{3}((i\xi')^{\perp}\cdot\widehat{\bf v}_{h}^{(2)}) = 0.
\end{equation}
Thus, applying $(|\xi'|^{2} - \partial_{3}^{2})$ to \eqref{0612-9} once again, using \eqref{0612-10}, 
we have an equation involving only $\widehat{v}_{3}^{(2)}$. 
\begin{equation}\label{0612-11}
  (|\xi'|^2 - \partial_{3}^{2})^{3}\widehat{v}_{3}^{(2)} -{|\Omega|}^2 \partial_{3}^{2} \widehat{v}_{3}^{(2)} =0. 
\end{equation}
To obtain an explicit formula for the sixth-order ordinary differential equation \eqref{0612-11}, 
we consider the following characteristic equation. 
\begin{equation}\label{0612-12}
  (|\xi'|^{2} - \lambda^{2})^{3} - |\Omega|^{2}\lambda^{2} = 0. 
\end{equation}
The algebraic equation \eqref{0612-12} has already appeared in the derivation of the explicit formula for ${\bf w}^{(2)}$, 
where $\lambda_{|\Omega|, k}$ $(k = 1,2,3)$ denoted a root of \eqref{0612-12}. 
Without loss of generality, we may assume that $\Re(\lambda_{|\Omega|, k})>0$ $(k = 1,2,3)$ 
(note that \eqref{0612-12} has only real coefficients). 
Since \eqref{0612-12} contains only even powers of the unknown, $-\lambda_{|\Omega|, k}$ $(k = 1,2,3)$ are also roots of \eqref{0612-12}. 
However, when deriving the explicit formula for \eqref{0612-11}, 
we restrict our attention to solutions ${\bf v}^{(2)}$ that decay as $x_{3} \to +\infty$. 
Under this condition, the root $-\lambda_{|\Omega|, k}$ can be discarded. 
In \cite{Da_Pr_2014}, this restriction on the characteristic roots is justified under an appropriate integrability condition. 
Also in the present framework, thanks to the estimate for ${\bf v}^{(2)}$ (see Proposition~\ref{0618-4} below), 
we obtain $L^{r}$ $(r < \infty)$ integrability with respect to $x_{3}$. 
Furthermore, all coefficients in \eqref{0612-12} are real. 
It is easy to see that a real root exists, and 
the complex conjugate $\bar{\lambda}_{|\Omega|, k}$ of $\lambda_{|\Omega|, k}$ $(k = 1,2,3)$ is also a root of \eqref{0612-12}. 
Therefore, we can assume that 
\begin{equation}\label{0916-2}
  \lambda_{|\Omega|, 1} \in \mathbb{R} 
  \text{ with }
  \lambda_{|\Omega|, 1}>0, 
\end{equation}
\begin{equation}\label{0916-3}
  \lambda_{|\Omega|, 2}, \lambda_{|\Omega|, 3} \in \mathbb{C} 
  \text{ with }
  \Re(\lambda_{|\Omega|, 2}), 
  \Re(\lambda_{|\Omega|, 3}) > 0 
  \text{ and } 
  \lambda_{|\Omega|, 3} 
  = \bar{\lambda}_{|\Omega|, 2}. 
\end{equation}
Consequently, there exist constants $C_{k}$ $(k = 1,2,3)$ depending on $|\xi'|$, $\Omega$ and the boundary data $\widehat{\bf a}(\xi') - \widehat{\bf w}^{(2)}(\xi', 0)$ 
such that we can write 
\begin{equation}\label{0612-15}
  \widehat{v}_{3}^{(2)}(\xi', x_{3}) 
  = \sum_{k = 1}^{3}C_{k}e^{-x_{3}\lambda_{|\Omega|, k}}. 
\end{equation}
By the incompressibility condition, we have 
\begin{equation}\label{0612-13}
  i\xi'\cdot\widehat{\bf v}_{h}^{(2)}(\xi', x_{3}) 
  = -\partial_{3}\widehat{v}_{3}^{(2)}(\xi', x_{3}) 
  = \sum_{k = 1}^{3}\lambda_{|\Omega|, k}C_{k}e^{-x_{3}\lambda_{|\Omega|, k}}. 
\end{equation}
Integrating \eqref{0612-9} with respect to $x_{3}$ over $[x_{3}, \infty)$, it follows that 
\begin{equation}\label{0612-14}
  (i\xi')^{\perp}\cdot\widehat{\bf v}_{h}^{(2)}(\xi', x_{3}) 
  = - \sum_{k = 1}^{3}\frac{(|\xi'|^{2} - \lambda_{|\Omega|, k}^{2})^{2}}{\Omega \lambda_{|\Omega|, k}}C_{k}e^{-x_{3}\lambda_{|\Omega|, k}}. 
\end{equation}
Using \eqref{0612-13} and \eqref{0612-14}, we get 
\begin{equation*}
  \widehat{\bf v}_{h}^{(2)}(\xi', x_{3}) 
  = -\frac{1}{|\xi'|^{2}}\sum_{k = 1}^{3}\left(i\xi'\lambda_{|\Omega|, k} - (i\xi')^{\perp}\frac{(|\xi'|^{2} - \lambda_{|\Omega|, k}^{2})^{2}}{\Omega \lambda_{|\Omega|, k}}\right)C_{k}e^{-x_{3}\lambda_{|\Omega|, k}}. 
\end{equation*}
We derive the constants $C_{k}$. 
For notational convenience, we denote 
\begin{equation*}
  {\bf b}(x') := {\bf a}(x') - {\bf w}^{(2)}(x', 0). 
\end{equation*}
Note that $b_{3}(\cdot) = a_{3}(\cdot) - w^{(2)}(\cdot, 0) = 0$. 
From \eqref{0612-15}, \eqref{0612-13} and \eqref{0612-14}, and their boundary condition, $C_{k}$ satisfy 
\begin{equation*}
  \begin{pmatrix}
    \lambda_{|\Omega|, 1} & \lambda_{|\Omega|, 2} & \lambda_{|\Omega|, 3} \\ 
    \displaystyle\frac{({|\xi'|}^{2} - \lambda_{|\Omega|, 1}^2)^{2}}{\Omega\lambda_{|\Omega|, 1}} & \displaystyle\frac{({|\xi'|}^{2} - \lambda_{|\Omega|, 2}^2)^{2}}{\Omega\lambda_{|\Omega|, 2}} & \displaystyle\frac{({|\xi'|}^{2} - \lambda_{|\Omega|, 3}^2)^{2}}{\Omega\lambda_{|\Omega|, 3}} \\ 
    1 & 1 & 1 \\ 
  \end{pmatrix}
  \begin{pmatrix}
    C_{1} \\ 
    C_{2} \\ 
    C_{3}
  \end{pmatrix}
  = 
  \begin{pmatrix}
    i\xi'\cdot\widehat{\bf b}_{h}(\xi')\\ 
    -(i\xi')^{\perp}\cdot\widehat{\bf b}_{h}(\xi')\\ 
    0
  \end{pmatrix}
  . 
\end{equation*}
We denote by ${\bf A}$ the matrix on the left-hand side of the above equation. 
For the inverse matrix ${\bf A}^{-1}$, the following holds. 
\begin{lem}\label{0612-16}
  Let $\Omega\in\mathbb{R}\setminus\{0\}$ and $\xi'\in \mathbb{R}^{2}\setminus\{0\}$. 
  Then the inverse matrix ${\bf A}^{-1}$ of ${\bf A}$ is given by 
  \begin{align}\label{0612-17}
    {\bf A}^{-1} 
    =& \frac{1}{\det{\bf A}}\notag\\  
    &\times 
    \begin{pmatrix}
      A_{1}(|\xi'| + \lambda_{|\Omega|, 1})(\lambda_{|\Omega|, 2} - \lambda_{|\Omega|, 3}) & -(\lambda_{|\Omega|, 2} - \lambda_{|\Omega|, 3}) & -|\xi'|A_{1}(\lambda_{|\Omega|, 2}^{2} - \lambda_{|\Omega|, 3}^{2}) \\ 
      A_{2}(|\xi'| + \lambda_{|\Omega|, 2})(\lambda_{|\Omega|, 3} - \lambda_{|\Omega|, 1}) & -(\lambda_{|\Omega|, 3} - \lambda_{|\Omega|, 1}) & -|\xi'|A_{2}(\lambda_{|\Omega|, 3}^{2} - \lambda_{|\Omega|, 1}^{2}) \\ 
      A_{3}(|\xi'| + \lambda_{|\Omega|, 3})(\lambda_{|\Omega|, 1} - \lambda_{|\Omega|, 2}) & -(\lambda_{|\Omega|, 1} - \lambda_{|\Omega|, 2}) & -|\xi'|A_{3}(\lambda_{|\Omega|, 1}^{2} - \lambda_{|\Omega|, 2}^{2}) \\ 
    \end{pmatrix}
    ,
  \end{align}
  where 
  \begin{equation*}
    A_{k} := \frac{(|\xi'|^{2} - \lambda_{|\Omega|, k}^{2})}{\Omega\lambda_{|\Omega|, k}}, 
  \end{equation*}
  and 
  \begin{equation*}
    \det{\bf A} 
    = \frac{1}{\Omega}(\lambda_{|\Omega|, 1} - \lambda_{|\Omega|, 2})(\lambda_{|\Omega|, 2} - \lambda_{|\Omega|, 3})(\lambda_{|\Omega|, 3} - \lambda_{|\Omega|, 1})(|\xi'| + \lambda_{|\Omega|, 1} + \lambda_{|\Omega|, 2} + \lambda_{|\Omega|, 3}). 
  \end{equation*}
\end{lem}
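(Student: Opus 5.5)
The plan is to verify the formula directly: compute $\det{\bf A}$, then check ${\bf A}\,{\bf M} = (\det{\bf A})\,I$, where ${\bf M}$ is the matrix in \eqref{0612-17}. Working with a right inverse suffices because ${\bf A}$ is square. Everything rests on rewriting the second row of ${\bf A}$ with the characteristic equation \eqref{0612-12}, and on using Vieta-type relations among $\lambda_{|\Omega|,1},\lambda_{|\Omega|,2},\lambda_{|\Omega|,3}$.

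\textbf{Step 1: simplify the second row.} Each $\lambda_{|\Omega|,k}$ satisfies $(|\xi'|^{2}-\lambda_{|\Omega|,k}^{2})^{3}=|\Omega|^{2}\lambda_{|\Omega|,k}^{2}$. Since $|\Omega|^2=\Omega^2$, this gives
\[
\frac{(|\xi'|^{2}-\lambda_{|\Omega|,k}^{2})^{2}}{\Omega\lambda_{|\Omega|,k}}=\frac{\Omega\lambda_{|\Omega|,k}}{|\xi'|^{2}-\lambda_{|\Omega|,k}^{2}}=\frac{1}{A_{k}} .
\]
It also gives $A_k^{2}=\dfrac{1}{\Omega\lambda_{|\Omega|,k}(|\xi'|^2-\lambda_{|\Omega|,k}^2)}$, i.e. $A_k^{3}=\dfrac{1}{\Omega\lambda_{|\Omega|,k}}$. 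Here $|\xi'|^2\neq\lambda_{|\Omega|,k}^2$ because $\Omega\neq0$ and $\lambda_{|\Omega|,k}\neq 0$. So ${\bf A}$ has rows $(\lambda_{|\Omega|,k})_k$, $(A_k^{-1})_k$ and $(1)_k$.

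\textbf{Step 2: relations among the roots.} Set $\mu=\lambda^{2}$. Equation \eqref{0612-12} becomes the monic cubic
\[
\mu^{3}-3|\xi'|^{2}\mu^{2}+(3|\xi'|^{4}+\Omega^{2})\mu-|\xi'|^{6}=0 .
\]
This yields $\sum_k\lambda_{|\Omega|,k}^{2}=3|\xi'|^{2}$, $\sum_{k<l}\lambda_{|\Omega|,k}^{2}\lambda_{|\Omega|,l}^{2}=3|\xi'|^{4}+\Omega^{2}$ and $\prod_k\lambda_{|\Omega|,k}^{2}=|\xi'|^{6}$. By \eqref{0916-2} and \eqref{0916-3}, $\lambda_{|\Omega|,1}>0$ and $\lambda_{|\Omega|,2}\lambda_{|\Omega|,3}=|\lambda_{|\Omega|,2}|^{2}>0$, so $\lambda_{|\Omega|,1}\lambda_{|\Omega|,2}\lambda_{|\Omega|,3}=|\xi'|^{3}$.

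Next I would derive the identity that produces the factor $|\xi'|+\lambda_{|\Omega|,k}$. Using $1/A_k=\Omega\lambda_k/(|\xi'|^2-\lambda_k^2)$, the $2\times2$ minors built from rows $1$ and $2$ of ${\bf A}$ are
\[
\frac{\lambda_k}{A_l}-\frac{\lambda_l}{A_k}=\Omega\lambda_k\lambda_l\left(\frac{1}{|\xi'|^{2}-\lambda_l^{2}}-\frac{1}{|\xi'|^{2}-\lambda_k^{2}}\right)=\frac{\Omega\lambda_k\lambda_l(\lambda_l^2-\lambda_k^2)}{(|\xi'|^2-\lambda_k^2)(|\xi'|^2-\lambda_l^2)}
\]
(indices $\Omega$ suppressed). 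I would then multiply by $A_kA_l\cdot A_m^{-1}$-type factors. The aim is to use $\prod_k(|\xi'|^2-\lambda_k^2)$, which is obtained by evaluating the cubic at $\mu=|\xi'|^2$; it equals $-\Omega^{2}|\xi'|^{2}$ up to sign. Combined with $\lambda_1\lambda_2\lambda_3=|\xi'|^3$, this should rewrite each minor as a multiple of $(\lambda_k-\lambda_l)(|\xi'|+\lambda_m)A_m$.

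\textbf{Step 3: determinant and cofactors.} I would expand $\det{\bf A}$ along the third row of ones. This gives $\det{\bf A}=\sum_{\text{cyc}}\bigl(\lambda_k A_l^{-1}-\lambda_l A_k^{-1}\bigr)$. Substituting the minors from Step 2 and the symmetric-function relations, I expect the sum to factor as $\Omega^{-1}(\lambda_1-\lambda_2)(\lambda_2-\lambda_3)(\lambda_3-\lambda_1)(|\xi'|+\lambda_1+\lambda_2+\lambda_3)$. Then I would check the adjugate column by column:
\begin{itemize}
\item The second column of ${\bf M}$ consists of the cofactors $-(\lambda_l-\lambda_m)$ of row $2$; these are immediate from ${\bf A}$.
\item The first and third columns are cofactors of rows $1$ and $3$. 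Each cofactor is a difference $A_l^{-1}-A_m^{-1}$ or $\lambda_l A_m^{-1}-\lambda_m A_l^{-1}$. These must be shown equal to the stated $A_k(|\xi'|+\lambda_k)(\lambda_l-\lambda_m)$ and $-|\xi'|A_k(\lambda_l^2-\lambda_m^2)$, respectively.
\end{itemize}
Equivalently, I would verify the nine entries of ${\bf A}{\bf M}$ directly, using $A_k^{-1}\cdot A_k=1$ and the identities from Step 2.

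\textbf{Main obstacle.} The hard part is the algebraic identity converting the cofactor $A_l^{-1}-A_m^{-1}$ into $A_k(|\xi'|+\lambda_k)(\lambda_l-\lambda_m)$, where $k$ is the third index. My first attempt would be to clear denominators using $A_k^3=1/(\Omega\lambda_k)$. That reduces the claim to a polynomial identity in $\lambda_1,\lambda_2,\lambda_3,|\xi'|$, to be proved modulo the relations $\sum\lambda_k^2=3|\xi'|^2$ and $\lambda_1\lambda_2\lambda_3=|\xi'|^3$, with $\Omega^2$ eliminated via the second Vieta relation. If this becomes unwieldy, the fallback is to divide out the Vandermonde-type factor $\lambda_l-\lambda_m$ and compare both sides as symmetric functions in the remaining variables.
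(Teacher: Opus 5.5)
Your plan is correct and is exactly the ``straightforward computation'' the paper invokes without giving details (it only uses that each $\lambda_{|\Omega|,k}$ solves \eqref{0612-12}, and cites \cite{Da_Pr_2014} for $\det{\bf A}$); suppressing the index $|\Omega|$, your ``main obstacle'' is in fact two lines: $A_l^{-1}-A_m^{-1}=\Omega(\lambda_l-\lambda_m)(|\xi'|^2+\lambda_l\lambda_m)/\bigl((|\xi'|^2-\lambda_l^2)(|\xi'|^2-\lambda_m^2)\bigr)$, and substituting $(|\xi'|^2-\lambda_l^2)(|\xi'|^2-\lambda_m^2)=\Omega^2|\xi'|^2/(|\xi'|^2-\lambda_k^2)$ (the full product is $+\Omega^2|\xi'|^2$) and $\lambda_l\lambda_m=|\xi'|^3/\lambda_k$ gives $A_k(|\xi'|+\lambda_k)(\lambda_l-\lambda_m)$. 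The same substitutions give $\lambda_kA_l^{-1}-\lambda_lA_k^{-1}=|\xi'|A_m(\lambda_l^2-\lambda_k^2)$ for the rows-$1$--$2$ minors, so the factor $|\xi'|+\lambda_m$ you anticipated there in Step~2 actually belongs to the rows-$2$--$3$ minors, as your Step~3 correctly records; expanding along the first row then gives $\det{\bf A}=\Omega^{-1}\sum_{\mathrm{cyc}}(|\xi'|^2-\lambda_k^2)(|\xi'|+\lambda_k)(\lambda_l-\lambda_m)$, where only the $\lambda_k^2$ and $\lambda_k^3$ terms survive and they produce the stated factorization; and incidentally $A_k^2=1/(|\xi'|^2-\lambda_k^2)$, not the expression in your Step~1.
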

Lemma~\ref{0612-16} follows from straightforward computations and the fact that $\lambda_{|\Omega|, k}$ is a root of the algebraic equation \eqref{0612-12}. 
For the derivation of $\det A$, we refer the reader to \cite{Da_Pr_2014}, Appendix A.2. 

From the preceding discussion, we obtain an explicit formula for ${\bf v}^{(2)}$. 
\begin{prop}\label{0626-12}
  Let $\Omega\in\mathbb{R}\setminus\{0\}$. 
  The solution ${\bf v}^{(2)} = (v_{1}^{(2)}, v_{2}^{(2)}, v_{3}^{(2)}) =: ({\bf v}_{h}^{(2)}, v_{3}^{(2)})$ of \eqref{0611-15} is given by 
  \begin{align}\label{0612-19}
    {\bf v}_{h}^{(2)}(x', x_{3}) 
    =& -\nabla'(-\Delta_{\mathbb{R}^{2}})^{-1}\sum_{k = 1}^{3}\mathcal{F}_{\mathbb{R}^{2}}^{-1}\left[\lambda_{|\Omega|, k}C_{k}e^{-x_{3}\lambda_{|\Omega|, k}}\right](x')\notag\\ 
    &+ (\nabla')^{\perp}(-\Delta_{\mathbb{R}^{2}})^{-1}\sum_{k = 1}^{3}\mathcal{F}_{\mathbb{R}^{2}}^{-1}\left[\frac{(|\xi'|^{2} - \lambda_{|\Omega|, k}^{2})^{2}}{\Omega \lambda_{|\Omega|, k}}C_{k}e^{-x_{3}\lambda_{|\Omega|, k}}\right](x'),\notag\\ 
    v_{3}^{(2)}(x', x_{3}) 
    =& \sum_{k = 1}^{3}\mathcal{F}_{\mathbb{R}^{2}}^{-1}\left[C_{k}e^{-x_{3}\lambda_{|\Omega|, k}}\right](x'), 
  \end{align}
  where $C_{k}$ satisfies 
  \begin{equation}\label{0613-11}
    \begin{pmatrix}
      C_{1} \\ 
      C_{2} \\ 
      C_{3}
    \end{pmatrix}
    = 
    {\bf A}^{-1}
    \begin{pmatrix}
      i\xi'\cdot(\widehat{\bf a}_{h}(\xi') - \widehat{\bf w}_{h}^{(2)}(\xi',0))\\ 
      -(i\xi')^{\perp}\cdot(\widehat{\bf a}_{h}(\xi') - \widehat{\bf w}_{h}^{(2)}(\xi',0))\\ 
      0
    \end{pmatrix}
    , 
  \end{equation}
  and ${\bf A}^{-1}$ is given by \eqref{0612-17}. 
\end{prop}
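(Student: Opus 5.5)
The proof is a direct verification. Write $\lambda_k:=\lambda_{|\Omega|,k}$ and ${\bf b}:={\bf a}-{\bf w}^{(2)}(\cdot,0)$, and recall $b_3=0$.

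\emph{Step 1: the formula solves the system.} For each fixed $\xi'\neq0$, every term $e^{-x_3\lambda_k}$ solves the sixth-order ODE \eqref{0612-11}, because $\lambda_k$ is a root of \eqref{0612-12}. So the ansatz \eqref{0612-15} gives $\widehat v_3^{(2)}$. The horizontal part is then recovered from two facts:
\begin{itemize}
\item its divergence, $i\xi'\cdot\widehat{\bf v}_h^{(2)}=-\partial_3\widehat v_3^{(2)}$, as in \eqref{0612-13};
\item its curl, $(i\xi')^{\perp}\cdot\widehat{\bf v}_h^{(2)}$, obtained by integrating \eqref{0612-9} over $[x_3,\infty)$, as in \eqref{0612-14}.
\end{itemize}
Integrating \eqref{0612-9} requires $\Re\lambda_k>0$, so that the boundary terms at infinity vanish. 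Next I reconstruct $\widehat{\bf v}_h^{(2)}$ via the identity
\begin{equation*}
  {\bf c}=-|\xi'|^{-2}\bigl(i\xi'(i\xi'\cdot{\bf c})+(i\xi')^{\perp}((i\xi')^{\perp}\cdot{\bf c})\bigr),
\end{equation*}
valid for any ${\bf c}\in\mathbb{C}^2$. Taking the inverse Fourier transform, $-|\xi'|^{-2}i\xi'$ becomes $-\nabla'(-\Delta_{\mathbb{R}^2})^{-1}$, which gives exactly \eqref{0612-19}.

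To check that the full Stokes–Coriolis system holds, I define the pressure $\widehat q^{(2)}$ from the second equation of \eqref{0612-7}. This requires $(|\xi'|^2-\partial_3^2)\widehat v_3^{(2)}$ to be a $\partial_3$-derivative of a decaying function, which holds because every term is exponential. I then verify the horizontal equations by taking their divergence and curl components. The curl component is \eqref{0612-10}, which follows from \eqref{0612-14} after multiplying by $(|\xi'|^2-\lambda_k^2)$ and using \eqref{0612-12}. The divergence component reduces to the pressure relation \eqref{0612-8}.

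\emph{Step 2: boundary conditions and the constants $C_k$.} Evaluating \eqref{0612-15}, \eqref{0612-13} and \eqref{0612-14} at $x_3=0$ and imposing $\widehat{\bf v}^{(2)}(\xi',0)=\widehat{\bf b}(\xi')$ gives the system ${\bf A}(C_1,C_2,C_3)^{T}=(i\xi'\cdot\widehat{\bf b}_h,\,-(i\xi')^{\perp}\cdot\widehat{\bf b}_h,\,0)^{T}$. The third row encodes $b_3=0$. By Lemma~\ref{0612-16}, $\det{\bf A}\neq0$ as long as:
\begin{itemize}
\item the $\lambda_k$ are distinct;
\item $|\xi'|+\sum_k\lambda_k\neq0$.
\end{itemize}
The second condition holds because the sum has positive real part: $\lambda_1>0$ and $\lambda_3=\bar\lambda_2$ by \eqref{0916-2}–\eqref{0916-3}. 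Under these conditions \eqref{0613-11} is well defined.

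\emph{Step 3: the main obstacle, distinctness of the roots.} The key difficulty is showing the three roots are distinct. Setting $\mu=\lambda^2$, the equation becomes the cubic $(|\xi'|^2-\mu)^3=|\Omega|^2\mu$. Its real root is simple, because the function $\mu\mapsto(|\xi'|^2-\mu)^3-|\Omega|^2\mu$ is strictly decreasing. The other two roots are complex conjugates of each other, and they are non-real because a real function with a single sign change has only one real zero. So the three values of $\mu$ are distinct. Choosing the square roots with positive real part, none of which is purely imaginary by Lemma~\ref{0804-2}, then gives three distinct $\lambda_k$.

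The remaining point is uniqueness among solutions decaying as $x_3\to\infty$. Any such solution of \eqref{0612-11} is a combination of $e^{-\lambda_kx_3}$ alone, since the modes $e^{+\lambda_kx_3}$ are excluded by $L^r$ integrability. Invertibility of ${\bf A}$ then fixes the coefficients, so the representation is unique.
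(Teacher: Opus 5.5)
Your proposal is correct and follows essentially the same route as the paper's derivation preceding Proposition~\ref{0626-12}. The shared steps are: the sixth-order ODE \eqref{0612-11} for $\widehat v_3^{(2)}$ with only the decaying exponentials retained; recovery of $\widehat{\bf v}_h^{(2)}$ from its divergence \eqref{0612-13} and its curl \eqref{0612-14} (the latter obtained by integrating \eqref{0612-9}); and the linear system for $(C_1,C_2,C_3)^{T}$ with matrix ${\bf A}$ coming from the boundary data. Your extra checks fill in points the paper leaves implicit or defers to Dalibard--Prange, and they are sound: the reconstruction of the pressure and verification of the full system, the distinctness of the $\lambda_{|\Omega|,k}$ via monotonicity of the cubic in $\mu=\lambda^{2}$, and hence $\det{\bf A}\neq0$.
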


\subsection{Estimates for ${\bf w}^{(2)}$ and ${\bf v}^{(2)}$}\label{0623-3} 
In order to estimate ${\bf w}^{(2)}$ and ${\bf v}^{(2)}$, given in \eqref{0612-18} and \eqref{0612-19}, respectively, 
we need to understand the detailed structure and asymptotic behavior of $\lambda_{|\Omega|, k}$ $(k = 1,2,3)$ with respect to $\xi'$ for fixed $\Omega$. 
Recall that $\lambda_{|\Omega|, k}$ is a root of the following algebraic equation, 
\begin{equation*}
  (|\xi'|^{2} - \lambda^{2})^{3} - |\Omega|^{2}\lambda^{2} = 0. 
\end{equation*}
By the classical formula for cubic equations, $\lambda_{|\Omega|, k}$ can be written as follows. 
\begin{align}\label{0625-9}
  \lambda_{|\Omega|, k}^{2} 
  =& {|\xi'|}^{2} 
   + \beta_{k}\left(\frac{|\Omega|^2}{2}\left(-{|\xi'|}^{2} + \left({|\xi'|}^{4} + \frac{4}{27}{|\Omega|}^{2}\right)^{\frac{1}{2}}\right)\right)^{\frac{1}{3}}\notag\\ 
   &- \bar{\beta_{k}}\left(\frac{|\Omega|^2}{2}\left({|\xi'|}^{2} + \left({|\xi'|}^{4} + \frac{4}{27}{|\Omega|}^{2}\right)^{\frac{1}{2}}\right)\right)^{\frac{1}{3}}, 
\end{align}
where $\beta_{1}:= 1$, $\beta_{2} := e^{\frac{2}{3}i\pi}$ and $\beta_{3} := e^{\frac{4}{3}i\pi}$. 
Moreover, we can write 
\begin{align}\label{0613-2}
  \lambda_{|\Omega|, k}^{2} 
  =& |\Omega|\left(\frac{|\xi'|}{|\Omega|^{\frac{1}{2}}}\right)^{2} 
   + |\Omega|2^{-\frac{1}{3}}\beta_{k}\left(-\left(\frac{|\xi'|}{|\Omega|^{\frac{1}{2}}}\right)^{2} + \left(\left(\frac{|\xi'|}{|\Omega|^{\frac{1}{2}}}\right)^{4} + \frac{4}{27}\right)^{\frac{1}{2}}\right)^{\frac{1}{3}}\notag\\ 
   &- |\Omega|2^{-\frac{1}{3}}\bar{\beta}_{k}\left(\left(\frac{|\xi'|}{|\Omega|^{\frac{1}{2}}}\right)^{2} + \left(\left(\frac{|\xi'|}{|\Omega|^{\frac{1}{2}}}\right)^{4} + \frac{4}{27}\right)^{\frac{1}{2}}\right)^{\frac{1}{3}}, 
\end{align}
which shows that the leading-order term of $\lambda_{|\Omega|, k}$ varies with the magnitude of $|\xi'|/|\Omega|^{\frac{1}{2}}$. 
Motivated by the above observations, we decompose the frequency domain as follows: 
\begin{equation*}
  \{\xi'\in\mathbb{R}^{2}|\ |\xi'|> C_{0}|\Omega|^{\frac{1}{2}}\},\ 
  \{\xi'\in\mathbb{R}^{2}|\ C_{0}^{-1}|\Omega|^{\frac{1}{2}}\leq |\xi'|\leq C_{0}|\Omega|^{\frac{1}{2}}\}
\end{equation*}
and 
\begin{equation*}
  \{\xi'\in\mathbb{R}^{2}|\ |\xi'| < C_{0}^{-1}|\Omega|^{\frac{1}{2}}\}, 
\end{equation*}
where $C_{0} \geq 1$ is a sufficiently large constant to be fixed later. 
In this setting, when $|\xi'| > C_{0}|\Omega|^{\frac{1}{2}}$, we instead employ different representations ${\bf w}^{(1)}$ and ${\bf v}^{(1)}$, 
in which the Coriolis term is treated as a perturbation, 
since the required estimates can be derived more directly. 
Therefore, we only consider the cases of $C_{0}^{-1}|\Omega|^{\frac{1}{2}}\leq |\xi'|\leq C_{0}|\Omega|^{\frac{1}{2}}$ and $|\xi'| < C_{0}^{-1}|\Omega|^{\frac{1}{2}}$ in what follows. 

We can obtain the following lemma. 
\begin{lem}\label{0613-7}
  Let $\Omega\in\mathbb{R}\setminus\{0\}$. 
  \begin{enumerate}
    \item\label{0613-1} Let $C_{0} \geq 1$. 
    If $\xi'\in \mathbb{R}^{2}$ satisfies $C_{0}^{-1}|\Omega|^{\frac{1}{2}}\leq |\xi'|\leq C_{0}|\Omega|^{\frac{1}{2}}$, 
    then there exists a constant $C>0$ such that for $k = 1,2,3$, we have 
    \begin{equation}\label{0613-3}
      C^{-1}|\xi'|\leq |\lambda_{|\Omega|, k}| \leq C|\xi'|. 
    \end{equation}
    \item\label{0613-4} There exists a sufficiently large constant $C_{0} \geq 1$ such that 
    if $\xi'\in \mathbb{R}^{2}$ satisfies $|\xi'|< C_{0}^{-1}|\Omega|^{\frac{1}{2}}$, then we have the asymptotic expansion 
    \begin{align*}
      \lambda_{|\Omega|, 1} 
      &= \frac{|\xi'|^{3}}{|\Omega|}\left(1 - \frac{3}{2}\frac{|\xi'|^{4}}{|\Omega|^{2}} + \mathcal{O}\left(\frac{|\xi'|^{8}}{|\Omega|^{4}}\right)\right),\\ 
      \lambda_{|\Omega|, 2} 
      &= e^{\frac{1}{4}i\pi}|\Omega|^{\frac{1}{2}}\left(1 - \frac{3}{4}i\frac{|\xi'|^{2}}{|\Omega|} + \frac{15}{32}\frac{|\xi'|^{4}}{|\Omega|^{2}} + \mathcal{O}\left(\frac{|\xi'|^{6}}{|\Omega|^{3}}\right)\right),\\ 
      \lambda_{|\Omega|, 3} 
      &= e^{-\frac{1}{4}i\pi}|\Omega|^{\frac{1}{2}}\left(1 + \frac{3}{4}i\frac{|\xi'|^{2}}{|\Omega|} + \frac{15}{32}\frac{|\xi'|^{4}}{|\Omega|^{2}} + \mathcal{O}\left(\frac{|\xi'|^{6}}{|\Omega|^{3}}\right)\right). 
    \end{align*}
  \end{enumerate}
\end{lem}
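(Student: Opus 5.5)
The plan is to reduce everything to a scaling variable. Set $\mu := \lambda^{2}/|\Omega|$ and $t := |\xi'|^{2}/|\Omega|$. Then \eqref{0804-1} becomes
\begin{equation*}
  (t - \mu)^{3} = \mu ,
\end{equation*}
and the roots satisfy $\lambda_{|\Omega|,k} = |\Omega|^{1/2}\mu_{k}(t)^{1/2}$, with the branch chosen so that $\Re\lambda>0$. This cubic is independent of $\Omega$, so both parts become statements about the roots $\mu_{k}(t)$ of a fixed polynomial family.

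For part \eqref{0613-1}, $t$ ranges over the compact interval $[C_{0}^{-2}, C_{0}^{2}]$. No root can vanish there: $\mu = 0$ would force $t^{3} = 0$. The roots depend continuously on $t$, by the explicit formula \eqref{0613-2} or by continuity of the roots of a monic polynomial. Hence $|\mu_{k}(t)|$ is bounded above and below by positive constants depending only on $C_{0}$. This gives $|\lambda_{|\Omega|,k}| \simeq |\Omega|^{1/2} \simeq C_{0}^{\pm1}|\xi'|$, which is \eqref{0613-3} with $C$ depending on $C_{0}$.

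For part \eqref{0613-4}, $t < C_{0}^{-2}$ is small, and I would do a perturbative root analysis.

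\textbf{Small root $\mu_{1}$.} Write $\mu = t^{3}(1+\eta)$. The equation becomes
\begin{equation*}
  (1 - t^{2}(1+\eta))^{3} = 1+\eta .
\end{equation*}
By the implicit function theorem around $t = 0$, $\eta = 0$, there is a unique analytic solution, and expanding gives $\eta = -3t^{2} + \mathcal{O}(t^{4})$. Then
\begin{equation*}
  \lambda_{1} = |\Omega|^{1/2} t^{3/2}(1+\eta)^{1/2},
\end{equation*}
with $|\Omega|^{1/2}t^{3/2} = |\xi'|^{3}/|\Omega|$. Since $(1+\eta)^{1/2} = 1 - \tfrac32 t^{2} + \mathcal{O}(t^{4})$ and $t^{2} = |\xi'|^{4}/|\Omega|^{2}$, this is the stated formula.

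\textbf{Large roots $\mu_{2},\mu_{3}$.} Near $t = 0$ the other two roots solve $-\mu^{2} \approx 1$, that is $\mu \approx \pm i$. Write $\mu = \pm i + \zeta$ and expand $(t-\mu)^{3} - \mu = 0$ to first order. The derivative of $\mu \mapsto (t-\mu)^{3} - \mu$ at $t=0$, $\mu = \pm i$, equals $-3\mu^{2} - 1 = 2 \neq 0$, so the implicit function theorem gives analytic branches $\mu_{\pm}(t)$. Comparing coefficients determines $\zeta = c_{1}t + c_{2}t^{2} + \mathcal{O}(t^{3})$. Taking the square root with $\Re>0$ gives $\lambda = e^{\pm i\pi/4}|\Omega|^{1/2}(1 + \cdots)$. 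Matching coefficients against the claimed $\mp\frac34 i\,t + \frac{15}{32}t^{2}$ is a routine Taylor computation. The real root is $\lambda_{1}$ and the conjugate pair is $\lambda_{2},\lambda_{3}$, consistent with \eqref{0916-2}–\eqref{0916-3}.

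\textbf{Main obstacle.} The delicate step is uniformity. The $\mathcal{O}(\cdot)$ constants must be independent of $\Omega$, and $C_{0}$ must be chosen so that $t < C_{0}^{-2}$ lies inside the disc where the implicit-function branches exist and the three branches are distinct. This is automatic because after rescaling there is no $\Omega$ left. I would fix $C_{0}$ so that $C_{0}^{-2}$ is smaller than the radius of analyticity of the three branches. The $\mathcal{O}$ bounds then follow from Taylor's theorem with the analytic remainder bounded on a closed subdisc.

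I would also check that the branches found by the implicit function theorem coincide with the labelling in \eqref{0625-9}. This holds by continuity of the labelling in $t$ together with the sign of the imaginary parts.
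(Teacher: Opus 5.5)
Your proposal is correct. Part (1) is essentially the paper's argument: you scale out $|\Omega|$ (the paper via \eqref{0613-2}, you via $\mu=\lambda^{2}/|\Omega|$ and $t=|\xi'|^{2}/|\Omega|$), note that no root vanishes for $t>0$, and use compactness of $t\in[C_0^{-2},C_0^{2}]$.

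For part (2) you take a different route. The paper Taylor-expands the explicit Cardano formula \eqref{0613-2} to obtain $\lambda_{|\Omega|,2}$ and $\lambda_{|\Omega|,3}$. Because the leading terms cancel in that formula for $k=1$, it recovers $\lambda_{|\Omega|,1}$ from the Vieta identity $\lambda_{|\Omega|,1}\lambda_{|\Omega|,2}\lambda_{|\Omega|,3}=|\xi'|^{3}$. You instead apply the implicit function theorem in two places: to the $\Omega$-free cubic $(t-\mu)^{3}=\mu$ at the simple roots $\pm i$, and to the rescaled equation for $\mu=t^{3}(1+\eta)$ for the small root. This needs no closed-form root formula, never meets the cancellation, and makes transparent both the uniformity in $\Omega$ of the $\mathcal{O}$-constants and the choice of $C_0$.

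Your route also gives the remainder for $\lambda_{|\Omega|,1}$ cleanly. Your equation for $\eta$ involves $t$ only through $t^{2}$; say so explicitly and apply the implicit function theorem in $s=t^{2}$. You then get $\eta=-3t^{2}+\mathcal{O}(t^{4})$, hence the stated $\mathcal{O}(|\xi'|^{8}/|\Omega|^{4})$. The Vieta route, by contrast, needs $|\lambda_{|\Omega|,2}|^{2}$ to one order beyond the displayed expansion, or the fact that it is even in $t$, to reach that remainder.

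The coefficient matching you left as routine does check out. One finds $\mu_{2}=i+\tfrac32 t+\tfrac{3i}{8}t^{2}+\mathcal{O}(t^{3})$, and its principal square root is $e^{i\pi/4}\bigl(1-\tfrac34 i t+\tfrac{15}{32}t^{2}+\mathcal{O}(t^{3})\bigr)$, as claimed. What the paper's approach buys in exchange is that it stays with the explicit formula \eqref{0625-9}, which it has already written down and uses elsewhere.
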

\begin{proof}
  The upper bound in \eqref{0613-1} follows immediately from \eqref{0613-2}. 
  For the lower bound, we can write 
  \begin{equation*}
    \lambda_{|\Omega|, k}^{2} 
    = \frac{(|\xi'|^{2} - \lambda_{|\Omega|, k}^{2})^{3}}{|\Omega|^{2}}, 
  \end{equation*}
  and 
  \begin{align*}
    |\xi'|^{2} - \lambda_{|\Omega|, k}^{2}
    =& \bar{\beta_{k}}\left(\frac{|\Omega|^2}{2}\left({|\xi'|}^{2} + \left({|\xi'|}^{4} + \frac{4}{27}{|\Omega|}^{2}\right)^{\frac{1}{2}}\right)\right)^{\frac{1}{3}}\\ 
    &- \beta_{k}\left(\frac{|\Omega|^2}{2}\left(-{|\xi'|}^{2} + \left({|\xi'|}^{4} + \frac{4}{27}{|\Omega|}^{2}\right)^{\frac{1}{2}}\right)\right)^{\frac{1}{3}}. 
  \end{align*}
  It is clear that if $|\xi'|^{2} - \lambda_{|\Omega|, k}^{2} = 0$, then $\xi' = 0$. 
  Thus, $|\lambda_{|\Omega|, k}| > 0$ for any $|\xi'| > 0$. 
  Moreover, from \eqref{0613-2}, we have 
  \begin{equation*}
    \frac{1}{|\Omega|^{\frac{1}{2}}}|\lambda_{k, |\Omega|}| > 0 
    \quad \text{for any } \frac{|\xi'|}{|\Omega|^{\frac{1}{2}}} > 0. 
  \end{equation*}
  By the assumption $C_{0}^{-1}\leq |\xi'|/|\Omega|^{\frac{1}{2}}\leq C_{0}$, we have 
  \begin{equation*}
    |\lambda_{k,|\Omega|}|\geq C^{-1}|\Omega|^{\frac{1}{2}} \geq C^{-1}|\xi'|. 
  \end{equation*}

  The expansions of $\lambda_{|\Omega|, 2}$ and $\lambda_{|\Omega|, 3}$ in \eqref{0613-4} follow from a Taylor expansion around $|\xi'|/|\Omega|^{\frac{1}{2}} = 0$. 
  In the case of $\lambda_{|\Omega|, 1}$, the leading-order terms are canceled, which makes the same approach cumbersome. 
  Hence we use the following fact. 
  $\lambda_{|\Omega|, k}$ $(k = 1,2,3)$ satisfies 
  \begin{equation*}
    (\lambda_{|\Omega|, 1}^{2} - \lambda^{2})(\lambda_{|\Omega|, 2}^{2} - \lambda^{2})(\lambda_{|\Omega|, 3}^{2} - \lambda^{2}) 
    = (|\xi'|^{2} - \lambda^{2})^{3} - |\Omega|^{2}\lambda^{2}, 
  \end{equation*}
  which implies 
  \begin{equation*}
    \lambda_{|\Omega|, 1}^{2}\lambda_{|\Omega|, 2}^{2}\lambda_{|\Omega|, 3}^{2} = |\xi'|^{6}. 
  \end{equation*}
  Moreover, since $\lambda_{|\Omega|, 1}, \Re(\lambda_{|\Omega|, 2}), \Re(\lambda_{|\Omega|, 3}) > 0$ (see \eqref{0916-2} and \eqref{0916-3}), we have 
  \begin{equation*}
    \lambda_{|\Omega|, 1}\lambda_{|\Omega|, 2}\lambda_{|\Omega|, 3} = |\xi'|^{3}. 
  \end{equation*}
  Using this fact, we obtain the expansion of $\lambda_{|\Omega|, 1}$. 
\end{proof}

By the preceding lemma, we have the following for 
\begin{equation*}
  B_{k} = \left(\prod_{\alpha = 1, \alpha \ne k}^{3}(\lambda_{|\Omega|, \alpha}^{2}-\lambda_{|\Omega|, k}^2)\right)^{-1}, 
\end{equation*}
which appears in the explicit formulas for $({\bf w}^{2}, {\bf v}^{2})$, \eqref{0612-18} and \eqref{0612-19}. 

\begin{cor}\label{0916-1}
  Let $\Omega\in\mathbb{R}\setminus\{0\}$. 
  \begin{enumerate}
    \item Let $C_{0} \geq 1$. 
    If $\xi'\in \mathbb{R}^{2}$ satisfies $C_{0}^{-1}|\Omega|^{\frac{1}{2}}\leq |\xi'|\leq C_{0}|\Omega|^{\frac{1}{2}}$, 
    then there exists a constant $C>0$ such that for $k = 1,2,3$, we have 
    \begin{equation*}
      C^{-1}\frac{1}{|\xi'|^{4}}\leq |B_{k}| \leq C\frac{1}{|\xi'|^{4}}. 
    \end{equation*}
    \item There exists a sufficiently large constant $C_{0} \geq 1$ such that 
    if $\xi'\in \mathbb{R}^{2}$ satisfies $|\xi'|< C_{0}^{-1}|\Omega|^{\frac{1}{2}}$, then we have the asymptotic expansion 
    \begin{align*}
      B_{1} 
      &= \frac{1}{|\Omega|^{2}}\left(1 - 3\frac{|\xi'|^{4}}{|\Omega|^{2}} + \mathcal{O}\left(\frac{|\xi'|^{8}}{|\Omega|^{4}}\right)\right),\\ 
      B_{2} 
      &= -\frac{1}{2|\Omega|^{2}}\left(1 + \frac{3}{2}i\frac{|\xi'|^{2}}{|\Omega|} + \mathcal{O}\left(\frac{|\xi'|^{4}}{|\Omega|^{2}}\right)\right),\\ 
      B_{3} 
      &= -\frac{1}{2|\Omega|^{2}}\left(1 - \frac{3}{2}i\frac{|\xi'|^{2}}{|\Omega|} + \mathcal{O}\left(\frac{|\xi'|^{4}}{|\Omega|^{2}}\right)\right). 
    \end{align*}
  \end{enumerate}
\end{cor}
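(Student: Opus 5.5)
We plan to derive both parts of Corollary~\ref{0916-1} directly from the definition
\[
B_{k} = \Bigl(\prod_{\alpha \neq k}(\lambda_{|\Omega|,\alpha}^{2} - \lambda_{|\Omega|,k}^{2})\Bigr)^{-1}
\]
together with Lemma~\ref{0613-7}.

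\textbf{Part (1): the intermediate regime.} The key tool is scaling. Set $\rho := |\xi'|/|\Omega|^{1/2} \in [C_{0}^{-1}, C_{0}]$. By \eqref{0613-2} we have $\lambda_{|\Omega|,k}^{2} = |\Omega|\,\mu_{k}(\rho)$, where the functions $\mu_{k}$ are continuous in $\rho$ and are the roots of
\[
(\rho^{2} - \mu)^{3} - \mu = 0 .
\]
Hence
\[
B_{k} = |\Omega|^{-2}\Bigl(\prod_{\alpha \neq k}(\mu_{\alpha} - \mu_{k})\Bigr)^{-1}.
\]
The upper bound on $|B_{k}|^{-1}$ is immediate from \eqref{0613-3}. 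For the lower bound we must show the roots $\mu_{k}(\rho)$ are pairwise distinct for every $\rho>0$. The discriminant argument runs as follows.

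\begin{itemize}
\item A double root would satisfy both $(\rho^{2}-\mu)^{3} = \mu$ and $-3(\rho^{2}-\mu)^{2} = 1$.
\item The second equation forces $(\rho^{2}-\mu)^{2} = -1/3$, i.e. $\rho^{2} - \mu = \pm i/\sqrt{3}$.
\item Substituting into the first gives $\mu = \mp i/(3\sqrt{3})$.
\item Then $\rho^{2} = \mu \pm i/\sqrt{3}$ is non-real, a contradiction.
\end{itemize}

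Equivalently, \eqref{0625-9} shows $\lambda_{|\Omega|,1}^{2}$ is real while $\lambda_{|\Omega|,2}^{2},\lambda_{|\Omega|,3}^{2}$ have nonzero imaginary parts of opposite sign, since $(|\xi'|^{4}+\tfrac{4}{27}|\Omega|^{2})^{1/2} > |\xi'|^{2}$.

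Continuity and compactness of $[C_{0}^{-1},C_{0}]$ then give $c \le \prod_{\alpha\neq k}|\mu_{\alpha}-\mu_{k}| \le C$. Since $|\xi'|^{4}\sim|\Omega|^{2}$ on this range, we obtain $|B_{k}| \sim |\xi'|^{-4}$, with constants depending on $C_{0}$.

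\textbf{Part (2): the low-frequency regime.} We write $\varepsilon := |\xi'|^{2}/|\Omega|$ and square the expansions of Lemma~\ref{0613-7}\,\eqref{0613-4}:
\[
\lambda_{1}^{2} = |\Omega|\varepsilon^{3}\bigl(1-3\varepsilon^{2}+\mathcal{O}(\varepsilon^{4})\bigr),
\]
\[
\lambda_{2}^{2} = i|\Omega|\Bigl(1 - \tfrac{3}{2}i\varepsilon + \tfrac{15}{16}\varepsilon^{2} - \tfrac{9}{16}\varepsilon^{2} + \mathcal{O}(\varepsilon^{3})\Bigr),
\qquad \lambda_{3}^{2} = \overline{\lambda_{2}^{2}} .
\]
For $B_{1}$ the product $(\lambda_{2}^{2}-\lambda_{1}^{2})(\lambda_{3}^{2}-\lambda_{1}^{2})$ equals $|\lambda_{2}^{2}|^{2} - 2\lambda_{1}^{2}\Re\lambda_{2}^{2} + \lambda_{1}^{4}$.

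Here a shortcut is cleaner. Comparing coefficients in the identity from the proof of Lemma~\ref{0613-7},
\[
\textstyle\prod_{k}(\lambda_{k}^{2}-\lambda^{2}) = (|\xi'|^{2}-\lambda^{2})^{3} - |\Omega|^{2}\lambda^{2},
\]
gives
\[
\lambda_{1}^{2}+\lambda_{2}^{2}+\lambda_{3}^{2} = 3|\xi'|^{2}, \qquad \lambda_{1}^{2}\lambda_{2}^{2}+\lambda_{2}^{2}\lambda_{3}^{2}+\lambda_{3}^{2}\lambda_{1}^{2} = 3|\xi'|^{4}+|\Omega|^{2}.
\]
Therefore
\[
B_{1}^{-1} = \lambda_{2}^{2}\lambda_{3}^{2} - \lambda_{1}^{2}(\lambda_{2}^{2}+\lambda_{3}^{2}) + \lambda_{1}^{4} = 3|\xi'|^{4} + |\Omega|^{2} - 2\lambda_{1}^{2}(3|\xi'|^{2}-\lambda_{1}^{2}) + \lambda_{1}^{4}.
\]
Since $\lambda_{1}^{2} = \mathcal{O}(|\Omega|\varepsilon^{3})$, this equals $|\Omega|^{2}(1 + 3\varepsilon^{2} + \mathcal{O}(\varepsilon^{4}))$, which inverts to the stated expansion of $B_{1}$.

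For $B_{2}$ we compute
\[
(\lambda_{1}^{2}-\lambda_{2}^{2})(\lambda_{3}^{2}-\lambda_{2}^{2}) = -\lambda_{2}^{2}\bigl(\lambda_{3}^{2}-\lambda_{2}^{2}\bigr)\bigl(1+\mathcal{O}(\varepsilon^{3})\bigr).
\]
From the expansion, $\lambda_{3}^{2}-\lambda_{2}^{2} = -2i|\Omega|\bigl(1+\tfrac{3}{8}\varepsilon^{2}+\dots\bigr) + \mathcal{O}(|\Omega|\varepsilon)$. Collecting terms gives $B_{2}^{-1} = -2|\Omega|^{2}\bigl(1 - \tfrac{3}{2}i\varepsilon + \mathcal{O}(\varepsilon^{2})\bigr)$, and inverting yields the claim. $B_{3}$ follows by complex conjugation.

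\textbf{Expected obstacle.} The main difficulty is bookkeeping of the $\mathcal{O}(\varepsilon)$ term in $B_{2}$. The real part of $\lambda_{3}^{2}-\lambda_{2}^{2}$ at order $\varepsilon$, namely $2\Re$ of the $\tfrac{3}{2}\varepsilon|\Omega|$ term, must be tracked precisely. One must also pick the correct sign convention for $\beta_{k}$ so that $\lambda_{2} \approx e^{i\pi/4}|\Omega|^{1/2}$.
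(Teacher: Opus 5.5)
Your route is the one the paper intends: the paper gives no separate argument and presents the corollary as a direct consequence of Lemma~\ref{0613-7}. Part~(1) and the $B_1$ expansion are correct as written. Your root-separation argument is genuinely needed, since $|\lambda_k|\sim|\xi'|$ alone gives no lower bound on $|\lambda_\alpha^2-\lambda_k^2|$, and the paper leaves this implicit. The step that does not work as written is the $B_2$ computation.

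You write $\lambda_3^2-\lambda_2^2=-2i|\Omega|(1+\tfrac38\varepsilon^2+\dots)+\mathcal{O}(|\Omega|\varepsilon)$ and then read off the order-$\varepsilon$ coefficient of $B_2^{-1}$. With an unspecified $\mathcal{O}(|\Omega|\varepsilon)$ remainder in that factor, you can only conclude $B_2^{-1}=-2|\Omega|^2(1+\mathcal{O}(\varepsilon))$. The remainder enters at exactly the order you are trying to identify.

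Your closing remark makes this worse: it treats the order-$\varepsilon$ real part of $\lambda_3^2-\lambda_2^2$ as a nonzero quantity built from the $\tfrac32\varepsilon|\Omega|$ terms. If that real part were $c\,\varepsilon|\Omega|$, you would get
\[
-\lambda_2^2(\lambda_3^2-\lambda_2^2)=-|\Omega|^2\bigl(i+\tfrac32\varepsilon\bigr)\bigl(-2i+c\,\varepsilon\bigr)+\dots=-2|\Omega|^2\Bigl(1+\tfrac{i(c-3)}{2}\varepsilon+\mathcal{O}(\varepsilon^2)\Bigr).
\]
For $c=3$ the $\tfrac32 i\varepsilon$ term would vanish entirely, and the claimed expansion holds only for $c=0$.

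The missing observation is that $c=0$ exactly. Since $\lambda_3^2=\overline{\lambda_2^2}$, which you already use for $B_1$, the real parts $\tfrac32\varepsilon|\Omega|$ cancel, and
\[
\lambda_3^2-\lambda_2^2=-2i\,\Im\lambda_2^2=-2i|\Omega|\bigl(1+\tfrac38\varepsilon^2+\mathcal{O}(\varepsilon^3)\bigr).
\]
It follows that
\[
B_2^{-1}=-\lambda_2^2(\lambda_3^2-\lambda_2^2)\bigl(1+\mathcal{O}(\varepsilon^3)\bigr)=-2|\Omega|^2\bigl(1-\tfrac32 i\varepsilon+\tfrac34\varepsilon^2+\mathcal{O}(\varepsilon^3)\bigr).
\]
The entire first-order term comes from the factor $-\lambda_2^2$, and $B_3=\overline{B_2}$ then follows as you say.

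A tidier way to organize both parts is the following. Differentiate $\prod_k(\lambda_k^2-X)=(|\xi'|^2-X)^3-|\Omega|^2X$ at $X=\lambda_k^2$; this gives
\[
B_k^{-1}=|\Omega|^2+3\bigl(|\xi'|^2-\lambda_k^2\bigr)^2 .
\]
Nonvanishing of this expression is exactly your double-root condition $(\rho^2-\mu_k)^2\neq-\tfrac13$. Inserting $|\xi'|^2-\lambda_1^2=|\xi'|^2(1-\varepsilon^2+\dots)$ and $|\xi'|^2-\lambda_2^2=-i|\Omega|(1-\tfrac12 i\varepsilon+\dots)$ then gives all three expansions in one line, with no cancellation to track.
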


As a consequence of Lemma~\ref{0613-7}, we obtain estimates for the Fourier multiplier operator with symbol $e^{-x_{3}\lambda_{|\Omega|, k}}$. 
\begin{lem}\label{0613-8}
  Let $x_{3} > 0$, $j \in \mathbb{Z}$, $\Omega\in\mathbb{R}\setminus\{0\}$ and $1\leq p \leq \infty$. 
  \begin{enumerate}
    \item\label{0613-5} Let $C_{0} \geq 1$. 
    If $C_{0}^{-1}|\Omega|^{\frac{1}{2}}\leq 2^{j} \leq C_{0}|\Omega|^{\frac{1}{2}}$, 
    then for each $k = 1, 2, 3$, there exist constants $C, c > 0$ such that 
    for any $f$ with $\phi_{j}*f \in L^{p}(\mathbb{R}^{2})$, we have 
    \begin{equation*}
      {\left\|\mathcal{F}_{\mathbb{R}^{2}}^{-1}\left[\widehat{\phi}_{j}(\xi')e^{-x_{3}\lambda_{|\Omega|, k}}\widehat{f}(\xi')\right]\right\|}_{L^{p}} 
      \leq C e^{-cx_{3}2^{j}}{\|\phi_{j}*f\|}_{L^{p}}, 
    \end{equation*}
    \item\label{0613-6} There exists a sufficiently large constant $C_{0} \geq 1$ such that 
    if $2^{j} < C_{0}^{-1}|\Omega|^{\frac{1}{2}}$, 
    then there exist constants $C, c > 0$ such that 
    for any $f$ with $\phi_{j}*f \in L^{p}(\mathbb{R}^{2})$, we have 
    \begin{align*}
      {\left\|\mathcal{F}_{\mathbb{R}^{2}}^{-1}\left[\widehat{\phi}_{j}(\xi')e^{-x_{3}\lambda_{|\Omega|, 1}}\widehat{f}(\xi')\right]\right\|}_{L^{p}} 
      &\leq C e^{-cx_{3}\frac{2^{3j}}{|\Omega|}}{\|\phi_{j}*f\|}_{L^{p}}, \\ 
      {\left\|\mathcal{F}_{\mathbb{R}^{2}}^{-1}\left[\widehat{\phi}_{j}(\xi')e^{-x_{3}\lambda_{|\Omega|, 2}}\widehat{f}(\xi')\right]\right\|}_{L^{p}}
      &\leq C e^{-cx_{3}|\Omega|^{\frac{1}{2}}}{\|\phi_{j}*f\|}_{L^{p}}, \\
      {\left\|\mathcal{F}_{\mathbb{R}^{2}}^{-1}\left[\widehat{\phi}_{j}(\xi')e^{-x_{3}\lambda_{|\Omega|, 3}}\widehat{f}(\xi')\right]\right\|}_{L^{p}} 
      &\leq C e^{-cx_{3}|\Omega|^{\frac{1}{2}}}{\|\phi_{j}*f\|}_{L^{p}}. 
    \end{align*}
  \end{enumerate}
\end{lem}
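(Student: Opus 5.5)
The plan is to reduce both parts to a standard multiplier lemma for frequency-localized symbols, in the spirit of Lemma~\ref{0611-10} from \cite{I_2015}. For $\widehat{\phi}_j$ supported in the annulus $2^{j-1}\le|\xi'|\le 2^{j+1}$, we write
\[
\mathcal{F}_{\mathbb{R}^{2}}^{-1}\left[\widehat{\phi}_{j}e^{-x_{3}\lambda}\widehat{f}\right]=\mathcal{F}_{\mathbb{R}^{2}}^{-1}\left[\widetilde{\phi}_{j}e^{-x_{3}\lambda}\right]*\left(\phi_{j}*f\right)
\]
up to the constant $(2\pi)^{-1}$. Here $\widetilde{\phi}_j=\widehat{\phi}_{j-1}+\widehat{\phi}_j+\widehat{\phi}_{j+1}$ equals $1$ on $\operatorname{supp}\widehat{\phi}_j$. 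By Young's inequality it then suffices to show
\[
{\|\mathcal{F}^{-1}[\widetilde{\phi}_j e^{-x_3\lambda_{|\Omega|,k}}]\|}_{L^1}\le Ce^{-cx_3\mu},
\]
where $\mu$ is the relevant rate: $2^j$, $2^{3j}/|\Omega|$, or $|\Omega|^{1/2}$. We rescale $\xi'=2^j\eta$, which is $L^1$-norm invariant. The $L^1$ norm of the kernel is then controlled by the $H^{2}(\mathbb{R}^2)$ norm of the rescaled symbol $m(\eta)=\widetilde{\phi}_0(\eta)e^{-x_3\lambda_{|\Omega|,k}(2^j|\eta|)}$, via $\|\check m\|_{L^1}\le C\|m\|_{L^2}^{1/2}\|\Delta m\|_{L^2}^{1/2}$ (or $\sum_{|\alpha|\le 2}\|\partial^\alpha m\|_{L^2}$). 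So everything reduces to pointwise bounds on $\Re\lambda_{|\Omega|,k}$ and on the $\eta$-derivatives of $x_3\lambda_{|\Omega|,k}(2^j|\eta|)$ on $1/2\le|\eta|\le 2$.

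For part \eqref{0613-5}, we set $\rho=|\xi'|/|\Omega|^{1/2}\in[C_0^{-1}/2,2C_0]$, a compact set. By \eqref{0613-2}, $\lambda_{|\Omega|,k}=|\Omega|^{1/2}\Lambda_k(\rho)$ with $\Lambda_k$ smooth in $\rho>0$; the roots stay simple since $\det{\bf A}\neq0$, i.e.\ the $\lambda_k$ are distinct. We also need $\Re\Lambda_k>0$, which holds because no root is purely imaginary (proof of Lemma~\ref{0804-2}). By compactness, $\Re\lambda_{|\Omega|,k}\ge c|\Omega|^{1/2}\ge c'2^j$ and $|\partial_\eta^\alpha[\lambda_{|\Omega|,k}(2^j|\eta|)]|\le C2^j$ for $|\alpha|\le2$. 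Derivatives of $e^{-x_3\lambda}$ then produce factors $(1+x_32^j)^2$, which are absorbed by $e^{-cx_32^j}$ after shrinking $c$, giving the claim.

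For part \eqref{0613-6}, we use Lemma~\ref{0613-7}\eqref{0613-4}. For $k=2,3$, $\lambda_{|\Omega|,k}=e^{\pm i\pi/4}|\Omega|^{1/2}(1+g_\pm(\rho^2))$ with $g_\pm$ analytic and small for $\rho<C_0^{-1}$. This gives $\Re\lambda\ge c|\Omega|^{1/2}$. The $\eta$-derivatives of $x_3\lambda(2^j|\eta|)$ are bounded by $Cx_3|\Omega|^{1/2}\rho^2\le Cx_3|\Omega|^{1/2}$, so the polynomial factors are absorbed by $e^{-cx_3|\Omega|^{1/2}}$.

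For $k=1$, we use $\lambda_{|\Omega|,1}=|\xi'|^3|\Omega|^{-1}(1+h(\rho^4))$, with $h$ analytic and $|h|\le1/2$ once $C_0$ is large. Real-analyticity follows from the product identity $\lambda_1\lambda_2\lambda_3=|\xi'|^3$ together with the analytic expansions of $\lambda_{2},\lambda_3$. Then $\lambda_{|\Omega|,1}\ge c2^{3j}/|\Omega|$, and its $\eta$-derivatives of order at most $2$ are bounded by $C2^{3j}/|\Omega|$. This gives the bound with rate $\mu=2^{3j}/|\Omega|$.

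The main obstacle is justifying uniform smoothness of the roots in the derivative bounds, particularly for $\lambda_{|\Omega|,1}$, where cancellation occurs in the cubic formula. The $\mathcal O$-terms in Lemma~\ref{0613-7} must be differentiable with matching bounds. I would handle this by the implicit function theorem applied to $(\rho^2-\Lambda^2)^3-\Lambda^2=0$ in the scaled variable. The roots are simple, so $\Lambda_k$ is real-analytic in $\rho^2$ near $0$ (after the substitution $\Lambda_1=\rho^3\tilde\Lambda_1$) and on compact subsets of $(0,\infty)$. This yields uniform derivative bounds, with constants depending only on $C_0$.
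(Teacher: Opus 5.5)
Your proposal is correct and follows essentially the same route as the paper: you localize with $\widetilde{\phi}_{j}$, apply Young's inequality, rescale to $j=0$, and control the $L^{1}$ norm of the kernel by $\xi'$-derivatives of the symbol using Lemma~\ref{0613-7}. The only differences are that you use an $L^{2}$-based bound with two derivatives where the paper uses the weight $(1+|x'|^{2})^{2}$ with four derivatives in $L^{1}$, and that you make explicit, via the implicit function theorem, the derivative bounds on the roots that the paper leaves implicit; one small fix is needed, namely that simplicity of the roots should not be inferred from $\det{\bf A}\neq 0$, since the paper never proves invertibility independently, but it follows directly because $\nu=\Lambda^{2}-\rho^{2}$ solves $\nu^{3}+\nu+\rho^{2}=0$, whose discriminant $-4-27\rho^{4}$ is negative (and in any case smoothness of $\Lambda_{k}$ for $\rho>0$ can be read off from \eqref{0613-2}).
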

\begin{proof}
  We only prove \eqref{0613-5}, since the proof of \eqref{0613-6} is similar. 
  It follows from \eqref{0613-2} that there exists a function $\Psi_{k} : \mathbb{R} \to \mathbb{C}$ $(k = 1, 2, 3)$ satisfying 
  \begin{equation*}
    \lambda_{|\Omega|, k} = |\Omega|^{\frac{1}{2}}\Psi_{k}\left(\frac{|\xi'|}{|\Omega|^{\frac{1}{2}}}\right). 
  \end{equation*}
  For each $j\in \mathbb{Z}$, we set $\widetilde{\phi}_{j}:= \phi_{j-1} + \phi_{j} + \phi_{j+1}$. 
  Then we have $\widehat{\phi}_{j}(\xi') = \widehat{\widetilde{\phi}}_{j}(\xi')\widehat{\phi}_{j}(\xi')$. 
  Thus, using Young's inequality, we get 
  \begin{equation*}
    {\left\|\mathcal{F}_{\mathbb{R}^{2}}^{-1}\left[\widehat{\phi}_{j}(\xi')e^{-x_{3}\lambda_{|\Omega|, k}}\widehat{f}(\xi')\right]\right\|}_{L^{p}} 
    \leq C {\left\|\mathcal{F}_{\mathbb{R}^{2}}^{-1}\left[\widehat{\widetilde{\phi}}_{j}(\xi')e^{-x_{3}|\Omega|^{\frac{1}{2}}\Psi_{k}(|\xi'|/|\Omega|^{\frac{1}{2}})}\right]\right\|}_{L^{1}}
    {\|\phi_{j}*f\|}_{L^{p}}. 
  \end{equation*}
  Without loss of generality, we may choose the dyadic partition of unity defining the Besov spaces so that 
  \begin{equation*}
    \phi_{j}(\xi') = \phi_{0}(2^{-j}\xi'). 
  \end{equation*}
  Since $\widetilde{\phi}_{j}(\xi') = \widetilde{\phi}_{0}(2^{-j}\xi')$, 
  it follows by a change of variables that 
  \begin{equation*}
    {\left\|\mathcal{F}_{\mathbb{R}^{2}}^{-1}\left[\widehat{\widetilde{\phi}}_{j}(\xi')e^{-x_{3}|\Omega|^{\frac{1}{2}}\Psi_{k}(|\xi'|/|\Omega|^{\frac{1}{2}})}\right]\right\|}_{L^{1}} 
    = {\left\|\mathcal{F}_{\mathbb{R}^{2}}^{-1}\left[\widehat{\widetilde{\phi}}_{0}(\xi')e^{-x_{3}|\Omega|^{\frac{1}{2}}\Psi_{k}(2^{j}|\xi'|/|\Omega|^{\frac{1}{2}})}\right]\right\|}_{L^{1}}
  \end{equation*}
  Also, we have 
  \begin{align*}
    &{\left\|\mathcal{F}_{\mathbb{R}^{2}}^{-1}\left[\widehat{\widetilde{\phi}}_{0}(\xi')e^{-x_{3}|\Omega|^{\frac{1}{2}}\Psi_{k}(2^{j}|\xi'|/|\Omega|^{\frac{1}{2}})}\right]\right\|}_{L^{1}}\\ 
    =& {\left\|(1 + |x'|^{2})^{-2}(1 + |x'|^{2})^{2}\mathcal{F}_{\mathbb{R}^{2}}^{-1}\left[\widehat{\widetilde{\phi}}_{0}(\xi')e^{-x_{3}|\Omega|^{\frac{1}{2}}\Psi_{k}(2^{j}|\xi'|/|\Omega|^{\frac{1}{2}})}\right]\right\|}_{L^{1}}\\ 
    =& {\left\|(1 + |x'|^{2})^{-2}\mathcal{F}_{\mathbb{R}^{2}}^{-1}\left[(1 - \Delta_{\xi'})^{2}\widehat{\widetilde{\phi}}_{0}(\xi')e^{-x_{3}|\Omega|^{\frac{1}{2}}\Psi_{k}(2^{j}|\xi'|/|\Omega|^{\frac{1}{2}})}\right]\right\|}_{L^{1}}\\ 
    \leq&C{\left\|\mathcal{F}_{\mathbb{R}^{2}}^{-1}\left[(1 - \Delta_{\xi'})^{2}\widehat{\widetilde{\phi}}_{0}(\xi')e^{-x_{3}|\Omega|^{\frac{1}{2}}\Psi_{k}(2^{j}|\xi'|/|\Omega|^{\frac{1}{2}})}\right]\right\|}_{L^{\infty}}\\ 
    \leq&C\int_{\mathbb{R}^{2}}\left|(1 - \Delta_{\xi'})^{2}\widehat{\widetilde{\phi}}_{0}(\xi')e^{-x_{3}|\Omega|^{\frac{1}{2}}\Psi_{k}(2^{j}|\xi'|/|\Omega|^{\frac{1}{2}})}\right|~{\rm d}\xi', 
  \end{align*}
  where $\Delta_{\xi'} := \partial_{\xi_{1}}^{2} + \partial_{\xi_{2}}^{2}$. 
  By $\text{supp }\widehat{\phi}_{0} = \{\xi'\in \mathbb{R}^{2}|\ 2^{-1}\leq |\xi'|\leq 2\}$ and Lemma~\ref{0613-7} \eqref{0613-1}, 
  the chain rule implies that for any multi-index $\beta\in (\mathbb{N}\cup\{0\})^{2}$ with $|\beta|\leq 4$, we have 
  \begin{align*}
    \left|\partial_{\xi'}^{\beta}e^{-x_{3}|\Omega|^{\frac{1}{2}}\Psi_{k}(2^{j}|\xi'|/|\Omega|^{\frac{1}{2}})}\right| 
    \leq& C \sum_{0 \leq n\leq |\beta|}(x_{3}2^{j})^{n}|\Omega|^{\frac{n}{2}}\left|(\Psi_{k})^{(n)}(2^{j}|\xi'|/|\Omega|^{\frac{1}{2}})\right|e^{-cx_{3}2^{j}}\\ 
    \leq& C \sum_{0 \leq n\leq |\beta|}(x_{3}2^{j})^{n}2^{nj}\left|(\Psi_{k})^{(n)}(2^{j}|\xi'|/|\Omega|^{\frac{1}{2}})\right|e^{-cx_{3}2^{j}}\\ 
    \leq& C\sum_{0 \leq n\leq |\beta|}(x_{3}2^{j})^{n}e^{-cx_{3}2^{j}}. 
  \end{align*}
  Therefore, we get 
  \begin{align*}
    &\int_{\mathbb{R}^{2}}\left|(1 - \Delta_{\xi'})^{2}\widehat{\widetilde{\phi}}_{0}(\xi')e^{-x_{3}|\Omega|^{\frac{1}{2}}\Psi_{k}(2^{j}|\xi'|/|\Omega|^{\frac{1}{2}})}\right|~{\rm d}\xi'\\ 
    \leq& C \int_{\mathbb{R}^{2}}\sum_{\beta\in(\mathbb{N}\cup\{0\})^{2}, |\beta|\leq4}\left|\partial_{\xi'}^{\beta}\widehat{\widetilde{\phi}}_{0}(\xi')\right|(1 + x_{3}2^{j})^{4}e^{-cx_{3}2^{j}}~{\rm d}\xi'\\ 
    \leq& C e^{-cx_{3}2^{j}}. 
  \end{align*}
  This yields \eqref{0613-5}. 
  For \eqref{0613-6}, we use Lemma~\ref{0613-7} \eqref{0613-4}.  
\end{proof}

By Lemma~\ref{0613-7}, Corollary~\ref{0916-1} and Lemma~\ref{0613-8}, we obtain estimates for the multiplier $K_{l}^{(2)}$ $(l = 1, 2, \cdots , 8)$ given by \eqref{0612-4} and \eqref{0612-5}. 

\begin{lem}\label{0613-10}
  Let $x_{3} > 0$, $\Omega\in\mathbb{R}\setminus\{0\}$ and $1\leq p \leq \infty$. 
  \begin{enumerate}
    \item\label{0616-3} Let $C_{0} \geq 1$. 
    If $C_{0}^{-1}|\Omega|^{\frac{1}{2}}\leq 2^{j} \leq C_{0}|\Omega|^{\frac{1}{2}}$, 
    then there exist constants $C, c > 0$ such that 
    for any $f$ with $\phi_{j}*f \in L^{p}(\mathbb{R}^{2})$, we have 
    \begin{align*}
      {\left\|\mathcal{F}_{\mathbb{R}^{2}}^{-1}\left[\widehat{\phi}_{j}(\xi')K_{1}^{(2)}(\xi', x_{3})\widehat{f}(\xi')\right]\right\|}_{L^{p}}
      &\leq C2^{-4j}e^{-cx_{3}2^{j}}{\|\phi_{j}*f\|}_{L^{p}}, \notag\\ 
      {\left\|\mathcal{F}_{\mathbb{R}^{2}}^{-1}\left[\widehat{\phi}_{j}(\xi')K_{2}^{(2)}(\xi', x_{3})\widehat{f}(\xi')\right]\right\|}_{L^{p}} 
      &\leq C2^{-3j}e^{-cx_{3}2^{j}}{\|\phi_{j}*f\|}_{L^{p}}, \notag\\ 
      {\left\|\mathcal{F}_{\mathbb{R}^{2}}^{-1}\left[\widehat{\phi}_{j}(\xi')K_{3}^{(2)}(\xi', x_{3})\widehat{f}(\xi')\right]\right\|}_{L^{p}} 
      &\leq C2^{-3j}e^{-cx_{3}2^{j}}{\|\phi_{j}*f\|}_{L^{p}}, \notag\\ 
      {\left\|\mathcal{F}_{\mathbb{R}^{2}}^{-1}\left[\widehat{\phi}_{j}(\xi')K_{4}^{(2)}(\xi', x_{3})\widehat{f}(\xi')\right]\right\|}_{L^{p}} 
      &\leq C2^{-2j}e^{-cx_{3}2^{j}}{\|\phi_{j}*f\|}_{L^{p}}, \notag\\ 
      {\left\|\mathcal{F}_{\mathbb{R}^{2}}^{-1}\left[\widehat{\phi}_{j}(\xi')K_{5}^{(2)}(\xi', x_{3})\widehat{f}(\xi')\right]\right\|}_{L^{p}} 
      &\leq C2^{-2j}e^{-cx_{3}2^{j}}{\|\phi_{j}*f\|}_{L^{p}}, \notag\\ 
      {\left\|\mathcal{F}_{\mathbb{R}^{2}}^{-1}\left[\widehat{\phi}_{j}(\xi')K_{6}^{(2)}(\xi', x_{3})\widehat{f}(\xi')\right]\right\|}_{L^{p}} 
      &\leq C2^{-j}e^{-cx_{3}2^{j}}{\|\phi_{j}*f\|}_{L^{p}}, \notag\\ 
      {\left\|\mathcal{F}_{\mathbb{R}^{2}}^{-1}\left[\widehat{\phi}_{j}(\xi')K_{7}^{(2)}(\xi', x_{3})\widehat{f}(\xi')\right]\right\|}_{L^{p}} 
      &\leq C2^{-j}e^{-cx_{3}2^{j}}{\|\phi_{j}*f\|}_{L^{p}}, \notag\\ 
      {\left\|\mathcal{F}_{\mathbb{R}^{2}}^{-1}\left[\widehat{\phi}_{j}(\xi')K_{8}^{(2)}(\xi', x_{3})\widehat{f}(\xi')\right]\right\|}_{L^{p}} 
      &\leq Ce^{-cx_{3}2^{j}}{\|\phi_{j}*f\|}_{L^{p}}. 
    \end{align*}
    \item\label{0613-9} There exists a sufficiently large constant $C_{0} \geq 1$ such that 
    if $2^{j} < C_{0}^{-1}|\Omega|^{\frac{1}{2}}$, 
    then there exist constants $C, c > 0$ such that 
    for any $f$ with $\phi_{j}*f \in L^{p}(\mathbb{R}^{2})$, we have 
    \begin{align*}
      {\left\|\mathcal{F}_{\mathbb{R}^{2}}^{-1}\left[\widehat{\phi}_{j}(\xi')K_{1}^{(2)}(\xi', x_{3})\widehat{f}(\xi')\right]\right\|}_{L^{p}}
      &\leq C\frac{1}{|\Omega|^{2}}(e^{-cx_{3}\frac{2^{3j}}{|\Omega|}} + e^{-cx_{3}|\Omega|^{\frac{1}{2}}}){\|\phi_{j}*f\|}_{L^{p}}, \notag\\ 
      {\left\|\mathcal{F}_{\mathbb{R}^{2}}^{-1}\left[\widehat{\phi}_{j}(\xi')K_{2}^{(2)}(\xi', x_{3})\widehat{f}(\xi')\right]\right\|}_{L^{p}} 
      &\leq C\left(\frac{2^{3j}}{|\Omega|^{3}}e^{-cx_{3}\frac{2^{3j}}{|\Omega|}} + \frac{1}{|\Omega|^{\frac{3}{2}}}e^{-cx_{3}|\Omega|^{\frac{1}{2}}}\right){\|\phi_{j}*f\|}_{L^{p}}, \notag\\ 
      {\left\|\mathcal{F}_{\mathbb{R}^{2}}^{-1}\left[\widehat{\phi}_{j}(\xi')K_{3}^{(2)}(\xi', x_{3})\widehat{f}(\xi')\right]\right\|}_{L^{p}} 
      &\leq C\left(\frac{1}{|\Omega|2^{j}}e^{-cx_{3}\frac{2^{3j}}{|\Omega|}} + \frac{1}{|\Omega|^{\frac{3}{2}}}e^{-cx_{3}|\Omega|^{\frac{1}{2}}}\right){\|\phi_{j}*f\|}_{L^{p}}, \notag\\ 
      {\left\|\mathcal{F}_{\mathbb{R}^{2}}^{-1}\left[\widehat{\phi}_{j}(\xi')K_{4}^{(2)}(\xi', x_{3})\widehat{f}(\xi')\right]\right\|}_{L^{p}} 
      &\leq C\left(\frac{2^{6j}}{|\Omega|^{4}}e^{-cx_{3}\frac{2^{3j}}{|\Omega|}} + \frac{1}{|\Omega|}e^{-cx_{3}|\Omega|^{\frac{1}{2}}}\right){\|\phi_{j}*f\|}_{L^{p}}, \notag\\ 
      {\left\|\mathcal{F}_{\mathbb{R}^{2}}^{-1}\left[\widehat{\phi}_{j}(\xi')K_{5}^{(2)}(\xi', x_{3})\widehat{f}(\xi')\right]\right\|}_{L^{p}} 
      &\leq C\left(\frac{2^{2j}}{|\Omega|^{2}}e^{-cx_{3}\frac{2^{3j}}{|\Omega|}} + \frac{1}{|\Omega|}e^{-cx_{3}|\Omega|^{\frac{1}{2}}}\right){\|\phi_{j}*f\|}_{L^{p}}, \notag\\ 
      {\left\|\mathcal{F}_{\mathbb{R}^{2}}^{-1}\left[\widehat{\phi}_{j}(\xi')K_{6}^{(2)}(\xi', x_{3})\widehat{f}(\xi')\right]\right\|}_{L^{p}} 
      &\leq C\left(\frac{2^{5j}}{|\Omega|^{3}}e^{-cx_{3}\frac{2^{3j}}{|\Omega|}} + \frac{1}{|\Omega|^{\frac{1}{2}}}e^{-cx_{3}|\Omega|^{\frac{1}{2}}}\right){\|\phi_{j}*f\|}_{L^{p}}, \notag\\ 
      {\left\|\mathcal{F}_{\mathbb{R}^{2}}^{-1}\left[\widehat{\phi}_{j}(\xi')K_{7}^{(2)}(\xi', x_{3})\widehat{f}(\xi')\right]\right\|}_{L^{p}} 
      &\leq C\left(\frac{2^{j}}{|\Omega|}e^{-cx_{3}\frac{2^{3j}}{|\Omega|}} + \frac{1}{|\Omega|^{\frac{1}{2}}}e^{-cx_{3}|\Omega|^{\frac{1}{2}}}\right){\|\phi_{j}*f\|}_{L^{p}}, \notag\\ 
      {\left\|\mathcal{F}_{\mathbb{R}^{2}}^{-1}\left[\widehat{\phi}_{j}(\xi')K_{8}^{(2)}(\xi', x_{3})\widehat{f}(\xi')\right]\right\|}_{L^{p}} 
      &\leq C\left(\frac{2^{4j}}{|\Omega|^{2}}e^{-cx_{3}\frac{2^{3j}}{|\Omega|}} + e^{-cx_{3}|\Omega|^{\frac{1}{2}}}\right){\|\phi_{j}*f\|}_{L^{p}}. 
    \end{align*}
  \end{enumerate}
\end{lem}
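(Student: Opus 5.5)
We prove Lemma~\ref{0613-10} by reducing each kernel $K_{l}^{(2)}$ to a finite linear combination of the basic multipliers $e^{-\lambda_{|\Omega|,k}|x_{3}|}$ with $\xi'$-dependent coefficients. Since $x_{3}>0$, $\text{sgn}(x_{3})=1$, and by \eqref{0612-4}, \eqref{0612-5} every $K_{l}^{(2)}$ has the form $\sum_{k=1}^{3}m_{l,k}(\xi')B_{k}e^{-\lambda_{|\Omega|,k}x_{3}}$. Here $m_{l,k}$ is one of $1$, $\lambda_{|\Omega|,k}$, $\lambda_{|\Omega|,k}^{2}$, $(|\xi'|^{2}-\lambda_{|\Omega|,k}^{2})^{a}\lambda_{|\Omega|,k}^{b}$ with $a\in\{0,1,2\}$ and $b\in\{-1,0,1\}$. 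We estimate each summand separately. As in the proof of Lemma~\ref{0613-8}, we insert $\widehat{\widetilde{\phi}}_{j}$, use Young's inequality, and rescale $\xi'\mapsto 2^{j}\xi'$. It then suffices to bound the $L^{1}$ norm of $\mathcal{F}^{-1}_{\mathbb{R}^{2}}[\widehat{\widetilde\phi}_{0}(\xi')\,M(2^{j}\xi')e^{-x_{3}\lambda_{|\Omega|,k}(2^{j}\xi')}]$, where $M=m_{l,k}B_{k}$. We control this by $\sup|\partial^{\beta}_{\xi'}(\cdots)|$ for $|\beta|\le 4$ on the annulus, using the weight $(1+|x'|^{2})^{-2}$ exactly as before. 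By the Leibniz rule, the result is the product of a bound on the derivatives of the rescaled symbol $M(2^{j}\cdot)$ and the exponential bound already obtained in Lemma~\ref{0613-8}. The polynomial factors $(x_{3}2^{j})^{n}$ or $(x_{3}2^{3j}/|\Omega|)^{n}$ are absorbed by reducing $c$.

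\emph{Case (1), the middle region.} By Lemma~\ref{0613-7}\eqref{0613-1} and Corollary~\ref{0916-1}(1) we have $|\lambda_{|\Omega|,k}|\simeq 2^{j}$ and $|B_{k}|\simeq 2^{-4j}$. Writing everything in terms of the smooth profiles $\Psi_{k}$ of $|\xi'|/|\Omega|^{1/2}$, where the argument ranges over the compact set $[C_{0}^{-1}/2,2C_{0}]$, we also get $\partial^{\beta}$-bounds of the same homogeneity. Hence $|m_{l,k}B_{k}|\lesssim 2^{(\deg m_{l,k}-4)j}$, with $\deg$ counting $|\xi'|$ and $\lambda$ as degree one. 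For example, $K_{1}^{(2)}$ gives $2^{-4j}$, $K_{2}^{(2)}$ gives $2^{-3j}$, and $K_{8}^{(2)}$, of degree $4$, gives $2^{0}$. These match the stated powers, and the factor $e^{-cx_{3}2^{j}}$ comes from Lemma~\ref{0613-8}\eqref{0613-5}. One point must be checked: $B_{k}$ involves the differences $\lambda_{\alpha}^{2}-\lambda_{k}^{2}$, so its smoothness needs the roots to remain distinct on the compact range. This holds because the discriminant of the cubic in $\lambda^{2}$ is nonzero for $|\Omega|\neq0$, which is evident from \eqref{0613-2} since the $\beta_{k}$ are distinct.

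\emph{Case (2), the low-frequency region.} This is the main obstacle, because of the cancellations. We treat $k=1$ and $k=2,3$ separately using the expansions in Lemma~\ref{0613-7}\eqref{0613-4} and Corollary~\ref{0916-1}(2). Set $\varepsilon=|\xi'|^{2}/|\Omega|\lesssim C_{0}^{-2}$.
\begin{itemize}
\item[$k=1$.] We have $\lambda_{1}\approx|\xi'|^{3}/|\Omega|$, $B_{1}\approx|\Omega|^{-2}$, and $|\xi'|^{2}-\lambda_{1}^{2}\approx|\xi'|^{2}$. For instance, $K_{3}^{(2)}$ gives $|\xi'|^{2}\lambda_{1}^{-1}B_{1}\approx(|\Omega||\xi'|)^{-1}$, and $K_{8}^{(2)}$ gives $|\xi'|^{4}|\Omega|^{-2}$. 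These produce the first terms, with decay $e^{-cx_{3}2^{3j}/|\Omega|}$.
\item[$k=2,3$.] We have $\lambda\approx|\Omega|^{1/2}$, $B\approx|\Omega|^{-2}$, and $|\xi'|^{2}-\lambda^{2}\approx \mp i|\Omega|$. These produce the second terms, with decay $e^{-cx_{3}|\Omega|^{1/2}}$. For example, $K_{8}^{(2)}$ gives $|\Omega|^{2}\cdot|\Omega|^{-2}=1$.
\end{itemize}
For the derivative bounds, we regard each $\lambda_{k}$ and $B_{k}$ as analytic functions of $\varepsilon$ near $0$, which Lemma~\ref{0613-7}\eqref{0613-4} supports after enlarging $C_{0}$. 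Then $\partial^{\beta}_{\xi'}$ of the rescaled symbol at scale $2^{j}$ costs nothing beyond the leading homogeneity. For $\lambda_{1}$, the convergent series in $\varepsilon^{2}$ times $|\xi'|^{3}/|\Omega|$ is needed. The exponentials are handled by Lemma~\ref{0613-8}\eqref{0613-6}.

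Where a naive estimate exceeds the claimed bound, the cancellation $\sum_{k}B_{k}=0$ and its analogues must be invoked. This occurs for $K_{2}^{(2)}$, where $\lambda_{1}B_{1}\approx 2^{3j}/|\Omega|^{3}$ is fine, and for $K_{1}^{(2)}$. These analogues are the partial-fraction identities $\sum_{k}\lambda_{k}^{2m}B_{k}=0$ for $m=0,1$. The only genuine subtlety is confirming that the stated bounds do not need such cancellation. Taking each summand in absolute value appears to give exactly the listed rates, so we will verify each of the eight kernels term by term in this way.
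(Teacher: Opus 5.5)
Your proposal follows the paper's argument, whose details the paper omits: each summand $m_{l,k}B_{k}e^{-\lambda_{|\Omega|,k}x_{3}}$ is estimated separately, using Lemma~\ref{0613-7}, Corollary~\ref{0916-1}, the expansions of $|\xi'|^{2}-\lambda_{|\Omega|,k}^{2}$ that the paper records, and the multiplier argument of Lemma~\ref{0613-8}. Your final conclusion is the right one: summand-by-summand absolute bounds already give all sixteen listed rates (for instance, $K_{1}^{(2)}$ gives $|\Omega|^{-2}$ times each exponential). So no identity such as $\sum_{k}B_{k}=0$ is ever needed, and you should delete the sentence saying it ``must be invoked'' for $K_{1}^{(2)}$ and $K_{2}^{(2)}$.
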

\begin{proof}
  The proposition follows immediately from Lemma~\ref{0613-7} and Lemma~\ref{0613-8}. 
  We omit the details of the proof. 
  Note that, in \eqref{0613-9}, the expansions of $\lambda_{|\Omega|, 1}$ and $\lambda_{|\Omega|, 2}$, $\lambda_{|\Omega|, 3}$ have different leading-order behaviors. 
  For example, if $|\xi'| < C_{0}^{-1}|\Omega|^{\frac{1}{2}}$, then we observe the following difference: 
  \begin{align*}
    |\xi'|^{2} - \lambda_{|\Omega|, 1}^{2} 
    &= |\xi'|^{2}\left(1 - \frac{|\xi'|^{4}}{|\Omega|^{2}} + \mathcal{O}\left(\frac{|\xi'|^{8}}{|\Omega|^{4}}\right)\right),\\ 
    |\xi'|^{2} - \lambda_{|\Omega|, 2}^{2} 
    &= -i|\Omega|\left(1 - \frac{1}{2}i\frac{|\xi'|^{2}}{|\Omega|} + \mathcal{O}\left(\frac{|\xi'|^{4}}{|\Omega|^{2}}\right)\right),\\ 
    |\xi'|^{2} - \lambda_{|\Omega|, 3}^{2} 
    &= i|\Omega|\left(1 + \frac{1}{2}i\frac{|\xi'|^{2}}{|\Omega|} + \mathcal{O}\left(\frac{|\xi'|^{4}}{|\Omega|^{2}}\right)\right). 
  \end{align*}
  Taking the above observation into account, we can obtain Lemma~\ref{0613-10}. 
\end{proof}

Similarly, we obtain estimates for $C_{k}$ $(k = 1, 2, 3)$ defined by \eqref{0613-11}. 
The proof is omitted. 
\begin{lem}\label{0618-2}
  Let $x_{3} > 0$, $\Omega\in\mathbb{R}\setminus\{0\}$, $j\in\mathbb{Z}$ and $1\leq p \leq \infty$. 
  Let $C_{k}$ $(k = 1, 2, 3)$ satisfy \eqref{0613-11} for given ${\bf a}:\mathbb{R}^{2}\to \mathbb{R}$ and ${\bf w}^{(2)}:\mathbb{R}_{+}^{3}\to \mathbb{R}$ 
  with $\phi_{j}*{\bf a}, \phi_{j}*{\bf w}^{(2)}(\cdot, 0)\in L^{p}(\mathbb{R}^{2})$ and $a_{3}(\cdot) = w_{3}^{(2)}(\cdot, 0) = 0$. 
  \begin{enumerate}
    \item\label{0618-3} Let $C_{0} \geq 1$. 
    If $C_{0}^{-1}|\Omega|^{\frac{1}{2}}\leq 2^{j} \leq C_{0}|\Omega|^{\frac{1}{2}}$, 
    then for each $k = 1, 2, 3$, 
    there exists a constant $C > 0$ independent of ${\bf a}$ and ${\bf w}^{(2)}$ such that we have 
    \begin{equation*}
      {\left\|\mathcal{F}_{\mathbb{R}^{2}}^{-1}\left[\widehat{\phi}_{j}(\xi')C_{k}\right]\right\|}_{L^{p}} 
      \leq C{\|\phi_{j}*({\bf a} - {\bf w}^{(2)}(\cdot, 0))\|}_{L^{p}}. 
    \end{equation*}
    \item\label{0618-8} There exists a sufficiently large constant $C_{0} \geq 1$ such that 
    if $\xi'\in \mathbb{R}^{2}$ satisfies $|\xi'| < C_{0}^{-1}|\Omega|^{\frac{1}{2}}$, we have 
    \begin{align*}
      &
      \begin{pmatrix}
        C_{1} \\ 
        C_{2} \\ 
        C_{3}
      \end{pmatrix}
      \\
      =& 
      \begin{pmatrix}
        \displaystyle\frac{-\sqrt{2}}{2|\Omega|^{\frac{1}{2}}}\Bigg(1 + \mathcal{O}\Bigg(\frac{|\xi'|}{|\Omega|^{\frac{1}{2}}}\Bigg)\Bigg) & \displaystyle\frac{\sqrt{2}\Omega}{2|\Omega|^{\frac{3}{2}}}\Bigg( 1 + \mathcal{O}\Bigg(\frac{|\xi'|}{|\Omega|^{\frac{1}{2}}}\Bigg)\Bigg) & \displaystyle1 + \mathcal{O}\Bigg(\frac{|\xi'|}{|\Omega|^{\frac{1}{2}}}\Bigg)\\ 
        \displaystyle\frac{e^{-\frac{1}{4}i\pi}}{2|\Omega|^{\frac{1}{2}}}\Bigg(1 + \mathcal{O}\Bigg(\frac{|\xi'|}{|\Omega|^{\frac{1}{2}}}\Bigg)\Bigg) & \displaystyle\frac{e^{\frac{5}{4}i\pi}\Omega }{2|\Omega|^{\frac{3}{2}}}\Bigg( 1 + \mathcal{O}\Bigg(\frac{|\xi'|}{|\Omega|^{\frac{1}{2}}}\Bigg)\Bigg) & \displaystyle\frac{e^{\frac{1}{4}i\pi}|\xi'|}{2|\Omega|^{\frac{1}{2}}}\Bigg(1 + \mathcal{O}\Bigg(\frac{|\xi'|}{|\Omega|^{\frac{1}{2}}}\Bigg)\Bigg)\\ 
        \displaystyle\frac{e^{\frac{1}{4}i\pi}}{2|\Omega|^{\frac{1}{2}}}\Bigg(1 + \mathcal{O}\Bigg(\frac{|\xi'|}{|\Omega|^{\frac{1}{2}}}\Bigg)\Bigg) & \displaystyle\frac{e^{\frac{3}{4}i\pi}\Omega }{2|\Omega|^{\frac{3}{2}}}\Bigg( 1 + \mathcal{O}\Bigg(\frac{|\xi'|}{|\Omega|^{\frac{1}{2}}}\Bigg)\Bigg) & \displaystyle\frac{e^{-\frac{1}{4}i\pi}|\xi'|}{2|\Omega|^{\frac{1}{2}}}\Bigg(1 + \mathcal{O}\Bigg(\frac{|\xi'|}{|\Omega|^{\frac{1}{2}}}\Bigg)\Bigg) 
      \end{pmatrix}\\
      &
      \begin{pmatrix}
        i\xi'\cdot(\widehat{\bf a}_{h}(\xi') - \widehat{\bf w}_{h}^{(2)}(\xi',0))\\ 
        -(i\xi')^{\perp}\cdot(\widehat{\bf a}_{h}(\xi') - \widehat{\bf w}_{h}^{(2)}(\xi',0))\\ 
        0
      \end{pmatrix}
      . 
    \end{align*}
    Also, if $2^{j} < C_{0}^{-1}|\Omega|^{\frac{1}{2}}$, 
    then there exists a constant $C > 0$ independent of ${\bf a}$ and ${\bf w}^{(2)}$ such that for $k = 1,2,3$, we have 
    \begin{equation*}
      {\left\|\mathcal{F}_{\mathbb{R}^{2}}^{-1}\left[\widehat{\phi}_{j}(\xi')C_{3}\right]\right\|}_{L^{p}} 
      \leq C\frac{2^{j}}{|\Omega|^{\frac{1}{2}}}{\|\phi_{j}*({\bf a} - {\bf w}^{(2)}(\cdot, 0))\|}_{L^{p}}. 
    \end{equation*}
  \end{enumerate}
\end{lem}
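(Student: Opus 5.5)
The plan is to compute $\mathbf{A}^{-1}$ applied to the boundary vector entrywise from Lemma~\ref{0612-16}, then convert the resulting symbol bounds into $L^{p}$ bounds by the multiplier argument of Lemma~\ref{0613-8}. Because $b_{3}=0$, the third component of the right-hand side of \eqref{0613-11} vanishes. Hence only the first two columns of $\mathbf{A}^{-1}$ act on data. Writing $\widehat{\bf b}_{h}=\widehat{\bf a}_{h}-\widehat{\bf w}^{(2)}_{h}(\cdot,0)$, we have
\begin{equation*}
C_{k}=\frac{1}{\det\mathbf{A}}\Big(A_{k}(|\xi'|+\lambda_{|\Omega|,k})(\lambda_{|\Omega|,k+1}-\lambda_{|\Omega|,k+2})\, i\xi'\cdot\widehat{\bf b}_{h}+(\lambda_{|\Omega|,k+1}-\lambda_{|\Omega|,k+2})(i\xi')^{\perp}\cdot\widehat{\bf b}_{h}\Big),
\end{equation*}
with indices taken mod $3$. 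Each $C_{k}$ is therefore a sum of two Fourier multipliers $m_{k,1}(\xi')\,i\xi'$ and $m_{k,2}(\xi')\,(i\xi')^{\perp}$ applied to $\widehat{\bf b}_{h}$.

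In the intermediate regime \eqref{0618-3}, I would rescale to $\eta=\xi'/|\Omega|^{1/2}\in[C_0^{-1},C_0]$. By \eqref{0613-2}, $\lambda_{|\Omega|,k}=|\Omega|^{1/2}\Psi_{k}(\eta)$, and $A_{k}=|\Omega|^{-1/2}\,\mathrm{sgn}(\Omega)\,(|\eta|^{2}-\Psi_{k}^{2})^{2}/\Psi_{k}$. Then $\det\mathbf{A}=|\Omega|^{2}\Omega^{-1}D(\eta)$, where $D$ is the product of the root differences and $(|\eta|+\sum\Psi_{k})$. Consequently $m_{k,1}\xi'$ and $m_{k,2}\xi'$ are homogeneous of degree $0$ in the scaling, i.e. bounded smooth functions of $\eta$.

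The key point is nondegeneracy: $D(\eta)\neq0$ on the compact annulus. Distinctness of the $\lambda_{|\Omega|,k}$ follows from \eqref{0625-9}. Indeed, the discriminant of the cubic in $\mu=\lambda^{2}$ is nonzero since $|\xi'|^{4}+\tfrac{4}{27}|\Omega|^{2}>0$, and $\lambda_{|\Omega|,1}$ is real while $\lambda_{|\Omega|,2},\lambda_{|\Omega|,3}$ are nonreal conjugates. Also $\Re(|\eta|+\sum\Psi_{k})>0$. Continuity and compactness then give $|D|\ge c(C_{0})$, and smoothness of $\Psi_{k}$ away from $\eta=0$ controls derivatives up to order $4$. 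The $L^{1}$-kernel argument in the proof of Lemma~\ref{0613-8} (multiply by $(1+|x'|^{2})^{2}$, integrate by parts in $\xi'$, with $\widetilde\phi_{j}$ localizing) then gives $\|\mathcal{F}^{-1}[\widehat\phi_{j}C_{k}]\|_{L^{p}}\le C\|\phi_{j}*{\bf b}\|_{L^{p}}$.

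In the low-frequency regime \eqref{0618-8}, I would insert the expansions of Lemma~\ref{0613-7}\eqref{0613-4} and Corollary~\ref{0916-1} into the same entries. Here $\lambda_{|\Omega|,1}\approx|\xi'|^{3}/|\Omega|$ and $\lambda_{|\Omega|,2},\lambda_{|\Omega|,3}\approx e^{\pm i\pi/4}|\Omega|^{1/2}$. This gives $\det\mathbf{A}\approx\Omega^{-1}\lambda_{|\Omega|,2}\lambda_{|\Omega|,3}(\lambda_{|\Omega|,2}-\lambda_{|\Omega|,3})(\lambda_{|\Omega|,2}+\lambda_{|\Omega|,3})$, which has size $|\Omega|^{2}/\Omega$ up to $(1+\mathcal O(|\xi'|/|\Omega|^{1/2}))$. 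For the $A_{k}$ we get $A_{1}\approx|\xi'|\,|\Omega|/(\Omega|\xi'|^{2})\cdot|\xi'|^{2}/|\xi'|$, consistent with the stated first row, and $A_{2},A_{3}\approx\mp i|\Omega|/(\Omega\lambda_{|\Omega|,2,3})$. Dividing and collecting leading terms yields the displayed matrix. The relative errors are $\mathcal O(|\xi'|/|\Omega|^{1/2})$, coming from the ratio $|\xi'|/\lambda_{|\Omega|,2}$ in the factors $(|\xi'|+\lambda_{|\Omega|,k})$ and in $\det\mathbf{A}$.

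For the $L^{p}$ bound on $C_{3}$, the multipliers multiplying $i\xi'$ and $(i\xi')^{\perp}$ have size $|\Omega|^{-1/2}$. Combined with $|\xi'|\sim2^{j}$, this gives the symbol bound $2^{j}|\Omega|^{-1/2}$. The symbol is smooth in $\eta=\xi'/|\Omega|^{1/2}$ near $0$: the $\Psi_{2},\Psi_{3}$ are analytic there, and the only appearance of $\Psi_{1}$ in $C_{3}$'s first two columns is through $\lambda_{|\Omega|,1}-\lambda_{|\Omega|,2}$ and $\det\mathbf{A}$, both smooth. Rescaling by $2^{j}$ as before, derivatives in $\xi'$ on the unit annulus cost powers of $2^{j}/|\Omega|^{1/2}\le C_{0}^{-1}$, so the kernel argument applies uniformly.

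I expect the main obstacle to be the careful bookkeeping of the expansions of $\det\mathbf{A}$ and of $A_{1}$. There, $\lambda_{|\Omega|,1}$ is small and cancellations occur, so one should use $\lambda_{|\Omega|,1}\lambda_{|\Omega|,2}\lambda_{|\Omega|,3}=|\xi'|^{3}$ to rewrite $A_{1}=(|\xi'|^{2}-\lambda_{|\Omega|,1}^{2})^{2}/(\Omega\lambda_{|\Omega|,1})$ in a form that is manifestly smooth. A further difficulty is verifying that the uniform derivative bounds needed for the Mikhlin-type $L^{1}$ estimate survive this rewriting.
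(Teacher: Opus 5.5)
Your proposal is correct and follows the route the paper has in mind (the paper omits the proof as ``similar''). The ingredients match: the explicit ${\bf A}^{-1}$ of Lemma~\ref{0612-16} with only its first two columns acting, and scaling in $|\xi'|/|\Omega|^{\frac{1}{2}}$ with $\det{\bf A}$ bounded away from zero on the middle annulus. Your discriminant/compactness argument supplies a lower bound that Lemma~\ref{0613-7}~\eqref{0613-1} alone does not give, since it bounds $|\lambda_{|\Omega|,k}|$ but not the root separations. The rest is the expansions of Lemma~\ref{0613-7}~\eqref{0613-4} in the low regime and the $L^{1}$-kernel argument of Lemma~\ref{0613-8}.

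Three slips to correct when you do the bookkeeping:
\begin{itemize}
\item $A_{k}$ carries $(|\xi'|^{2}-\lambda_{|\Omega|,k}^{2})$ to the first power, not squared. It is the reciprocal of the $(2,k)$ entry of ${\bf A}$.
\item Consequently $A_{1}\approx\text{sgn}(\Omega)/|\xi'|$, not $\text{sgn}(\Omega)$.
\item The leading term of the determinant has a minus sign: $\det{\bf A}\approx-\Omega^{-1}\lambda_{|\Omega|,2}\lambda_{|\Omega|,3}(\lambda_{|\Omega|,2}^{2}-\lambda_{|\Omega|,3}^{2})\approx-2i|\Omega|^{2}/\Omega$. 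With the sign you wrote, every entry of the displayed matrix would come out negated.
\end{itemize}
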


Using the above lemmas, we obtain estimates for ${\bf w}^{(2)}$ and ${\bf v}^{(2)}$ under the frequency restriction. 
Here, we recall that the operators $P_{C_{0}^{-1}|\Omega|^{\frac{1}{2}}\leq 2^{j}\leq C_{0}|\Omega|^{\frac{1}{2}}}$ and $P_{2^{j}< C_{0}^{-1}|\Omega|^{\frac{1}{2}}}$ are defined as follows: 
\begin{equation*}
  P_{C_{0}^{-1}|\Omega|^{\frac{1}{2}}\leq 2^{j}\leq C_{0}|\Omega|^{\frac{1}{2}}}f 
  = \sum_{C_{0}^{-1}|\Omega|^{\frac{1}{2}}\leq 2^{j}\leq C_{0}|\Omega|^{\frac{1}{2}}}\phi_{j}*f 
  \quad 
  \text{and}
  \quad 
  P_{2^{j}\leq C_{0}^{-1}|\Omega|^{\frac{1}{2}}}f 
  = \sum_{2^{j}\leq C_{0}^{-1}|\Omega|^{\frac{1}{2}}}\phi_{j}*f. 
\end{equation*}
We now state a basic property of these operators. 
\begin{lem}\label{0617-1}
  Let $\Omega\in\mathbb{R}\setminus\{0\}$, $C_{0} \geq 1$, $s\in\mathbb{R}$, $s_{1}>0$ and $1\leq p \leq \infty$. 
  Then there exists a constant $C > 0$ such that for any $j'\in \mathbb{Z}$ and $f$ with $\phi_{j'}*f \in L^{p}(\mathbb{R}^{2})$, we have 
  \begin{equation}\label{0617-2}
    |\Omega|^{\frac{s}{2}}{\left\|\phi_{j'}*P_{C_{0}^{-1}|\Omega|^{\frac{1}{2}}\leq 2^{j}\leq C_{0}|\Omega|^{\frac{1}{2}}}f\right\|}_{L^{p}}
    \leq C2^{sj'}{\left\|\phi_{j'}*P_{C_{0}^{-1}|\Omega|^{\frac{1}{2}}\leq 2^{j}\leq C_{0}|\Omega|^{\frac{1}{2}}}f\right\|}_{L^{p}}, 
  \end{equation}
  and 
  \begin{equation}\label{0617-3}
    |\Omega|^{-\frac{s_{1}}{2}}{\left\|\phi_{j'}*P_{2^{j}\leq C_{0}^{-1}|\Omega|^{\frac{1}{2}}}f\right\|}_{L^{p}}
    \leq C2^{-s_{1}j'}{\left\|\phi_{j'}*P_{2^{j}\leq C_{0}^{-1}|\Omega|^{\frac{1}{2}}}f\right\|}_{L^{p}}. 
  \end{equation}
\end{lem}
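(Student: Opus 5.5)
The plan is to use the support properties of the Littlewood--Paley pieces. Both estimates are pointwise in $j'$ and reduce to locating the frequencies of the localized function. Since $\operatorname{supp}\widehat{\phi}_{j}\subset\{2^{j-1}\le|\xi'|\le 2^{j+1}\}$, we have $\widehat{\phi}_{j'}\widehat{\phi}_{j}\equiv 0$ unless $|j-j'|\le 1$. Hence
\begin{equation*}
  \phi_{j'}*P_{C_{0}^{-1}|\Omega|^{\frac{1}{2}}\le 2^{j}\le C_{0}|\Omega|^{\frac{1}{2}}}f
  =\sum_{|j-j'|\le 1,\ C_{0}^{-1}|\Omega|^{\frac{1}{2}}\le 2^{j}\le C_{0}|\Omega|^{\frac{1}{2}}}\phi_{j'}*\phi_{j}*f .
\end{equation*}
The same identity holds for the low-frequency cutoff $P_{2^{j}\le C_{0}^{-1}|\Omega|^{\frac{1}{2}}}$.

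First, consider the case in which the left-hand side vanishes, i.e. no admissible $j$ lies within distance $1$ of $j'$. Then both sides of \eqref{0617-2} and \eqref{0617-3} are zero and there is nothing to prove.

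Otherwise there exists an admissible $j$ with $|j-j'|\le 1$.
\begin{itemize}
  \item For \eqref{0617-2}, this gives $2^{j'}\ge 2^{j-1}\ge (2C_{0})^{-1}|\Omega|^{\frac{1}{2}}$ and $2^{j'}\le 2^{j+1}\le 2C_{0}|\Omega|^{\frac{1}{2}}$. Therefore $(2C_{0})^{-1}\le 2^{j'}|\Omega|^{-\frac{1}{2}}\le 2C_{0}$. Raising this to the power $s$, whose sign is arbitrary, yields $|\Omega|^{\frac{s}{2}}\le (2C_{0})^{|s|}2^{sj'}$. Multiplying by the $L^{p}$ norm gives \eqref{0617-2} with $C=(2C_{0})^{|s|}$, which is independent of $\Omega$ and $j'$.
  \item For \eqref{0617-3}, only the upper bound is available: $2^{j'}\le 2^{j+1}\le 2C_{0}^{-1}|\Omega|^{\frac{1}{2}}\le 2|\Omega|^{\frac{1}{2}}$. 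Because $s_{1}>0$, raising to the power $s_{1}$ preserves the inequality. This gives $2^{s_{1}j'}\le 2^{s_{1}}|\Omega|^{\frac{s_{1}}{2}}$, i.e. $|\Omega|^{-\frac{s_{1}}{2}}\le 2^{s_{1}}2^{-s_{1}j'}$, which is \eqref{0617-3} with $C=2^{s_{1}}$.
\end{itemize}

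No step is a real obstacle. The only points requiring care are the following.
\begin{itemize}
  \item The sign condition $s_{1}>0$ is essential in the low-frequency case, since only an upper bound on $2^{j'}$ is available there. This is exactly why the statement restricts $s_{1}$ while allowing arbitrary $s$ in the intermediate regime, where two-sided bounds hold.
  \item The constants must not depend on $\Omega$ or $j'$.
  \item The vanishing case should be handled explicitly, so that the frequency-location argument is applied only when the localized function is nonzero.
\end{itemize}
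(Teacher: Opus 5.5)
Your proposal is correct and follows essentially the same route as the paper. The support of $\widehat{\phi}_{j}$ forces $\phi_{j'}*Pf=0$ unless $2^{j'}\in[(2C_{0})^{-1}|\Omega|^{\frac{1}{2}},\,2C_{0}|\Omega|^{\frac{1}{2}}]$, or $2^{j'}\leq 2C_{0}^{-1}|\Omega|^{\frac{1}{2}}$ in the low-frequency case; then $|\Omega|^{\frac{s}{2}}$ is traded for $2^{sj'}$, with $s_{1}>0$ needed for the one-sided bound, and your explicit constants $(2C_{0})^{|s|}$ and $2^{s_{1}}$ are a slightly cleaner version of the paper's bookkeeping.
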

\begin{proof}
  By the definition of $\phi_{j}$, we can write 
  \begin{equation*}
    \phi_{j'}*P_{C_{0}^{-1}|\Omega|^{\frac{1}{2}}\leq 2^{j}\leq C_{0}|\Omega|^{\frac{1}{2}}}f 
    = \phi_{j'}*(\phi_{j-1} + \phi_{j} + \phi_{j+1})*f. 
  \end{equation*}
  Thus, if $2^{j'} < (2C_{0})^{-1}|\Omega|^{\frac{1}{2}}$ or $2^{j'} > 2C_{0}|\Omega|^{\frac{1}{2}}$, then we have 
  \begin{equation*}
    \phi_{j'}*P_{C_{0}^{-1}|\Omega|^{\frac{1}{2}}\leq 2^{j}\leq C_{0}|\Omega|^{\frac{1}{2}}}f 
    = 0. 
  \end{equation*}
  Therefore, it suffices to consider only the case of $(2C_{0})^{-1}|\Omega|^{\frac{1}{2}}\leq 2^{j'}\leq 2C_{0}|\Omega|^{\frac{1}{2}}$. 
  Consequently, we obtain 
  \begin{align*}
    |\Omega|^{\frac{s}{2}}{\left\|\phi_{j'}*P_{C_{0}^{-1}|\Omega|^{\frac{1}{2}}\leq 2^{j}\leq C_{0}|\Omega|^{\frac{1}{2}}}f\right\|}_{L^{p}} 
    &= C_{0}^{|s|}(C_{0}^{-|s|}|\Omega|^{\frac{s}{2}}){\left\|\phi_{j'}*P_{C_{0}^{-1}|\Omega|^{\frac{1}{2}}\leq 2^{j}\leq C_{0}|\Omega|^{\frac{1}{2}}}f\right\|}_{L^{p}}\\ 
    &\leq C2^{sj'}{\left\|\phi_{j'}*P_{C_{0}^{-1}|\Omega|^{\frac{1}{2}}\leq 2^{j}\leq C_{0}|\Omega|^{\frac{1}{2}}}f\right\|}_{L^{p}}. 
  \end{align*}
  Similarly, it follows that 
  \begin{align*}
    |\Omega|^{-\frac{s_{1}}{2}}{\left\|\phi_{j'}*P_{2^{j}\leq C_{0}^{-1}|\Omega|^{\frac{1}{2}}}f\right\|}_{L^{p}}
    &= C_{0}^{-s_{1}}(C_{0}^{-1}|\Omega|^{\frac{1}{2}})^{-s_{1}}{\left\|\phi_{j'}*P_{2^{j}\leq C_{0}^{-1}|\Omega|^{\frac{1}{2}}}f\right\|}_{L^{p}}\\ 
    &\leq C2^{-s_{1}j'}{\left\|\phi_{j'}*P_{2^{j}\leq C_{0}^{-1}|\Omega|^{\frac{1}{2}}}f\right\|}_{L^{p}}. 
  \end{align*}
\end{proof}

For the formulas \eqref{0612-18}, we have the following proposition. 
\begin{prop}\label{0619-4}
  Let $\Omega \in \mathbb{R}\setminus\{0\}$, $s\in \mathbb{R}$ and $1\leq p,q,r,r_{0},r_{1} \leq \infty$ with $2 \leq r_{1}$ and $\max\{r_{0}, r_{1}\}\leq r$. 
  Let ${\bf u}$ and ${\bf F}$ be smooth functions on $\mathbb{R}_{+}^{3}$. 
  \begin{enumerate}
    \item\label{0616-1} Let $C_{0} \geq 1$. Then there exists a constant $C>0$ such that for ${\bf w}^{(2)}$ given by \eqref{0612-18}, we have 
    \begin{align*}
      {\left\|P_{C_{0}^{-1}|\Omega|^{\frac{1}{2}}\leq 2^{j}\leq C_{0}|\Omega|^{\frac{1}{2}}}{\bf w}^{(2)}\right\|}_{\widetilde{L}_{x_{3}}^{r}(0, \infty ; \dot{B}_{p,q}^{s + \frac{1}{r}})} 
      \leq& C {\left\|P_{C_{0}^{-1}|\Omega|^{\frac{1}{2}}\leq 2^{j}\leq C_{0}|\Omega|^{\frac{1}{2}}}{\bf F}\right\|}_{\widetilde{L}_{x_{3}}^{r_{0}}(0, \infty ; \dot{B}_{p,q}^{s + \frac{1}{r_{0}} - 1})}\\ 
      &+ C {\left\|P_{C_{0}^{-1}|\Omega|^{\frac{1}{2}}\leq 2^{j}\leq C_{0}|\Omega|^{\frac{1}{2}}}({\bf u}\otimes{\bf u})\right\|}_{\widetilde{L}_{x_{3}}^{\frac{r_{1}}{2}}(0, \infty ; \dot{B}_{p,q}^{s + \frac{2}{r_{1}} - 1})}. 
    \end{align*}
    \item\label{0617-5} There exists a sufficiently large constant $C_{0} \geq 1$ such that 
    there exists a constant $C>0$ such that for ${\bf w}^{(2)}$ given by \eqref{0612-18}, we have 
    \begin{align*}
      {\left\|P_{2^{j}< C_{0}^{-1}|\Omega|^{\frac{1}{2}}}{\bf w}^{(2)}\right\|}_{\widetilde{L}_{x_{3}}^{r}(0,\infty;\dot{B}_{p,q}^{s + \frac{1}{r}})} 
      \leq&C |\Omega|^{- \frac{1}{r_{0}} + \frac{1}{r}}{\left\|P_{2^{j}< C_{0}^{-1}|\Omega|^{\frac{1}{2}}}{\bf F}\right\|}_{\widetilde{L}_{x_{3}}^{r_{0}}(0,\infty;\dot{B}_{p,q}^{s + \frac{3}{r_{0}} - \frac{2}{r} - 1})}\notag\\ 
      &+ C |\Omega|^{-\frac{1}{2}\left(1 + \frac{1}{r} - \frac{1}{r_{0}}\right)}{\left\|P_{2^{j}< C_{0}^{-1}|\Omega|^{\frac{1}{2}}}{\bf F}\right\|}_{\widetilde{L}_{x_{3}}^{r_{0}}(0,\infty;\dot{B}_{p,q}^{s + \frac{1}{r}})}\notag\\ 
      &+ C |\Omega|^{-\frac{2}{r_{1}} + \frac{1}{r}}{\left\|P_{2^{j}< C_{0}^{-1}|\Omega|^{\frac{1}{2}}}({\bf u}\otimes{\bf u})\right\|}_{\widetilde{L}_{x_{3}}^{\frac{r_{1}}{2}}(0,\infty;\dot{B}_{p,q}^{s + \frac{6}{r_{1}} - \frac{2}{r} - 1})}\notag\\ 
      &+ C |\Omega|^{-\frac{1}{2}\left(1 + \frac{1}{r} - \frac{2}{r_{1}}\right)}{\left\|P_{2^{j}< C_{0}^{-1}|\Omega|^{\frac{1}{2}}}({\bf u}\otimes{\bf u})\right\|}_{\widetilde{L}_{x_{3}}^{\frac{r_{1}}{2}}(0,\infty;\dot{B}_{p,q}^{s + \frac{1}{r}})}. 
    \end{align*}
  \end{enumerate}
\end{prop}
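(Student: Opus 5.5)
The plan mirrors the proof of Proposition~\ref{0611-11}. Start from the explicit formula \eqref{0612-18}. Apply $\phi_{j}*$ to each term and use Lemma~\ref{0611-6} to turn the horizontal derivatives $\partial_m$, $\nabla'$, $(\nabla')^{\perp}$, $\Delta_{\mathbb{R}^2}$ into powers $2^{j}$. Then bound each $\phi_j * T^{(\pm)}_{K_l^{(2)}}[f]$ as in \eqref{0616-2}: it is at most the sum of two $y_3$-integrals of the frequency-localized multiplier norms evaluated at $x_3-y_3$ and at $x_3+y_3$. The contribution from $x_3+y_3$ is dominated by the one from $|x_3-y_3|$, since every bound in Lemma~\ref{0613-10} is decreasing in $x_3$.

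\emph{Intermediate band.} Here $2^{j}\simeq |\Omega|^{1/2}$, and we use Lemma~\ref{0613-10}~\eqref{0616-3}. Count powers term by term. The $\Omega$-free terms carry $2^{j}2^{-j}$ ($K_7$), $2^0$ ($K_8$), $2^{3j}2^{-3j}$ ($K_3$), $2^{2j}2^{-2j}$ ($K_5$) and $2^{j}2^{-j}$ ($K_6$); all are $O(1)$. The $\Omega$-terms carry $|\Omega|2^{j}2^{-3j}$ ($K_2$ with one derivative), $|\Omega|2^{-2j}$ ($K_4$), $|\Omega|2^{2j}2^{-4j}$ ($K_1$) and $|\Omega|2^{j}2^{-3j}$ ($K_2$ with $(\nabla')^\perp$). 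Each equals $|\Omega|2^{-2j}$, which is bounded by \eqref{0617-2} since $|\Omega|\le C_0^2 2^{2j}$. Hence
$$\|\phi_j*{\bf w}^{(2)}(x_3)\|_{L^p}\le C\int_0^\infty e^{-c|x_3-y_3|2^j}\big(\|\phi_j*{\bf F}(y_3)\|_{L^p}+\|\phi_j*({\bf u}\otimes{\bf u})(y_3)\|_{L^p}\big)\,dy_3 .$$
Young's inequality in $x_3$ (allowed because $\max\{r_0,r_1\}\le r$) gives the factors $2^{(-1-\frac1r+\frac1{r_0})j}$ and $2^{(-1-\frac1r+\frac2{r_1})j}$. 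Multiplying by $2^{(s+\frac1r)j}$ and taking the $\ell^q$ norm yields part~\eqref{0616-1}. Since $\phi_j$ commutes with $P$, the projections can be carried through; only the finitely many $j$ adjacent to the band matter, and those are handled by $\widetilde\phi_j$.

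\emph{Low band.} Here $2^{j}<C_0^{-1}|\Omega|^{1/2}$, and we use Lemma~\ref{0613-10}~\eqref{0613-9}. Each kernel splits into a slow part with rate $2^{3j}/|\Omega|$ and a fast part with rate $|\Omega|^{1/2}$.

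For the slow parts, the same bookkeeping gives uniform prefactors. For example, $2^j\cdot 2^j/|\Omega|$ for $K_7$, $2^{4j}/|\Omega|^2$ for $K_8$, $2^{3j}\cdot 2^{-j}/|\Omega|$ for $K_3$, $|\Omega|2^j\cdot 2^{3j}/|\Omega|^3$ for $K_2$, and $|\Omega|2^{2j}/|\Omega|^2$ for $K_1$. All of these are of order $2^{2j}/|\Omega|$ or smaller, up to factors $(2^{2j}/|\Omega|)^k\le 1$. Young's inequality with the kernel $e^{-c|x_3|2^{3j}/|\Omega|}$ then gives
$$\frac{2^{2j}}{|\Omega|}\Big(\frac{2^{3j}}{|\Omega|}\Big)^{-1-\frac1r+\frac1{r_0}} = |\Omega|^{\frac1r-\frac1{r_0}}\,2^{(-1-\frac3r+\frac3{r_0})j}.$$
After multiplying by $2^{(s+\frac1r)j}$ this becomes exactly the first term, with regularity $s+\frac3{r_0}-\frac2r-1$. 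The computation for ${\bf u}\otimes{\bf u}$ with $r_0$ replaced by $r_1/2$ gives the third term.

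For the fast parts, the prefactors are at worst $O(1)$. Examples are $2^j|\Omega|^{-1/2}$ for $K_7$, $1$ for $K_8$, $|\Omega|2^j|\Omega|^{-3/2}$ for $K_2$, $|\Omega|\,|\Omega|^{-1}$ for $K_4$, and $|\Omega|2^{2j}|\Omega|^{-2}$ for $K_1$. Each is bounded by a constant after using $2^j\le|\Omega|^{1/2}$, or else by the stronger choice: write $2^{j}|\Omega|^{-1/2}\le 1$ and keep the weight. Young's inequality with the kernel $e^{-c|x_3||\Omega|^{1/2}}$ then gives $|\Omega|^{-\frac12(1+\frac1r-\frac1{r_0})}$. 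Keeping $2^{(s+\frac1r)j}$ unchanged yields the second and fourth terms.

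\emph{Main obstacle.} The hardest step is the bookkeeping for the fast parts. In several terms, most dangerously $K_8$ and $K_6$ with derivatives, the naive prefactor carries a positive power of $2^{j}$ or $|\Omega|$ that must be absorbed using $2^{j}<C_0^{-1}|\Omega|^{1/2}$ and \eqref{0617-3} without losing the claimed power of $|\Omega|$. I would check each of the roughly fifteen terms against a table of exponents, fixing $C_0$ large as required by Lemma~\ref{0613-10}.
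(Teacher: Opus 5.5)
Your proposal is correct and follows essentially the same route as the paper. For each dyadic block, Lemma~\ref{0611-6} and Lemma~\ref{0613-10} reduce everything to the prefactor $(1+|\Omega|2^{-2j})$ in the intermediate band, and to $2^{2j}|\Omega|^{-1}$ (slow kernel) and $O(1)$ (fast kernel) in the low band; Young's inequality in $x_3$ and the $\ell^q$ sum then finish. Your exponent bookkeeping matches the paper's. The feared difficulty with $K_8^{(2)}$ and $K_6^{(2)}$ does not actually arise, since every fast-part prefactor already lies in $\{1,\, 2^{j}|\Omega|^{-1/2},\, 2^{2j}|\Omega|^{-1},\, 2^{3j}|\Omega|^{-3/2}\}$.
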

\begin{proof}
  We first prove \eqref{0616-1}. 
  Since an estimate similar to \eqref{0616-2} also holds for $T_{K_{l}^{(2)}}^{(\pm)}$, 
  using Lemma~\ref{0611-6}, Lemma~\ref{0613-10} \eqref{0616-3} and Lemma~\ref{0617-1} \eqref{0617-2}, for each $j'\in \mathbb{Z}$, we have 
  \begin{align*}
    &{\left\|\phi_{j'}*P_{C_{0}^{-1}|\Omega|^{\frac{1}{2}}\leq 2^{j}\leq C_{0}|\Omega|^{\frac{1}{2}}}{\bf w}^{(2)}\right\|}_{L^{p}}\\ 
    \leq& C(1 + |\Omega|2^{-2j'})\int_{0}^{+\infty}e^{-c|x_{3} - y_{3}|2^{j'}}{\left\|\phi_{j'}*P_{C_{0}^{-1}|\Omega|^{\frac{1}{2}}\leq 2^{j}\leq C_{0}|\Omega|^{\frac{1}{2}}}{\bf F}\right\|}_{L^{p}}~{\rm d}y_{3}\\ 
    &+ C(1 + |\Omega|2^{-2j'})\int_{0}^{+\infty}e^{-c|x_{3} - y_{3}|2^{j'}}{\left\|\phi_{j'}*P_{C_{0}^{-1}|\Omega|^{\frac{1}{2}}\leq 2^{j}\leq C_{0}|\Omega|^{\frac{1}{2}}}({\bf u}\otimes{\bf u})\right\|}_{L^{p}}~{\rm d}y_{3}\\ 
    \leq& C\int_{0}^{+\infty}e^{-c|x_{3} - y_{3}|2^{j'}}{\left\|\phi_{j'}*P_{C_{0}^{-1}|\Omega|^{\frac{1}{2}}\leq 2^{j}\leq C_{0}|\Omega|^{\frac{1}{2}}}{\bf F}\right\|}_{L^{p}}~{\rm d}y_{3}\\ 
    &+ C\int_{0}^{+\infty}e^{-c|x_{3} - y_{3}|2^{j'}}{\left\|\phi_{j'}*P_{C_{0}^{-1}|\Omega|^{\frac{1}{2}}\leq 2^{j}\leq C_{0}|\Omega|^{\frac{1}{2}}}({\bf u}\otimes{\bf u})\right\|}_{L^{p}}~{\rm d}y_{3}. 
  \end{align*}
  By Young's inequality, we obtain 
  \begin{align}\label{0616-4}
    &{\left\|\phi_{j'}*P_{C_{0}^{-1}|\Omega|^{\frac{1}{2}}\leq 2^{j}\leq C_{0}|\Omega|^{\frac{1}{2}}}{\bf w}^{(2)}\right\|}_{L^{r}(0,\infty;L^{p})}\notag\\ 
    \leq& C2^{\left(-1 - \frac{1}{r} + \frac{1}{r_{0}}\right)j'}{\left\|\phi_{j'}*P_{C_{0}^{-1}|\Omega|^{\frac{1}{2}}\leq 2^{j}\leq C_{0}|\Omega|^{\frac{1}{2}}}{\bf F}\right\|}_{L^{r_{0}}(0,\infty; L^{p})}\notag\\ 
    &+ C2^{\left(-1 - \frac{1}{r} + \frac{2}{r_{1}}\right)j'}{\left\|\phi_{j'}*P_{C_{0}^{-1}|\Omega|^{\frac{1}{2}}\leq 2^{j}\leq C_{0}|\Omega|^{\frac{1}{2}}}({\bf u}\otimes{\bf u})\right\|}_{L^{\frac{r_{1}}{2}}(0, \infty; L^{p})}. 
  \end{align}
  Multiplying $2^{\left(s + \frac{1}{r}\right)j'}$ to \eqref{0616-4} and taking the $l^{q}(\mathbb{Z})$ norm with respect to $j'$, 
  we get \eqref{0616-1}. 

  Similarly, by Lemma~\ref{0611-6}, Lemma~\ref{0613-10} \eqref{0613-9} and Lemma~\ref{0617-1} \eqref{0617-3}, it follows that 
  \begin{align}\label{1005-1}
    &{\left\|\phi_{j'}*P_{2^{j} < C_{0}^{-1}|\Omega|^{\frac{1}{2}}}{\bf w}^{(2)}\right\|}_{L^{p}}\notag\\ 
    \leq& C\max\left\{\frac{2^{2j'}}{|\Omega|}, \frac{2^{4j'}}{|\Omega|^{2}}, \frac{2^{6j'}}{|\Omega|^{3}}\right\}\int_{0}^{+\infty}e^{-c|x_{3} - y_{3}|\frac{2^{3j'}}{|\Omega|}}{\left\|\phi_{j'}*P_{2^{j} < C_{0}^{-1}|\Omega|^{\frac{1}{2}}}{\bf F}\right\|}_{L^{p}}~{\rm d}y_{3}\notag\\ 
    &+ C\max\left\{1,\frac{2^{j'}}{|\Omega|^{\frac{1}{2}}}, \frac{2^{2j'}}{|\Omega|}, \frac{2^{3j'}}{|\Omega|^{\frac{3}{2}}}\right\}\int_{0}^{+\infty}e^{-c|x_{3} - y_{3}||\Omega|^{\frac{1}{2}}}{\left\|\phi_{j'}*P_{2^{j} < C_{0}^{-1}|\Omega|^{\frac{1}{2}}}{\bf F}\right\|}_{L^{p}}~{\rm d}y_{3}\notag\\ 
    &+ C\max\left\{\frac{2^{2j'}}{|\Omega|}, \frac{2^{4j'}}{|\Omega|^{2}}, \frac{2^{6j'}}{|\Omega|^{3}}\right\}\int_{0}^{+\infty}e^{-c|x_{3} - y_{3}|\frac{2^{3j'}}{|\Omega|}}{\left\|\phi_{j'}*P_{2^{j} < C_{0}^{-1}|\Omega|^{\frac{1}{2}}}({\bf u}\otimes{\bf u})\right\|}_{L^{p}}~{\rm d}y_{3}\notag\\ 
    &+ C\max\left\{1,\frac{2^{j'}}{|\Omega|^{\frac{1}{2}}}, \frac{2^{2j'}}{|\Omega|}, \frac{2^{3j'}}{|\Omega|^{\frac{3}{2}}}\right\}\int_{0}^{+\infty}e^{-c|x_{3} - y_{3}||\Omega|^{\frac{1}{2}}}{\left\|\phi_{j'}*P_{2^{j} < C_{0}^{-1}|\Omega|^{\frac{1}{2}}}({\bf u}\otimes{\bf u})\right\|}_{L^{p}}~{\rm d}y_{3}\notag\\ 
    \leq& C\frac{2^{2j'}}{|\Omega|}\int_{0}^{+\infty}e^{-c|x_{3} - y_{3}|\frac{2^{3j'}}{|\Omega|}}{\left\|\phi_{j'}*P_{2^{j} < C_{0}^{-1}|\Omega|^{\frac{1}{2}}}{\bf F}\right\|}_{L^{p}}~{\rm d}y_{3}\notag\\ 
    &+ C\int_{0}^{+\infty}e^{-c|x_{3} - y_{3}||\Omega|^{\frac{1}{2}}}{\left\|\phi_{j'}*P_{2^{j} < C_{0}^{-1}|\Omega|^{\frac{1}{2}}}{\bf F}\right\|}_{L^{p}}~{\rm d}y_{3}\notag\\ 
    &+ C\frac{2^{2j'}}{|\Omega|}\int_{0}^{+\infty}e^{-c|x_{3} - y_{3}|\frac{2^{3j'}}{|\Omega|}}{\left\|\phi_{j'}*P_{2^{j} < C_{0}^{-1}|\Omega|^{\frac{1}{2}}}({\bf u}\otimes{\bf u})\right\|}_{L^{p}}~{\rm d}y_{3}\notag\\ 
    &+ C\int_{0}^{+\infty}e^{-c|x_{3} - y_{3}||\Omega|^{\frac{1}{2}}}{\left\|\phi_{j'}*P_{2^{j} < C_{0}^{-1}|\Omega|^{\frac{1}{2}}}({\bf u}\otimes{\bf u})\right\|}_{L^{p}}~{\rm d}y_{3}. 
  \end{align}
  Using Young's inequality, we have 
  \begin{align}\label{0617-4}
    &{\left\|\phi_{j'}*P_{2^{j} < C_{0}^{-1}|\Omega|^{\frac{1}{2}}}{\bf w}^{(2)}\right\|}_{L^{r}(0,\infty;L^{p})}\notag\\ 
    \leq& C\frac{2^{2j'}}{|\Omega|}\left(\frac{2^{3j'}}{|\Omega|}\right)^{-1 - \frac{1}{r} + \frac{1}{r_{0}}}{\left\|\phi_{j'}*P_{2^{j} < C_{0}^{-1}|\Omega|^{\frac{1}{2}}}{\bf F}\right\|}_{L^{r_{0}}(0,\infty; L^{p})}\notag\\ 
    &+ C |\Omega|^{-\frac{1}{2}\left(1 + \frac{1}{r} - \frac{1}{r_{0}}\right)}{\left\|\phi_{j'}*P_{2^{j} < C_{0}^{-1}|\Omega|^{\frac{1}{2}}}{\bf F}\right\|}_{L^{r_{0}}(0,\infty; L^{p})}\notag\\ 
    &+ C\frac{2^{2j'}}{|\Omega|}\left(\frac{2^{3j'}}{|\Omega|}\right)^{-1 - \frac{1}{r} + \frac{2}{r_{1}}}{\left\|\phi_{j'}*P_{2^{j} < C_{0}^{-1}|\Omega|^{\frac{1}{2}}}({\bf u}\otimes{\bf u})\right\|}_{L^{\frac{r_{1}}{2}}(0, \infty; L^{p})}\notag\\ 
    &+ C |\Omega|^{-\frac{1}{2}\left(1 + \frac{1}{r} - \frac{2}{r_{1}}\right)}{\left\|\phi_{j'}*P_{2^{j} < C_{0}^{-1}|\Omega|^{\frac{1}{2}}}({\bf u}\otimes{\bf u})\right\|}_{L^{\frac{r_{1}}{2}}(0, \infty; L^{p})}. 
  \end{align}
  Multiplying \eqref{0617-4} by $2^{\left(s + \frac{1}{r}\right)j'}$ and taking the $l^{q}(\mathbb{Z})$ norm with respect to $j'$, 
  we obtain \eqref{0617-5}. 
\end{proof}

By a similar argument, we obtain the following proposition for the formulas \eqref{0612-19}. 
\begin{prop}\label{0618-4}
  Let $\Omega \in \mathbb{R}\setminus\{0\}$, $s\in \mathbb{R}$ and $1\leq p,q,r,r_{0},r_{1} \leq \infty$ with $2\leq r_{1}$. 
  Let ${\bf u}$ and ${\bf F}$ be smooth functions on $\mathbb{R}_{+}^{3}$ and ${\bf a}$ be a smooth function on $\mathbb{R}^{2}$ with $a_{3} = 0$. 
  \begin{enumerate}
    \item\label{0618-1} Let $C_{0} \geq 1$. Then there exists a constant $C>0$ such that for ${\bf v}^{(2)}$ given by \eqref{0612-19}, we have 
    \begin{align}\label{0618-7}
      {\left\|P_{C_{0}^{-1}|\Omega|^{\frac{1}{2}}\leq 2^{j}\leq C_{0}|\Omega|^{\frac{1}{2}}}{\bf v}^{(2)}\right\|}_{\widetilde{L}_{x_{3}}^{r}(0, \infty ; \dot{B}_{p,q}^{s + \frac{1}{r}})} 
      \leq& C {\left\|P_{C_{0}^{-1}|\Omega|^{\frac{1}{2}}\leq 2^{j}\leq C_{0}|\Omega|^{\frac{1}{2}}}{\bf a}\right\|}_{\dot{B}_{p,q}^{s}}\notag\\ 
      &+ C {\left\|P_{C_{0}^{-1}|\Omega|^{\frac{1}{2}}\leq 2^{j}\leq C_{0}|\Omega|^{\frac{1}{2}}}{\bf F}\right\|}_{\widetilde{L}_{x_{3}}^{r_{0}}(0, \infty ; \dot{B}_{p,q}^{s + \frac{1}{r_{0}} - 1})}\notag\\ 
      &+ C {\left\|P_{C_{0}^{-1}|\Omega|^{\frac{1}{2}}\leq 2^{j}\leq C_{0}|\Omega|^{\frac{1}{2}}}({\bf u}\otimes{\bf u})\right\|}_{\widetilde{L}_{x_{3}}^{\frac{r_{1}}{2}}(0, \infty ; \dot{B}_{p,q}^{s + \frac{2}{r_{1}} - 1})}. 
    \end{align}
    \item\label{0618-5} There exists a sufficiently large constant $C_{0} \geq 1$ such that there exists a constant $C>0$ such that for ${\bf v}^{(2)}$ given by \eqref{0612-19}, we have 
    \begin{align}\label{0618-10}
      &{\left\|P_{2^{j}< C_{0}^{-1}|\Omega|^{\frac{1}{2}}}{\bf v}^{(2)}\right\|}_{\widetilde{L}_{x_{3}}^{r}(0,\infty;\dot{B}_{p,q}^{s + \frac{1}{r}})}\notag\\ 
      \leq& C |\Omega|^{-\frac{1}{2} + \frac{1}{r}}{\left\|P_{2^{j}< C_{0}^{-1}|\Omega|^{\frac{1}{2}}}{\bf a}\right\|}_{\dot{B}_{p,q}^{s - \frac{2}{r} + 1}} 
      + C |\Omega|^{- \frac{1}{2r}}{\left\|P_{2^{j}< C_{0}^{-1}|\Omega|^{\frac{1}{2}}}{\bf a}\right\|}_{\dot{B}_{p,q}^{s + \frac{1}{r}}}\notag\\ 
      &+ C |\Omega|^{-\frac{1}{2} - \frac{1}{r_{0}}+ \frac{1}{r}}{\left\|P_{2^{j}< C_{0}^{-1}|\Omega|^{\frac{1}{2}}}{\bf F}\right\|}_{\widetilde{L}_{x_{3}}^{r_{0}}(0,\infty;\dot{B}_{p,q}^{s + \frac{3}{r_{0}} - \frac{2}{r}})}\notag\\ 
      & + C |\Omega|^{-1 + \frac{1}{2r_{0}} + \frac{1}{r}}{\left\|P_{2^{j}< C_{0}^{-1}|\Omega|^{\frac{1}{2}}}{\bf F}\right\|}_{\widetilde{L}_{x_{3}}^{r_{0}}(0,\infty;\dot{B}_{p,q}^{s - \frac{2}{r} + 1})}\notag\\ 
      &+ C |\Omega|^{ - \frac{1}{r_{0}} - \frac{1}{2r}}{\left\|P_{2^{j}< C_{0}^{-1}|\Omega|^{\frac{1}{2}}}{\bf F}\right\|}_{\widetilde{L}_{x_{3}}^{r_{0}}(0,\infty;\dot{B}_{p,q}^{s + \frac{3}{r_{0}} + \frac{1}{r} - 1})}\notag\\ 
      & + C |\Omega|^{-\frac{1}{2} + \frac{1}{2r_{0}} - \frac{1}{2r}}{\left\|P_{2^{j}< C_{0}^{-1}|\Omega|^{\frac{1}{2}}}{\bf F}\right\|}_{\widetilde{L}_{x_{3}}^{r_{0}}(0,\infty;\dot{B}_{p,q}^{s + \frac{1}{r}})}\notag\\ 
      &+ C |\Omega|^{- \frac{1}{2} - \frac{2}{r_{1}} + \frac{1}{r}}{\left\|P_{2^{j}< C_{0}^{-1}|\Omega|^{\frac{1}{2}}}({\bf u}\otimes{\bf u})\right\|}_{\widetilde{L}_{x_{3}}^{\frac{r_{1}}{2}}(0,\infty;\dot{B}_{p,q}^{s + \frac{6}{r_{1}} - \frac{2}{r}})}\notag\\ 
      &+ C |\Omega|^{-1 + \frac{1}{r_{1}} + \frac{1}{r}}{\left\|P_{2^{j}< C_{0}^{-1}|\Omega|^{\frac{1}{2}}}({\bf u}\otimes{\bf u})\right\|}_{\widetilde{L}_{x_{3}}^{\frac{r_{1}}{2}}(0,\infty;\dot{B}_{p,q}^{s - \frac{2}{r} + 1})}\notag\\ 
      &+ C |\Omega|^{ - \frac{2}{r_{1}} - \frac{1}{2r}}{\left\|P_{2^{j}< C_{0}^{-1}|\Omega|^{\frac{1}{2}}}({\bf u}\otimes{\bf u})\right\|}_{\widetilde{L}_{x_{3}}^{\frac{r_{1}}{2}}(0,\infty;\dot{B}_{p,q}^{s + \frac{6}{r_{1}} + \frac{1}{r} - 1})}\notag\\ 
      &+ C |\Omega|^{- \frac{1}{2} + \frac{1}{r_{1}} - \frac{1}{2r}}{\left\|P_{2^{j}< C_{0}^{-1}|\Omega|^{\frac{1}{2}}}({\bf u}\otimes{\bf u})\right\|}_{\widetilde{L}_{x_{3}}^{\frac{r_{1}}{2}}(0,\infty;\dot{B}_{p,q}^{s + \frac{1}{r}})}. 
    \end{align}
  \end{enumerate}
\end{prop}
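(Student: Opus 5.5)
We prove Proposition~\ref{0618-4} the same way as Propositions~\ref{0611-13} and~\ref{0619-4}. We work blockwise: fix $j'$ with $\phi_{j'}*P{\bf v}^{(2)}\neq 0$, where $P$ denotes the relevant frequency cutoff. We read off the formula \eqref{0612-19} at the level of $\phi_{j'}*{\bf v}^{(2)}$. Its horizontal part carries the symbols $i\xi'|\xi'|^{-2}\lambda_{|\Omega|,k}$ and $(i\xi')^{\perp}|\xi'|^{-2}(|\xi'|^2-\lambda_{|\Omega|,k}^2)^2/(\Omega\lambda_{|\Omega|,k})$, both multiplied by $C_k e^{-x_3\lambda_{|\Omega|,k}}$; the vertical part carries $C_k e^{-x_3\lambda_{|\Omega|,k}}$. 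With ${\bf b}={\bf a}-{\bf w}^{(2)}(\cdot,0)$, each $C_k$ is a combination of $i\xi'\cdot\widehat{\bf b}_h$ and $(i\xi')^\perp\cdot\widehat{\bf b}_h$ with coefficients from ${\bf A}^{-1}$ in Lemma~\ref{0612-16}. All the multipliers involved are smooth and homogeneous-type symbols on the annulus $|\xi'|\sim 2^{j'}$. We therefore bound them in $L^p$ exactly as in the proof of Lemma~\ref{0613-8}, by rescaling to $\widehat{\widetilde\phi}_0$ and controlling four $\xi'$-derivatives.

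\emph{Middle region.} Here Lemma~\ref{0613-7}\eqref{0613-1} gives $|\lambda_{|\Omega|,k}|\sim 2^{j'}\sim|\Omega|^{1/2}$, and hence $|(|\xi'|^2-\lambda_k^2)^2/(\Omega\lambda_k)|\lesssim 2^{j'}$. Lemma~\ref{0618-2}\eqref{0618-3} bounds $C_k$ by $\|\phi_{j'}*{\bf b}\|_{L^p}$, which already accounts for the $\xi'$ factors. Lemma~\ref{0613-8}\eqref{0613-5} then gives
\[
\|\phi_{j'}*P{\bf v}^{(2)}(\cdot,x_3)\|_{L^p}\le Ce^{-cx_3 2^{j'}}\|\phi_{j'}*{\bf b}\|_{L^p}.
\]
Taking the $L^r(0,\infty)$ norm produces $2^{-j'/r}$. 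The term ${\bf w}^{(2)}(\cdot,0)$ is handled as in \eqref{0611-12}: evaluate the $T_{K_l^{(2)}}$ representation \eqref{0612-18} at $x_3=0$, apply Lemma~\ref{0613-10}\eqref{0616-3} (with $|\Omega|2^{-2j'}\lesssim1$ by Lemma~\ref{0617-1}), and use H\"older in $y_3$. Multiplying by $2^{(s+1/r)j'}$ and summing in $l^q$ gives \eqref{0618-7}.

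\emph{Low region.} We split ${\bf v}^{(2)}$ into the slow mode $k=1$, which decays like $e^{-cx_3 2^{3j'}/|\Omega|}$, and the fast modes $k=2,3$, which decay like $e^{-cx_3|\Omega|^{1/2}}$, using Lemma~\ref{0613-8}\eqref{0613-6}. For each piece we collect the size of the symbol, combining the expansions of $\lambda_k$, $|\xi'|^2-\lambda_k^2$ and $B_k$ in Lemma~\ref{0613-7}\eqref{0613-4} and Corollary~\ref{0916-1} with the asymptotic matrix of Lemma~\ref{0618-2}\eqref{0618-8}.

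For the slow mode, the first row gives $C_1\sim |\Omega|^{-1/2}|\xi'||\widehat{\bf b}|$. The vertical component carries no further factor. The horizontal symbol has size $\lambda_1/|\xi'|+|\xi'|^4/(|\Omega|\lambda_1|\xi'|)\sim 1$, so that part is $O(|\Omega|^{-1/2}2^{j'})$ in total. Its $L^r_{x_3}$ norm gains $(2^{3j'}/|\Omega|)^{-1/r}$, which produces the factor $|\Omega|^{-1/2+1/r}2^{(1-3/r)j'}$; after the weight this matches the ${\bf a}$-norm $\dot B^{s-2/r+1}$ in \eqref{0618-10}.

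For the fast modes, the horizontal symbol contains $(|\xi'|^2-\lambda_k^2)^2/(\Omega\lambda_k|\xi'|)\sim|\Omega|^{1/2}/|\xi'|$, against $C_{2,3}\sim|\Omega|^{-1/2}|\xi'||\widehat{\bf b}|$, so this mode is $O(1)$. The $L^r_{x_3}$ norm gains $|\Omega|^{-1/(2r)}$, which yields the second ${\bf a}$ term in \eqref{0618-10}.

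For ${\bf w}^{(2)}(\cdot,0)$ we use \eqref{0612-18} at $x_3=0$ with Lemma~\ref{0613-10}\eqref{0613-9}, exactly as in \eqref{1005-1}. H\"older in $y_3$ against the two exponentials gives factors $(2^{3j'}/|\Omega|)^{-1/r_0'}$ and $|\Omega|^{-1/(2r_0')}$, and the same with $r_1/2$ in place of $r_0$ for the nonlinear term. Combining each of these two ${\bf w}^{(2)}(\cdot,0)$ pieces with each of the two ${\bf v}$ modes yields the four ${\bf F}$ terms and the four ${\bf u}\otimes{\bf u}$ terms in \eqref{0618-10}. Finally we multiply by $2^{(s+1/r)j'}$ and take the $l^q$ norm.

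The main obstacle is the low-frequency bookkeeping. We need symbol bounds with derivatives that are uniform in $2^{j'}/|\Omega|^{1/2}<C_0^{-1}$. We also need to use the third column of ${\bf A}^{-1}$ only against a zero entry, since the right-hand side vector in \eqref{0613-11} has third component $0$, because its $O(1)$ entry would otherwise spoil the estimate. And we must track the cancellations that make $\lambda_1\sim|\xi'|^3/|\Omega|$ without loss. We would verify the derivative bounds by writing every symbol as $|\Omega|^{\alpha}F(|\xi'|/|\Omega|^{1/2})$, with $F$ given by convergent series from Lemma~\ref{0613-7} near $0$.
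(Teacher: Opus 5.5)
Your proposal is correct and is essentially the paper's proof. In both regions the paper proceeds as follows:
\begin{itemize}
\item It reduces $\phi_{j'}*P{\bf v}^{(2)}$ to $\sum_k \|\phi_{j'}*P\mathcal{F}_{\mathbb{R}^{2}}^{-1}[C_k e^{-x_3\lambda_{|\Omega|,k}}]\|_{L^p}$, with the extra factor $|\Omega|^{1/2}2^{-j'}$ on the fast modes $k=2,3$ in the low region.
\item It applies Lemma~\ref{0613-8} and Lemma~\ref{0618-2}. In the low region this gives the bound $\frac{2^{j'}}{|\Omega|^{1/2}}e^{-cx_3 2^{3j'}/|\Omega|}+e^{-cx_3|\Omega|^{1/2}}$ times $\|\phi_{j'}*P({\bf a}-{\bf w}^{(2)}(\cdot,0))\|_{L^p}$.
\item It estimates ${\bf w}^{(2)}(\cdot,0)$ via Lemma~\ref{0613-10} and H\"older exactly as in \eqref{1005-1}, which produces the same two-by-two cross terms you describe.
\end{itemize}
Your explicit symbol-size bookkeeping is a faithful expansion of steps the paper leaves implicit. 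This includes $C_k\sim|\Omega|^{-1/2}|\xi'||\widehat{\bf b}_h|$ from the first two columns of the asymptotic matrix, with the third column paired only against the zero entry.
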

\begin{proof}
  We first prove \eqref{0618-1}. 
  For each $j'\in \mathbb{Z}$, by Lemma~\ref{0611-6}, Lemma~\ref{0613-7} \eqref{0613-3} and Lemma~\ref{0617-1} \eqref{0617-2}, observe that the following holds. 
  \begin{align*}
    &{\left\|\phi_{j'}*P_{C_{0}^{-1}|\Omega|^{\frac{1}{2}}\leq 2^{j}\leq C_{0}|\Omega|^{\frac{1}{2}}}{\bf v}_{h}^{(2)}\right\|}_{L^{p}}\\ 
    \leq& \sum_{k = 1}^{3}2^{-j'}{\left\|\phi_{j'}*P_{C_{0}^{-1}|\Omega|^{\frac{1}{2}}\leq 2^{j}\leq C_{0}|\Omega|^{\frac{1}{2}}}\mathcal{F}_{\mathbb{R}^{2}}^{-1}\left[\lambda_{|\Omega|, k}C_{k}e^{-x_{3}\lambda_{|\Omega|, k}}\right]\right\|}_{L^{p}}\\ 
    +& \sum_{k = 1}^{3}2^{-j'}{\left\|\phi_{j'}*P_{C_{0}^{-1}|\Omega|^{\frac{1}{2}}\leq 2^{j}\leq C_{0}|\Omega|^{\frac{1}{2}}}\mathcal{F}_{\mathbb{R}^{2}}^{-1}\left[\frac{(|\xi'|^{2} - \lambda_{|\Omega|, k}^{2})^{2}}{\Omega \lambda_{|\Omega|, k}}C_{k}e^{-x_{3}\lambda_{|\Omega|, k}}\right]\right\|}_{L^{p}}\\ 
    \leq& C\sum_{k = 1}^{3}{\left\|\phi_{j'}*P_{C_{0}^{-1}|\Omega|^{\frac{1}{2}}\leq 2^{j}\leq C_{0}|\Omega|^{\frac{1}{2}}}\mathcal{F}_{\mathbb{R}^{2}}^{-1}\left[C_{k}e^{-x_{3}\lambda_{|\Omega|, k}}\right]\right\|}_{L^{p}}. 
  \end{align*}
  The same bound for $v_{3}^{(2)}$ as above follows. 
  Thanks to Lemma~\ref{0613-8} \eqref{0613-5} and Lemma~\ref{0618-2} \eqref{0618-3}, we have 
  \begin{align*}
    &{\left\|\phi_{j'}*P_{C_{0}^{-1}|\Omega|^{\frac{1}{2}}\leq 2^{j}\leq C_{0}|\Omega|^{\frac{1}{2}}}{\bf v}^{(2)}\right\|}_{L^{p}}\\ 
    \leq& C e^{-cx_{3}2^{j'}}{\left\|\phi_{j'}*P_{C_{0}^{-1}|\Omega|^{\frac{1}{2}}\leq 2^{j}\leq C_{0}|\Omega|^{\frac{1}{2}}}({\bf a} - {\bf w}^{(2)}(\cdot, 0))\right\|}_{L^{p}}. 
  \end{align*}
  Taking the $L^{r}$ norm with respect to $x_{3}$, we obtain 
  \begin{align*}
    &\sum_{k = 1}^{3}{\left\|\phi_{j'}*P_{C_{0}^{-1}|\Omega|^{\frac{1}{2}}\leq 2^{j}\leq C_{0}|\Omega|^{\frac{1}{2}}}\mathcal{F}_{\mathbb{R}^{2}}^{-1}\left[C_{k}e^{-x_{3}\lambda_{|\Omega|, k}}\right]\right\|}_{L_{x_{3}}^{r}(0, \infty; L^{p})}\\ 
    \leq& C 2^{-\frac{1}{r}j'}{\left\|\phi_{j'}*P_{C_{0}^{-1}|\Omega|^{\frac{1}{2}}\leq 2^{j}\leq C_{0}|\Omega|^{\frac{1}{2}}}{\bf a}\right\|}_{L^{p}} 
    + C 2^{-\frac{1}{r}j'}{\left\|\phi_{j'}*P_{C_{0}^{-1}|\Omega|^{\frac{1}{2}}\leq 2^{j}\leq C_{0}|\Omega|^{\frac{1}{2}}}{\bf w}^{(2)}(\cdot, 0)\right\|}_{L^{p}}. 
  \end{align*}
  We estimate ${\bf w}^{(2)}(\cdot, 0)$. 
  In the same way as for ${\bf w}^{(1)}(\cdot, 0)$, by Lemma~\ref{0611-6}, Lemma~\ref{0613-10} \eqref{0616-3} and H\"older's inequality, it follows that 
  \begin{align*}
    &{\left\|\phi_{j'}*P_{C_{0}^{-1}|\Omega|^{\frac{1}{2}}\leq 2^{j}\leq C_{0}|\Omega|^{\frac{1}{2}}}{\bf w}^{(2)}(\cdot, 0)\right\|}_{L^{p}}\\ 
    \leq& C \int_{0}^{+\infty}e^{-cy_{3}2^{j'}}{\left\|\phi_{j'}*P_{C_{0}^{-1}|\Omega|^{\frac{1}{2}}\leq 2^{j}\leq C_{0}|\Omega|^{\frac{1}{2}}}{\bf F}\right\|}_{L^{p}}~{\rm d}y_{3}\\ 
    & + C \int_{0}^{+\infty}e^{-cy_{3}2^{j'}}{\left\|\phi_{j'}*P_{C_{0}^{-1}|\Omega|^{\frac{1}{2}}\leq 2^{j}\leq C_{0}|\Omega|^{\frac{1}{2}}}({\bf u}\otimes{\bf u})\right\|}_{L^{p}}~{\rm d}y_{3}\\ 
    \leq& C2^{\left(-1 + \frac{1}{r_{0}}\right)j'}{\left\|\phi_{j'}*P_{C_{0}^{-1}|\Omega|^{\frac{1}{2}}\leq 2^{j}\leq C_{0}|\Omega|^{\frac{1}{2}}}{\bf F}\right\|}_{L_{x_{3}}^{r_{0}}(0,\infty ; L^{p})}\\ 
    &+ C 2^{\left(-1 + \frac{2}{r_{1}}\right)j'}{\left\|\phi_{j'}*P_{C_{0}^{-1}|\Omega|^{\frac{1}{2}}\leq 2^{j}\leq C_{0}|\Omega|^{\frac{1}{2}}}({\bf u}\otimes{\bf u})\right\|}_{L_{x_{3}}^{\frac{r_{1}}{2}}(0,\infty ; L^{p})}. 
  \end{align*}
  Therefore, we have 
  \begin{align}\label{0618-6}
    &\sum_{k = 1}^{3}{\left\|\phi_{j'}*P_{C_{0}^{-1}|\Omega|^{\frac{1}{2}}\leq 2^{j}\leq C_{0}|\Omega|^{\frac{1}{2}}}\mathcal{F}_{\mathbb{R}^{2}}^{-1}\left[C_{k}e^{-x_{3}\lambda_{|\Omega|, k}}\right]\right\|}_{L_{x_{3}}^{r}(0, \infty; L^{p})}\notag\\ 
    \leq& C 2^{-\frac{1}{r}j'}{\left\|\phi_{j'}*P_{C_{0}^{-1}|\Omega|^{\frac{1}{2}}\leq 2^{j}\leq C_{0}|\Omega|^{\frac{1}{2}}}{\bf a}\right\|}_{L^{p}}\notag\\ 
    & + C 2^{\left(-1 + \frac{1}{r_{0}} - \frac{1}{r}\right)j'}{\left\|\phi_{j'}*P_{C_{0}^{-1}|\Omega|^{\frac{1}{2}}\leq 2^{j}\leq C_{0}|\Omega|^{\frac{1}{2}}}{\bf F}\right\|}_{L_{x_{3}}^{r_{0}}(0,\infty ; L^{p})}\notag\\ 
    & + C2^{\left(-1 + \frac{2}{r_{1}} - \frac{1}{r}\right)j'}{\left\|\phi_{j'}*P_{C_{0}^{-1}|\Omega|^{\frac{1}{2}}\leq 2^{j}\leq C_{0}|\Omega|^{\frac{1}{2}}}({\bf u}\otimes{\bf u})\right\|}_{L_{x_{3}}^{\frac{r_{1}}{2}}(0,\infty ; L^{p})}. 
  \end{align}
  Multiplying \eqref{0618-6} by $2^{\left(s + \frac{1}{r}\right)j'}$ and taking the $l^{q}(\mathbb{Z})$ norm with respect to $j'$, 
  we get \eqref{0618-7}. 

  The proof of \eqref{0618-5} is similar. 
  If $C_{0}> 0$ is sufficiently large, then for each $j' \in \mathbb{Z}$, using Lemma~\ref{0617-1} \eqref{0617-3}, we have 
  \begin{align*}
    &{\left\|\phi_{j'}*P_{2^{j}< C_{0}^{-1}|\Omega|^{\frac{1}{2}}}{\bf v}_{h}^{(2)}\right\|}_{L^{p}}\\ 
    \leq& \sum_{k = 1}^{3}{\left\|\phi_{j'}*P_{2^{j}< C_{0}^{-1}|\Omega|^{\frac{1}{2}}}\mathcal{F}_{\mathbb{R}^{2}}^{-1}\left[\left(\frac{i\xi'\lambda_{|\Omega|, k}}{|\xi'|^{2}} - (i\xi')^{\perp}\frac{(|\xi'|^{2} - \lambda_{|\Omega|, k}^{2})^{2}}{\Omega |\xi'|^{2}\lambda_{|\Omega|, k}}\right)C_{k}e^{-x_{3}\lambda_{|\Omega|, k}}\right]\right\|}_{L^{p}}\\ 
    \leq& C{\left\|\phi_{j'}*P_{2^{j}< C_{0}^{-1}|\Omega|^{\frac{1}{2}}}\mathcal{F}_{\mathbb{R}^{2}}^{-1}\left[C_{1}e^{-x_{3}\lambda_{|\Omega|, 1}}\right]\right\|}_{L^{p}}\\ 
    &+ C\frac{|\Omega|^{\frac{1}{2}}}{2^{j'}}\sum_{k = 2}^{3}{\left\|\phi_{j'}*P_{2^{j}< C_{0}^{-1}|\Omega|^{\frac{1}{2}}}\mathcal{F}_{\mathbb{R}^{2}}^{-1}\left[C_{k}e^{-x_{3}\lambda_{|\Omega|, k}}\right]\right\|}_{L^{p}}. 
  \end{align*}
  The same bound for $v_{3}^{(2)}$ as above follows. 
  By Lemma~\ref{0613-8} \eqref{0613-6} and Lemma~\ref{0618-2} \eqref{0618-8}, we obtain 
  \begin{align*}
    {\left\|\phi_{j'}*P_{2^{j}< C_{0}^{-1}|\Omega|^{\frac{1}{2}}}{\bf v}^{(2)}\right\|}_{L^{p}} 
    \leq& C\frac{2^{j'}}{|\Omega|^{\frac{1}{2}}}e^{-cx_{3}\frac{2^{3j'}}{|\Omega|}}{\left\|\phi_{j'}*P_{2^{j}< C_{0}^{-1}|\Omega|^{\frac{1}{2}}}({\bf a} - {\bf w}^{(2)}(\cdot, 0))\right\|}_{L^{p}}\\ 
    &+ Ce^{-cx_{3}|\Omega|^{\frac{1}{2}}}{\left\|\phi_{j'}*P_{2^{j}< C_{0}^{-1}|\Omega|^{\frac{1}{2}}}({\bf a} - {\bf w}^{(2)}(\cdot, 0))\right\|}_{L^{p}}. 
  \end{align*}
  Taking the $L^{r}$ norm with respect to $x_{3}$, we get 
  \begin{align*}
    &{\left\|\phi_{j'}*P_{2^{j}< C_{0}^{-1}|\Omega|^{\frac{1}{2}}}{\bf v}^{(2)}\right\|}_{L_{x_{3}}^{r}(0, \infty ; L^{p})}\\ 
    \leq& C|\Omega|^{-\frac{1}{2} + \frac{1}{r}}2^{\left(1 - \frac{3}{r}\right)j'}{\left\|\phi_{j'}*P_{2^{j}< C_{0}^{-1}|\Omega|^{\frac{1}{2}}}{\bf a}\right\|}_{L^{p}}\\ 
    &+ C|\Omega|^{-\frac{1}{2} + \frac{1}{r}}2^{\left(1 - \frac{3}{r}\right)j'}{\left\|\phi_{j'}*P_{2^{j}< C_{0}^{-1}|\Omega|^{\frac{1}{2}}}{\bf w}^{(2)}(\cdot, 0)\right\|}_{L^{p}}\\ 
    &+ C|\Omega|^{-\frac{1}{2r}}{\left\|\phi_{j'}*P_{2^{j}< C_{0}^{-1}|\Omega|^{\frac{1}{2}}}{\bf a}\right\|}_{L^{p}} 
    + C|\Omega|^{-\frac{1}{2r}}{\left\|\phi_{j'}*P_{2^{j}< C_{0}^{-1}|\Omega|^{\frac{1}{2}}}{\bf w}^{(2)}(\cdot, 0)\right\|}_{L^{p}}. 
  \end{align*}
  Similarly to \eqref{1005-1}, thanks to Lemma~\ref{0611-6}, Lemma~\ref{0613-10} \eqref{0613-9} and H\"older's inequality, we obtain 
  \begin{align*}
    &{\left\|\phi_{j'}*P_{2^{j}< C_{0}^{-1}|\Omega|^{\frac{1}{2}}}{\bf w}^{(2)}(\cdot, 0)\right\|}_{L^{p}}\\ 
    \leq& C \frac{2^{2j'}}{|\Omega|}\int_{0}^{+\infty}e^{-cy_{3}\frac{2^{3j'}}{|\Omega|}}{\left\|\phi_{j'}*P_{2^{j}< C_{0}^{-1}|\Omega|^{\frac{1}{2}}}({\bf F} - {\bf u}\otimes{\bf u})\right\|}_{L^{p}}~{\rm d}y_{3}\\ 
    &+ C \int_{0}^{+\infty}e^{-cy_{3}|\Omega|^{\frac{1}{2}}}{\left\|\phi_{j'}*P_{2^{j}< C_{0}^{-1}|\Omega|^{\frac{1}{2}}}({\bf F} - {\bf u}\otimes{\bf u})\right\|}_{L^{p}}~{\rm d}y_{3}\\ 
    \leq& C |\Omega|^{-\frac{1}{r_{0}}}2^{\left(-1 + \frac{3}{r_{0}}\right)j'}{\left\|\phi_{j'}*P_{2^{j}< C_{0}^{-1}|\Omega|^{\frac{1}{2}}}{\bf F}\right\|}_{L_{x_{3}}^{r_{0}}(0, \infty; L^{p})}\\ 
    &+ C |\Omega|^{-\frac{1}{2}\left(1 - \frac{1}{r_{0}}\right)}{\left\|\phi_{j'}*P_{2^{j}< C_{0}^{-1}|\Omega|^{\frac{1}{2}}}{\bf F}\right\|}_{L_{x_{3}}^{r_{0}}(0, \infty; L^{p})}\\ 
    &+ C |\Omega|^{-\frac{2}{r_{1}}}2^{\left(-1 + \frac{6}{r_{1}}\right)j'}{\left\|\phi_{j'}*P_{2^{j}< C_{0}^{-1}|\Omega|^{\frac{1}{2}}}({\bf u}\otimes{\bf u})\right\|}_{L_{x_{3}}^{\frac{r_{1}}{2}}(0, \infty; L^{p})}\\ 
    &+ C |\Omega|^{-\frac{1}{2}\left(1 - \frac{2}{r_{1}}\right)}{\left\|\phi_{j'}*P_{2^{j}< C_{0}^{-1}|\Omega|^{\frac{1}{2}}}({\bf u}\otimes{\bf u})\right\|}_{L_{x_{3}}^{\frac{r_{1}}{2}}(0, \infty; L^{p})}. 
  \end{align*}
  Therefore, it follows that 
  \begin{align}\label{0618-9}
    &{\left\|\phi_{j'}*P_{2^{j}< C_{0}^{-1}|\Omega|^{\frac{1}{2}}}{\bf v}^{(2)}\right\|}_{L_{x_{3}}^{r}(0, \infty ; L^{p})}\notag\\ 
    \leq& C|\Omega|^{-\frac{1}{2} + \frac{1}{r}}2^{\left(1 - \frac{3}{r}\right)j'}{\left\|\phi_{j'}*P_{2^{j}< C_{0}^{-1}|\Omega|^{\frac{1}{2}}}{\bf a}\right\|}_{L^{p}} 
    + C|\Omega|^{-\frac{1}{2r}}{\left\|\phi_{j'}*P_{2^{j}< C_{0}^{-1}|\Omega|^{\frac{1}{2}}}{\bf a}\right\|}_{L^{p}}\notag\\ 
    &+ C|\Omega|^{-\frac{1}{2} -\frac{1}{r_{0}} + \frac{1}{r}}2^{\left(\frac{3}{r_{0}} - \frac{3}{r}\right)j'}{\left\|\phi_{j'}*P_{2^{j}< C_{0}^{-1}|\Omega|^{\frac{1}{2}}}{\bf F}\right\|}_{L_{x_{3}}^{r_{0}}(0, \infty; L^{p})}\notag\\ 
    &+ C|\Omega|^{-1 + \frac{1}{2r_{0}} + \frac{1}{r}}2^{\left(1 - \frac{3}{r}\right)j'}{\left\|\phi_{j'}*P_{2^{j}< C_{0}^{-1}|\Omega|^{\frac{1}{2}}}{\bf F}\right\|}_{L_{x_{3}}^{r_{0}}(0, \infty; L^{p})}\notag\\ 
    &+ C|\Omega|^{-\frac{1}{2} - \frac{2}{r_{1}} + \frac{1}{r}}2^{\left(\frac{6}{r_{1}} - \frac{3}{r}\right)j'}{\left\|\phi_{j'}*P_{2^{j}< C_{0}^{-1}|\Omega|^{\frac{1}{2}}}({\bf u}\otimes{\bf u})\right\|}_{L_{x_{3}}^{\frac{r_{1}}{2}}(0, \infty; L^{p})}\notag\\ 
    &+ C|\Omega|^{-1 + \frac{1}{r_{1}} + \frac{1}{r}}2^{\left(1 - \frac{3}{r}\right)j'}{\left\|\phi_{j'}*P_{2^{j}< C_{0}^{-1}|\Omega|^{\frac{1}{2}}}({\bf u}\otimes{\bf u})\right\|}_{L_{x_{3}}^{\frac{r_{1}}{2}}(0, \infty; L^{p})}\notag\\ 
    &+ C|\Omega|^{-\frac{1}{r_{0}} -\frac{1}{2r}}2^{\left(-1 + \frac{3}{r_{0}}\right)j'}{\left\|\phi_{j'}*P_{2^{j}< C_{0}^{-1}|\Omega|^{\frac{1}{2}}}{\bf F}\right\|}_{L_{x_{3}}^{r_{0}}(0, \infty; L^{p})}\notag\\ 
    &+ C|\Omega|^{-\frac{1}{2}\left(1 - \frac{1}{r_{0}} + \frac{1}{r}\right)}{\left\|\phi_{j'}*P_{2^{j}< C_{0}^{-1}|\Omega|^{\frac{1}{2}}}{\bf F}\right\|}_{L_{x_{3}}^{r_{0}}(0, \infty; L^{p})}\notag\\ 
    &+ C|\Omega|^{-\frac{2}{r_{1}} -\frac{1}{2r}}2^{\left(-1 + \frac{6}{r_{1}}\right)j'}{\left\|\phi_{j'}*P_{2^{j}< C_{0}^{-1}|\Omega|^{\frac{1}{2}}}({\bf u}\otimes{\bf u})\right\|}_{L_{x_{3}}^{\frac{r_{1}}{2}}(0, \infty; L^{p})}\notag\\ 
    &+ C|\Omega|^{-\frac{1}{2}\left(1 - \frac{2}{r_{1}} + \frac{1}{r}\right)}{\left\|\phi_{j'}*P_{2^{j}< C_{0}^{-1}|\Omega|^{\frac{1}{2}}}({\bf u}\otimes{\bf u})\right\|}_{L_{x_{3}}^{\frac{r_{1}}{2}}(0, \infty; L^{p})}. 
  \end{align}
  Multiplying $2^{\left(s + \frac{1}{r}\right)j'}$ to \eqref{0618-9} and taking the $l^{q}(\mathbb{Z})$ norm with respect to $j'$, 
  we get \eqref{0618-10}. 
\end{proof}

\subsection{Auxiliary lemmas}\label{0624-3} 
In this subsection, we present some properties of frequency-restricted functions, 
and a bilinear estimate in Besov spaces, 
both of which are needed for the proofs of our main theorems. 

The following lemma establishes the boundedness of the operators 
\begin{equation*}
  P_{2^{j} > C_{0}|\Omega|^{\frac{1}{2}}}, \quad 
  P_{C_{0}^{-1}|\Omega|^{\frac{1}{2}} \leq 2^{j} \leq C_{0}|\Omega|^{\frac{1}{2}}} 
  \quad 
  \text{and} 
  \quad 
  P_{2^{j} < C_{0}^{-1}|\Omega|^{\frac{1}{2}}}, 
\end{equation*}
uniformly in $\Omega$ and $C_{0}$. 
\begin{lem}\label{0619-2}
  Let $\Omega\in\mathbb{R}\setminus\{0\}$, $C_{0} \geq 1$ and $1\leq p \leq \infty$. 
  Then there exists a constant $C > 0$ independent of $\Omega$ and $C_{0}$ such that for any $f \in L^{p}(\mathbb{R}^{2})$, we have 
  \begin{equation}\label{0619-1}
    {\left\|P_{2^{j} > C_{0}|\Omega|^{\frac{1}{2}}}f\right\|}_{L^{p}} 
    \leq C {\|f\|}_{L^{p}}, 
  \end{equation}
  \begin{equation}\label{0619-7}
    {\left\|P_{C_{0}^{-1}|\Omega|^{\frac{1}{2}} \leq 2^{j} \leq C_{0}|\Omega|^{\frac{1}{2}}}f\right\|}_{L^{p}} 
    \leq C {\|f\|}_{L^{p}}, 
  \end{equation}
  and 
  \begin{equation}\label{0619-8}
    {\left\|P_{2^{j} < C_{0}^{-1}|\Omega|^{\frac{1}{2}}}f\right\|}_{L^{p}} 
    \leq C {\|f\|}_{L^{p}}. 
  \end{equation}
\end{lem}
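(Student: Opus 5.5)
The plan is to realize each of the three operators as a convolution with a dilated kernel whose $L^{1}$ norm is bounded independently of the dilation, and then conclude by Young's inequality. Since the dyadic family is normalized by $\widehat{\phi}_{j}(\xi')=\widehat{\phi}_{0}(2^{-j}\xi')$ (as arranged in the proof of Lemma~\ref{0613-8}), the partial sums are determined by a single integer threshold. Set $J:=\max\{j\in\mathbb{Z}\,|\,2^{j}<C_{0}^{-1}|\Omega|^{\frac{1}{2}}\}$. Then
\begin{equation*}
P_{2^{j}<C_{0}^{-1}|\Omega|^{\frac{1}{2}}}f=\sum_{j\le J}\phi_{j}*f=\mathcal{F}_{\mathbb{R}^{2}}^{-1}\big[\chi(2^{-J}\xi')\widehat{f}(\xi')\big],
\end{equation*}
where $\chi(\xi'):=\sum_{j\le 0}\widehat{\phi}_{0}(2^{-j}\xi')$ on $\mathbb{R}^{2}\setminus\{0\}$, extended by $\chi(0)=1$. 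This extension is justified because the partition of unity forces $\chi\equiv1$ near the origin.

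The function $\chi$ is a fixed smooth, compactly supported function: it equals $1$ for $|\xi'|\le 1/2$ and vanishes for $|\xi'|\ge 2$. Its inverse Fourier transform $\check\chi$ is therefore Schwartz, so $\|\check\chi\|_{L^{1}}<\infty$. The dilation $\xi'\mapsto 2^{-J}\xi'$ turns the kernel into $2^{2J}\check\chi(2^{J}x')$, which has the same $L^{1}$ norm. Young's inequality then gives \eqref{0619-8} with $C=(2\pi)^{-1}\cdot 2\pi\|\check\chi\|_{L^{1}}$, a constant depending only on $\phi_{0}$ and hence independent of $\Omega$ and $C_{0}$.

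For \eqref{0619-1}, let $J'$ be the smallest $j$ with $2^{j}>C_{0}|\Omega|^{\frac{1}{2}}$. Then $P_{2^{j}>C_{0}|\Omega|^{\frac{1}{2}}}=Id-\sum_{j<J'}\phi_{j}*$, and the second piece is the low-frequency operator with multiplier $\chi(2^{-(J'-1)}\xi')$. This holds for $f\in L^{p}$ in the sense of $\mathcal{S}'$ (modulo polynomials, which the $L^{p}$ setting excludes), so the bound $1+\|\check\chi\|_{L^{1}}$ follows.

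The middle operator is the difference of two low-frequency projections: its multiplier is $\chi(2^{-J''}\xi')-\chi(2^{-J}\xi')$ with $J''$ the largest $j$ satisfying $2^{j}\le C_{0}|\Omega|^{\frac{1}{2}}$. Hence \eqref{0619-7} holds with constant $2\|\check\chi\|_{L^{1}}$. Again nothing depends on $C_{0}$ or $\Omega$, because the number of dyadic blocks never enters; only two dilations of $\chi$ appear.

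The one point requiring care is the identification $\sum_{j\le J}\phi_{j}*f=\mathcal{F}^{-1}[\chi(2^{-J}\cdot)\widehat f]$, which needs the low-frequency sum to converge to the right object for $f\in L^{p}$. I would handle this by first proving the identity for $f\in\mathcal{S}$ with $\widehat f$ vanishing near $0$. Then I would extend by density for $p<\infty$, and for $p=\infty$ by weak-$*$ and duality arguments, noting that $\chi$ is smooth at the origin so no singularity arises.
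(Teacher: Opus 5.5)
Your proposal is correct and is essentially the paper's proof: you write the low-frequency sum as a dilate of a single compactly supported cutoff equal to $1$ near the origin, use that the $L^{1}$ norm of its kernel is dilation invariant, and apply Young's inequality to get a constant independent of $\Omega$ and $C_{0}$, with the high-frequency operator being the identity minus such a cutoff and the middle one a difference of two. Your integer threshold $2^{-J}$ is in fact a cleaner rendering of the paper's $\chi((2C_{0})^{-1}|\Omega|^{-\frac{1}{2}}\xi')$, which tacitly lets $\chi$ depend on where $C_{0}|\Omega|^{\frac{1}{2}}$ falls between consecutive powers of $2$; this is harmless because the $L^{1}$ norm of the kernel is unaffected.

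The only slip is your proposed weak-$*$ extension to $p=\infty$ of $\sum_{j\le J}\phi_{j}*f=\mathcal{F}_{\mathbb{R}^{2}}^{-1}[\chi(2^{-J}\xi')\widehat{f}]$. For $f\equiv 1$ the left side vanishes because $\widehat{\phi}_{j}(0)=0$, while the right side equals $f$ because $\chi(0)=1$. So at $p=\infty$ the identity, and hence the low-frequency bound, holds only modulo polynomials, which is how the paper's definition of these operators on $\mathcal{S}'(\mathbb{R}^{2})/\mathcal{P}(\mathbb{R}^{2})$ has to be read there. The paper does not address this point either, and for $p<\infty$ your argument is complete.
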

\begin{proof}
  We only prove \eqref{0619-1}, since the proofs of the others are similar. 
  By the definition of $P_{2^{j} > C_{0}|\Omega|^{\frac{1}{2}}}$ (Definition~\ref{0625-14}) , we can write 
  \begin{equation*}
    P_{2^{j} > C_{0}|\Omega|^{\frac{1}{2}}}f 
    = \mathcal{F}_{\mathbb{R}^{2}}^{-1}\Bigg[\sum_{2^{j} > C_{0}|\Omega|^{\frac{1}{2}}}\widehat{\phi}_{j}(\xi')\widehat{f}(\xi')\Bigg]
    = \mathcal{F}_{\mathbb{R}^{2}}^{-1}\Bigg[\Bigg(Id - \sum_{2^{j} \leq C_{0}|\Omega|^{\frac{1}{2}}}\widehat{\phi}_{j}(\xi')\Bigg)\widehat{f}(\xi')\Bigg]. 
  \end{equation*}
  The following function 
  \begin{equation*}
    \sum_{2^{j} \leq C_{0}|\Omega|^{\frac{1}{2}}}\widehat{\phi}_{j} 
  \end{equation*}
  is supported in $\{\xi'\in \mathbb{R}^{2} |\  |\xi'| \leq 2C_{0}|\Omega|^{\frac{1}{2}}\}$, 
  and is identically equal to $1$ in a neighborhood of the origin. 
  Thus, there exists a smooth radial function $\chi$ such that 
  \begin{equation*}
    \text{supp }\chi \subset \{\xi'\in \mathbb{R}^{2} |\ |\xi'| \leq 1\}, 
  \end{equation*}
  $\chi (\xi') = 1$ if $\xi'$ is near $0$, and 
  \begin{equation*}
    \sum_{2^{j} \leq C_{0}|\Omega|^{\frac{1}{2}}}\widehat{\phi}_{j}(\xi') 
    = \chi ((2C_{0})^{-1}|\Omega|^{-\frac{1}{2}}\xi'). 
  \end{equation*}
  Using Young's inequality, we have 
  \begin{equation*}
    {\left\|P_{2^{j} > C_{0}|\Omega|^{\frac{1}{2}}}f\right\|}_{L^{p}} 
    \leq {\|f\|}_{L^{p}} 
    + {\left\|\mathcal{F}_{\mathbb{R}^{2}}^{-1}\left[\chi ((2C_{0})^{-1}|\Omega|^{-\frac{1}{2}}\xi')\right]\right\|}_{L^{1}}{\|f\|}_{L^{p}}. 
  \end{equation*}
  By a change of variables, we obtain 
  \begin{equation*}
    {\left\|\mathcal{F}_{\mathbb{R}^{2}}^{-1}\left[\chi ((2C_{0})^{-1}|\Omega|^{-\frac{1}{2}}\xi')\right]\right\|}_{L^{1}} 
    = {\left\|\mathcal{F}_{\mathbb{R}^{2}}^{-1}\left[\chi (\xi')\right]\right\|}_{L^{1}} 
    \leq C, 
  \end{equation*}
  where the constant $C > 0$ is independent of $\Omega$ and $C_{0}$. 
\end{proof}

As in Lemma~\ref{0617-1}, the following property holds for the operator $P_{2^{j} > C_{0}|\Omega|^{\frac{1}{2}}}$. 
Since the dependence on the constant $C_{0}$ is important, we state it explicitly. 
The proof is the same as that of Lemma~\ref{0617-1} and is omitted. 
\begin{lem}\label{0619-3}
  Let $\Omega\in\mathbb{R}\setminus\{0\}$, $C_{0} \geq 1$, $s > 0$ and $1\leq p \leq \infty$. 
  Then there exists a constant $C > 0$ independent of $\Omega$ and $C_{0}$ such that for any $j'\in \mathbb{Z}$ and $f$ with $\phi_{j'}*f \in L^{p}(\mathbb{R}^{2})$, we have 
  \begin{equation*}
    |\Omega|^{\frac{s}{2}}{\left\|\phi_{j'}*P_{2^{j} > C_{0}|\Omega|^{\frac{1}{2}}}f\right\|}_{L^{p}}
    \leq \frac{C}{C_{0}^{s}}2^{sj'}{\left\|\phi_{j'}*P_{2^{j} > C_{0}|\Omega|^{\frac{1}{2}}}f\right\|}_{L^{p}}. 
  \end{equation*}
\end{lem}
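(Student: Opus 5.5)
The plan is to mimic the proof of Lemma~\ref{0617-1}, while tracking the dependence on $C_{0}$ explicitly. The starting point is a support argument. Since $\widehat{\phi}_{j'}$ is supported in $\{2^{j'-1}\leq|\xi'|\leq 2^{j'+1}\}$, we have
\begin{equation*}
  \phi_{j'}*P_{2^{j} > C_{0}|\Omega|^{\frac{1}{2}}}f
  = \sum_{2^{j} > C_{0}|\Omega|^{\frac{1}{2}},\ |j - j'|\leq 1}\phi_{j'}*\phi_{j}*f .
\end{equation*}
This expression therefore vanishes unless some $j$ with $|j-j'|\leq 1$ satisfies $2^{j} > C_{0}|\Omega|^{\frac{1}{2}}$. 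In particular, it vanishes unless $2^{j'+1} > C_{0}|\Omega|^{\frac{1}{2}}$, that is, unless $2^{j'} > \frac{1}{2}C_{0}|\Omega|^{\frac{1}{2}}$. Whenever the left-hand side is zero, the inequality holds trivially, so only the case $2^{j'} > \frac{1}{2}C_{0}|\Omega|^{\frac{1}{2}}$ remains.

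In that case, $|\Omega|^{\frac{1}{2}} < 2C_{0}^{-1}2^{j'}$. Since $s>0$, raising to the power $s$ preserves the inequality, which gives
\begin{equation*}
  |\Omega|^{\frac{s}{2}} < 2^{s}C_{0}^{-s}2^{sj'}.
\end{equation*}
Multiplying by $\|\phi_{j'}*P_{2^{j} > C_{0}|\Omega|^{\frac{1}{2}}}f\|_{L^{p}}$ yields the claim with $C = 2^{s}$. This constant depends only on $s$, and not on $\Omega$, $C_{0}$, $p$ or $f$.

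There is essentially no obstacle here. The only points to check are two. First, the localization step is an exact identity: it uses only $\widehat{\phi}_{j'}\widehat{\phi}_{j}=0$ for $|j-j'|\geq 2$, so no $L^{p}$ bound is needed at all, not even Lemma~\ref{0619-2}. Second, the positivity $s>0$ is precisely what allows the lower bound on $2^{j'}$ to be converted into the gain $C_{0}^{-s}$. This is also why the statement is restricted to $s>0$, in contrast with \eqref{0617-2}.
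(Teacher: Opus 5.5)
Your proof is correct and is the argument the paper intends. The paper omits the proof, saying it is the same as that of Lemma~\ref{0617-1}: the support identity $\widehat{\phi}_{j'}\widehat{\phi}_{j}=0$ for $|j-j'|\geq 2$ reduces to the case $2^{j'}>\frac{1}{2}C_{0}|\Omega|^{\frac{1}{2}}$, where $s>0$ gives the bound with $C=2^{s}$, independent of $\Omega$ and $C_{0}$.
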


In Theorem~\ref{0609-4}, it is important to estimate the solution for sufficiently large $|\Omega|$. 
The following lemma follows immediately from the definition of the Besov norm \eqref{0625-15}. 
\begin{lem}\label{0619-16}
  Let $\Omega\in\mathbb{R}\setminus\{0\}$, $C_{0} \geq 1$, $s \in \mathbb{R}$, $1\leq p \leq \infty$ and $1 \leq q < \infty$. 
  Then for any fixed $C_{0}$ and for any $f \in \dot{B}_{p,q}^{s}(\mathbb{R}^{2})$, we have 
  \begin{equation*}
    \lim_{|\Omega|\to \infty}{\left\|P_{2^{j} > C_{0}|\Omega|^{\frac{1}{2}}}f\right\|}_{\dot{B}_{p,q}^{s}} 
    = \lim_{|\Omega|\to \infty}{\left\|P_{C_{0}^{-1}|\Omega|^{\frac{1}{2}}\leq 2^{j}\leq C_{0}|\Omega|^{\frac{1}{2}}}f\right\|}_{\dot{B}_{p,q}^{s}} 
    = 0. 
  \end{equation*}
\end{lem}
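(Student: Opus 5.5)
The plan is to reduce the statement to the tail behaviour of an $\ell^{q}$ sequence, with $q<\infty$ doing all the work. Fix $C_{0}\geq 1$ and $f\in\dot{B}_{p,q}^{s}(\mathbb{R}^{2})$, and set $a_{j'}:=2^{sj'}{\|\phi_{j'}*f\|}_{L^{p}}$. By the definition \eqref{0625-15}, $\{a_{j'}\}_{j'\in\mathbb{Z}}\in l^{q}(\mathbb{Z})$.

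First I localize the frequencies of the projected functions. Since $\widehat{\phi}_{j'}\widehat{\phi}_{j}=0$ whenever $|j-j'|\geq 2$, we have
\begin{equation*}
  \phi_{j'}*P_{2^{j}>C_{0}|\Omega|^{\frac{1}{2}}}f=\sum_{|j-j'|\leq 1,\ 2^{j}>C_{0}|\Omega|^{\frac{1}{2}}}\phi_{j'}*\phi_{j}*f ,
\end{equation*}
and this vanishes unless $2^{j'}>\frac{1}{2}C_{0}|\Omega|^{\frac{1}{2}}$. Next I bound each block. Writing $\phi_{j}*f=\widetilde{\phi}_{j'}*\phi_{j}*f$, which is legitimate because $\widehat{\widetilde{\phi}}_{j'}=1$ on the support of $\widehat{\phi}_{j}$ for $|j-j'|\leq 1$, Young's inequality with $\|\phi_{j'}\|_{L^{1}}=\|\phi_{0}\|_{L^{1}}$ gives
\begin{equation*}
  {\|\phi_{j'}*P_{2^{j}>C_{0}|\Omega|^{\frac{1}{2}}}f\|}_{L^{p}}\leq C\sum_{|j-j'|\leq 1}{\|\phi_{j}*f\|}_{L^{p}}.
\end{equation*}
Multiplying by $2^{sj'}$ and absorbing the factor $2^{s(j'-j)}\leq 2^{|s|}$ then yields
\begin{equation*}
  {\|P_{2^{j}>C_{0}|\Omega|^{\frac{1}{2}}}f\|}_{\dot{B}_{p,q}^{s}}\leq C\Big(\sum_{2^{j'}\geq \frac{1}{4}C_{0}|\Omega|^{\frac{1}{2}}}a_{j'}^{q}\Big)^{\frac{1}{q}}.
\end{equation*}
The right-hand side is the tail of a convergent series, so it tends to $0$ as $|\Omega|\to\infty$, because the cutoff index tends to $+\infty$. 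This is the point where $q<\infty$ is needed.

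For the middle projection the same argument localizes $j'$ to the window $(2C_{0})^{-1}|\Omega|^{\frac{1}{2}}\leq 2^{j'}\leq 2C_{0}|\Omega|^{\frac{1}{2}}$. This window contains at most $O(\log C_{0})$ indices, and all of them tend to $+\infty$ as $|\Omega|\to\infty$ with $C_{0}$ fixed. Hence
\begin{equation*}
  {\|P_{C_{0}^{-1}|\Omega|^{\frac{1}{2}}\leq 2^{j}\leq C_{0}|\Omega|^{\frac{1}{2}}}f\|}_{\dot{B}_{p,q}^{s}}\leq C\Big(\sum_{2^{j'}\geq (4C_{0})^{-1}|\Omega|^{\frac{1}{2}}}a_{j'}^{q}\Big)^{\frac{1}{q}}\to 0 .
\end{equation*}
There is no genuine obstacle here. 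The only points to check are the bookkeeping of neighbouring dyadic blocks and that the constant $C$ is independent of $\Omega$, which the uniform $L^{1}$ bound on $\phi_{j'}$ guarantees.
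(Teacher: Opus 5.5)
Your proof is correct and takes the same approach as the paper, which gives no argument beyond saying that the lemma ``follows immediately from the definition of the Besov norm''; you supply that argument through the block localization $\phi_{j'}*\phi_j=0$ for $|j-j'|\geq 2$, a uniform Young bound, and the vanishing of $\ell^{q}$ tails for $q<\infty$. Two small remarks: the detour through $\widetilde{\phi}_{j'}$ is unnecessary, since Young's inequality applies directly to $\phi_{j'}*(\phi_j*f)$, and the uniform $L^{1}$ bound on $\phi_{j'}$ relies on the normalization $\widehat{\phi}_j=\widehat{\phi}_0(2^{-j}\cdot)$, which the paper adopts without loss of generality.
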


By Lemma~\ref{0617-1} \eqref{0617-3}, the following lemma holds in the case $2^{j} < C_{0}^{-1}|\Omega|^{\frac{1}{2}}$. 
\begin{lem}\label{0619-15}
  Let $\Omega\in\mathbb{R}\setminus\{0\}$, $C_{0} \geq 1$, $s \in \mathbb{R}$, $s_{1} > 0$, $1\leq p \leq \infty$ and $1 \leq q < \infty$. 
  Then for any fixed $C_{0}$ and for any $f \in \dot{B}_{p,q}^{s}(\mathbb{R}^{2})$, we have 
  \begin{equation*}
    \lim_{|\Omega|\to \infty}|\Omega|^{-\frac{s_{1}}{2}}{\left\|P_{2^{j} < C_{0}^{-1}|\Omega|^{\frac{1}{2}}}f\right\|}_{\dot{B}_{p,q}^{s + s_{1}}} 
    = 0. 
  \end{equation*}
  Also, let $1\leq r \leq \infty$. 
  Then for any fixed $C_{0}$ and for any $f \in \widetilde{L}_{x_{3}}^{r}(0, \infty ; \dot{B}_{p,q}^{s}(\mathbb{R}^{2}))$, we have 
  \begin{equation*}
    \lim_{|\Omega|\to \infty}|\Omega|^{-\frac{s_{1}}{2}}{\left\|P_{2^{j} < C_{0}^{-1}|\Omega|^{\frac{1}{2}}}f\right\|}_{\widetilde{L}_{x_{3}}^{r}(0, \infty ; \dot{B}_{p,q}^{s + s_{1}})} 
    = 0. 
  \end{equation*}
\end{lem}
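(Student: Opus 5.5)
The plan is to treat both limits by dominated convergence in $\ell^{q}(\mathbb{Z})$, using $q<\infty$ in an essential way, together with the pointwise bound of Lemma~\ref{0617-1} \eqref{0617-3}.

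Fix $C_{0}\geq 1$, $s\in\mathbb{R}$, $s_{1}>0$ and $f\in\dot{B}_{p,q}^{s}$. First we localize the projection. Exactly as in the proof of Lemma~\ref{0617-1}, $\phi_{j'}*P_{2^{j}<C_{0}^{-1}|\Omega|^{\frac12}}f$ vanishes unless $2^{j'}\leq 2C_{0}^{-1}|\Omega|^{\frac12}$. In that range Lemma~\ref{0619-2} (applied to $\phi_{j'}*f$), or simply the almost-orthogonality $\phi_{j'}=\phi_{j'}*\widetilde{\phi}_{j'}$, gives $\|\phi_{j'}*P_{2^{j}<C_{0}^{-1}|\Omega|^{\frac12}}f\|_{L^{p}}\leq C\|\widetilde{\phi}_{j'}*f\|_{L^{p}}$ with $C$ independent of $\Omega$. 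Here $\widetilde{\phi}_{j'}=\phi_{j'-1}+\phi_{j'}+\phi_{j'+1}$.

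Consequently, writing $a_{j'}:=2^{sj'}\|\widetilde{\phi}_{j'}*f\|_{L^{p}}$, we have $\{a_{j'}\}\in\ell^{q}$ with $\|a\|_{\ell^{q}}\leq C\|f\|_{\dot{B}^{s}_{p,q}}$, and
\begin{equation*}
|\Omega|^{-\frac{s_{1}}{2}}\left\|P_{2^{j}<C_{0}^{-1}|\Omega|^{\frac12}}f\right\|_{\dot{B}_{p,q}^{s+s_{1}}}
\leq C\Big\|\Big\{\mathbf{1}_{\{2^{j'}\leq 2C_{0}^{-1}|\Omega|^{\frac12}\}}\big(2^{j'}|\Omega|^{-\frac12}\big)^{s_{1}}a_{j'}\Big\}_{j'}\Big\|_{\ell^{q}}.
\end{equation*}
For each fixed $j'$, the factor $\mathbf{1}(\cdots)(2^{j'}|\Omega|^{-1/2})^{s_{1}}$ is bounded by $(2C_{0}^{-1})^{s_{1}}\leq 2^{s_{1}}$, which is the content of \eqref{0617-3}. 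It also tends to $0$ as $|\Omega|\to\infty$, because $s_{1}>0$. Hence each term of the sequence is dominated by $2^{s_{1}}a_{j'}$ and tends to zero pointwise. Since $q<\infty$, dominated convergence on $\ell^{q}$ (the counting measure) yields the first limit.

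The Chemin--Lerner statement is proved identically. We replace $\|\phi_{j'}*f\|_{L^{p}}$ by $\|\phi_{j'}*f\|_{L^{r}_{x_{3}}(0,\infty;L^{p})}$, apply Lemma~\ref{0619-2} pointwise in $x_{3}$ before taking the $L^{r}$ norm, and set $a_{j'}:=2^{sj'}\|\widetilde{\phi}_{j'}*f\|_{L^{r}L^{p}}\in\ell^{q}$. The pointwise-in-$j'$ factor is the same, so the same dominated convergence applies; no restriction on $r$ is needed.

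The only delicate point is to make sure the domination constant is uniform in $\Omega$, which is why the localization must be done via the support restriction and the $\Omega$-independent bound of Lemma~\ref{0619-2}. Beyond that the argument is routine; the failure of the statement for $q=\infty$ shows that the finiteness of $q$ is exactly what is used.
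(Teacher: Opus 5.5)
Your argument is correct and is essentially the paper's. The paper truncates to $f_M=\sum_{|j'|\le M}\phi_{j'}*f$, bounds $f-f_M$ uniformly in $\Omega$ via \eqref{0617-3} and Lemma~\ref{0619-2}, and lets the factor $|\Omega|^{-s_1/2}$ kill the finitely many remaining blocks; this is exactly your dominated convergence on $\ell^{q}$, using the same $\Omega$-uniform domination and $q<\infty$, and in both versions the Chemin--Lerner case is verbatim the same.
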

\begin{proof}
  We only consider the case $f \in \dot{B}_{p,q}^{s}(\mathbb{R}^{2})$. 
  The case $f \in \widetilde{L}_{x_{3}}^{r}(0, \infty ; \dot{B}_{p,q}^{s}(\mathbb{R}^{2}))$ can be proved similarly. 
  Since $q < \infty$, it follows that 
  \begin{equation*}
    \lim_{M \to \infty}{\Bigg\|f - \sum_{|j'|\leq M}\phi_{j'}*f\Bigg\|}_{\dot{B}_{p,q}^{s}} = 0. 
  \end{equation*}
  We denote 
  \begin{equation*}
    f_{M} := \sum_{|j'|\leq M}\phi_{j'}*f. 
  \end{equation*}
  For any $M \in \mathbb{N}$, we have 
  \begin{equation*}
    {\left\|P_{2^{j} < C_{0}^{-1}|\Omega|^{\frac{1}{2}}}f\right\|}_{\dot{B}_{p,q}^{s + s_{1}}} 
    \leq {\left\|P_{2^{j} < C_{0}^{-1}|\Omega|^{\frac{1}{2}}}(f - f_{M})\right\|}_{\dot{B}_{p,q}^{s + s_{1}}} 
    + {\left\|P_{2^{j} < C_{0}^{-1}|\Omega|^{\frac{1}{2}}}f_{M}\right\|}_{\dot{B}_{p,q}^{s + s_{1}}}. 
  \end{equation*}
  Moreover, using Lemma~\ref{0617-1} \eqref{0617-3} and Lemma~\ref{0619-2} \eqref{0619-8}, we obtain 
  \begin{equation*}
    |\Omega|^{-\frac{s_{1}}{2}}{\left\|P_{2^{j} < C_{0}^{-1}|\Omega|^{\frac{1}{2}}}(f - f_{M})\right\|}_{\dot{B}_{p,q}^{s + s_{1}}} 
    \leq C{\left\|P_{2^{j} < C_{0}^{-1}|\Omega|^{\frac{1}{2}}}(f - f_{M})\right\|}_{\dot{B}_{p,q}^{s}} 
    \leq C{\|f - f_{M}\|}_{\dot{B}_{p,q}^{s}}. 
  \end{equation*}
  Therefore, for any $\varepsilon > 0$, there exists $N \in \mathbb{N}$ such that for any $M\in \mathbb{N}$ with $M \geq N$, 
  and for any $\Omega$, it follows that 
  \begin{equation}\label{0619-13}
    |\Omega|^{-\frac{s_{1}}{2}}{\left\|P_{2^{j} < C_{0}^{-1}|\Omega|^{\frac{1}{2}}}(f - f_{M})\right\|}_{\dot{B}_{p,q}^{s + s_{1}}} 
    \leq \varepsilon. 
  \end{equation}
  For fixed $M\in \mathbb{N}$, $f_{M}$ is a smooth function, that is, by Lemma~\ref{0619-2} \eqref{0619-8}, we have 
  \begin{equation*}
    {\left\|P_{2^{j} < C_{0}^{-1}|\Omega|^{\frac{1}{2}}}f_{M}\right\|}_{\dot{B}_{p,q}^{s + s_{1}}} 
    \leq C {\|f_{M}\|}_{\dot{B}_{p,q}^{s + s_{1}}} 
    < \infty, 
  \end{equation*}
  which implies 
  \begin{equation}\label{0619-14}
    \lim_{|\Omega| \to \infty}|\Omega|^{-\frac{s_{1}}{2}}{\left\|P_{2^{j} < C_{0}^{-1}|\Omega|^{\frac{1}{2}}}f_{M}\right\|}_{\dot{B}_{p,q}^{s + s_{1}}} 
    = 0. 
  \end{equation}
  It follows from \eqref{0619-13} and \eqref{0619-14} that Lemma~\ref{0619-15} holds. 
\end{proof}

The following lemma provides a bilinear estimate in the Chemin--Lerner spaces. 
We state it here in the form needed for the proof of our main theorems. 
For the proof, we refer the reader to \cite{Fu_2025} (see also \cite{Ba_Ch_Da_2011}). 
\begin{lem}\label{0619-9}
  Let $1 \leq p < \infty$, $1 \leq q \leq \infty$ and $2\leq r < \infty$ with $p < 2r'$, 
  where $r' = r/(r - 1)$ is the H\"older conjugate of $r$. 
  Then there exists a constant $C > 0$ such that for any $f, g \in \widetilde{L}_{x_{3}}^{r}(0,\infty ; \dot{B}_{p,q}^{\frac{2}{p} + \frac{1}{r} - 1}(\mathbb{R}^{2}))$, we have 
  \begin{equation*}
    {\|fg\|}_{\widetilde{L}_{x_{3}}^{\frac{r}{2}}(0,\infty ; \dot{B}_{p,q}^{\frac{2}{p} + \frac{2}{r} - 2})} 
    \leq C {\|f\|}_{\widetilde{L}_{x_{3}}^{r}(0,\infty ; \dot{B}_{p,q}^{\frac{2}{p} + \frac{1}{r} - 1})} 
    {\|g\|}_{\widetilde{L}_{x_{3}}^{r}(0,\infty ; \dot{B}_{p,q}^{\frac{2}{p} + \frac{1}{r} - 1})}. 
  \end{equation*}
\end{lem}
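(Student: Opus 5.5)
This bilinear estimate is the one from \cite{Fu_2025}; it rests only on the Chemin--Lerner structure, not on $\Omega$. I would prove it with Bony's paraproduct decomposition applied in the horizontal variables $x'$ for each fixed $x_{3}$, followed by H\"older's inequality in $x_{3}$. Set $\sigma := \frac{2}{p} + \frac{1}{r} - 1$ and write
\begin{equation*}
  fg = T_{f}g + T_{g}f + R(f,g),
  \qquad
  T_{f}g := \sum_{j}S_{j-2}f\,(\phi_{j}*g),
  \qquad
  R(f,g) := \sum_{|j-k|\leq 1}(\phi_{j}*f)(\phi_{k}*g),
\end{equation*}
where $S_{j-2} := \sum_{k\leq j-3}\phi_{k}*$.

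\textbf{Paraproducts.} The term $\phi_{j'}*T_{f}g$ only involves indices $|j-j'|\leq 4$. For fixed $x_{3}$, H\"older's inequality in $x'$ bounds it by $\|S_{j-2}f\|_{L^{\infty}}\|\phi_{j}*g\|_{L^{p}}$. Bernstein's inequality gives $\|\phi_{k}*f\|_{L^{\infty}}\leq C2^{\frac{2}{p}k}\|\phi_{k}*f\|_{L^{p}}$. We then take the $L^{r/2}_{x_{3}}$ norm, using $L^{r}\cdot L^{r}\subset L^{r/2}$ and Minkowski's inequality over $k\leq j-3$. This yields
\begin{equation*}
  \|\phi_{j}*(T_{f}g)\|_{L^{r/2}L^{p}}
  \lesssim \sum_{k\leq j}2^{\frac{2}{p}k}\|\phi_{k}*f\|_{L^{r}L^{p}}\,\|\phi_{j}*g\|_{L^{r}L^{p}}.
\end{equation*}
The low-frequency sum equals $\sum_{k\leq j}2^{(1-\frac{1}{r})k}\,2^{\sigma k}\|\phi_{k}*f\|_{L^{r}L^{p}}\lesssim 2^{(1-\frac1r)j}\|f\|_{\widetilde{L}^{r}\dot{B}^{\sigma}_{p,\infty}}$. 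This holds because $\frac{2}{p}-\sigma = 1-\frac{1}{r}>0$, which is exactly where $r>1$ is used. Multiplying by $2^{(\frac{2}{p}+\frac{2}{r}-2)j}$ and summing in $\ell^{q}$ gives the bound $\|f\|_{\widetilde{L}^{r}\dot{B}^{\sigma}_{p,q}}\|g\|_{\widetilde{L}^{r}\dot{B}^{\sigma}_{p,q}}$, since $\frac{2}{p}+\frac{2}{r}-2+1-\frac1r = \sigma$. The term $T_{g}f$ is symmetric.

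\textbf{Remainder (main obstacle).} This is where $p<2r'$ is needed. For $R(f,g)$, the block $\phi_{j'}*R$ collects all $j\geq j'-3$. I would estimate each product $(\phi_{j}*f)(\phi_{k}*g)$ in $L^{p/2}$ by H\"older's inequality, and then use Bernstein's inequality $L^{p/2}\to L^{p}$ on the output block, at a cost of $2^{\frac{2}{p}j'}$. When $p<2$, one instead works in $L^{1}$ with Bernstein $L^{1}\to L^{p}$.

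In the case $p\geq 2$, this gives
\begin{equation*}
  2^{(\frac{2}{p}+\frac{2}{r}-2)j'}\|\phi_{j'}*R\|_{L^{r/2}L^{p}}
  \lesssim \sum_{j\geq j'-3}2^{(\frac{4}{p}+\frac{2}{r}-2)(j'-j)}\,a_{j}b_{j},
\end{equation*}
where $a_{j}:=2^{\sigma j}\|\phi_{j}*f\|_{L^{r}L^{p}}$ and $b_{j}$ is defined analogously for $g$. The sum converges with $\ell^{q}$ control by Young's inequality for sequences, provided $\frac{4}{p}+\frac{2}{r}-2>0$, that is, $\frac{2}{p}>1-\frac{1}{r}=\frac{1}{r'}$, which is precisely $p<2r'$. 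The case $p<2$ is analogous, with exponent $2+\frac{2}{r}-2>0$. Bounding $\sum_j a_jb_j$ by $\ell^{q}\cdot\ell^{\infty}$ completes the remainder estimate. Summing the three contributions proves Lemma~\ref{0619-9}.
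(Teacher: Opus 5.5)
Your proposal is correct and is the standard Bony paraproduct argument: decompose in the horizontal variables for fixed $x_{3}$, then apply H\"older's inequality in $x_{3}$ block by block. The paper gives no proof of its own and simply defers to \cite{Fu_2025} and \cite{Ba_Ch_Da_2011} for this. Your exponent bookkeeping checks out: $r>1$ gives $\sigma<2/p$ for the paraproducts, and $p<2r'$ is exactly $\sigma>0$ for the remainder when $p\ge 2$. For $p<2$, the product lands in $L^{1}$ via H\"older in $L^{p}\times L^{p'}$ together with Bernstein $L^{p}\to L^{p'}$ on one factor; the resulting factor $2^{\frac{2}{r}(j'-j)}$ is where $r<\infty$ is used.
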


\section{Proof of Theorems}\label{0625-5}
\subsection{Proof of Theorem~\ref{0609-3}}
Let ${\bf a} \in \dot{B}_{p,q}^{\frac{2}{p} - 1}(\mathbb{R}^{2})$ 
and ${\bf F} \in \widetilde{L}_{x_{3}}^{r_{0}}(0,\infty; \dot{B}_{p,q}^{\frac{2}{p} + \frac{1}{r_{0}} - 2}(\mathbb{R}^{2}))$ 
satisfy the smallness condition \eqref{0609-5} for some constant $\delta_{0} > 0$ to be fixed later. 
We set 
\begin{equation}\label{0619-12}
  X := \left\{{\bf u} \in \widetilde{L}_{x_{3}}^{r}(0,\infty;\dot{B}_{p,q}^{\frac{2}{p} + \frac{1}{r} - 1}(\mathbb{R}^{2})) |\ {\|{\bf u}\|}_{\widetilde{L}_{x_{3}}^{r}(0,\infty;\dot{B}_{p,q}^{\frac{2}{p} + \frac{1}{r} - 1})} \leq \varepsilon_{0}\right\}, 
\end{equation}
where $\varepsilon_{0} > 0$ is some constant to be fixed later. 
For given ${\bf a}$, ${\bf F}$ and ${\bf u}\in X$, 
we obtain ${\bf w}^{(1)}$, ${\bf v}^{(1)}$, ${\bf w}^{(2)}$ and ${\bf v}^{(2)}$, 
which are represented as \eqref{0611-4}, \eqref{0611-5}, \eqref{0612-18} and \eqref{0612-19}, respectively. 
Thus, for any ${\bf u}\in X$ and any constant $C_{0} \geq 1$, we can define a map 
\begin{align}\label{1015-1}
  \Phi_{C_{0}}({\bf u}) 
  :=& P_{2^{j}> C_{0}|\Omega|^{\frac{1}{2}}}({\bf w}^{(1)}|_{\mathbb{R}_{+}^{3}} + {\bf v}^{(1)})\notag\\ 
  &+ \big(P_{C_{0}^{-1}|\Omega|^{\frac{1}{2}} \leq 2^{j} \leq C_{0}|\Omega|^{\frac{1}{2}}} + P_{2^{j} < C_{0}^{-1}|\Omega|^{\frac{1}{2}}}\big)({\bf w}^{(2)}|_{\mathbb{R}_{+}^{3}} + {\bf v}^{(2)}). 
\end{align}
If $\Phi_{C_{0}}$ has a fixed point in $X$, 
then it is a solution to \eqref{SNSC} satisfying \eqref{0609-1}. 

First, we show that there exists a constant $C_{0} \geq 1$ such that 
for any ${\bf u}\in X$, $\Phi_{C_{0}}({\bf u})\in X$. 
By Proposition~\ref{0611-11} for $s = 2/p -1$ and $r_{1} = r$, we have 
\begin{align*}
  &{\left\|P_{2^{j}> C_{0}|\Omega|^{\frac{1}{2}}}{\bf w}^{(1)}\right\|}_{\widetilde{L}_{x_{3}}^{r}(0,\infty;\dot{B}_{p,q}^{\frac{2}{p} + \frac{1}{r} - 1})}\\ 
  \leq& C {\left\|P_{2^{j}> C_{0}|\Omega|^{\frac{1}{2}}}{\bf F}\right\|}_{\widetilde{L}_{x_{3}}^{r_{0}}(0,\infty;\dot{B}_{p,q}^{\frac{2}{p} + \frac{1}{r_{0}} - 2})} 
  + C {\left\|P_{2^{j}> C_{0}|\Omega|^{\frac{1}{2}}}({\bf u}\otimes{\bf u})\right\|}_{\widetilde{L}_{x_{3}}^{\frac{r}{2}}(0,\infty;\dot{B}_{p,q}^{\frac{2}{p} + \frac{2}{r} - 2})}\\ 
  &+ C |\Omega|{\left\|P_{2^{j}> C_{0}|\Omega|^{\frac{1}{2}}}{\bf u}\right\|}_{\widetilde{L}_{x_{3}}^{r}(0,\infty;\dot{B}_{p,q}^{\frac{2}{p} + \frac{1}{r} - 3})}. 
\end{align*}
Note that thanks to Lemma~\ref{0619-2} \eqref{0619-1}, 
we get the uniform bounds without frequency restriction $P_{2^{j}> C_{0}|\Omega|^{\frac{1}{2}}}$ for the first and second terms. 
Moreover, using Lemma~\ref{0619-3}, we obtain 
\begin{equation*}
  |\Omega|{\left\|P_{2^{j}> C_{0}|\Omega|^{\frac{1}{2}}}{\bf u}\right\|}_{\widetilde{L}_{x_{3}}^{r}(0,\infty;\dot{B}_{p,q}^{\frac{2}{p} + \frac{1}{r} - 3})} 
  \leq \frac{C}{C_{0}^{2}}{\left\|P_{2^{j}> C_{0}|\Omega|^{\frac{1}{2}}}{\bf u}\right\|}_{\widetilde{L}_{x_{3}}^{r}(0,\infty;\dot{B}_{p,q}^{\frac{2}{p} + \frac{1}{r} - 1})}. 
\end{equation*}
Hence, we have 
\begin{align}\label{0619-5}
  &{\left\|P_{2^{j}> C_{0}|\Omega|^{\frac{1}{2}}}{\bf w}^{(1)}\right\|}_{\widetilde{L}_{x_{3}}^{r}(0,\infty;\dot{B}_{p,q}^{\frac{2}{p} + \frac{1}{r} - 1})}\notag\\ 
  \leq& C {\left\|{\bf F}\right\|}_{\widetilde{L}_{x_{3}}^{r_{0}}(0,\infty;\dot{B}_{p,q}^{\frac{2}{p} + \frac{1}{r_{0}} - 2})}
  + C {\left\|({\bf u}\otimes{\bf u})\right\|}_{\widetilde{L}_{x_{3}}^{\frac{r}{2}}(0,\infty;\dot{B}_{p,q}^{\frac{2}{p} + \frac{2}{r} - 2})} 
  + \frac{C}{C_{0}^{2}} {\left\|{\bf u}\right\|}_{\widetilde{L}_{x_{3}}^{r}(0,\infty;\dot{B}_{p,q}^{\frac{2}{p} + \frac{1}{r} - 1})}. 
\end{align}
Similarly, by Proposition~\ref{0611-13} for $s = 2/p -1$ and $r_{1} = r$, we get 
\begin{align}\label{0619-6}
  &{\left\|P_{2^{j} > C_{0}|\Omega|^{\frac{1}{2}}}{\bf v}^{(1)}\right\|}_{\widetilde{L}_{x_{3}}^{r}(0,\infty;\dot{B}_{p,q}^{\frac{2}{p} + \frac{1}{r} - 1})}\notag\\ 
  \leq& C {\left\|P_{2^{j} > C_{0}|\Omega|^{\frac{1}{2}}}{\bf a}\right\|}_{\dot{B}_{p,q}^{\frac{2}{p} - 1}} 
  + C {\left\|P_{2^{j} > C_{0}|\Omega|^{\frac{1}{2}}}{\bf F}\right\|}_{\widetilde{L}_{x_{3}}^{r_{0}}(0,\infty;\dot{B}_{p,q}^{\frac{2}{p} + \frac{1}{r_{0}} - 2})}\notag\\ 
  &+ C {\left\|P_{2^{j} > C_{0}|\Omega|^{\frac{1}{2}}}({\bf u}\otimes{\bf u})\right\|}_{\widetilde{L}_{x_{3}}^{\frac{r}{2}}(0,\infty;\dot{B}_{p,q}^{\frac{2}{p} + \frac{2}{r} - 2})} 
  + C|\Omega|{\left\|P_{2^{j} > C_{0}|\Omega|^{\frac{1}{2}}}{\bf u}\right\|}_{\widetilde{L}_{x_{3}}^{r}(0, \infty ; \dot{B}_{p,q}^{\frac{2}{p} + \frac{1}{r} - 3})}\notag\\ 
  \leq& C {\left\|{\bf a}\right\|}_{\dot{B}_{p,q}^{\frac{2}{p} - 1}} 
  + C {\left\|{\bf F}\right\|}_{\widetilde{L}_{x_{3}}^{r_{0}}(0,\infty;\dot{B}_{p,q}^{\frac{2}{p} + \frac{1}{r_{0}} - 2})}\notag\\ 
  &+ C {\left\|{\bf u}\otimes{\bf u}\right\|}_{\widetilde{L}_{x_{3}}^{\frac{r}{2}}(0,\infty;\dot{B}_{p,q}^{\frac{2}{p} + \frac{2}{r} - 2})} 
  + \frac{C}{C_{0}^{2}} {\left\|{\bf u}\right\|}_{\widetilde{L}_{x_{3}}^{r}(0,\infty;\dot{B}_{p,q}^{\frac{2}{p} + \frac{1}{r} - 1})}. 
\end{align}
Here, we note that the constant $C > 0$ in \eqref{0619-5} and \eqref{0619-6} is independent of $C_{0}$. 

We now establish estimates for ${\bf w}^{(2)}$ and ${\bf v}^{(2)}$. 
Using Proposition~\ref{0619-4} \eqref{0616-1} for $s = 2/p -1$ and $r_{1} = r$ and Lemma~\ref{0619-2} \eqref{0619-7}, we obtain 
\begin{align*}
  &{\left\|P_{C_{0}^{-1}|\Omega|^{\frac{1}{2}} \leq 2^{j} \leq C_{0}|\Omega|^{\frac{1}{2}}}{\bf w}^{(2)}\right\|}_{\widetilde{L}_{x_{3}}^{r}(0,\infty;\dot{B}_{p,q}^{\frac{2}{p} + \frac{1}{r} - 1})}\\ 
  \leq& C {\left\|P_{C_{0}^{-1}|\Omega|^{\frac{1}{2}} \leq 2^{j} \leq C_{0}|\Omega|^{\frac{1}{2}}}{\bf F}\right\|}_{\widetilde{L}_{x_{3}}^{r_{0}}(0,\infty;\dot{B}_{p,q}^{\frac{2}{p} + \frac{1}{r_{0}} - 2})}\\ 
  &+ C {\left\|P_{C_{0}^{-1}|\Omega|^{\frac{1}{2}} \leq 2^{j} \leq C_{0}|\Omega|^{\frac{1}{2}}}({\bf u}\otimes{\bf u})\right\|}_{\widetilde{L}_{x_{3}}^{\frac{r}{2}}(0,\infty;\dot{B}_{p,q}^{\frac{2}{p} + \frac{2}{r} - 2})}\\ 
  \leq& C {\left\|{\bf F}\right\|}_{\widetilde{L}_{x_{3}}^{r_{0}}(0,\infty;\dot{B}_{p,q}^{\frac{2}{p} + \frac{1}{r_{0}} - 2})} 
  + C {\left\|{\bf u}\otimes{\bf u}\right\|}_{\widetilde{L}_{x_{3}}^{\frac{r}{2}}(0,\infty;\dot{B}_{p,q}^{\frac{2}{p} + \frac{2}{r} - 2})}. 
\end{align*}
Also, by Proposition~\ref{0618-4} \eqref{0618-1} for $s = 2/p -1$ and $r_{1} = r$, we get 
\begin{align*}
  &{\left\|P_{C_{0}^{-1}|\Omega|^{\frac{1}{2}} \leq 2^{j} \leq C_{0}|\Omega|^{\frac{1}{2}}}{\bf v}^{(2)}\right\|}_{\widetilde{L}_{x_{3}}^{r}(0,\infty;\dot{B}_{p,q}^{\frac{2}{p} + \frac{1}{r} - 1})}\notag\\ 
  \leq& C {\left\|{\bf a}\right\|}_{\dot{B}_{p,q}^{\frac{2}{p} - 1}} 
  + C {\left\|{\bf F}\right\|}_{\widetilde{L}_{x_{3}}^{r_{0}}(0,\infty;\dot{B}_{p,q}^{\frac{2}{p} + \frac{1}{r_{0}} - 2})} 
  + C {\left\|{\bf u}\otimes{\bf u}\right\|}_{\widetilde{L}_{x_{3}}^{\frac{r}{2}}(0,\infty;\dot{B}_{p,q}^{\frac{2}{p} + \frac{2}{r} - 2})}. 
\end{align*}
We take $C_{0}$ sufficiently large. 
Then thanks to Proposition~\ref{0619-4} \eqref{0617-5} for $s = 2/p -1$ and $r_{1} = r$ , we have 
\begin{align}\label{0610-3}
  {\left\|P_{2^{j}< C_{0}^{-1}|\Omega|^{\frac{1}{2}}}{\bf w}^{(2)}\right\|}_{\widetilde{L}_{x_{3}}^{r}(0,\infty;\dot{B}_{p,q}^{\frac{2}{p} + \frac{1}{r} - 1})} 
  \leq&C |\Omega|^{- \frac{1}{r_{0}} + \frac{1}{r}}{\left\|P_{2^{j}< C_{0}^{-1}|\Omega|^{\frac{1}{2}}}{\bf F}\right\|}_{\widetilde{L}_{x_{3}}^{r_{0}}(0,\infty;\dot{B}_{p,q}^{\frac{2}{p} + \frac{3}{r_{0}} - \frac{2}{r} - 2})}\notag\\ 
  &+ C |\Omega|^{- \frac{1}{2} + \frac{1}{2r_{0}} - \frac{1}{2r}}{\left\|P_{2^{j}< C_{0}^{-1}|\Omega|^{\frac{1}{2}}}{\bf F}\right\|}_{\widetilde{L}_{x_{3}}^{r_{0}}(0,\infty;\dot{B}_{p,q}^{\frac{2}{p} + \frac{1}{r} - 1})}\notag\\ 
  &+ C |\Omega|^{-\frac{1}{r}}{\left\|P_{2^{j}< C_{0}^{-1}|\Omega|^{\frac{1}{2}}}({\bf u}\otimes{\bf u})\right\|}_{\widetilde{L}_{x_{3}}^{\frac{r}{2}}(0,\infty;\dot{B}_{p,q}^{\frac{2}{p} + \frac{4}{r} - 2})}\notag\\ 
  &+ C |\Omega|^{- \frac{1}{2} + \frac{1}{2r}}{\left\|P_{2^{j}< C_{0}^{-1}|\Omega|^{\frac{1}{2}}}({\bf u}\otimes{\bf u})\right\|}_{\widetilde{L}_{x_{3}}^{\frac{r}{2}}(0,\infty;\dot{B}_{p,q}^{\frac{2}{p} + \frac{1}{r} - 1})}. 
\end{align}
Since $r_{0} \leq r $, using Lemma~\ref{0617-1} \eqref{0617-3} and Lemma~\ref{0619-2} \eqref{0619-8} , we obtain 
\begin{align*}
  &{\left\|P_{2^{j}< C_{0}^{-1}|\Omega|^{\frac{1}{2}}}{\bf w}^{(2)}\right\|}_{\widetilde{L}_{x_{3}}^{r}(0,\infty;\dot{B}_{p,q}^{\frac{2}{p} + \frac{1}{r} - 1})}\\ 
  \leq& C {\left\|P_{2^{j}< C_{0}^{-1}|\Omega|^{\frac{1}{2}}}{\bf F}\right\|}_{\widetilde{L}_{x_{3}}^{r_{0}}(0,\infty;\dot{B}_{p,q}^{\frac{2}{p} + \frac{1}{r_{0}} - 2})} 
  + C {\left\|P_{2^{j}< C_{0}^{-1}|\Omega|^{\frac{1}{2}}}({\bf u}\otimes{\bf u})\right\|}_{\widetilde{L}_{x_{3}}^{\frac{r}{2}}(0,\infty;\dot{B}_{p,q}^{\frac{2}{p} + \frac{2}{r} - 2})}\\ 
  \leq& C {\left\|{\bf u}\otimes{\bf u}\right\|}_{\widetilde{L}_{x_{3}}^{\frac{r}{2}}(0,\infty;\dot{B}_{p,q}^{\frac{2}{p} + \frac{2}{r} - 2})} 
  + C {\left\|{\bf F}\right\|}_{\widetilde{L}_{x_{3}}^{r_{0}}(0,\infty;\dot{B}_{p,q}^{\frac{2}{p} + \frac{1}{r_{0}} - 2})}. 
\end{align*}
For the estimate of ${\bf v}^{(2)}$, by Proposition~\ref{0618-4} \eqref{0618-5}  and Lemma~\ref{0617-1} \eqref{0617-3}, we have 
\begin{align}\label{0610-4}
  &{\left\|P_{2^{j}< C_{0}^{-1}|\Omega|^{\frac{1}{2}}}{\bf v}^{(2)}\right\|}_{\widetilde{L}_{x_{3}}^{r}(0,\infty;\dot{B}_{p,q}^{\frac{2}{p} + \frac{1}{r} - 1})}\notag\\ 
  \leq& C |\Omega|^{-\frac{1}{2} + \frac{1}{r}}{\left\|P_{2^{j}< C_{0}^{-1}|\Omega|^{\frac{1}{2}}}{\bf a}\right\|}_{\dot{B}_{p,q}^{\frac{2}{p} - \frac{2}{r}}} 
  + C |\Omega|^{-\frac{1}{2r}}{\left\|P_{2^{j}< C_{0}^{-1}|\Omega|^{\frac{1}{2}}}{\bf a}\right\|}_{\dot{B}_{p,q}^{\frac{2}{p} + \frac{1}{r} - 1}}\notag\\ 
  &+ C |\Omega|^{-\frac{1}{2} - \frac{1}{r_{0}}+ \frac{1}{r}}{\left\|P_{2^{j}< C_{0}^{-1}|\Omega|^{\frac{1}{2}}}{\bf F}\right\|}_{\widetilde{L}_{x_{3}}^{r_{0}}(0,\infty;\dot{B}_{p,q}^{\frac{2}{p} + \frac{3}{r_{0}} - \frac{2}{r} - 1})}\notag\\ 
  & + C |\Omega|^{-1 + \frac{1}{2r_{0}} + \frac{1}{r}}{\left\|P_{2^{j}< C_{0}^{-1}|\Omega|^{\frac{1}{2}}}{\bf F}\right\|}_{\widetilde{L}_{x_{3}}^{r_{0}}(0,\infty;\dot{B}_{p,q}^{\frac{2}{p} - \frac{2}{r}})}\notag\\ 
  &+ C |\Omega|^{- \frac{1}{r_{0}} - \frac{1}{2r}}{\left\|P_{2^{j}< C_{0}^{-1}|\Omega|^{\frac{1}{2}}}{\bf F}\right\|}_{\widetilde{L}_{x_{3}}^{r_{0}}(0,\infty;\dot{B}_{p,q}^{\frac{2}{p} + \frac{3}{r_{0}} + \frac{1}{r} - 2})}\notag\\ 
  & + C |\Omega|^{-\frac{1}{2} + \frac{1}{2r_{0}} - \frac{1}{2r}}{\left\|P_{2^{j}< C_{0}^{-1}|\Omega|^{\frac{1}{2}}}{\bf F}\right\|}_{\widetilde{L}_{x_{3}}^{r_{0}}(0,\infty;\dot{B}_{p,q}^{\frac{2}{p} + \frac{1}{r} - 1})}\notag\\ 
  &+ C |\Omega|^{-\frac{1}{2} - \frac{1}{r}}{\left\|P_{2^{j}< C_{0}^{-1}|\Omega|^{\frac{1}{2}}}({\bf u}\otimes{\bf u})\right\|}_{\widetilde{L}_{x_{3}}^{\frac{r}{2}}(0,\infty;\dot{B}_{p,q}^{\frac{2}{p} + \frac{4}{r} - 1})}\notag\\ 
  &+ C |\Omega|^{- 1 + \frac{2}{r}}{\left\|P_{2^{j}< C_{0}^{-1}|\Omega|^{\frac{1}{2}}}({\bf u}\otimes{\bf u})\right\|}_{\widetilde{L}_{x_{3}}^{\frac{r}{2}}(0,\infty;\dot{B}_{p,q}^{\frac{2}{p} - \frac{2}{r}})}\notag\\ 
  &+ C |\Omega|^{- \frac{5}{2r}}{\left\|P_{2^{j}< C_{0}^{-1}|\Omega|^{\frac{1}{2}}}({\bf u}\otimes{\bf u})\right\|}_{\widetilde{L}_{x_{3}}^{\frac{r}{2}}(0,\infty;\dot{B}_{p,q}^{\frac{2}{p} + \frac{7}{r} - 2})}\notag\\ 
  &+ C |\Omega|^{- \frac{1}{2} + \frac{1}{2r}}{\left\|P_{2^{j}< C_{0}^{-1}|\Omega|^{\frac{1}{2}}}({\bf u}\otimes{\bf u})\right\|}_{\widetilde{L}_{x_{3}}^{\frac{r}{2}}(0,\infty;\dot{B}_{p,q}^{\frac{2}{p} + \frac{1}{r} - 1})}\notag\\ 
  \leq& C {\left\|{\bf a}\right\|}_{\dot{B}_{p,q}^{\frac{2}{p} - 1}} 
  + C {\left\|{\bf F}\right\|}_{\widetilde{L}_{x_{3}}^{r_{0}}(0,\infty;\dot{B}_{p,q}^{\frac{2}{p} + \frac{1}{r_{0}} - 2})} 
  + C {\left\|{\bf u}\otimes{\bf u}\right\|}_{\widetilde{L}_{x_{3}}^{\frac{r}{2}}(0,\infty;\dot{B}_{p,q}^{\frac{2}{p} + \frac{2}{r} - 2})}. 
\end{align}
Therefore, we obtain 
\begin{align*}
  {\|\Phi_{C_{0}}({\bf u})\|}_{\widetilde{L}_{x_{3}}^{r}(0,\infty;\dot{B}_{p,q}^{\frac{2}{p} + \frac{1}{r} - 1})} 
  \leq& C {\left\|{\bf a}\right\|}_{\dot{B}_{p,q}^{\frac{2}{p} - 1}} 
  + C {\left\|{\bf F}\right\|}_{\widetilde{L}_{x_{3}}^{r_{0}}(0,\infty;\dot{B}_{p,q}^{\frac{2}{p} + \frac{1}{r_{0}} - 2})}\\ 
  &+ C {\left\|{\bf u}\otimes{\bf u}\right\|}_{\widetilde{L}_{x_{3}}^{\frac{r}{2}}(0,\infty;\dot{B}_{p,q}^{\frac{2}{p} + \frac{2}{r} - 2})} 
  + \frac{\widetilde{C}}{C_{0}^{2}} {\left\|{\bf u}\right\|}_{\widetilde{L}_{x_{3}}^{r}(0,\infty;\dot{B}_{p,q}^{\frac{2}{p} + \frac{1}{r} - 1})}, 
\end{align*}
where the constant $\widetilde{C} > 0$ is independent of $C_{0}$. 
Since $p < 2r'$, using the bilinear estimate (Lemma~\ref{0619-9}), we get 
\begin{equation*}
  {\left\|{\bf u}\otimes{\bf u}\right\|}_{\widetilde{L}_{x_{3}}^{\frac{r}{2}}(0,\infty;\dot{B}_{p,q}^{\frac{2}{p} + \frac{2}{r} - 2})} 
  \leq C {\left\|{\bf u}\right\|}_{\widetilde{L}_{x_{3}}^{r}(0,\infty;\dot{B}_{p,q}^{\frac{2}{p} + \frac{1}{r} - 1})}^{2}. 
\end{equation*}
Thus, for $C_{0} \geq 1$, there exists a constant $C_{1} > 0$ such that for any ${\bf u}\in X$, we have 
\begin{align}\label{0610-1}
  {\|\Phi_{C_{0}}({\bf u})\|}_{\widetilde{L}_{x_{3}}^{r}(0,\infty;\dot{B}_{p,q}^{\frac{2}{p} + \frac{1}{r} - 1})} 
  \leq& C_{1} {\left\|{\bf a}\right\|}_{\dot{B}_{p,q}^{\frac{2}{p} - 1}} 
  + C_{1} {\left\|{\bf F}\right\|}_{\widetilde{L}_{x_{3}}^{r_{0}}(0,\infty;\dot{B}_{p,q}^{\frac{2}{p} + \frac{1}{r_{0}} - 2})}\notag\\ 
  &+ C_{1} {\left\|{\bf u}\right\|}_{\widetilde{L}_{x_{3}}^{r}(0,\infty;\dot{B}_{p,q}^{\frac{2}{p} + \frac{1}{r} - 1})}^{2} 
  + \frac{\widetilde{C}}{C_{0}^{2}} {\left\|{\bf u}\right\|}_{\widetilde{L}_{x_{3}}^{r}(0,\infty;\dot{B}_{p,q}^{\frac{2}{p} + \frac{1}{r} - 1})}. 
\end{align}
By the assumption \eqref{0609-5} on ${\bf a}$ and ${\bf F}$ and since ${\bf u}\in X$, it follows that 
\begin{align*}
  {\|\Phi_{C_{0}}({\bf u})\|}_{\widetilde{L}_{x_{3}}^{r}(0,\infty;\dot{B}_{p,q}^{\frac{2}{p} + \frac{1}{r} - 1})} 
  \leq& 3C_{1} \delta_{0} 
  + C_{1} \varepsilon_{0}^{2} 
  + \frac{\widetilde{C}}{C_{0}^{2}} \varepsilon_{0}. 
\end{align*}
Here, we can choose $C_{0}$ so that 
\begin{equation*}
  \frac{\widetilde{C}}{C_{0}^{2}} \leq \frac{1}{2}
\end{equation*}
holds. 
Also, let 
\begin{equation*}
  \varepsilon_{0} = \frac{1}{4C_{1}} 
  \quad 
  \text{and} 
  \quad 
  \delta_{0} = \frac{1}{48C_{1}^{2}}. 
\end{equation*}
Then we have 
\begin{equation*}
  {\|\Phi_{C_{0}}({\bf u})\|}_{\widetilde{L}_{x_{3}}^{r}(0,\infty;\dot{B}_{p,q}^{\frac{2}{p} + \frac{1}{r} - 1})} 
  \leq \varepsilon_{0}, 
\end{equation*}
which implies $\Phi_{C_{0}}({\bf u}) \in X$. 

Next, we verify that $\Phi_{C_{0}}$ is a contraction mapping on $X$. 
Let ${\bf u}, {\bf U} \in X$ and denote 
\begin{align*}
  \Phi_{C_{0}}({\bf U}) 
  =& P_{2^{j}> C_{0}|\Omega|^{\frac{1}{2}}}({\bf W}^{(1)}|_{\mathbb{R}_{+}^{3}} + {\bf V}^{(1)})\\ 
  &+ \big(P_{C_{0}^{-1}|\Omega|^{\frac{1}{2}} \leq 2^{j} \leq C_{0}|\Omega|^{\frac{1}{2}}} + P_{2^{j} < C_{0}^{-1}|\Omega|^{\frac{1}{2}}}\big)({\bf W}^{(2)}|_{\mathbb{R}_{+}^{3}} + {\bf V}^{(2)}). 
\end{align*}
Then we can write 
\begin{align*}
  \Phi_{C_{0}}({\bf u}) - \Phi_{C_{0}}({\bf U}) 
  =& P_{2^{j}> C_{0}|\Omega|^{\frac{1}{2}}}\left(({\bf w}^{(1)} - {\bf W}^{(1)})|_{\mathbb{R}_{+}^{3}} + ({\bf v}^{(1)} - {\bf V}^{(1)})\right)\\ 
  &+ P_{C_{0}^{-1}|\Omega|^{\frac{1}{2}} \leq 2^{j} \leq C_{0}|\Omega|^{\frac{1}{2}}}\left(({\bf w}^{(2)} - {\bf W}^{(2)})|_{\mathbb{R}_{+}^{3}} + ({\bf v}^{(2)} - {\bf V}^{(2)})\right)\\ 
  &+ P_{2^{j} < C_{0}^{-1}|\Omega|^{\frac{1}{2}}}\left(({\bf w}^{(2)} - {\bf W}^{(2)})|_{\mathbb{R}_{+}^{3}} + ({\bf v}^{(2)} - {\bf V}^{(2)})\right). 
\end{align*}
Note that ${\bf w}^{(1)} - {\bf W}^{(1)}$, ${\bf v}^{(1)} - {\bf V}^{(1)}$, ${\bf w}^{(2)} - {\bf W}^{(2)}$ and ${\bf v}^{(2)} - {\bf V}^{(2)}$ 
formally satisfy the equations below, respectively. 
\begin{equation*}
  \begin{cases}
  -\Delta({\bf w}^{(1)}-{\bf W}^{(1)})
  = -\mathbb{P}_{\mathbb{R}^{3}}
        \left(\text{div }
        (\bar{\bf u}\otimes\bar{\bf u}
        -\bar{\bf U}\otimes\bar{\bf U})\right)  
    +\Omega\partial_{3}(-\Delta)^{-1}
        \left(\nabla\times(\bar{\bf u}-\bar{\bf U})\right), \\ 
  &\hspace{-50pt} x\in\mathbb{R}^{3},\\
  \text{div }({\bf w}^{(1)}-{\bf W}^{(1)})=0,
  &\hspace{-50pt} x\in\mathbb{R}^{3}, 
  \end{cases}
\end{equation*}
\begin{equation*}
  \begin{cases}
    -\Delta ({\bf v}^{(1)} - {\bf V}^{(1)}) + \nabla (q^{(1)} - Q^{(1)}) = 0, 
          &\quad  \   x\in \mathbb{R}_{+}^{3}, \\
    \text{div } ({\bf v}^{(1)} - {\bf V}^{(1)}) = 0,
          &\quad  \   x\in \mathbb{R}_{+}^{3}, \\
    ({\bf v}^{(1)} - {\bf V}^{(1)})(x', 0) = - ({\bf w}^{(1)}-{\bf W}^{(1)})(x', 0),
          &\quad  \ x'\in{\mathbb{R}}^2, 
  \end{cases}
\end{equation*}
\begin{equation*}
  \begin{cases}
  -\Delta({\bf w}^{(2)}-{\bf W}^{(2)}) 
  - \Omega\partial_{3}(-\Delta)^{-1}
  \left(\nabla\times({\bf w}^{(2)} - {\bf W}^{(2)})\right)
  = - \mathbb{P}_{\mathbb{R}^{3}}
        \left(\text{div }
        (\bar{\bf u}\otimes\bar{\bf u}
        -\bar{\bf U}\otimes\bar{\bf U})\right), \\
  &\hspace{-50pt} x\in\mathbb{R}^{3},\\
  \text{div }({\bf w}^{(2)}-{\bf W}^{(2)})=0,
  &\hspace{-50pt} x\in\mathbb{R}^{3}, 
  \end{cases}
\end{equation*}
\begin{equation*}
  \begin{cases}
    -\Delta ({\bf v}^{(2)} - {\bf V}^{(2)}) +\Omega\left(e_{3}\times({\bf v}^{(2)} - {\bf V}^{(2)})\right) + \nabla (q^{(2)} - Q^{(2)}) = 0, 
          &\quad  \   x\in \mathbb{R}_{+}^{3}, \\
    \text{div } ({\bf v}^{(2)} - {\bf V}^{(2)}) = 0,
          &\quad  \   x\in \mathbb{R}_{+}^{3}, \\
    ({\bf v}^{(2)} - {\bf V}^{(2)})(x', 0) = - ({\bf w}^{(2)}-{\bf W}^{(2)})(x', 0),
          &\quad  \ x'\in{\mathbb{R}}^2. 
  \end{cases}
\end{equation*}
Also, we can write 
\begin{equation*}
  \bar{\bf u}\otimes\bar{\bf u} - \bar{\bf U}\otimes\bar{\bf U} 
  = \bar{\bf u}\otimes\left(\bar{\bf u} - \bar{\bf U}\right) 
  + \left(\bar{\bf u} - \bar{\bf U}\right)\otimes\bar{\bf U}. 
\end{equation*}
Therefore, by an argument similar to that for the estimate of $\Phi_{C_{0}}({\bf u})$ \eqref{0610-1}, 
we get 
\begin{align}\label{0610-2}
  &{\|\Phi_{C_{0}}({\bf u}) - \Phi_{C_{0}}({\bf U})\|}_{\widetilde{L}_{x_{3}}^{r}(0,\infty;\dot{B}_{p,q}^{\frac{2}{p} + \frac{1}{r} - 1})}\notag\\ 
  \leq& C {\left\|{\bf u}\otimes\left({\bf u} - {\bf U}\right) + \left({\bf u} - {\bf U}\right)\otimes{\bf U}\right\|}_{\widetilde{L}_{x_{3}}^{\frac{r}{2}}(0,\infty;\dot{B}_{p,q}^{\frac{2}{p} + \frac{2}{r} - 2})} 
  + \frac{\widetilde{C}}{C_{0}^{2}} {\left\|{\bf u} - {\bf U}\right\|}_{\widetilde{L}_{x_{3}}^{r}(0,\infty;\dot{B}_{p,q}^{\frac{2}{p} + \frac{1}{r} - 1})}. 
\end{align}
Here the constant $\widetilde{C} > 0$ is independent of $C_{0}$. 
Using Lemma~\ref{0619-9}, it follows that 
\begin{align*}
  &{\left\|{\bf u}\otimes\left({\bf u} - {\bf U}\right) + \left({\bf u} - {\bf U}\right)\otimes{\bf U}\right\|}_{\widetilde{L}_{x_{3}}^{\frac{r}{2}}(0,\infty;\dot{B}_{p,q}^{\frac{2}{p} + \frac{2}{r} - 2})}\\ 
  \leq& C \left({\left\|{\bf u}\right\|}_{\widetilde{L}_{x_{3}}^{r}(0,\infty;\dot{B}_{p,q}^{\frac{2}{p} + \frac{1}{r} - 1})} 
  + {\left\|{\bf U}\right\|}_{\widetilde{L}_{x_{3}}^{r}(0,\infty;\dot{B}_{p,q}^{\frac{2}{p} + \frac{1}{r} - 1})}\right) 
  {\left\|{\bf u} - {\bf U}\right\|}_{\widetilde{L}_{x_{3}}^{r}(0,\infty;\dot{B}_{p,q}^{\frac{2}{p} + \frac{1}{r} - 1})}
\end{align*}
Thus, there exists a constant $C_{2} > 0$ such that for any ${\bf u}, {\bf U}\in X$, we have 
\begin{align*}
  &{\|\Phi_{C_{0}}({\bf u})- \Phi_{C_{0}}({\bf U})\|}_{\widetilde{L}_{x_{3}}^{r}(0,\infty;\dot{B}_{p,q}^{\frac{2}{p} + \frac{1}{r} - 1})}\\ 
  \leq& C_{2}\left({\left\|{\bf u}\right\|}_{\widetilde{L}_{x_{3}}^{r}(0,\infty;\dot{B}_{p,q}^{\frac{2}{p} + \frac{1}{r} - 1})} 
  + {\left\|{\bf U}\right\|}_{\widetilde{L}_{x_{3}}^{r}(0,\infty;\dot{B}_{p,q}^{\frac{2}{p} + \frac{1}{r} - 1})}\right){\left\|{\bf u} - {\bf U}\right\|}_{\widetilde{L}_{x_{3}}^{r}(0,\infty;\dot{B}_{p,q}^{\frac{2}{p} + \frac{1}{r} - 1})}\\ 
  &+ \frac{\widetilde{C}}{C_{0}^{2}} {\left\|{\bf u} - {\bf U}\right\|}_{\widetilde{L}_{x_{3}}^{r}(0,\infty;\dot{B}_{p,q}^{\frac{2}{p} + \frac{1}{r} - 1})}. 
\end{align*}
We take $C_{0}$ sufficiently large such that 
\begin{equation*}
  \frac{\widetilde{C}}{C_{0}^{2}} 
  \leq \frac{1}{4}
\end{equation*}
holds. 
Since ${\bf u}, {\bf U} \in X$, we obtain 
\begin{equation*}
  {\|\Phi_{C_{0}}({\bf u})- \Phi_{C_{0}}({\bf U})\|}_{\widetilde{L}_{x_{3}}^{r}(0,\infty;\dot{B}_{p,q}^{\frac{2}{p} + \frac{1}{r} - 1})} 
  \leq \left(2C_{2}\varepsilon_{0} + \frac{1}{4}\right){\left\|{\bf u} - {\bf U}\right\|}_{\widetilde{L}_{x_{3}}^{r}(0,\infty;\dot{B}_{p,q}^{\frac{2}{p} + \frac{1}{r} - 1})}. 
\end{equation*}
Hence, taking 
\begin{equation*}
  \varepsilon_{0} = \frac{1}{8C_{2}}, 
\end{equation*}
we have 
\begin{equation*}
  {\|\Phi_{C_{0}}({\bf u})- \Phi_{C_{0}}({\bf U})\|}_{\widetilde{L}_{x_{3}}^{r}(0,\infty;\dot{B}_{p,q}^{\frac{2}{p} + \frac{1}{r} - 1})}\\ 
  \leq \frac{1}{2}{\left\|{\bf u} - {\bf U}\right\|}_{\widetilde{L}_{x_{3}}^{r}(0,\infty;\dot{B}_{p,q}^{\frac{2}{p} + \frac{1}{r} - 1})}. 
\end{equation*}
This shows that $\Phi_{C_{0}}$ is a contraction mapping on $X$. 

From the above, we choose $C_{0}$ sufficiently large, 
and choose $\varepsilon_{0}$, $\delta_{0}$ so that 
\begin{equation*}
  \varepsilon_{0} = \min\left\{\frac{1}{4C_{1}}, \frac{1}{8C_{2}}\right\}
  \quad \text{and}\quad 
  \delta_{0} = \frac{\varepsilon_{0}}{12C_{1}} 
\end{equation*}
is satisfied. 
Then it is possible to apply Banach's fixed point theorem, 
that is, there exists a unique fixed point ${\bf u}\in X$ of $\Phi_{C_{0}}$, 
which satisfies 
\begin{equation}\label{1007-1}
  {\bf u} 
  = P_{2^{j}> C_{0}|\Omega|^{\frac{1}{2}}}({\bf w}^{(1)}|_{\mathbb{R}_{+}^{3}} + {\bf v}^{(1)}) 
  + \Big(P_{C_{0}^{-1}|\Omega|^{\frac{1}{2}} \leq 2^{j} \leq C_{0}|\Omega|^{\frac{1}{2}}} + P_{2^{j} < C_{0}^{-1}|\Omega|^{\frac{1}{2}}}\Big)({\bf w}^{(2)}|_{\mathbb{R}_{+}^{3}} + {\bf v}^{(2)}) 
\end{equation}
in $X$. 

Finally, we prove that ${\bf u} \in C_{x_{3}}([0,\infty) ; \dot{B}_{p,q}^{\frac{2}{p} - 1}(\mathbb{R}^{2}))$. 
Using Proposition~\ref{0611-11}, Proposition~\ref{0611-13}, Proposition~\ref{0619-4} and Proposition~\ref{0618-4} with $r = \infty$ and Lemma~\ref{0619-9} , we obtain 
\begin{align*}
  {\|{\bf u}\|}_{\widetilde{L}_{x_{3}}^{\infty}(0,\infty;\dot{B}_{p,q}^{\frac{2}{p} - 1})} 
  \leq& {\left\|P_{2^{j}> C_{0}|\Omega|^{\frac{1}{2}}}({\bf w}^{(1)} + {\bf v}^{(1)})\right\|}_{\widetilde{L}_{x_{3}}^{\infty}(0,\infty;\dot{B}_{p,q}^{\frac{2}{p} - 1})}\\ 
  &+ {\left\|P_{C_{0}^{-1}|\Omega|^{\frac{1}{2}} \leq 2^{j} \leq C_{0}|\Omega|^{\frac{1}{2}}}({\bf w}^{(2)} + {\bf v}^{(2)})\right\|}_{\widetilde{L}_{x_{3}}^{\infty}(0,\infty;\dot{B}_{p,q}^{\frac{2}{p} - 1})}\\ 
  &+ {\left\|P_{2^{j} < C_{0}^{-1}|\Omega|^{\frac{1}{2}}}({\bf w}^{(2)} + {\bf v}^{(2)})\right\|}_{\widetilde{L}_{x_{3}}^{\infty}(0,\infty;\dot{B}_{p,q}^{\frac{2}{p} - 1})}\\ 
  \leq& C {\left\|{\bf a}\right\|}_{\dot{B}_{p,q}^{\frac{2}{p} - 1}} 
  + C {\left\|{\bf F}\right\|}_{\widetilde{L}_{x_{3}}^{r_{0}}(0,\infty;\dot{B}_{p,q}^{\frac{2}{p} + \frac{1}{r_{0}} - 2})}\\ 
  &+ C {\left\|{\bf u}\otimes{\bf u}\right\|}_{\widetilde{L}_{x_{3}}^{\frac{r}{2}}(0,\infty;\dot{B}_{p,q}^{\frac{2}{p} + \frac{2}{r} - 2})} 
  + C{\left\|{\bf u}\right\|}_{\widetilde{L}_{x_{3}}^{r}(0,\infty;\dot{B}_{p,q}^{\frac{2}{p} + \frac{1}{r} - 1})}\\ 
  \leq& C {\left\|{\bf a}\right\|}_{\dot{B}_{p,q}^{\frac{2}{p} - 1}} 
  + C {\left\|{\bf F}\right\|}_{\widetilde{L}_{x_{3}}^{r_{0}}(0,\infty;\dot{B}_{p,q}^{\frac{2}{p} + \frac{1}{r_{0}} - 2})}\\ 
  &+ C {\left\|{\bf u}\right\|}_{\widetilde{L}_{x_{3}}^{r}(0,\infty;\dot{B}_{p,q}^{\frac{2}{p} + \frac{1}{r} - 1})}^{2} 
  + C{\left\|{\bf u}\right\|}_{\widetilde{L}_{x_{3}}^{r}(0,\infty;\dot{B}_{p,q}^{\frac{2}{p} + \frac{1}{r} - 1})}\\ 
  <& \infty. 
\end{align*}
It follows that ${\bf u} \in \widetilde{L}_{x_{3}}^{\infty}(0,\infty;\dot{B}_{p,q}^{\frac{2}{p} - 1})$. 
By Minkowski's inequality, we get 
\begin{equation*}
  {\bf u}\in 
  \widetilde{L}_{x_{3}}^{\infty}(0,\infty;\dot{B}_{p,q}^{\frac{2}{p} - 1}(\mathbb{R}^{2})) 
  \subset L_{x_{3}}^{\infty}(0,\infty;\dot{B}_{p,q}^{\frac{2}{p} - 1}(\mathbb{R}^{2})). 
\end{equation*}
Thus, for almost everywhere $x_{3} > 0$, ${\bf u}(\cdot, x_{3})\in \dot{B}_{p,q}^{\frac{2}{p} - 1}(\mathbb{R}^{2})$. 
We show that 
\begin{equation*}
  \lim_{x_{3}\to 0}{\|{\bf u}(\cdot, x_{3}) - {\bf a}\|}_{\dot{B}_{p,q}^{\frac{2}{p} - 1}} = 0, 
\end{equation*}
which implies that ${\bf u}$ is continuous at $x_{3} = 0$. 
For any $M > 0$, we can write 
\begin{equation*}
  {\bf u} - {\bf a} 
  = \sum_{|j|\leq M}\phi_{j}*({\bf u} - {\bf a}) 
  + \sum_{|j|> M}\phi_{j}*({\bf u} - {\bf a}). 
\end{equation*}
Hence we have 
\begin{equation*}
  {\|{\bf u}(\cdot, x_{3}) - {\bf a}\|}_{\dot{B}_{p,q}^{\frac{2}{p} - 1}} 
  \leq {\Big\|\sum_{|j|\leq M}\phi_{j}*({\bf u}(\cdot, x_{3}) - {\bf a})\Big\|}_{\dot{B}_{p,q}^{\frac{2}{p} - 1}} 
  + {\Big\|\sum_{|j|> M}\phi_{j}*({\bf u}(\cdot, x_{3}) - {\bf a})\Big\|}_{\dot{B}_{p,q}^{\frac{2}{p} - 1}}. 
\end{equation*}
Since $q < \infty$, for any $\varepsilon > 0$, there exists $M > 0$ such that 
\begin{equation*}
  {\Big\|\sum_{|j|> M}\phi_{j}*({\bf u}(\cdot, x_{3}) - {\bf a})\Big\|}_{\dot{B}_{p,q}^{\frac{2}{p} - 1}} 
  < \frac{\varepsilon}{2}. 
\end{equation*}
From this, it suffices to prove that for any fixed $|j| \leq M$, 
\begin{equation}\label{1010-5}
  \lim_{x_{3}\to 0}{\|\phi_{j}*({\bf u}(\cdot, x_{3}) - {\bf a})\|}_{L^{p}} = 0. 
\end{equation}
Furthermore, by the representation of ${\bf u}$ \eqref{1007-1}, 
it is enough to show that for any fixed $j$ with $2^{j} > C_{0}|\Omega|^{\frac{1}{2}}$, 
\begin{equation}\label{1010-3}
  \lim_{x_{3}\to 0}{\|\phi_{j}*({\bf w}^{(1)}|_{\mathbb{R}_{+}^{3}}(\cdot, x_{3}) + {\bf v}^{(1)}(\cdot, x_{3}) - {\bf a})\|}_{L^{p}} = 0, 
\end{equation}
and for any fixed $j$ with $2^{j} \leq  C_{0}|\Omega|^{\frac{1}{2}}$
\begin{equation}\label{1010-4}
  \lim_{x_{3}\to 0}{\|\phi_{j}*({\bf w}^{(2)}|_{\mathbb{R}_{+}^{3}}(\cdot, x_{3}) + {\bf v}^{(2)}(\cdot, x_{3}) - {\bf a})\|}_{L^{p}} = 0. 
\end{equation}

First, we consider the case $2^{j} > C_{0}|\Omega|^{\frac{1}{2}}$. 
We show that 
\begin{equation}\label{1010-1}
  \lim_{x_{3}\to 0}{\|\phi_{j}*({\bf w}^{(1)}|_{\mathbb{R}_{+}^{3}}(\cdot, x_{3}) - {\bf w}^{(1)}(\cdot, 0))\|}_{L^{p}} = 0. 
\end{equation}
In view of the explicit formulas for ${\bf w}^{(1)}$ in \eqref{0611-4}, 
it suffices to show that $T_{K_{l}^{(1)}}^{(\pm)}[{\bf F} - {\bf u}\otimes{\bf u}]$ ($l = 1,2,\cdots, 5$) defined by \eqref{0611-2} is continuous with respect to $x_{3}$. 
We only check the case $T_{K_{1}^{(1)}}^{(+)}$, since the other cases can be treated similarly. 
We recall that 
\begin{equation*}
  T_{K_{1}^{(1)}}^{(+)}[{\bf F}](x', x_{3}) 
  = \frac{1}{2}\int_{0}^{+\infty}\mathcal{F}_{\mathbb{R}^{2}}^{-1}\left[\frac{1}{|\xi'|}(e^{-|\xi'||x_{3} - y_{3}|} + e^{-|\xi'|(x_{3} + y_{3})})(\widehat{\bf F})(\xi', y_{3})\right](x')~{\rm d}y_{3}. 
\end{equation*}
Hence we have 
\begin{align*}
  &T_{K_{1}^{(1)}}^{(+)}[{\bf F}](x', x_{3}) - T_{K_{1}^{(1)}}^{(+)}[{\bf F}](x', 0)\\ 
  =& \frac{1}{2}\int_{0}^{+\infty}\mathcal{F}_{\mathbb{R}^{2}}^{-1}\left[\frac{1}{|\xi'|}(e^{-|\xi'||x_{3} - y_{3}|} + e^{-|\xi'|(x_{3} + y_{3})} - 2e^{-|\xi'|y_{3}})\widehat{\bf F}(\xi', y_{3})\right](x')~{\rm d}y_{3}\\ 
  =& \frac{1}{2}\int_{0}^{x_{3}}\mathcal{F}_{\mathbb{R}^{2}}^{-1}\left[\frac{1}{|\xi'|}(e^{-|\xi'|(x_{3} - y_{3})} + e^{-|\xi'|(x_{3} + y_{3})} - 2e^{-|\xi'|y_{3}})\widehat{\bf F}(\xi', y_{3})\right](x')~{\rm d}y_{3}\\ 
  &+ \frac{1}{2}\int_{x_{3}}^{+\infty}\mathcal{F}_{\mathbb{R}^{2}}^{-1}\left[\frac{1}{|\xi'|}(e^{-|\xi'|(y_{3} - x_{3})} + e^{-|\xi'|(x_{3} + y_{3})} - 2e^{-|\xi'|y_{3}})\widehat{\bf F}(\xi', y_{3})\right](x')~{\rm d}y_{3}. 
\end{align*}
For the first term on the right-hand side, the boundedness of the operator $\phi_{j}*$ and the Poisson semigroup yields
\begin{align*}
  &{\left\|\phi_{j}*\left(\int_{0}^{x_{3}}\mathcal{F}_{\mathbb{R}^{2}}^{-1}\left[\frac{1}{|\xi'|}(e^{-|\xi'|(x_{3} - y_{3})} + e^{-|\xi'|(x_{3} + y_{3})} - 2e^{-|\xi'|y_{3}})\widehat{\bf F}(\xi', y_{3})\right](x')~{\rm d}y_{3}\right)\right\|}_{L^{p}}\\ 
  \leq& C2^{-j}\int_{0}^{x_{3}}{\|\phi_{j}*{\bf F}(\cdot, y_{3})\|}_{L^{p}}~{\rm d}y_{3}. 
\end{align*}
When $r_{0} = 1$, by the dominated convenience theorem, it follows that 
\begin{equation*}
  \lim_{x_{3} \to 0}\int_{0}^{x_{3}}{\|\phi_{j}*{\bf F}(\cdot, y_{3})\|}_{L^{p}}~{\rm d}y_{3} = 0. 
\end{equation*}
When $r_{0} > 1$, using H\"{o}lder's inequality, we get 
\begin{equation*}
  \int_{0}^{x_{3}}{\|\phi_{j}*{\bf F}(\cdot, y_{3})\|}_{L^{p}}~{\rm d}y_{3} 
  \leq x_{3}^{\frac{1}{r_{0}'}}{\|\phi_{j}*{\bf F}\|}_{L^{r_{0}}(0, \infty; L^{p})} 
  \to 0,\ \text{as}\ x_{3}\to 0. 
\end{equation*}
For the second term, it follows that 
\begin{align*}
  &{\left\|\phi_{j}*\left(\int_{x_{3}}^{+\infty}\mathcal{F}_{\mathbb{R}^{2}}^{-1}\left[\frac{1}{|\xi'|}(e^{-|\xi'|(y_{3} - x_{3})} + e^{-|\xi'|(x_{3} + y_{3})} - 2e^{-|\xi'|y_{3}})\widehat{\bf F}(\xi', y_{3})\right](x')~{\rm d}y_{3}\right)\right\|}_{L^{p}}\\ 
  \leq& C2^{-j}\int_{x_{3}}^{+\infty}{\|\phi_{j}*e^{-|\xi'|y_{3}}(e^{|\xi'|x_{3}} + e^{-|\xi'|x_{3}} - 2Id){\bf F}(\cdot, y_{3})\|}_{L^{p}}~{\rm d}y_{3}. 
\end{align*}
By the continuity of the Poisson semigroup, for almost every $y_{3}$, we have 
\begin{equation*}
  \lim_{x_{3} \to 0}{\|\phi_{j}*e^{-|\xi'|y_{3}}(e^{|\xi'|x_{3}} + e^{-|\xi'|x_{3}} - 2Id){\bf F}(\cdot, y_{3})\|}_{L^{p}}
  =0. 
\end{equation*}
Also, using Lemma~\ref{0611-10} and H\"{o}lder's inequality, we obtain 
\begin{align*}
  &\int_{x_{3}}^{+\infty}{\|\phi_{j}*e^{-|\xi'|y_{3}}(e^{|\xi'|x_{3}} + e^{-|\xi'|x_{3}} - 2Id){\bf F}(\cdot, y_{3})\|}_{L^{p}}~{\rm d}y_{3}\\ 
  \leq& C\int_{0}^{+\infty}e^{-c2^{j}(y_{3} - x_{3})}{\|\phi_{j}*{\bf F}(\cdot, y_{3})\|}_{L^{p}}~{\rm d}y_{3}. 
\end{align*}
We can assume that $x_{3}\leq 1$. Thus we have 
\begin{equation*}
  e^{-c2^{j}(y_{3} - x_{3})} 
  \leq e^{c2^{j}}e^{-c2^{j}y_{3}}. 
\end{equation*}
Using H\"{o}lder's inequality, it follows that 
\begin{equation*}
  \int_{0}^{+\infty}e^{-c2^{j}y_{3}}{\|\phi_{j}*{\bf F}(\cdot, y_{3})\|}_{L^{p}}~{\rm d}y_{3}\\ 
  \leq \left(\int_{0}^{+\infty}e^{-cr_{0}'2^{j}y_{3}}~{\rm d}y_{3}\right)^{\frac{1}{r_{0}'}}{\|\phi_{j}*{\bf F}\|}_{L^{r_{0}}(0, \infty;L^{p})} 
  <\infty. 
\end{equation*}
Therefore, by the dominated convergence theorem, we obtain 
\begin{equation*}
  \lim_{x_{3}\to 0}\int_{x_{3}}^{+\infty}{\|\phi_{j}*e^{-|\xi'|y_{3}}(e^{|\xi'|x_{3}} + e^{-|\xi'|x_{3}} - 2Id){\bf F}(\cdot, y_{3})\|}_{L^{p}}~{\rm d}y_{3} 
  = 0. 
\end{equation*}
Thus, we have 
\begin{equation*}
  \lim_{x_{3}\to 0}{\big\|\phi_{j}*(T_{K_{1}^{(1)}}^{(+)}[{\bf F}](\cdot, x_{3}) - T_{K_{1}^{(1)}}^{(+)}[{\bf F}](\cdot, 0))\big\|}_{L^{p}} = 0. 
\end{equation*}
Similarly, it follows that 
\begin{equation*}
  \lim_{x_{3}\to 0}{\big\|\phi_{j}*(T_{K_{1}^{(1)}}^{(+)}[{\bf u}\otimes{\bf u}](\cdot, x_{3}) - T_{K_{1}^{(1)}}^{(+)}[{\bf u}\otimes{\bf u}](\cdot, 0))\big\|}_{L^{p}} = 0. 
\end{equation*}
By the same argument, for $l = 1, 2, \cdots , 5$, we obtain 
\begin{equation*}
  \lim_{x_{3}\to 0}{\big\|\phi_{j}*(T_{K_{l}^{(1)}}^{(\pm)}[{\bf F} - {\bf u}\otimes{\bf u}](\cdot, x_{3}) - T_{K_{l}^{(1)}}^{(\pm)}[{\bf F} - {\bf u}\otimes{\bf u}](\cdot, 0))\big\|}_{L^{p}} = 0. 
\end{equation*}
From the above, we have \eqref{1010-1}. 
For ${\bf v}^{(1)}$, it follows from the explicit formulas \eqref{0611-5} and the continuity of the Poisson semigroup that 
\begin{equation}\label{1010-2}
  \lim_{x_{3}\to 0}{\|\phi_{j}*({\bf v}^{(1)}(\cdot, x_{3}) - {\bf a} + {\bf w}^{(1)}(\cdot, 0))\|}_{L^{p}} 
  = 0. 
\end{equation}
By \eqref{1010-1} and \eqref{1010-2}, we obtain \eqref{1010-3}. 

Next, we consider the case $2^{j}\leq C_{0}|\Omega|^{\frac{1}{2}}$. 
For the eigenvalues $\lambda_{|\Omega|,k}$ ($k = 1, 2, 3$) given by \eqref{0625-9}, 
which appear in the explicit formulas \eqref{0612-18} and \eqref{0612-19} for ${\bf w}^{(2)}$ and ${\bf v}^{(2)}$, respectively, 
the Fourier multiplier theorem yields 
\begin{equation*}
  \lim_{x_{3}\to 0}{\left\|\mathcal{F}_{\mathbb{R}^{2}}^{-1}\left[\widehat{\phi}_{j}(Id - e^{-x_{3}\lambda_{|\Omega|, k}})\widehat{f}\right]\right\|}_{L^{p}}=0 
\end{equation*}
for any $f$ with $\phi_{j}*f \in L^{p}(\mathbb{R}^{2})$. 
Using this fact, we can prove \eqref{1010-4} by an argument similar to that for the case $2^{j} > C_{0}|\Omega|^{\frac{1}{2}}$. 
Therefore, we obtain \eqref{1010-5} and 
\begin{equation*}
  \lim_{x_{3}\to 0}{\|{\bf u}(\cdot, x_{3}) - {\bf a}\|}_{\dot{B}_{p,q}^{\frac{2}{p} - 1}} = 0. 
\end{equation*}
Similarly, for any $\tilde{x}_{3} > 0$, we have 
\begin{equation*}
  \lim_{x_{3}\to \tilde{x}_{3}}{\|{\bf u}(\cdot, x_{3}) - {\bf u}(\cdot, \tilde{x}_{3})\|}_{\dot{B}_{p,q}^{\frac{2}{p} - 1}} = 0, 
\end{equation*}
which implies ${\bf u} \in C_{x_{3}}([0,\infty) ; \dot{B}_{p,q}^{\frac{2}{p} - 1}(\mathbb{R}^{2}))$. 
This completes the proof of Theorem~\ref{0609-3}. 

\subsection{Proof of Theorem~\ref{0609-4}}\label{1010-6}
The estimate used in the proof of Theorem~\ref{0609-4} are similar to those used in the proof of Theorem~\ref{0609-3}. 
We show that there exists a sufficiently large constant $C_{0} \geq 1$ such that 
there exists a constant $\varepsilon_{0} > 0$ such that 
for any boundary data ${\bf a}^{(1)} \in \dot{B}_{p,q}^{\frac{2}{p} - 1}(\mathbb{R}^{2})$ with $a_{3} = 0$ and any ${\bf F} \in \widetilde{L}_{x_{3}}^{r_{0}}(0,\infty; \dot{B}_{p,q}^{\frac{2}{p} + \frac{1}{r_{0}} - 2}(\mathbb{R}^{2}))$, 
there exists a sufficiently large constant $\Omega_{0} > 0$ such that 
for any $\Omega\in \mathbb{R}\setminus\{0\}$ with $|\Omega| \geq \Omega_{0}$, 
the mapping $\Phi_{C_{0}}$ defined in \eqref{1015-1} is a contraction mapping on $X$. 
Here $X$ is defined by \eqref{0619-12}. 
Note that estimate~\eqref{0610-2} holds independently of ${\bf a}$ and ${\bf F}$. 
Hence it suffices to prove that for any ${\bf u}\in X$, $\Phi_{C_{0}}({\bf u}) \in X$. 

By Proposition~\ref{0611-11}, Proposition~\ref{0611-13}, Proposition~\ref{0619-4} \eqref{0616-1}, Proposition~\ref{0618-4} \eqref{0618-1}, Lemma~\ref{0619-2} \eqref{0619-1} \eqref{0619-7} and Lemma~\ref{0619-7}, we have 
\begin{align*}
  &{\left\|P_{2^{j}> C_{0}|\Omega|^{\frac{1}{2}}}({\bf w}^{(1)} + {\bf v}^{(1)}) + P_{C_{0}^{-1}|\Omega|^{\frac{1}{2}} \leq 2^{j} \leq C_{0}|\Omega|^{\frac{1}{2}}}({\bf w}^{(2)} + {\bf v}^{(2)})\right\|}_{\widetilde{L}_{x_{3}}^{r}(0,\infty;\dot{B}_{p,q}^{\frac{2}{p} + \frac{1}{r} - 1})}\\ 
  \leq & C {\left\|P_{2^{j}> C_{0}|\Omega|^{\frac{1}{2}}}{\bf a}\right\|}_{\dot{B}_{p,q}^{\frac{2}{p} - 1}} 
  + C {\left\|P_{C_{0}^{-1}|\Omega|^{\frac{1}{2}} \leq 2^{j}\leq  C_{0}|\Omega|^{\frac{1}{2}}}{\bf a}\right\|}_{\dot{B}_{p,q}^{\frac{2}{p} - 1}}\\ 
  &+ C {\left\|P_{2^{j}> C_{0}|\Omega|^{\frac{1}{2}}}{\bf F}\right\|}_{\widetilde{L}_{x_{3}}^{r_{0}}(0,\infty;\dot{B}_{p,q}^{\frac{2}{p} + \frac{1}{r_{0}} - 2})} 
  + C {\left\|P_{C_{0}^{-1}|\Omega|^{\frac{1}{2}} \leq 2^{j}\leq  C_{0}|\Omega|^{\frac{1}{2}}}{\bf F}\right\|}_{\widetilde{L}_{x_{3}}^{r_{0}}(0,\infty;\dot{B}_{p,q}^{\frac{2}{p} + \frac{1}{r_{0}} - 2})}\\ 
  &+ C {\left\|{\bf u}\otimes{\bf u}\right\|}_{\widetilde{L}_{x_{3}}^{\frac{r}{2}}(0,\infty;\dot{B}_{p,q}^{\frac{2}{p} + \frac{2}{r} - 2})} 
  + \frac{\widetilde{C}}{C_{0}^{2}} {\left\|{\bf u}\right\|}_{\widetilde{L}_{x_{3}}^{r}(0,\infty;\dot{B}_{p,q}^{\frac{2}{p} + \frac{1}{r} - 1})}, 
\end{align*}
where the constant $\widetilde{C} > 0$ is independent of $C_{0}$. 
Thanks to $q < \infty$, by Lemma~\ref{0619-15}, for any fixed $C_{0}$ and any $\varepsilon_{1} > 0$, there exists a sufficiently large constant $\Omega_{1}>0$ such that 
\begin{equation*}
  {\left\|P_{2^{j}> C_{0}\Omega_{1}^{\frac{1}{2}}}{\bf a}\right\|}_{\dot{B}_{p,q}^{\frac{2}{p} - 1}},\ 
  {\left\|P_{2^{j}> C_{0}\Omega_{1}^{\frac{1}{2}}}{\bf F}\right\|}_{\widetilde{L}_{x_{3}}^{r_{0}}(0,\infty;\dot{B}_{p,q}^{\frac{2}{p} + \frac{1}{r_{0}} - 2})} 
  \leq \varepsilon_{1}, 
\end{equation*}
and 
\begin{equation*}
  {\left\|P_{C_{0}^{-1}\Omega_{1}^{\frac{1}{2}} < 2^{j}\leq  C_{0}\Omega_{1}^{\frac{1}{2}}}{\bf a}\right\|}_{\dot{B}_{p,q}^{\frac{2}{p} - 1}},\ 
  {\left\|P_{C_{0}^{-1}\Omega_{1}^{\frac{1}{2}} \leq 2^{j}\leq  C_{0}\Omega_{1}^{\frac{1}{2}}}{\bf F}\right\|}_{\widetilde{L}_{x_{3}}^{r_{0}}(0,\infty;\dot{B}_{p,q}^{\frac{2}{p} + \frac{1}{r_{0}} - 2})} 
  \leq \varepsilon_{1}. 
\end{equation*}
Thus, for any $\Omega\in \mathbb{R}\setminus\{0\}$ with $|\Omega|\geq \Omega_{1}$, we obtain 
\begin{align*}
  &{\left\|P_{2^{j}> C_{0}|\Omega|^{\frac{1}{2}}}({\bf w}^{(1)} + {\bf v}^{(1)}) + P_{C_{0}^{-1}|\Omega|^{\frac{1}{2}} \leq 2^{j} \leq C_{0}|\Omega|^{\frac{1}{2}}}({\bf w}^{(2)} + {\bf v}^{(2)})\right\|}_{\widetilde{L}_{x_{3}}^{r}(0,\infty;\dot{B}_{p,q}^{\frac{2}{p} + \frac{1}{r} - 1})}\\ 
  \leq & C \varepsilon_{1}
  + C {\left\|{\bf u}\otimes{\bf u}\right\|}_{\widetilde{L}_{x_{3}}^{\frac{r}{2}}(0,\infty;\dot{B}_{p,q}^{\frac{2}{p} + \frac{2}{r} - 2})} 
  + \frac{\widetilde{C}}{C_{0}^{2}} {\left\|{\bf u}\right\|}_{\widetilde{L}_{x_{3}}^{r}(0,\infty;\dot{B}_{p,q}^{\frac{2}{p} + \frac{1}{r} - 1})}. 
\end{align*}
For the estimate of ${\bf w}^{(2)}$, we recall \eqref{0610-3}: 
\begin{align*}
  {\left\|P_{2^{j}< C_{0}^{-1}|\Omega|^{\frac{1}{2}}}{\bf w}^{(2)}\right\|}_{\widetilde{L}_{x_{3}}^{r}(0,\infty;\dot{B}_{p,q}^{\frac{2}{p} + \frac{1}{r} - 1})} 
  \leq&C |\Omega|^{- \frac{1}{r_{0}} + \frac{1}{r}}{\left\|P_{2^{j}< C_{0}^{-1}|\Omega|^{\frac{1}{2}}}{\bf F}\right\|}_{\widetilde{L}_{x_{3}}^{r_{0}}(0,\infty;\dot{B}_{p,q}^{\frac{2}{p} + \frac{3}{r_{0}} - \frac{2}{r} - 2})}\\ 
  &+ C |\Omega|^{- \frac{1}{2} + \frac{1}{2r_{0}} - \frac{1}{2r}}{\left\|P_{2^{j}< C_{0}^{-1}|\Omega|^{\frac{1}{2}}}{\bf F}\right\|}_{\widetilde{L}_{x_{3}}^{r_{0}}(0,\infty;\dot{B}_{p,q}^{\frac{2}{p} + \frac{1}{r} - 1})}\\ 
  &+ C |\Omega|^{-\frac{1}{r}}{\left\|P_{2^{j}< C_{0}^{-1}|\Omega|^{\frac{1}{2}}}({\bf u}\otimes{\bf u})\right\|}_{\widetilde{L}_{x_{3}}^{\frac{r}{2}}(0,\infty;\dot{B}_{p,q}^{\frac{2}{p} + \frac{4}{r} - 2})}\\ 
  &+ C |\Omega|^{- \frac{1}{2} + \frac{1}{2r}}{\left\|P_{2^{j}< C_{0}^{-1}|\Omega|^{\frac{1}{2}}}({\bf u}\otimes{\bf u})\right\|}_{\widetilde{L}_{x_{3}}^{\frac{r}{2}}(0,\infty;\dot{B}_{p,q}^{\frac{2}{p} + \frac{1}{r} - 1})}. 
\end{align*}
Since $r_{0} < r$, we have 
\begin{equation*}
  -\frac{1}{r_{0}} + \frac{1}{r} < 0. 
\end{equation*}
At this point, we can apply Lemma~\ref{0619-15}, that is, 
for fixed $C_{0}$ and any $\varepsilon_{2} > 0$, there exists a constant $\Omega_{2} > 0$ such that for any $\Omega\in \mathbb{R}\setminus\{0\}$ with $|\Omega|\geq \Omega_{2}$, 
it follows that 
\begin{equation*}
  |\Omega|^{- \frac{1}{r_{0}} + \frac{1}{r}}{\left\|P_{2^{j}< C_{0}^{-1}|\Omega|^{\frac{1}{2}}}{\bf F}\right\|}_{\widetilde{L}_{x_{3}}^{r_{0}}(0,\infty;\dot{B}_{p,q}^{\frac{2}{p} + \frac{3}{r_{0}} - \frac{2}{r} - 2})} 
  \leq \varepsilon_{2}
\end{equation*}
and 
\begin{equation*}
  |\Omega|^{- \frac{1}{2} + \frac{1}{2r_{0}} - \frac{1}{2r}}{\left\|P_{2^{j}< C_{0}^{-1}|\Omega|^{\frac{1}{2}}}{\bf F}\right\|}_{\widetilde{L}_{x_{3}}^{r_{0}}(0,\infty;\dot{B}_{p,q}^{\frac{2}{p} + \frac{1}{r} - 1})} 
  \leq \varepsilon_{2}. 
\end{equation*}
Hence we obtain 
\begin{align*}
  {\left\|P_{2^{j}< C_{0}^{-1}|\Omega|^{\frac{1}{2}}}{\bf w}^{(2)}\right\|}_{\widetilde{L}_{x_{3}}^{r}(0,\infty;\dot{B}_{p,q}^{\frac{2}{p} + \frac{1}{r} - 1})} 
  \leq&C \varepsilon_{2} 
  + C {\left\|{\bf u}\otimes{\bf u}\right\|}_{\widetilde{L}_{x_{3}}^{\frac{r}{2}}(0,\infty;\dot{B}_{p,q}^{\frac{2}{p} + \frac{2}{r} - 2})}. 
\end{align*}
Similarly, thanks to $2 < r$, applying Lemma~\ref{0619-15} to estimate \eqref{0610-4}, we get 
\begin{equation*}
  {\left\|P_{2^{j}< C_{0}^{-1}|\Omega|^{\frac{1}{2}}}{\bf v}^{(2)}\right\|}_{\widetilde{L}_{x_{3}}^{r}(0,\infty;\dot{B}_{p,q}^{\frac{2}{p} + \frac{1}{r} - 1})} 
  \leq C \varepsilon_{2} 
  + C {\left\|{\bf u}\otimes{\bf u}\right\|}_{\widetilde{L}_{x_{3}}^{\frac{r}{2}}(0,\infty;\dot{B}_{p,q}^{\frac{2}{p} + \frac{2}{r} - 2})}. 
\end{equation*}
Let 
\begin{equation*}
  \Omega_{0} = \max\{\Omega_{1}, \Omega_{2}\}. 
\end{equation*}
Then for any $\Omega\in \mathbb{R}\setminus\{0\}$ with $|\Omega|\geq \Omega_{0}$, we obtain 
\begin{align*}
  {\|\Phi_{C_{0}}({\bf u})\|}_{\widetilde{L}_{x_{3}}^{r}(0,\infty;\dot{B}_{p,q}^{\frac{2}{p} + \frac{1}{r} - 1})} 
  \leq& C \varepsilon_{1}
  + C \varepsilon_{2} 
  + C \varepsilon_{3}
  + C {\left\|{\bf u}\otimes{\bf u}\right\|}_{\widetilde{L}_{x_{3}}^{\frac{r}{2}}(0,\infty;\dot{B}_{p,q}^{\frac{2}{p} + \frac{2}{r} - 2})}\\ 
  &+ \frac{\widetilde{C}}{C_{0}^{2}} {\left\|{\bf u}\right\|}_{\widetilde{L}_{x_{3}}^{r}(0,\infty;\dot{B}_{p,q}^{\frac{2}{p} + \frac{1}{r} - 1})}. 
\end{align*}
By Lemma~\ref{0619-9} and ${\bf u}\in X$, there exists a constants $C_{3}>0$ such that we have 
\begin{equation*}
  {\|\Phi_{C_{0}}({\bf u})\|}_{\widetilde{L}_{x_{3}}^{r}(0,\infty;\dot{B}_{p,q}^{\frac{2}{p} + \frac{1}{r} - 1})}\\ 
  \leq C_{3} \varepsilon_{1}
  + C_{3} \varepsilon_{2} 
  + C_{3} \varepsilon_{3} 
  + C_{3} \varepsilon_{0}^{2} 
  + \frac{\widetilde{C}}{C_{0}^{2}}\varepsilon_{0}. 
\end{equation*}
We take $C_{0}$ sufficiently large with 
\begin{equation*}
  \frac{\widetilde{C}}{C_{0}^{2}} \leq \frac{1}{2}. 
\end{equation*}
Also, let 
\begin{equation*}
  \varepsilon_{0} 
  = \frac{1}{4C_{3}}. 
\end{equation*}
Since $\varepsilon_{1}$, $\varepsilon_{2}$ and $\varepsilon_{3}$ are arbitrary, we choose 
\begin{equation*}
  \varepsilon_{1} 
  = \varepsilon_{2} 
  = \varepsilon_{3} 
  = \frac{1}{48C_{3}^{2}}. 
\end{equation*}
Then it follows that 
\begin{equation*}
  {\|\Phi_{C_{0}}({\bf u})\|}_{\widetilde{L}_{x_{3}}^{r}(0,\infty;\dot{B}_{p,q}^{\frac{2}{p} + \frac{1}{r} - 1})}\\ 
  \leq \varepsilon_{0} 
\end{equation*}
for any ${\bf u} \in X$, 
which implies $\Phi_{C_{0}}({\bf u}) \in X$. 
Therefore, the remainder of the proof follows from the same argument as in the proof of Theorem~\ref{0609-3}. 
Consequently, we can construct a unique solution ${\bf u}\in X$ to \eqref{SNSC} 
satisfying ${\bf u} \in C_{x_{3}}(0,\infty ; \dot{B}_{p,q}^{\frac{2}{p} - 1}(\mathbb{R}^{2}))$ and \eqref{0609-1}. 

\vskip 7mm 

\noindent 
{\bf Acknowledgments.} The author would like to thank Professor Tsukasa Iwabuchi for valuable discussions and continuous encouragement. 
The author is grateful to Professor Keiichi Watanabe for valuable comments on an earlier version of this paper, 
which helped improve the presentation and clarify several points. 
The author was supported by the Grant-in-Aid for JSPS Fellows, Grant Number 26KJ0527.\\ 
{\bf Data availability statement.} This manuscript has no associated data.\\
{\bf Conflict of Interest.} The author declares that he has no conflict of interest.


\begin{thebibliography}{xx}

  \bib{Ba_Ma_Ni_1997}{article}{
   author={Babin, Anatoli},
   author={Mahalov, Alex},
   author={Nicolaenko, Basil},
   title={Regularity and integrability of $3$D Euler and Navier-Stokes
   equations for rotating fluids},
   journal={Asymptot. Anal.},
   volume={15},
   date={1997},
   number={2},
   pages={103--150},
}

 \bib{Ba_Ma_Ni_1999}{article}{
   author={Babin, Anatoli},
   author={Mahalov, Alex},
   author={Nicolaenko, Basil},
   title={Global regularity of 3D rotating Navier-Stokes equations for
   resonant domains},
   journal={Indiana Univ. Math. J.},
   volume={48},
   date={1999},
   number={3},
   pages={1133--1176},
}

 \bib{Ba_Ma_Ni_2001}{article}{
   author={Babin, Anatoli},
   author={Mahalov, Alex},
   author={Nicolaenko, Basil},
   title={3D Navier-Stokes and Euler equations with initial data
   characterized by uniformly large vorticity},
   note={Dedicated to Professors Ciprian Foias and Roger Temam (Bloomington,
   IN, 2000)},
   journal={Indiana Univ. Math. J.},
   volume={50},
   date={2001},
   pages={1--35},
}

  \bib{Ba_Ch_Da_2011}{book}{
    author={Bahouri, Hajer},
    author={Chemin, Jean-Yves},
    author={Danchin, Rapha\"el},
    title={Fourier analysis and nonlinear partial differential equations},
    series={Grundlehren der mathematischen Wissenschaften [Fundamental
    Principles of Mathematical Sciences]},
    volume={343},
    publisher={Springer, Heidelberg},
    date={2011},
    pages={xvi+523},
 }

\bib{Ch_De_Ga_Gr_2002_2}{article}{
   author={Chemin, Jean-Yves},
   author={Desjardins, Beno\^it},
   author={Gallagher, Isabelle},
   author={Grenier, Emmanuel},
   title={Ekman boundary layers in rotating fluids},
   note={A tribute to J. L. Lions},
   journal={ESAIM Control Optim. Calc. Var.},
   volume={8},
   date={2002},
   pages={441--466},
}

 \bib{Ch_De_Ga_Gr_2002}{article}{
   author={Chemin, Jean-Yves},
   author={Desjardins, Beno\^it},
   author={Gallagher, Isabelle},
   author={Grenier, Emmanuel},
   title={Anisotropy and dispersion in rotating fluids},
   conference={
      title={Nonlinear partial differential equations and their
      applications. Coll\`ege de France Seminar, Vol. XIV},
      address={Paris},
      date={1997/1998},
   },
   book={
      series={Stud. Math. Appl.},
      volume={31},
      publisher={North-Holland, Amsterdam},
   },
   date={2002},
   pages={171--192},
}

 \bib{Ch_De_Ga_Gr_2006}{book}{
   author={Chemin, Jean-Yves},
   author={Desjardins, Beno\^it},
   author={Gallagher, Isabelle},
   author={Grenier, Emmanuel},
   title={Mathematical geophysics},
   series={Oxford Lecture Series in Mathematics and its Applications},
   volume={32},
   note={An introduction to rotating fluids and the Navier-Stokes equations},
   publisher={The Clarendon Press, Oxford University Press, Oxford},
   date={2006},
   pages={xii+250},
}

 \bib{Ch_Fa_Ga_2026}{article}{
   author={Chemin, Jean-Yves},
   author={Fanelli, Francesco},
   author={Gallagher, Isabelle},
   title={Ekman boundary layers in a domain with topography},
   journal={Trans. Amer. Math. Soc.},
   volume={379},
   date={2026},
   number={5},
   pages={3063--3110},
}

 \bib{Ch_Le_1995}{article}{
   author={Chemin, J.-Y.},
   author={Lerner, N.},
   title={Flot de champs de vecteurs non lipschitziens et \'equations de
   Navier-Stokes},
   language={French},
   journal={J. Differential Equations},
   volume={121},
   date={1995},
   number={2},
   pages={314--328},
}

 \bib{Ch_1993}{article}{
   author={Chen, Zhi Min},
   title={$L^n$ solutions of the stationary and nonstationary Navier-Stokes
   equations in ${\bf R}^n$},
   journal={Pacific J. Math.},
   volume={158},
   date={1993},
   number={2},
   pages={293--303},
}

 \bib{Da_Ge_2017}{article}{
   author={Dalibard, Anne-Laure},
   author={G\'erard-Varet, David},
   title={Nonlinear boundary layers for rotating fluids},
   journal={Anal. PDE},
   volume={10},
   date={2017},
   number={1},
   pages={1--42},
}

 \bib{Da_Pr_2014}{article}{
   author={Dalibard, Anne-Laure},
   author={Prange, Christophe},
   title={Well-posedness of the Stokes-Coriolis system in the half-space
   over a rough surface},
   journal={Anal. PDE},
   volume={7},
   date={2014},
   number={6},
   pages={1253--1315},
}

 \bib{Da_2000}{article}{
   author={Danchin, R.},
   title={Global existence in critical spaces for compressible Navier-Stokes
   equations},
   journal={Invent. Math.},
   volume={141},
   date={2000},
   number={3},
   pages={579--614},
}

 \bib{Du_2005}{article}{
   author={Dutrifoy, Alexandre},
   title={Examples of dispersive effects in non-viscous rotating fluids},
   language={English, with English and French summaries},
   journal={J. Math. Pures Appl. (9)},
   volume={84},
   date={2005},
   number={3},
   pages={331--356},
}

 \bib{Ek_1905}{article}{
   author={Ekman, Vagn Walfrid},
   title={On the influence of the earth's rotation on ocean currents},
   journal={Arkiv. Matem., Astr. Fysik},
   volume={2},
   date={1905},
   number={11},
}

  \bib{Fu_2025}{article}{
   author={Fujii, Mikihiro},
   title={Stationary Navier-Stokes equations on the half spaces in the
   scaling critical framework},
   journal={J. Differential Equations},
   volume={435},
   date={2025},
   pages={Paper No. 113298, 25},
}

  \bib{Fu_2026}{article}{
   author={Fujii, Mikihiro},
   title={Sharp non-uniqueness for the Navier-Stokes equations in scaling critical spaces},
   journal={arXiv:2602.19846},
   date={2026},
}

 \bib{Ge_2005}{article}{
   author={Gerard-Varet, David},
   title={Formal derivation of boundary layers in fluid mechanics},
   journal={J. Math. Fluid Mech.},
   volume={7},
   date={2005},
   number={2},
   pages={179--200},
}

 \bib{Gi_I_Ma_Ma_2005}{article}{
   author={Giga, Yoshikazu},
   author={Inui, Katsuya},
   author={Mahalov, Alex},
   author={Matsui, Shin'ya},
   title={Uniform local solvability for the Navier-Stokes equations with the
   Coriolis force},
   journal={Methods Appl. Anal.},
   volume={12},
   date={2005},
   number={4},
   pages={381--393},
}

 \bib{Gi_I_Ma_Ma_2006}{article}{
   author={Giga, Yoshikazu},
   author={Inui, Katsuya},
   author={Mahalov, Alex},
   author={Matsui, Shin'ya},
   title={Navier-Stokes equations in a rotating frame in $\Bbb R^3$ with
   initial data nondecreasing at infinity},
   journal={Hokkaido Math. J.},
   volume={35},
   date={2006},
   number={2},
   pages={321--364},
}

 \bib{Gi_In_Ma_Ma_Sa_2007}{article}{
   author={Giga, Yoshikazu},
   author={Inui, Katsuya},
   author={Mahalov, Alex},
   author={Matsui, Shin'ya},
   author={Saal, J\"urgen},
   title={Rotating Navier-Stokes equations in $\Bbb R_+^3$ with initial data
   nondecreasing at infinity: the Ekman boundary layer problem},
   journal={Arch. Ration. Mech. Anal.},
   volume={186},
   date={2007},
   number={2},
   pages={177--224},
}

 \bib{Gi_I_Ma_Sa_2008}{article}{
   author={Giga, Yoshikazu},
   author={Inui, Katsuya},
   author={Mahalov, Alex},
   author={Saal, J\"urgen},
   title={Uniform global solvability of the rotating Navier-Stokes equations
   for nondecaying initial data},
   journal={Indiana Univ. Math. J.},
   volume={57},
   date={2008},
   number={6},
   pages={2775--2791},
}

 \bib{Gr_1980}{book}{
   author={Greenspan, Harvey Philip},
   title={The theory of rotating fluids},
   series={Cambridge Monographs on Mechanics and Applied Mathematics},
   note={Reprint of the 1968 original},
   publisher={Cambridge University Press, Cambridge-New York},
   date={1980},
   pages={xii+328},
}

 \bib{Gr_Ma_1997}{article}{
   author={Grenier, E.},
   author={Masmoudi, N.},
   title={Ekman layers of rotating fluids, the case of well prepared initial
   data},
   journal={Comm. Partial Differential Equations},
   volume={22},
   date={1997},
   number={5-6},
   pages={953--975},
}

 \bib{Hi_Sh_2010}{article}{
   author={Hieber, Matthias},
   author={Shibata, Yoshihiro},
   title={The Fujita-Kato approach to the Navier-Stokes equations in the
   rotational framework},
   journal={Math. Z.},
   volume={265},
   date={2010},
   number={2},
   pages={481--491},
}

 \bib{I_2015}{article}{
   author={Iwabuchi, Tsukasa},
   title={Global solutions for the critical Burgers equation in the Besov
   spaces and the large time behavior},
   journal={Ann. Inst. H. Poincar\'e{} C Anal. Non Lin\'eaire},
   volume={32},
   date={2015},
   number={3},
   pages={687--713},
}

 \bib{I_Ta_2014}{article}{
   author={Iwabuchi, Tsukasa},
   author={Takada, Ryo},
   title={Global well-posedness and ill-posedness for the Navier-Stokes
   equations with the Coriolis force in function spaces of Besov type},
   journal={J. Funct. Anal.},
   volume={267},
   date={2014},
   number={5},
   pages={1321--1337},
}

 \bib{I_Ta_2015}{article}{
   author={Iwabuchi, Tsukasa},
   author={Takada, Ryo},
   title={Dispersive effect of the Coriolis force and the local
   well-posedness for the Navier-Stokes equations in the rotational
   framework},
   journal={Funkcial. Ekvac.},
   volume={58},
   date={2015},
   number={3},
   pages={365--385},
}

 \bib{Ka_Ko_Sh_2019}{article}{
   author={Kaneko, Kenta},
   author={Kozono, Hideo},
   author={Shimizu, Senjo},
   title={Stationary solution to the Navier-Stokes equations in the scaling
   invariant Besov space and its regularity},
   journal={Indiana Univ. Math. J.},
   volume={68},
   date={2019},
   number={3},
   pages={857--880},
}

 \bib{Ka_1984}{article}{
   author={Kato, Tosio},
   title={Strong $L\sp{p}$-solutions of the Navier-Stokes equation in ${\bf
   R}\sp{m}$, with applications to weak solutions},
   journal={Math. Z.},
   volume={187},
   date={1984},
   number={4},
   pages={471--480},
}

 \bib{Ko_2015}{article}{
   author={Koba, Hajime},
   title={On stability of the spatially inhomogeneous
   Navier-Stokes-Boussinesq system with general nonlinearity},
   journal={Arch. Ration. Mech. Anal.},
   volume={215},
   date={2015},
   number={3},
   pages={907--965},
}

 \bib{Ko_Le_Ta_2014}{article}{
   author={Koh, Youngwoo},
   author={Lee, Sanghyuk},
   author={Takada, Ryo},
   title={Dispersive estimates for the Navier-Stokes equations in the
   rotational framework},
   journal={Adv. Differential Equations},
   volume={19},
   date={2014},
   number={9-10},
   pages={857--878},
}

  \bib{Ko_Ta_2001}{article}{
   author={Koch, Herbert},
   author={Tataru, Daniel},
   title={Well-posedness for the Navier-Stokes equations},
   journal={Adv. Math.},
   volume={157},
   date={2001},
   number={1},
   pages={22--35},
}

 \bib{Ko_Yo_2011}{article}{
   author={Konieczny, Pawe\l},
   author={Yoneda, Tsuyoshi},
   title={On dispersive effect of the Coriolis force for the stationary
   Navier-Stokes equations},
   journal={J. Differential Equations},
   volume={250},
   date={2011},
   number={10},
   pages={3859--3873},
}

 \bib{Ko_Ya_1994}{article}{
   author={Kozono, Hideo},
   author={Yamazaki, Masao},
   title={Semilinear heat equations and the Navier-Stokes equation with
   distributions in new function spaces as initial data},
   journal={Comm. Partial Differential Equations},
   volume={19},
   date={1994},
   number={5-6},
   pages={959--1014},
}

 \bib{Ma_2000}{article}{
   author={Masmoudi, Nader},
   title={Ekman layers of rotating fluids: the case of general initial data},
   journal={Comm. Pure Appl. Math.},
   volume={53},
   date={2000},
   number={4},
   pages={432--483},
}

 \bib{Oh_Ta_2021}{article}{
   author={Ohyama, Hiroki},
   author={Takada, Ryo},
   title={Asymptotic limit of fast rotation for the incompressible
   Navier-Stokes equations in a 3D layer},
   journal={J. Evol. Equ.},
   volume={21},
   date={2021},
   number={2},
   pages={2591--2629},
}

  \bib{Tr_1983}{book}{
   author={Triebel, Hans},
   title={Theory of function spaces},
   series={Monographs in Mathematics},
   volume={78},
   publisher={Birkh\"auser Verlag, Basel},
   date={1983},
   pages={284},
}

 \bib{Tsu_2019}{article}{
   author={Tsurumi, Hiroyuki},
   title={Ill-posedness of the stationary Navier-Stokes equations in Besov
   spaces},
   journal={J. Math. Anal. Appl.},
   volume={475},
   date={2019},
   number={2},
   pages={1732--1743},
}

 \bib{Tsu_2019_2}{article}{
   author={Tsurumi, Hiroyuki},
   title={Well-posedness and ill-posedness problems of the stationary
   Navier-Stokes equations in scaling invariant Besov spaces},
   journal={Arch. Ration. Mech. Anal.},
   volume={234},
   date={2019},
   number={2},
   pages={911--923},
}

  \bib{Pe_1979}{book}{
   author={J. Pedlosky},
   title={Geophysical Fluid Dynamics},
   publisher={Springer-Verlag New York},
   date={1979},
}

\end{thebibliography}
\end{document}